\documentclass[11pt]{article}
\usepackage{amsfonts}
\usepackage{color}
\usepackage{amsmath,amssymb,comment}
\usepackage[hidelinks]{hyperref}
\newtheorem{theo}{Theorem}[section]
\newtheorem{lem}[theo]{Lemma}

\newtheorem{prop}[theo]{Proposition}

\newtheorem{defi}[theo]{Definition}

\newcommand{\mysection}[1]{\section{#1} \setcounter{equation}{0}}
\newcommand{\proof}{{\sc Proof.} \quad}
\newcommand{\proofc}{{\sc Proof} \ }
\newcommand{\be}{\begin{equation} \label}
\newcommand{\ee}{\end{equation}}
\newcommand{\bea}{\begin{eqnarray}\label}
\newcommand{\eea}{\end{eqnarray}}
\newcommand{\bas}{\begin{eqnarray*}}
\newcommand{\eas}{\end{eqnarray*}}
\newcommand{\bit}{\begin{itemize}}
\newcommand{\eit}{\end{itemize}}
\newcommand{\qed}{\hfill$\Box$ \vskip.2cm}
\newcommand{\nn}{\nonumber}
\newcommand{\R}{\mathbb{R}}
\newcommand{\N}{\mathbb{N}}
\newcommand{\pO}{\partial\Omega}

\newcommand{\eps}{\varepsilon}

\newcommand{\supp}{{\rm supp} \, }

\newcommand{\wto}{\rightharpoonup}
\newcommand{\wsto}{\stackrel{\star}{\rightharpoonup}}
\newcommand{\hra}{\hookrightarrow}
\newcommand{\io}{\int_\Omega}
\newcommand{\na}{\nabla}
\newcommand{\Del}{\Delta}
\newcommand{\del}{\delta}
\newcommand{\al}{\alpha}
\newcommand{\vt}{\vartheta}
\newcommand{\lam}{\lambda}
\newcommand{\Lam}{\Lambda}
\newcommand{\sig}{\sigma}
\newcommand{\pa}{\partial}
\newcommand{\bom}{\overline{\Omega}}
\newcommand{\Om}{\Omega}

\newcommand{\ov}{\overline}

\newcommand{\wh}{\widehat}

\newcommand{\hs}{\hspace*}
\newcommand{\sm}{\setminus}

\newcommand{\vp}{\varphi}
\newcommand{\lbal}{\left\{ \begin{array}{l}}
\newcommand{\lball}{\left\{ \begin{array}{ll}}
\newcommand{\ear}{\end{array} \right.}

\newcommand{\abs}{\\[5pt]}

\newcommand{\adb}{\allowdisplaybreaks}

\newcommand{\tme}{T_{max,\eps}}

\newcommand{\F}{{\mathcal{F}}}

\newcommand{\ovv}{\ov{v}}
\newcommand{\ovk}{\ov{v}_k}

\newcommand{\ueps}{u_\eps}
\newcommand{\veps}{v_\eps}

\newcommand{\heps}{h_\eps}
\newcommand{\feps}{f_\eps}
\newcommand{\geps}{g_\eps}
\newcommand{\yeps}{y_\eps}
\newcommand{\zeps}{z_\eps}
\newcommand{\keps}{\kappa_\eps}
\newcommand{\Keps}{K_\eps}
\newcommand{\Teps}{\Theta_\eps}
\newcommand{\Tepst}{\Theta_{\eps t}}
\newcommand{\leps}{\ell_\eps}
\newcommand{\helleps}{\wh{\ell}_\eps}
\newcommand{\zd}{\zeta_\delta}
\newcommand{\Tinf}{\Theta_\infty}

\renewcommand{\div}{{\rm div} \,}
\newcommand{\nas}{\nabla^s}
\newcommand{\lan}{\langle}
\newcommand{\ran}{\rangle}
\newcommand{\D}{\mathbb{D}}
\newcommand{\C}{\mathbb{C}}
\newcommand{\E}{\mathbb{E}}
\newcommand{\B}{\mathbb{B}}
\newcommand{\A}{\mathbb{A}}
\newcommand{\M}{\mathcal{M}}
\newcommand{\kD}{k_{\D}}
\newcommand{\kC}{k_{\C}}

\newcommand{\xis}{\xi_\star}
\newcommand{\uK}{\underline{K}}
\newcommand{\epss}{\eps_\star}
\newcommand{\epsss}{\eps_{\star\star}}
\newcommand{\epssss}{\eps_{\star\star\star}}
\newcommand{\lleps}{\wh{\ell}_\eps}
\begin{document}
\adb
\title{Global solvability and stabilization\\
in multi-dimensional small-strain nonlinear thermoviscoelasticity}
\author{
Michael Winkler\footnote{Corresponding author. email: michael.winkler@math.uni-paderborn.de}\\
{\small Universit\"at Paderborn, Institut f\"ur Mathematik}\\
{\small 33098 Paderborn, Germany} }
\date{}
\maketitle
\begin{abstract}
\noindent 
  Despite considerable developments in the literature of the past decades, a standing open problem in the analysis
  of continuum mechanics appears to consist of determining how far the prototypical model for small-strain thermoviscoelastic
  evolution in Kelvin-Voigt materials with inertia, as given by
  \bas
	\lbal
	u_{tt} = \mu \Del u_t + (\lam+\mu)\na\na\cdot u_t
	+ \wh{\mu} \Del u + (\wh{\lam}+\wh{\mu}) \na\na\cdot u
	- B\na\Theta, \\[1mm]
	\kappa \Theta_t = D\Del\Theta + \mu |\na u_t|^2 + (\lam+\mu) |\div u_t|^2 - B\Theta \div u_t,
	\ear
	\qquad \qquad (\star)
  \eas
  is globally solvable in multi-dimensional settings and for initial data of arbitrary size.\abs
  The present manuscript addresses this in the context of an initial value problem in smoothly bounded $n$-dimensional
  domains with $n\ge 2$, posed under homogeneous boundary conditions of Dirichlet type for the displacement variable $u$,
  and of Neumann type for the temperature $\Theta$. 
  Within suitably generalized concepts of solvability, global existence of solutions is shown without any size restrictions
  on the data, and for a system actually more general than ($\star$) by, inter alia, allowing the heat capacity $\kappa$
  to depend on $\Theta$.
  Apart from that, results on large time behavior are derived which particularly assert stabilization of $\Theta$ toward
  a spatially homogeneous limit. \abs
  Besides on standard features related to energy conservation and entropy production, in its core parts the analysis relies
  on an evolution property of certain logarithmic refinements of classical entropy functionals, to the best of our knowledge
  undiscovered in precedent literature and possibly of independent interest.\abs
\noindent {\bf Key words:} thermoviscoelasticity; generalized solution; large time behavior\\
{\bf MSC 2020:} 74F05 (primary); 35D30, 35B40, 74A15, 80A17 (secondary)
\end{abstract}
%
%
%
%

\tableofcontents

\newpage
\section{Introduction}\label{intro}
Models for thermoviscoelastic evolution have been challenging mathematical research for decades. 
After the pioneering works \cite{dafermos_hsiao_smooth} and \cite{dafermos}
in this regard, refined knowledge on issues of 
global solvability (\cite{racke_zheng}, \cite{kim}, \cite{jiang_QAM1993}, \cite{watson}) 
and also of large-time behavior (\cite{racke_zheng}, \cite{hsiao_luo}) 
has led to a meanwhile quite comprehensive picture in spatially one-dimensional settings.\abs
The literature on higher-dimensional relatives seems considerably sparser, however, even for models which concentrate
on small-strain scenarios. 
For the classical model for nonlinear thermoviscoelastic dynamics at small strains in isotropic Kelvin-Voigt materials, for instance,
as given by (\cite{roubicek}, \cite{roubicek_SIMA}, \cite{pawlow_zajaczkowski_SIMA})
\be{00}
	\lbal
	u_{tt} = \mu \Del u_t + (\lam+\mu)\na\na\cdot u_t
	+ \wh{\mu} \Del u + (\wh{\lam}+\wh{\mu}) \na\na\cdot u
	- B\na\Theta, \\[1mm]
	\kappa \Theta_t = D\Del\Theta + \mu |\na u_t|^2 + (\lam+\mu) |\div u_t|^2 - B\Theta \div u_t
	\ear
\ee
with constant parameters $\mu,\lam,\wh{\mu},\wh{\lam},B,\kappa$ and $D$,
small-data global smooth solutions were constructed in \cite{shibata} and \cite{gaw_zaj_TMNA}, but
a theory of global solutions in the presense of large data seems yet lacking unless restrictions are imposed
by either resorting to radially symmetric frameworks (\cite{gawinecki}), or neglecting core mechanisms by
assuming that $\mu=\lam=0$ and $\wh{\lam}+\wh{\mu}=0$ (\cite{cieslak_CVPD}).
While various results are available 
for slightly modified variants, e.g.~taking certain enhancements of heat capacities at large temperatures or of
viscous dissipation at large strains into account, or assuming thermal dilation to undergo 
suitably strong saturation near sites of large temperatures
(\cite{pawlow_zajaczkowski_SIMA}, \cite{roubicek}, \cite{blanchard_guibe}, \cite{paw_zaj2017}, \cite{gaw_zaj_CPAA}), 
for the original system (\ref{00})
the knowledge beyond local-in-time solvability (\cite{bonetti_bonfanti}) seems comparatively thin yet.\abs
This situation differs considerably from that in related models that neglect inertial effects.
Even for large-strain versions of such systems that involve fourth-order mechanisms and couplings yet more strongly
nonlinear than those in (\ref{00}), namely, interesting recent developments
have not only asserted global solvability under mild assumptions on the respective ingredients (\cite{mielke_roubicek}),
but also provided far-reaching information on qualitative aspects such as positivity of temperature and convergence 
toward solutions of linearized systems in some limiting situations involving a parameter that measures deviation from
constant equilibria (\cite{friedrich2024}, \cite{friedrich2022}).
Results on further relatives, partially as well involving fourth-order expressions, 
alternatively including additional internal variables that describe plasticity, or addressing adhesive contact,
can be found in
\cite{roubicek_nodea2013}, \cite{roubicek_SIMA}, \cite{owczarek_wielgos}, \cite{bartels_roubicek},
\cite{paoli_petrov}, \cite{paoli_petrov_gamm}, \cite{roubicek_dcdss13}, 
\cite{rossi_roubicek}, \cite{pawlow2000} and \cite{guo_zhu}, for instance.\abs
{\bf Main results I: Existence of global large-data solutions.} \quad
A key challenge for any expedient analysis of (\ref{00}) seems to consist in an appropriate handling of the quadratic
dependence of the heat source therein on $\na u_t$, and the intention to suitably cope with this forms
a core intention of the present manuscript.
Under assumptions on the ingredients $\D,\C$ and $\B$ slightly more general than those underlying (\ref{00}),
this will be addressed in the framework of the problem
\be{0}
	\lball
	u_{tt} = \div (\D : \nas u_t) + \div ( \C : \nas u) - \div (\Theta \B) + f(x,t),
	\qquad & x\in\Om, \ t>0, \\[1mm]
	\kappa(\Theta) \Theta_t = D\Del \Theta + \lan \D : \nas u_t , \nas u_t \ran - \Theta \lan \B , \nas u_t \ran + g(x,t),
	\qquad & x\in\Om, \ t>0, \\[1mm]
	u=0, \quad \frac{\pa \Theta}{\pa\nu} =0,
	\qquad & x\in\pO, \ t>0, \\[1mm]
	u(x,0)=u_0(x), \quad u_t(x,0)=u_{0t}(x), \quad \Theta(x,0)=\Theta_0(x),
	\qquad & x\in\Om,
	\ear
\ee
posed in a smoothly bounded domain $\Om\subset\R^n$, with $D>0$, with tensors 
$\D\in \R^{n\times n\times n\times n}$,
$\C\in \R^{n\times n\times n\times n}$
and
$\B\in\R^{n\times n}$, with given forcing terms $f$ and $g$, with a possibly 
temperature-dependent heat capacity $\kappa:[0,\infty)\to [0,\infty)$, and with prescribed initial distributions
$u_0, u_{0t}$ and $\Theta_0$.
Here and below, the symmetric gradient of a vector field $\vp:\R^n\to\R^n$ is abbreviated by writing 
$\nas\vp=\frac{1}{2}(\na\vp+(\na\vp)^T)$, and the products between a tensor $\E\in \R^{n\times n\times n \times n}$ 
and a matrix $\A\in\R^{n\times n}$, and between matrices
$\A_1\in \R^{n\times n}$ and $\A_2\in\R^{n\times n}$, are denoted by $\E:\A$ and $\lan \A_1,\A_2\ran$, respectively.\abs
Two essentially well-known basic features of (\ref{0}), enjoyed by sufficiently regular solutions thereof, 
are expressed in the energy evolution law
\be{energy}
	\frac{d}{dt} \bigg\{
	\frac{1}{2}\io |u_t|^2
	+ \frac{1}{2} \io \lan \C:\nas u,\nas u\ran
	+ \io K(\Theta) \bigg\}
	= \io f\cdot u_t
	+ \io g,
	\qquad
	K(\Theta):=\int_0^\Theta \kappa(\sig) d\sig,
\ee
and the identity
\be{ent}
	\frac{d}{dt} \io \ell(\Theta)
	= D \io \frac{|\na\Theta|^2}{\Theta^2}
	+ \io \frac{\lan \D: \nas u_t,\nas u_t\ran}{\Theta}
	+ \io \frac{g}{\Theta},
	\qquad \ell(\Theta):=\int_1^\Theta \frac{\kappa(\sig)}{\sig} d\sig,
\ee
quantifying entropy production.
Indeed, properties of this type have served as fundamental ingredients both for existence theories and for qualitative studies
on (\ref{0}) and close relatives in one- and higher-dimensional settings (\cite{roubicek}, \cite{racke_zheng}, 
\cite{cieslak_CVPD}, \cite{cieslak_MAAN}).
In particular, in \cite{cieslak_CVPD} it has been seen that in the absence of viscosity effects and for constant heat capacities, 
that is, when in (\ref{0}) we have $\D=0$ and $\kappa\equiv const.$, the $L^1$ bound for $\Theta$
then gained from (\ref{energy}), in conjunction with regularity properties entailed 
by the presence of the diffusion-induced contribution to (\ref{ent}), facilitates an appropriate treatment of the 
nonlinearities related to thermal dilation, and thus the construction of some global solutions in three-dimensional domains.\abs
In the current more complex setting of the full model (\ref{0}) including viscous effects, however, the identities
(\ref{energy}) and (\ref{ent}) alone seem insufficient to provide suitable information on regularity of the 
corresponding contribution $\lan \D : \nas u_t , \nas u_t \ran$ to the heat generation process.
The core of our approach toward a theory of large-data solutions is now based on the observation that a certain logarithmically
enhanced modification of the entropy functional in (\ref{ent}) still satisfies a comparable inequality along trajectories,
slightly weakened in the sense of failing to express genuine nondecrease but yet sufficient to imply bounds for the associated
dissipation rate functionals at least on finite time intervals. 
Specifically, the cornerstone of our existence theory can be found in an inequality of the form
\bea{corner}
	\frac{d}{dt} \io \wh{\ell}(\Theta)
	\ge \frac{1}{C} \io \frac{\ln^2(\Theta+M) |\na\Theta|^2}{(\Theta+M)^2}
	+ \frac{1}{C} \io \frac{\ln^2(\Theta+M) |\nas u_t|^2}{\Theta+M} 
	- C \io |u_t|^2
	- C,
\eea
valid with some $M>1$ and $C>0$ and 
\be{whell}
	\wh{\ell}(\Theta):=\int_0^\Theta \frac{\ln^2(\sig+M) \kappa(\sig)}{\sig+M} d\sig
\ee
(see Lemma \ref{lem12}).
When combined with (\ref{energy}), this will imply an estimate of the form
\be{LlogL}
	\int_0^T \io |\nas u_t| \ln \big( |\nas u_t|+e\big)
	\le C(T),
	\qquad T>0,
\ee
(Lemma \ref{lem3}, Lemma \ref{lem6} and Lemma \ref{lem23}), 
particularly ensuring a uniform integrablity property of symmetric gradients
in suitably regularized variants of (\ref{0}) (cf.~(\ref{0eps}));
through Lions' lemma, an accordingly entailed weak $L^1$ compactness feature will allow for the identification of limits in 
corresponding viscosity-related 
contributions to certain integral identities for suitably renormalized versions of the temperature variable
(Lemma \ref{lem24} and Lemma \ref{lem244}).\abs
In the framework of an adequately generalized concept of solvability, to be specified in Definition \ref{dw} below,
this will lead to the first of our main results which 
asserts global existence of solutions to (\ref{0}), actually in domains of arbitrarily high dimension, 
within a large class of ingredients which particularly covers the scenario in (\ref{00}):
\begin{theo}\label{theo37}
  Let $n\ge 2$ and $\Om\subset\R^n$ be a bounded domain with smooth boundary,
  let $D>0$, and suppose that
  \be{DCB}
	\lbal
	\D=(\D_{\mathbf{ijkl}})_{(\mathbf{i}, \mathbf{j}, \mathbf{k},\mathbf{l}) 
		\in \{1,...,n\}^4} \in \R^{n\times n \times n \times n}, \\[1mm]
	\C=(\C_{\mathbf{ijkl}})_{(\mathbf{i}, \mathbf{j}, \mathbf{k},\mathbf{l}) 
		\in \{1,...,n\}^4} \in \R^{n\times n \times n \times n}
	\qquad \mbox{and} \\[1mm]
  	\B=(\B_{\mathbf{ij}})_{(\mathbf{i},\mathbf{j})\in \{1,...,n\}^2} \in \R^{n\times n}_{sym}
	\ear
  \ee
  are such that
  \be{DCsym}
	\D_{\mathbf{ijkl}}=\D_{\mathbf{klij}}=\D_{\mathbf{jikl}}
	\quad \mbox{and} \quad
	\C_{\mathbf{ijkl}}=\C_{\mathbf{klij}}=\C_{\mathbf{jikl}}
	\qquad \mbox{for all } (\mathbf{i}, \mathbf{j}, \mathbf{k},\mathbf{l}) \in\{1,...,n\}^4,
  \ee
  and that with some $\kD>0$ and $\kC>0$ we have
  \be{DC_lower}
	\lan \D:\A,\A\ran \ge \kD |\A|^2
	\quad \mbox{and} \quad
	\lan \C:\A,\A\ran \ge \kC |\A|^2
	\qquad \mbox{for all } \A\in\R^{n\times n}_{sym}.
  \ee
  Moreover, assume that
  \be{fg_reg}
	f\in L^1_{loc}([0,\infty);L^2(\Om;\R^n))
	\qquad \mbox{and} \qquad
	0 \le g\in L^1_{loc}(\bom\times [0,\infty)),
  \ee
  and suppose that
  \be{kappa_reg}
	\kappa\in C^0([0,\infty))
	\quad \mbox{ is such that $\kappa>0$ on } (0,\infty),
  \ee
  and that
  \be{131.1}
	\liminf_{\xi\to\infty} \kappa(\xi) >0.
  \ee
  Then whenever
  \be{init}
	\lbal
	u_0\in W_0^{1,2}(\Om;\R^n), \\[1mm]
	u_{0t} \in L^2(\Om;\R^n) 
	\qquad \mbox{and} \\[1mm]
	\Theta_0: \Om\to\R
	\quad \mbox{is such that} \quad 
	\Theta_0\not\equiv 0,
	\quad 
	K(\Theta_0)\in L^1(\Om)
 	\quad \mbox{and} \quad 
	\ell(\Theta_0) \in L^1(\Om),
	\ear
  \ee
  with 
  \be{K}
	K(\xi):=\int_0^\xi \kappa(\sig) d\sig,
	\qquad \xi\ge 0,
  \ee
  and
  \be{ell}
	\ell(\xi):= \int_1^\xi \frac{\kappa(\sig)}{\sig} d\sig,
	\qquad \xi>0,
  \ee
  there exist
  \be{reg37}
	\lbal
	u \in C^0([0,\infty);L^2(\Om;\R^n)) \cap L^\infty_{loc}([0,\infty);W_0^{1,2}(\Om;\R^n))
	\qquad \mbox{and} \\[1mm]
	\Theta\in L^\infty_{loc}([0,\infty);L^1(\Om))
	\ear
  \ee
  such that $\Theta>0$ a.e.~in $\Om\times (0,\infty)$, and that $(u,\Theta)$ 
  forms a generalized solution of (\ref{0}) in the sense of Definition \ref{dw}.
\end{theo}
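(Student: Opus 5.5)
The plan is to construct solutions as limits of a regularized family. For $\eps\in(0,1)$ I would consider an approximate problem (\ref{0eps}) in which the data are smoothed ($f_\eps, g_\eps, u_{0\eps}, u_{0t\eps}, \Theta_{0\eps}$ smooth, $\Theta_{0\eps}\ge\eps$-type positivity). The heat capacity is replaced by a smooth $\kappa_\eps$ bounded away from zero. The quadratic heat source $\lan\D:\nas u_t,\nas u_t\ran$, and possibly the coupling $\Theta\B$, are truncated, e.g.\ by dividing by $1+\eps|\nas u_t|^2$, so that global classical solutions with $\Theta_\eps>0$ exist by standard parabolic theory and a maximum principle argument. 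The regularization should be chosen so that the energy identity (\ref{energy}) and the entropy identity (\ref{ent}) survive, at least as inequalities in the correct direction, uniformly in $\eps$.

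The a priori estimates are then obtained in the following order.
\begin{itemize}
\item[(i)] From (\ref{energy}) with Gronwall, using $f\in L^1_{loc}L^2$ and $g\ge0$, we get $\eps$-independent bounds on $\|u_{\eps t}\|_{L^2}$, $\|\nas u_\eps\|_{L^2}$ (Korn and (\ref{DC_lower})) and $\io K(\Theta_\eps)$ on $(0,T)$. By (\ref{131.1}) the last bound gives an $L^\infty_{loc}L^1$ bound for $\Theta_\eps$.
\item[(ii)] From (\ref{ent}), together with $\ell(\Theta)\le C K(\Theta)+C$ for large $\Theta$ and $\ell(\Theta_0)\in L^1$, we get $\int_0^T\io|\na\ln\Theta_\eps|^2\le C(T)$, a bound on $\int_0^T\io\Theta_\eps^{-1}|\nas u_{\eps t}|^2$, and control of $\io\ell(\Theta_\eps)$ from below, hence of $\ln\Theta_\eps$ in $L^1$. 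Since $\Theta_0\not\equiv0$, this should yield $\Theta>0$ a.e.\ in the limit.
\item[(iii)] The key step is the logarithmic entropy inequality (\ref{corner}) for $\wh\ell$ from (\ref{whell}), i.e.\ Lemma \ref{lem12}. Differentiating $\io\wh\ell(\Theta_\eps)$ produces the favourable terms, a gradient term of sign-indefinite type coming from $(\ln^2(\Theta+M)/(\Theta+M))'$, and the dilation term $-\frac{\ln^2(\Theta+M)}{\Theta+M}\Theta\lan\B,\nas u_t\ran$.
  \begin{itemize}
  \item[] The gradient term is absorbed by choosing $M$ large, so that the derivative is negative and dominated, or handled via Young's inequality.
  \item[] The dilation term is absorbed by Young's inequality into half of $\frac{\ln^2}{\Theta+M}|\nas u_t|^2$, leaving $\io\Theta\ln^2(\Theta+M)$.
  \end{itemize}
  This remainder is the main obstacle, since only $L^1$ control of $\Theta$ is available. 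I would try to avoid it by noting that $\wh\ell$ grows like $K\ln^3$, and bounding the remainder through a Gagliardo--Nirenberg interpolation using the gradient dissipation $\io\ln^2|\na\Theta|^2/(\Theta+M)^2$. This amounts to estimating $\Theta\ln^2$ through $\na(\ln^{2}(\Theta+M))$-type quantities in $L^2$ together with the $L^1$ bound. Failing that, I would integrate by parts in the $\B$ term, moving the derivative onto $\Theta$ to produce $u_t\cdot\na\Theta$ factors, which are then bounded by $C\io|u_t|^2$, matching the form of (\ref{corner}).
\end{itemize}
Combined with (i), integrating (\ref{corner}) in time gives $\int_0^T\io\frac{\ln^2(\Theta_\eps+M)}{\Theta_\eps+M}|\nas u_{\eps t}|^2\le C(T)$. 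Splitting according to $|\nas u_t|\lessgtr\Theta$ and using the $L^1$ bound for $\Theta$ then yields (\ref{LlogL}).

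For the passage to the limit, (\ref{LlogL}) and de la Vallée-Poussin give weak $L^1$ compactness of $\nas u_{\eps t}$. Aubin--Lions applied to $u_\eps$, and to suitable renormalizations $\phi(\Theta_\eps)$ (for instance $\ln\Theta_\eps$ or bounded concave functions) using the gradient bounds from (ii) and (iii), gives a.e.\ convergence of $\Theta_\eps$, and by Vitali, convergence in $L^1_{loc}$. The momentum equation then passes to the limit weakly. For the temperature, the generalized solution of Definition \ref{dw} presumably consists of an energy inequality or identity plus a renormalized entropy-type inequality in which the quadratic term appears with a favourable sign. There I would use the weak lower semicontinuity of $\int\phi'(\Theta)\lan\D:\nas u_t,\nas u_t\ran\zeta$, obtained by strong convergence of $\phi'(\Theta_\eps)^{1/2}$ together with weak convergence of $\nas u_{\eps t}$. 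The terms linear in $\nas u_t$ converge via weak $L^1$ compactness times bounded a.e.-convergent factors, which is Lions' lemma. Finally, $u\in C^0([0,\infty);L^2)$ follows from the bound on $u_{\eps t}$ in $L^\infty L^2$.
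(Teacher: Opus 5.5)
\textbf{Main gap: the defect measure is missing.} In your limit passage you claim that a.e.\ convergence of $\Teps$ upgrades to $L^1_{loc}$ convergence by Vitali. Under (\ref{131.1}) alone, none of your estimates gives uniform integrability of $(\Teps)_{\eps\in(0,1)}$ when $n\ge 3$:
\begin{itemize}
\item the energy only bounds $\io\Keps(\Teps)$, and $\Keps$ may grow merely linearly (e.g.\ $\kappa\equiv 1$);
\item the $L\log L$ bound concerns $\nas u_{\eps t}$, not $\Teps$;
\item the dissipation bounds on $\na\ln\Teps$ and $\na\ln^2(\Teps+M)$ hold uniformly for concentrating profiles. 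For instance, $r^{-n}$ on a ball of radius $r$, cut off on the doubled ball, has mass of order one, while its logarithmic Dirichlet integrals are $O(r^{n-2}\ln^4\frac{1}{r})\to 0$ as $r\searrow 0$.
\end{itemize}
Only for $n=2$ does a Moser--Trudinger argument, as in Lemma \ref{lem17}, restore equi-integrability.

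Consequently $\int_0^\infty\io\Teps\lan\B,\na\vp\ran$ need not converge to $\int_0^\infty\io\Theta\lan\B,\na\vp\ran$. One only has $\Teps\wsto\Theta+\mu$ in $L^\infty((0,T);\M_+(\bom))$, with a possibly nontrivial defect $\mu\ge 0$ (nonnegative by Fatou's lemma). This is exactly why Definition \ref{dw} contains $\int_0^\infty\int_{\bom}\lan\B,\na\vp\ran d\mu$ in (\ref{wu}) and $a\int_{\bom}d\mu(t)$ in (\ref{wF}). Your plan never produces $\mu$, so as written it would yield an integrable solution in every dimension, which your bounds do not support.

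What is missing is to keep the measure-valued limit in the momentum identity and to prove (\ref{wF}), including the term $a\int_{\bom}d\mu(t)$. This can be done by splitting $\Keps(\xi)=a\xi+\Keps^{(1)}(\xi)-\Keps^{(2)}(\xi)$ with $\Keps^{(1)}\ge 0$ and $0\le\Keps^{(2)}\le C$, which is possible precisely because of (\ref{131.1}). The linear part converges to $a(\Theta+\mu)$ weakly-$\star$, $\Keps^{(1)}$ is handled by Fatou's lemma, and $\Keps^{(2)}$ by dominated convergence (Lemmas \ref{lem131} and \ref{lem35}).

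\textbf{Second gap: positivity.} A lower bound for $\io\ell(\Teps)$ gives no control of $\ln\Teps$ when $\int_0^1\frac{\kappa(\sig)}{\sig}d\sig<\infty$, e.g.\ $\kappa(\xi)=\xi^3$ near $0$, which (\ref{kappa_reg}) admits. In that case $\ell$ is bounded below on $(0,1)$. You need two further steps:
\begin{itemize}
\item combine entropy monotonicity, $\Theta_0\not\equiv 0$ and the energy bound into a uniform lower bound $|\{\Teps(\cdot,t)>b\}|\ge c(T)>0$;
\item apply a Poincar\'e inequality to $(\ln\frac{b}{\Teps})_+$, which vanishes on that set, together with your bound on $\na\ln\Teps$ (Lemmas \ref{lem18} and \ref{lem19}).
\end{itemize}

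\textbf{Smaller remarks.}
\begin{itemize}
\item The Gagliardo--Nirenberg route for $\io\Theta\ln^2(\Theta+M)$ cannot work for $n\ge 3$, since the same concentrating profiles defeat it. Your fallback is sound, however: for $M\ge e$,
\[
0\le\frac{d}{d\xi}\frac{\xi\ln^2(\xi+M)}{\xi+M}\le(\ln M+2)\frac{\ln(\xi+M)}{\xi+M}.
\]
Integrating the whole dilation term by parts (using $u_t=0$ on $\pO$) therefore leaves only $C\io\frac{\ln(\Theta+M)}{\Theta+M}|\na\Theta||u_t|$. This is absorbed by the gradient dissipation plus $C\io|u_t|^2$.
\item $\wh{\ell}$ does not grow like $K\ln^3$; rather $\wh{\ell}\le\frac{4}{e^2}K$. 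This is what lets the energy control $\io\wh{\ell}(\Teps(\cdot,t))$.
\item Truncating only the heat source does not by itself give global classical approximate solutions. The coupling $\Theta\lan\B,\nas u_t\ran$ still requires more regularity of $\nas u_t$ than the energy provides. The paper's term $\eps\Delta^{2m}\veps$, with $W^{2m,2}(\Om)\hookrightarrow W^{1,\infty}(\Om)$, is what secures this, while keeping the energy inequality and the entropy identity exact.
\end{itemize}
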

A slight modification of our argument will additionally identify some situations in which the potentially nontrivial 
measure in the above result can actually be dropped. 
We emphasize that in spatially planar settings, 
the following statement in this regard asserts the existence of such integrable solutions 
actually for heat capacities which are even allowed to undergo some mildly fast decay at large temperatures,
as appearing to be of relevance in some materials which exhibit heat capacity anomalies
(\cite{drotziger}, \cite{griffel}, \cite{troyanchuk}).
In three-dimensional domains, unboundedness of $\kappa$ is required, but arbitrarily slow growth is sufficient 
for a corresponding conclusion; here and below, as usual we let $\xi_+:=\max\{\xi,0\}$ for $\xi\in\R$:
\begin{theo}\label{theo38}
  Let $n\ge 2$ and $\Om\subset\R^2$ be a bounded domain with smooth boundary,
  let $D>0$, and suppose that (\ref{DCB}), (\ref{DCsym}) and (\ref{DC_lower}) hold with some $\kD>0$ and $\kC>0$,
  and that $f,g$ and $\kappa$ satisfy (\ref{fg_reg}) and (\ref{kappa_reg}) as well as
  \be{38.1}
	\kappa(\xi) \cdot \ln^{(3-n)_+} \xi \to + \infty
	\qquad \mbox{as } \xi\to\infty.
  \ee
  Then for any choice of initial data which are such that (\ref{init}) is valid with $K$ as in (\ref{K}),
  one can find a global integrable solution $(u,\Theta)$
  of (\ref{0}), in the sense of Definition \ref{dw} ii), such that $\Theta>0$ a.e.~in $\Om\times (0,\infty)$.
  In addition,
  \be{reg388}
	\Theta \in L^\infty_{loc}([0,\infty);L^1(\Om))
	\qquad
	\mbox{if} \quad
	\liminf_{\xi\to\infty} \kappa(\xi) >0.
  \ee
\end{theo}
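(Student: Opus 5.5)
We follow the construction behind Theorem \ref{theo37} and aim to exclude the defect measure. Concretely, we work with the regularized problems (\ref{0eps}) and keep the a priori bounds that stem from (\ref{energy}) and from the logarithmic entropy inequality (\ref{corner}). The energy identity gives $\io K(\Theta_\eps)\le C(T)$ on $(0,T)$. Together with (\ref{corner}), it yields
\be{plan1}
	\int_0^T\io \frac{\ln^2(\Theta_\eps+M)|\nas u_{\eps t}|^2}{\Theta_\eps+M}
	+ \int_0^T \io \frac{\ln^2(\Theta_\eps+M)|\na\Theta_\eps|^2}{(\Theta_\eps+M)^2}
	\le C(T).
\ee
The measure in Definition \ref{dw} can only arise as a weak-$\star$ limit of $\lan \D:\nas u_{\eps t},\nas u_{\eps t}\ran$ tested against the renormalization in the temperature equation. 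The plan is therefore to show that, under (\ref{38.1}), the heat source $|\nas u_{\eps t}|^2$ is itself uniformly integrable on $\Om\times(0,T)$. Once that holds, Lions' lemma and the strong convergence of $\nas u_{\eps t}$ let us pass to the limit directly in the $\Theta$-equation tested with smooth functions, which gives an integrable solution.

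The key step is to upgrade the $L\log L$-type bound (\ref{LlogL}) to a bound on $|\nas u_{\eps t}|^2\,\phi(|\nas u_{\eps t}|)$ for some $\phi\to\infty$. Split $\Om\times(0,T)$ into the set where $\Theta_\eps\le \Lambda$ and its complement. On the first set, (\ref{plan1}) already controls $\ln^2(\Lambda+M)/(\Lambda+M)$ times the square. On the complement we need high integrability of $\Theta_\eps$. Here (\ref{38.1}) enters: since $\kappa(\xi)\ln^{(3-n)_+}\xi\to\infty$, the energy gives $\io \Theta_\eps\,\psi(\Theta_\eps)\le C(T)$ with $\psi:=\kappa\ln^{(3-n)_+}/\ln^{(3-n)_+}$ unbounded up to the log factor.

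For $n=2$ one can simply take $\psi(\xi)=\kappa(\xi)\ln\xi/\ln\xi$ and work as follows. Define $w_\eps:=\ln^2(\Theta_\eps+M)$. Then (\ref{plan1}) says that $\na(\ln^2(\Theta_\eps+M))$ is bounded in $L^2$ in space-time. In two dimensions, a Moser--Trudinger / Gagliardo--Nirenberg interpolation between this gradient bound and the $L^1$-type bound converts into $\int_0^T\io \Theta_\eps^{p}\le C$ for every $p$ controlled by the allowed decay of $\kappa$. Combining this with Hölder on $(\Theta_\eps+M)\ln^{-2}(\Theta_\eps+M)\cdot\ln^2(\cdot)|\nas u_{\eps t}|^2/(\Theta_\eps+M)$ then gives uniform integrability of $|\nas u_{\eps t}|^2$ on large-temperature sets. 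For $n=3$ the embedding is weaker, and one needs the factor $\kappa\to\infty$ to gain the missing superlinear integrability of $\Theta_\eps$, through the interpolation $\|\Theta\|_{L^{5/3}}$-type estimate from the $L^2$ gradient of $\ln^2$ and $\io\Theta\kappa(\Theta)$.

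I expect the main obstacle to be this step, namely turning the weighted bound (\ref{plan1}) into genuine equi-integrability of $|\nas u_{\eps t}|^2$ with precisely the sharp threshold $\ln^{(3-n)_+}$ of (\ref{38.1}). The first attempt would be an Orlicz-space Gagliardo--Nirenberg inequality applied to $\ln^2(\Theta_\eps+M)$ and tuned to the log weights.

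Once this step is done, the remaining items are routine:
\begin{itemize}
\item positivity a.e.\ of $\Theta$, as in Theorem \ref{theo37}, from the $\ell$-bound;
\item the strong convergence $\nas u_{\eps t}\to\nas u_t$ in $L^2$, obtained from the energy inequality and lower semicontinuity;
\item the addendum (\ref{reg388}): if $\liminf\kappa>0$ then $K(\xi)\ge c\xi-C$, so the energy bound gives $\Theta\in L^\infty_{loc}L^1$.
\end{itemize}
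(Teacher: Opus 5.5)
Your plan targets the wrong term, and the step you single out as key cannot be carried out.

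\textbf{Where the defect measure actually sits.} In Definition \ref{dw}, $\mu$ does not enter the thermal inequality (\ref{wt}) at all. It appears only in the momentum identity (\ref{wu}) and in (\ref{wF}), as the concentration defect of the temperature itself in the thermal-stress term $-\div(\Theta\B)$, i.e.\ $\Teps\wsto\Theta+\mu$ as in (\ref{24.7}). The quadratic heat source never produces a defect. Since (\ref{wt}) is only an inequality, $\phi'(\Teps)\lan\D:\nas\veps,\nas\veps\ran$ is handled by weak lower semicontinuity (Lemma \ref{lem32}, via $\sqrt{\D}$ and Lions' lemma). The term $\Teps\phi'(\Teps)\lan\B,\nas\veps\ran$ is handled by the equi-integrability of $\nas\veps$ from (\ref{LlogL}). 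Both treatments are identical for generalized and integrable solutions. So even full equi-integrability of $|\nas\veps|^2$ would leave $\int_0^\infty\io\Teps\lan\B,\na\vp\ran$ untouched and would not give $\mu=0$.

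\textbf{Why your key step fails.} In (\ref{3.1}) the viscous dissipation cancels against the heat source. None of the available estimates bounds $|\nas\veps|^2$ in $L^1(\Om\times(0,T))$ uniformly in $\eps$. Estimates (\ref{12.2}) and (\ref{11.3}) control it only with the degenerate weights $\frac{\ln^2(\Teps+e)}{\Teps+1}$ and $\frac{1}{\Teps}$. Your H\"older step would need the weighted quantity in $L^{p'}$, not merely in $L^1$. The same objection applies to the asserted strong $L^2$ convergence of $\nas\veps$. Finally, Moser--Trudinger applied to $w=\ln^2(\Teps+M)$ does not give $\int_0^T\io\Teps^p\le C$ for $p>1$. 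At fixed $t$ the resulting bound grows exponentially in $\|\na w(\cdot,t)\|_{L^2(\Om)}$, which is only square integrable in time.

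\textbf{What is actually needed.} You need uniform integrability of $(\Teps)$ itself over $\Om\times(0,T)$. Together with $\Teps\to\Theta$ a.e., Vitali then gives $L^1$ convergence, hence (\ref{wu}) with $\mu=0$, and (\ref{wF}) with $a=0$ follows from Fatou.

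For $n\ge 3$, (\ref{38.1}) means $\kappa\to\infty$. Then $K(\xi)/\xi\to\infty$, and (\ref{6.3}) with de la Vall\'ee-Poussin suffices (Lemma \ref{lem15}). No interpolation is needed, and this covers every $n\ge3$, not only $n=3$.

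For $n=2$, $\kappa$ may decay and $K$ may be sublinear, so the energy yields no $L^1$ bound at all. In particular you may not take (\ref{LlogL}) for granted here, since Lemma \ref{lem23} itself requires $\int_0^T\io\Teps\le C$. The missing idea is the argument of Lemmas \ref{lem16} and \ref{lem17}:
\begin{itemize}
\item Construct $\uK$ with $\uK\le C\Keps+C$, $(\xi+1)\uK'\le C(\uK+1)$ and $\uK(\xi)\ln\uK(\xi)/\xi\to\infty$.
\item Apply (\ref{444.1}) to $\vp=\uK(\Teps(\cdot,t))$. Its right-hand side is \emph{linear} in $\io|\na\vp|^2/(\vp+1)^2\le C\io|\na\Teps|^2/(\Teps+1)^2$, multiplied by the bounded quantity $\io\uK(\Teps)$.
\item Integrating in time and using (\ref{12.1}) gives $\int_t^{t+1}\io\uK(\Teps)\ln(\uK(\Teps)+1)\le C$.
\end{itemize}
This avoids the exponential dependence noted above, and yields both the space-time $L^1$ bound and the equi-integrability of $\Teps$.

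Your treatment of (\ref{reg388}) is fine.
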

{\bf Remark.} \quad
  i) \ To put these results in perspective, 
  we once more emphasize that in the prototypical case when $\kappa\equiv 1$,
  available literature does not provide any result on global existence of large-data solutions. 
  Theorem \ref{theo37} and the two-dimensional part of Theorem \ref{theo38} particularly do not only 
  cover this scenario as a special case, but go considerably beyond by addressing situations
  where in accordance with the Debye law the heat capacity undergoes some cubic-like degeneracies at low temperatures
  (\cite{debye}).
  More generally, we recall that
  in the presense of heat capacities growing at large temperatures according to a power-type law of the form 
  $\kappa(\xi)\simeq \xi^\omega$, the mildest conditions on $\omega$ appearing in precedent results on global large-data solvability
  in boundary value problems for the PDE system in (\ref{0}) seem to require that
  \bas
	\omega>\lball
	0
	\qquad & \mbox{if } n=2, \\[1mm]
	\frac{1}{2}
	\qquad & \mbox{if } n\ge 3.
	\ear
  \eas
  In fact, in \cite{roubicek} some global very weak solutions to (\ref{0}) 
  have been constructed under the assumptions that $n\ge 2$ and that
  $\kappa$ suitably generalizes the non-singular choice $\kappa(\xi)=\kappa_0(\xi+1)^\omega$, $\xi\ge 0$, 
  with some $\omega>\frac{n-2}{2}$.
  More regular solutions were found for exactly power-type heat capacities of the singular form $\kappa(\xi)=\xi^\omega$
  when $\omega\in (\frac{1}{2},1]$ in \cite{gaw_zaj_CPAA} in two- and in \cite{pawlow_zajaczkowski_SIMA} and \cite{paw_zaj2017} 
  in three-dimensional domains.
  Certain weak-renormalized solvability in a related boundary value problem of fully Dirichlet type has been asserted for
  $n\ge 2$ and arbitrary $\kappa\in C^0(\R)$ fulfilling $\int_0^\xi \kappa(\sig)d\sig \ge C\xi^{\omega+1}$ with some
  $\omega>1$ in \cite[Theorem 3]{blanchard_guibe}.\abs
  ii) \ We furthermore highlight that both Theorem \ref{theo37} and Theorem \ref{theo38} yield solutions featuring 
  a.e.~strictly positive temperature distributions without presupposing the corresponding initial data 
  to be uniformly positive in $\Om$, such as required in \cite{pawlow_zajaczkowski_SIMA}, 
  \cite{paw_zaj2017} and \cite{friedrich2024},
  for instance. 
  In fact, through the assumption that $\ell(\Theta_0)$ be integrable the hypotheses in (\ref{init}) allow for some considerably
  strong zeroes of $\Theta_0$, and in the case when $\int_0^1 \frac{\kappa(\sig)}{\sig} d\sig<\infty$ even admit
  widely arbitrary behavior of $\Theta_0$ in $\{\Theta_0<1\}$.\abs
{\bf Main results II: Large time behavior.} \quad
The solutions found above may fail to enjoy regularity properties significantly beyond those in (\ref{reg37}) and
(\ref{reg388}); in particular, in view of Definition \ref{dw} i) the statement of Theorem \ref{theo37} potentially involves
nontrivial defect measures in the temperature-related contribution to the mechanical part in (\ref{0}).
Remarkably, under a mild additional assumption on decay of the external sources 
$f$ and $g$ it is nevertheless possible to characterize the large time behavior of the solutions constructed before.
Unlike in most previous parts, our analysis in this direction now makes substantial use of the entropy production property
predicted by (\ref{ent}), through both the monotonicity feature and the dissipation mechanism expressed therein.\abs
Specifically, in Section \ref{sect_theta} we shall use an approximate counterpart of (\ref{ent}) as a starting point for a
qualitative analysis of the heat subsystem in (\ref{0}), in line with limited information on smoothness operating at markedly  
low levels of regularity. 
As a result, we shall see that the temperature approaches a homogeneous profile in the large time limit:
\begin{theo}\label{theo55}
  Let $n\ge 2$ and $\Om\subset\R^2$ be a bounded domain with smooth boundary,
  let $D>0$, and suppose that (\ref{DCB}), (\ref{DCsym}), (\ref{DC_lower}) and (\ref{kappa_reg}) hold with some $\kD>0$ and $\kC>0$,
  and that $f$ and $g$, besides fulfilling (\ref{fg_reg}), are such that 
  \be{fg_decay}
	\int_0^\infty \|f(\cdot,t)\|_{L^2(\Om)} dt < \infty
	\qquad \mbox{and} \qquad
	\int_0^\infty \|g(\cdot,t)\|_{L^1(\Om)} dt < \infty.
  \ee
  i) \ If $\kappa$ is such that (\ref{kappa_reg}) and (\ref{131.1}) hold, then there exists $\Tinf>0$ such that for the generalized
  solution $(u,\Theta)$ found in Theorem \ref{theo37} we have
  \be{55.1}
	\int_t^{t+1} \io |\Theta-\Tinf|^p \to 0
	\quad \mbox{for all } p\in (0,1)
	\qquad \mbox{as } t\to\infty.
  \ee
  ii) \ If $\kappa$ satisfies (\ref{kappa_reg}) and (\ref{38.1}), 
  then the integrable solution $(u,\Theta)$ of (\ref{0}) from Theorem \ref{theo38} has the property that with some $\Tinf>0$,
  \be{55.2}
	\int_t^{t+1} \io |\Theta-\Tinf| \to 0
	\qquad \mbox{as } t\to\infty.
  \ee
\end{theo}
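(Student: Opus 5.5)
The plan is to work at the level of the regularized problems (\ref{0eps}) used to construct the solutions in Theorem \ref{theo37} and Theorem \ref{theo38}. We first derive estimates that hold uniformly in $\eps$ and for all $t>0$, not just on finite intervals, and then pass to the limit. Under (\ref{fg_decay}), the energy identity (\ref{energy}) gives a global bound. Writing
\[
	y(t):=\tfrac12\io|u_t|^2+\tfrac12\io\lan\C:\nas u,\nas u\ran+\io K(\Theta),
\]
we have $y'\le \|f\|_{L^2}\sqrt{2y}+\|g\|_{L^1}$. A Gronwall-type argument for $\sqrt y$ then yields $\sup_{t>0}y(t)\le C$, uniformly in $\eps$. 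Together with (\ref{131.1}) or (\ref{38.1}), this bounds $\io\Theta$ uniformly in time.

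Next we use the entropy identity (\ref{ent}). Its approximate version says that $\io\ell(\Theta)$ is nondecreasing. Since $\ell(\xi)\le C(K(\xi)+1)$ for $\xi\ge1$, and $\ell\le0$ on $(0,1)$, it is bounded above by the energy bound. This gives two things: a lower bound $\io\ell(\Theta(t))\ge\io\ell(\Theta_0)$, and, after integrating the dissipation,
\[
	\int_0^\infty\io\frac{|\na\Theta|^2}{\Theta^2}\le C, \qquad \int_0^\infty\io\frac{|\nas u_t|^2}{\Theta}\le C .
\]
(The $g/\Theta$ term is nonnegative.) Both bounds pass to the limit by lower semicontinuity, so for the limit $\Theta$ we get $\int_t^\infty\io|\na\ln\Theta|^2\to0$. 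By the Poincaré inequality, $\int_t^{t+1}\io|\ln\Theta-\overline{\ln\Theta}(s)|^2\,ds\to0$, where $\overline{\ln\Theta}(s)$ denotes the spatial mean. The uniform $L^1$ bound for $\Theta$, combined with the lower bound on $\io\ell(\Theta)$, keeps these means bounded above and below. Hence every sequence $t_k\to\infty$ has a subsequence along which $\Theta(\cdot,t_k+\cdot)$ is close in measure, on $\Om\times(0,1)$, to some constant $c$ (which may depend on the subsequence).

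The main obstacle is to identify this constant uniquely as some $\Tinf$, independent of the subsequence. I would try to use the conserved energy. From the dissipation bound on $\nas u_t$ and the decay of $f$, the mechanical part should relax. Specifically, Cauchy--Schwarz gives
\[
	\int_t^{t+1}\!\io|\nas u_t|\le \Big(\int_t^{t+1}\!\io\Theta\Big)^{1/2}\Big(\int_t^{t+1}\!\io\frac{|\nas u_t|^2}{\Theta}\Big)^{1/2}\to0 .
\]
Testing the $u$-equation against $u$, one expects $\int_t^{t+1}\io(|u_t|^2+|\nas u|^2)\to0$; the $\Theta\B$ term is only controlled in $L^1$ and needs care, for instance by working with renormalized or truncated temperatures. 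Once the mechanical part relaxes, $\io K(\Theta)$ has a limit $E_\infty$, since the total energy converges by (\ref{fg_decay}). Then $c$ is determined by $|\Om|K(c)=E_\infty$, where $K$ is strictly increasing. In case i), where only weak $L^1$ control is available (the possible defect measure), I would identify $c$ through the monotone limit of $\io\ell(\Theta)$ instead, using $\ell$ strictly increasing. Uniform integrability of $\ell(\Theta)$ comes from the bounds above, since $\ell$ grows like $\ln$ at infinity and like the $\ell(\Theta_0)$ control near $0$.

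Finally, convergence in measure together with the uniform $L^1$ bound gives convergence in $L^p$ for every $p<1$ by Vitali, which is (\ref{55.1}). In case ii), (\ref{38.1}) makes $K(\xi)/\xi\to\infty$ when $n=2$; when $n\ge3$ it makes $\kappa$ unbounded, so again $K(\xi)/\xi\to\infty$. The energy bound then gives uniform integrability of $\Theta$ itself, which upgrades the convergence to $L^1$, i.e. (\ref{55.2}).
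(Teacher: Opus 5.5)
Your proposal has a genuine gap at the step ``hence every sequence $t_k\to\infty$ has a subsequence along which $\Theta(\cdot,t_k+\cdot)$ is close in measure, on $\Omega\times(0,1)$, to some constant $c$''. From $\int_t^\infty\io|\nabla\ln\Theta|^2\to0$ and Poincar\'e in $x$ you only get that $\Theta(\cdot,t_k+s)$ is close to the spatially homogeneous but \emph{time-dependent} profile $\exp(\overline{\ln\Theta}(t_k+s))$. Nothing you have written controls how this mean oscillates in $s$. A bounded family of functions of $s$ only has weak-$\star$ convergent subsequences, and their limits need not be constant.

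This time direction is where the paper spends most of its effort:
\begin{itemize}
\item Lemma \ref{lem45} shows that the \emph{approximate} entropy production, including $\frac{|\nabla^s v_\eps|^2}{\Theta_\eps}$ and $\frac{g_\eps}{\Theta_\eps}$, is small on $(t,t+1)$ for large $t$ and small $\eps=\eps_j$. It uses convergence of $\io\ell_\eps(\Theta_\eps(\cdot,t))$ at single times and the monotone limit $L$.
\item Lemma \ref{lem48} turns this into smallness of $\partial_t\ell^{(M)}_\eps(\Theta_\eps)$ in $L^1((t,t+1);(W^{1,p}(\Omega))^\star)$ for the truncated entropies.
\item Lemma \ref{lem52} concludes that every limit of $\ell^{(M)}(\Theta(\cdot,t_k+\cdot))$ is constant in space \emph{and} time and lies in $[L-\frac{\Gamma}{M},L]$.
\end{itemize}
The paper derives this at the $\eps$-level because the limit only satisfies the one-sided inequality (\ref{wt}).

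A repair closer to your route would be to use $\frac{1}{|\Omega|}\io\ell(\Theta(\cdot,s))\nearrow L$ slice by slice, on those $s$ where $\|\nabla\ln\Theta(\cdot,s)\|_{L^2(\Omega)}$ is small. This forces $\overline{\ln\Theta}(s)\to\ln\ell^{-1}(L)$. It does, however, require two things you have not shown. First, fixed-time uniform integrability of $\ell(\Theta(\cdot,s))$. Second, the monotone limit itself for the limit function, which rests on $\ell_\eps(\Theta_\eps)\to\ell(\Theta)$ in $L^1_{loc}$ and on the $\ln^2$ bound of Lemma \ref{lem19}.

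Part ii) has a further gap. For $n=2$ one has $(3-n)_+=1$, so (\ref{38.1}) only requires $\kappa(\xi)\ln\xi\to\infty$. This admits decaying capacities such as $\kappa(\xi)=\ln^{-1/2}(\xi+e)$, for which $K(\xi)/\xi\to0$. Your claim that (\ref{38.1}) gives $K(\xi)/\xi\to\infty$ for $n=2$ is therefore false. The energy bound then yields neither $\sup_t\io\Theta<\infty$ nor uniform integrability of $\Theta$. You use the former in your Cauchy--Schwarz step and when bounding the means of $\ln\Theta$; the latter is indispensable for (\ref{55.2}). The paper obtains space-time uniform integrability of $\Theta_\eps$ on $\Omega\times(t,t+1)$, uniformly in $t$, from the two-dimensional Moser--Trudinger inequality (Lemma \ref{lem444}) applied to $\underline{K}(\Theta_\eps)$. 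This is combined with the bound on $\int_t^{t+1}\io\frac{|\nabla\Theta_\eps|^2}{(\Theta_\eps+1)^2}$ (Lemmas \ref{lem16} and \ref{lem17}). Some ingredient of this kind is missing from your argument.

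Finally, the energy-based identification you propose for case ii) does not work, for three reasons:
\begin{itemize}
\item The limit only satisfies the energy \emph{inequality} (\ref{wF}).
\item Relaxation of $\io|u_t|^2+\io|\nabla^s u|^2$ is not available, since the coupling $\Theta\B$ is controlled only in $L^1$.
\item $K(\Theta)$ is not uniformly integrable, so even convergence in measure to $c$ would not give $\io K(\Theta)\to|\Omega|K(c)$.
\end{itemize}
The identification through the monotone limit $L$ of $\frac{1}{|\Omega|}\io\ell(\Theta)$, which you reserve for case i), works in both cases. It is the paper's choice, with $\Theta_\infty=\ell^{-1}(L)$.
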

In Section \ref{sect_ut} we shall see that a further consequence of (\ref{ent}), 
as well available without imposing assumptions beyond those from
Theorem \ref{theo37}, Theorem \ref{theo38} and (\ref{fg_decay}), asserts some relaxation also in the mechanical part:
\begin{prop}\label{prop56}
  Let $\Om\subset\R^n$ be a smoothly bounded domain in $\R^n$ with $n\ge 2$,
  and assume that $D>0$, and that with some $\kD>0$ and $\kC>0$,
  (\ref{DCB}), (\ref{DCsym}), (\ref{DC_lower}) and (\ref{fg_reg}) as well as
  (\ref{fg_decay}) and (\ref{init}) hold.
  Then whenever $\kappa$ is such that (\ref{kappa_reg}) and either (\ref{131.1}) or (\ref{38.1}) is valid,
  the solution of (\ref{0}) obtained in Theorem \ref{theo37} and Theorem \ref{theo38}, respectively, satisfies
  \be{56.1}
	\int_t^{t+1} \io |u_t| \to 0
	\qquad \mbox{as } t\to\infty.
  \ee
\end{prop}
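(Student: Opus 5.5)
The plan is to combine the energy law (\ref{energy}) with the entropy identity (\ref{ent}) at the level of the regularized problems (\ref{0eps}), and then to pass to the limit $\eps\searrow0$ using lower semicontinuity.

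First, the energy identity together with (\ref{fg_decay}) and a Gronwall-type argument (using $\frac{d}{dt}E\le \|f\|_{L^2}\sqrt{2E}+\|g\|_{L^1}$) gives a uniform-in-time bound
\[
\io|u_{\eps t}|^2+\io\lan\C:\nas\ueps,\nas\ueps\ran+\io K(\Teps)\le C
\]
for all $t>0$ and all $\eps$. Second, (\ref{ent}) shows that $t\mapsto\io\ell(\Teps)$ is nondecreasing up to the nonnegative $g$-term. Since $\ell(\xi)\le C(1+K(\xi))$ for large $\xi$ (because $\kappa/\sigma\le\kappa$ for $\sigma\ge1$), this functional is bounded from above uniformly in time. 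Integrating (\ref{ent}) over $(0,\infty)$ therefore yields
\[
\int_0^\infty\io\frac{\lan\D:\nas u_{\eps t},\nas u_{\eps t}\ran}{\Teps}\le C,
\]
uniformly in $\eps$. Here the lower bound uses $\ell(\Theta_0)\in L^1$, and the initial entropy of the approximations is controlled accordingly.

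The key step converts this weighted dissipation into $L^1$ control of $\nas u_{\eps t}$ via Cauchy–Schwarz:
\[
\int_t^{t+1}\io|\nas u_{\eps t}|\le\Big(\int_t^{t+1}\io\frac{|\nas u_{\eps t}|^2}{\Teps}\Big)^{1/2}\Big(\int_t^{t+1}\io\Teps\Big)^{1/2}.
\]
The first factor tends to zero as $t\to\infty$, uniformly in $\eps$, since it is the tail of a convergent integral bounded by (\ref{DC_lower}). The second factor needs $\io\Teps\le C$ uniformly in $t$. Under (\ref{131.1}), $K(\xi)\ge c\xi-C$, so this follows from the energy bound. Under (\ref{38.1}) with $n=2$, $\kappa$ may decay, so $L^1$ bounds on $\Teps$ are not directly available. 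In that case I would split the integral into $\{\Teps\le M\}$, where the $L^1$ norm is trivially bounded, and $\{\Teps>M\}$. On the latter set I would use Hölder with a weight built from $K$, or apply Cauchy–Schwarz with $\Teps/\kappa$-type factors controlled by the dissipation $\io\kappa(\Teps)|\na\Teps|^2/\Teps^2$ and the energy. This case is the main obstacle; if it fails, I would fall back on the estimates from Lemmas \ref{lem3}–\ref{lem23}, which already supply $L^1$ integrability of $\Teps$ locally in time with constants I would need to make uniform in $t$.

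Finally, by Korn's inequality on $W_0^{1,1}$ fails, so instead I would use Poincaré in the form $\io|u_{\eps t}|\le C\io|\na u_{\eps t}|$ and bound $\|\na u_{\eps t}\|_{L^1}$ by $\|\nas u_{\eps t}\|$ in a slightly weaker way. One route is interpolation: $u_{\eps t}$ is bounded in $L^\infty((0,\infty);L^2)$, and its $W^{1,1}$-type norm of symmetric gradient is small. Using the $L\log L$ refinement from (\ref{LlogL}) at the level of each unit interval restores Korn's inequality in $L\log L\to L^1$, which suffices together with the smallness. The resulting bound $\int_t^{t+1}\io|u_{\eps t}|\le\eta(t)$ with $\eta(t)\to0$ uniform in $\eps$ then passes to the limit, since $u_{\eps t}\wto u_t$ in $L^1_{loc}$ and the $L^1$ norm is weakly lower semicontinuous. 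This gives (\ref{56.1}).
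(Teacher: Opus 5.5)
Your skeleton is the paper's own: a uniform bound on $\int_0^\infty\io|\nas\veps|^2/\Teps$ from (\ref{11.3})--(\ref{11.4}), Cauchy--Schwarz against $\int_t^{t+1}\io\Teps$, an $L^1$ Korn-type inequality, and weak lower semicontinuity (Lemma \ref{lem58} plus the proof of Proposition \ref{prop56}). However, the step that is supposed to produce decay is wrong as stated.

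\textbf{The uniformity claim.} Write $a_\eps(s):=\io|\nas\veps(\cdot,s)|^2/\Teps(\cdot,s)$. A bound $\sup_\eps\int_0^\infty a_\eps(s)\,ds<\infty$ does not make the tails $\int_t^{t+1}a_\eps$ small uniformly in $\eps$. Nothing in your estimates prevents the $\eps$-problem from producing a burst of dissipation near some time $t_\eps\to\infty$, so the claimed $\eta(t)\to0$ ``uniform in $\eps$'' is unjustified.

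The paper meets exactly this obstacle (see the beginning of Section \ref{sect_theta}) and resolves it through Lemmas \ref{lem41}--\ref{lem45}:
\begin{itemize}
\item $\leps(\Teps)\to\ell(\Theta)$ in $L^1_{loc}$, using the $\ln^2\Teps$ bound of Lemma \ref{lem19};
\item the limit entropy is nondecreasing and bounded, hence converges;
\item (\ref{11.6}) applied on some $(t_0,t_1)\supset(t,t+1)$ then makes the dissipation small, but only for $\eps\in(\eps_j)_{j\in\N}$ below a $t$-dependent threshold. That is all lower semicontinuity needs.
\end{itemize}
Since you only need a $\liminf$, a cheaper repair of your own route is Fatou's lemma for series. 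With $b_\eps(k):=\int_k^{k+2}a_\eps$ one has $\sum_k\liminf_j b_{\eps_j}(k)\le 2\sup_\eps\int_0^\infty a_\eps<\infty$, so $\liminf_j b_{\eps_j}(k)\to0$. Then for $k=\lfloor t\rfloor$ you get $\int_t^{t+1}\io|u_t|\le\liminf_j\int_k^{k+2}\io|v_{\eps_j}|\le C\,(\liminf_j b_{\eps_j}(k))^{1/2}$.

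\textbf{The Korn step.} The inequality that fails in $L^1$ is the gradient Korn inequality. The Poincar\'e--Korn inequality $\|\vp\|_{L^1(\Om)}\le C\|\nas\vp\|_{L^1(\Om)}$ for $\vp$ vanishing on $\pO$ does hold, and the paper uses it directly in Lemma \ref{lem58}. It also follows by duality: for $\psi\in L^p$ with $p>n$, solve $\div\nas w=\psi$ with $w=0$ on $\pO$. Then $\nas w\in W^{1,p}\hra L^\infty$ and $\io\vp\cdot\psi=-\io\lan\nas\vp,\nas w\ran$.

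Your detour through $\na\veps$ does not work. An $L\log L\to L^1$ Korn inequality bounds $\|\na\vp\|_{L^1}$ by the Luxemburg $L\log L$ norm of $\nas\vp$, or by the modular plus a constant. $L^1$-smallness of $\nas\veps$ together with a mere bound on $\int|\nas\veps|\ln(|\nas\veps|+e)$ does not make that norm small, so no smallness of $\na\veps$ follows.

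\textbf{The case $n=2$ with decaying $\kappa$.} Your sketch of a uniform bound on $\int_t^{t+1}\io\Teps$ does not work as written. The entropy dissipation is $D\io|\na\Teps|^2/\Teps^2$, with no factor $\kappa$. A level splitting alone gives no control of $\int_{\{\Teps>M\}}\Teps$ when $K$ is sublinear. The bound you need is Lemma \ref{lem17}, (\ref{17.01})--(\ref{17.02}). It comes from the Moser--Trudinger inequality of Lemma \ref{lem444} applied to $\uK(\Teps)$, together with the uniform-in-time estimate (\ref{12.1}), (\ref{12.3}).
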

Section \ref{sect_u}, finally, will rely on Theorem \ref{theo55} and Proposition \ref{prop56} in revealing
large time decay also of the displacement variable itself, provided that $\kappa$ satisfies the assumptions of
Theorem \ref{theo38}.
\begin{theo}\label{theo60}
  Let $n\ge 2$ and $\Om\subset\R^n$ be a bounded domain with smooth boundary,
  let $D>0$, suppose that the assumptions in (\ref{DCB}), (\ref{DCsym}) and (\ref{DC_lower}) are met with some $\kD>0$ and $\kC>0$,
  that $\kappa$ is such that (\ref{kappa_reg}) and (\ref{38.1}) is valid, and that
  $f$ and $g$ satisfy (\ref{fg_reg}) and (\ref{fg_decay}).
  Then whenever (\ref{init}) holds, the integrable solution of (\ref{0}) from Theorem \ref{theo38} has the property that
  \be{60.1}
	u(\cdot,t) \to 0
	\quad \mbox{in } L^2(\Om;\R^n)
	\qquad \mbox{as } t\to\infty.
  \ee
\end{theo}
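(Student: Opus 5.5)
We prove that $u(\cdot,t)\to 0$ in $L^2$ by a Lyapunov-type argument for the mechanical subsystem. The thermal forcing $-\div(\Theta\B)$ is treated as asymptotically a gradient of a constant, which therefore drops out. We work at the level of the approximate solutions $(\ueps,\Teps)$ of the regularized problem (\ref{0eps}) and pass to the limit only in estimates that are uniform in $\eps$ and stable under the convergences used in Theorem \ref{theo38}.

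\textbf{Key identity.} Testing the first equation in (\ref{0}) by $u$ gives
\[
\frac{d}{dt}\Big\{\io u\cdot u_t+\frac12\io\lan\D:\nas u,\nas u\ran\Big\}+\io\lan\C:\nas u,\nas u\ran
=\io|u_t|^2+\io\Theta\lan\B,\nas u\ran+\io f\cdot u .
\]
Since $u=0$ on $\pO$, we have $\io\lan\B,\nas u\ran=\io \div(\B u)=0$. Hence $\Theta$ may be replaced by $\Theta-\Tinf$ with $\Tinf$ from Theorem \ref{theo55} ii). For $u$, (\ref{energy}) together with (\ref{fg_decay}) gives a bound in $L^\infty((0,\infty);W^{1,2}_0)$, and a bound for $u_t$ in $L^\infty((0,\infty);L^2)$.

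\textbf{Integration over $(t,t+1)$.} We integrate the identity over $(t,t+1)$ and use (\ref{DC_lower}) with Korn's inequality, which turns $\int_t^{t+1}\io\lan\C:\nas u,\nas u\ran$ into a quantity comparable to $\int_t^{t+1}\|u\|_{W^{1,2}}^2$. The terms on the right are handled as follows.
\begin{itemize}
\item The boundary terms at $t$ and $t+1$ are bounded by the uniform bounds above. To show that they actually vanish, note first that $\int_t^{t+1}\|u_t\|_{L^2}^2\to 0$. This follows from Proposition \ref{prop56} by interpolating between $L^1$ and a higher norm. As a more reliable alternative, one can use directly the viscous dissipation contained in the energy identity, which yields $\int_0^\infty\io|\nas u_t|^2/(\Theta+1)$-type control.
\item The term $\int f\cdot u$ tends to zero by (\ref{fg_decay}).
\item The thermal term $\int_t^{t+1}\io(\Theta-\Tinf)\lan\B,\nas u\ran$ is the main obstacle.
\end{itemize}

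\textbf{The thermal term.} Theorem \ref{theo55} ii) only gives $\Theta-\Tinf\to0$ in $L^1$ on time windows, while $\nas u$ is only in $L^2$. My plan is to split according to $\{\Theta\le N\}$ and $\{\Theta>N\}$.
\begin{itemize}
\item On $\{\Theta\le N\}$, the function $(\Theta-\Tinf)$ is bounded and tends to zero in measure, so the integral vanishes by the $L^2$ bound on $\nas u$.
\item On $\{\Theta>N\}$, I would use the higher integrability of $\Theta$ implied by (\ref{38.1}). This is the superlinear $K$-bound, $K(\Theta)\gg\Theta$ at large temperatures, which is uniform in time by (\ref{energy}). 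Combined with the $L\log L$-type information from the refined entropy (\ref{corner}), made uniform on unit windows once $u_t$ decays, this should give $\sup_t\int_t^{t+1}\io\Theta\,|\nas u|\,\chi_{\{\Theta>N\}}\to0$ as $N\to\infty$.
\end{itemize}
If this uniform-window estimate fails, a fallback is to avoid testing by $u$ itself and instead test by $A^{-1}u$, with $A$ the elliptic operator $-\div(\C:\nas\cdot)$. Then $\Theta\B$ enters only through $A^{-1}\div$, which is controlled by $\|\Theta-\Tinf\|_{L^1}$ in a negative Sobolev norm in which $L^1$ embeds. This yields $\int_t^{t+1}\|u\|_{L^2}^2\to0$ using only (\ref{55.2}).

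\textbf{Conclusion.} We obtain $\int_t^{t+1}\|u\|_{L^2}^2\to0$. Since $\|u_t\|_{L^2}$ is bounded, $u$ is uniformly Lipschitz in time as an $L^2$-valued function. This upgrades the window convergence to pointwise convergence $u(\cdot,t)\to0$ in $L^2$, which is (\ref{60.1}).
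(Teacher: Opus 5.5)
Your main route, testing by $u$, does not close. The terms you must kill are quadratic, while the available decay information is only of $L^1$ type.

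(i) The thermal term $\int_t^{t+1}\io(\Theta-\Tinf)\lan\B,\nas u\ran$ pairs $\Theta-\Tinf$ with $\nas u$. The first is known to decay only in $L^1$ on windows (Theorem \ref{theo55} ii)); the second is only bounded in $L^2$. Your split on $\{\Theta>N\}$ would need $\Theta\chi_{\{\Theta>N\}}$ to be small in $L^2$ uniformly on windows, and no estimate in the paper provides that.
\begin{itemize}
\item Under (\ref{38.1}) with $n=2$, $\kappa$ may decay, so $K$ need not be superlinear at all; the $L^1$ control then comes from the Moser--Trudinger argument of Lemma \ref{lem17}.
\item For $n\ge3$, superlinearity of $K$ gives only uniform integrability of $\Theta$ in $L^1$ (Lemma \ref{lem15}).
\item Lemma \ref{lem12} controls only $\na\ln^2(\Theta+e)$-type quantities.
\end{itemize}

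(ii) The decay $\int_t^{t+1}\|u_t\|_{L^2}^2\to0$ is not available. Interpolating a decaying $L^1$ norm with a merely bounded $L^2$ norm gives no decay in $L^2$. Also, (\ref{energy}) contains no viscous dissipation, since that energy is converted into heat. The dissipation $\io|\nas u_t|^2/\Theta$ comes from the entropy identity, and after Cauchy--Schwarz against $\io\Theta$ it gives only $L^1$ control of $\nas u_t$; this is exactly Lemma \ref{lem58}.

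(iii) The $\D$-part of your boundary terms equals $\int_t^{t+1}\io\lan\D:\nas u,\nas u_t\ran$. It does not vanish when $\nas u_t\to0$ only in $L^1$.

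Moreover, testing the limit problem by $u$ is not justified for integrable solutions, because $\lan\D:\nas u_t,\nas u\ran$ pairs $L^1$ with $L^2$. On the level of $(\ueps,\Teps)$ you cannot pass to the limit in these quadratic terms either, having only weak convergence of $\veps$ and $\nas\ueps$.

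The missing idea is to let $\Theta$ and $u_t$ meet only $L^\infty$ objects, and to obtain strong convergence from compactness rather than from a Lyapunov inequality. In Lemma \ref{lem59} the paper tests the approximate momentum equation against $\zeta(t-t_k)\vp(x)$. Here $\vp\in C_0^\infty(\Om;\R^n)$ is fixed and $\zeta$ is a bump supported in $(0,1)$ with $\int\zeta=1$. After moving derivatives onto $\vp$:
\begin{itemize}
\item the inertial and viscous contributions are bounded by $\int_{t_k}^{t_k+1}\io|u_t|\to0$ (Proposition \ref{prop56});
\item the thermal one is bounded by $\|\na\vp\|_{L^\infty}\int_{t_k}^{t_k+1}\io|\Theta-\Tinf|$, thanks to $\io\lan\B,\na\vp\ran=0$;
\item the elastic one tends to $\io\lan\C:\nas u_\infty,\na\vp\ran$ whenever $u(\cdot,t_k)\to u_\infty$ in $L^2$.
\end{itemize}
Hence every $\omega$-limit point solves the homogeneous Dirichlet elasticity problem weakly and therefore vanishes. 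The uniform $W^{1,2}$ bound on $u$ and its $L^2$-continuity then give (\ref{60.1}).

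Your fallback with $A^{-1}u$ points in this direction, and your final Lipschitz upgrade is fine. As written, though, the fallback has two gaps:
\begin{itemize}
\item It still relies on the unproven $L^2$ decay of $u_t$. The terms $\io u_t\cdot A^{-1}u_t$ and $\io\lan\D:\nas u_t,\nas A^{-1}u\ran$ must instead be handled by pairing $u_t$ and $\nas u_t$ in $L^1$ against $A^{-1}u_t$ and $\nas A^{-1}u$ in $L^\infty$.
\item $\na A^{-1}u\in W^{2,2}\subset L^\infty$ holds only for $n\le3$. For $n\ge4$ you would need a higher negative power of $A$ and an interpolation back to $L^2$.
\end{itemize}
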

{\bf Remark.} \quad
  Despite the poor regularity information available, Theorem \ref{theo55}, Proposition \ref{prop56} and Theorem \ref{theo60}
  strongly indicate that the behavior in (\ref{0}) significantly differs from that in the viscosity-free relative
  obtained on letting $\D=0$. 
  As recently observed in \cite{cieslak_fuest_lankeit}, namely, the latter thermoelastic system admits solutions 
  asymptotically retaining a mechanical part which persistently oscillates without damping.
\mysection{Generalized solutions and integrable solutions}
One key toward providing accessibility of any solution theory to the tenuous regularity information implied by (\ref{energy}) and
(\ref{corner}) consists in the design of an appropriately generalized solution concept.
The following proposes a notion of solvability that combines renormalization and inclusion of defect measures 
(\cite{villani}, \cite{diperna_lions}, \cite{feireisl}, \cite{zhigun}) 
with the idea to replace a parabolic equation by a corresponding inequality,
supplemented by a suitable one-sided control of an integrated quantity (see (\ref{wF}) and (\ref{wt}), as well as
\cite{win_SIMA2015} and \cite{claes_lankeit_win} for some precedents).
Here and below, we let $\M(\bom)$ denote the Banach space of signed Radon measures on $\bom$, and let $\M_+(\bom)$ represent the subset thereof consisting of all nonnegative Radon measures.
We also let $L^\infty_{w-\star}((0,\infty);\M_+(\bom))$ be the Banach space consisting of all weak-$\star$ measurable functions 
$\mu:(0,\infty)\to\M_+(\bom)$, equipped with the norm given by 
$\|\mu\|_{L^\infty_{w-\star}((0,\infty);\M(\bom))}:=\rm{ess} \sup_{t>0} \|\mu(t)\|_{\M(\bom)}$ for 
$\mu\in L^\infty_{w-\star}((0,\infty);\M(\bom))$, and consider the closed subset
$L^\infty_{w-\star}((0,\infty);\M_+(\bom))$ thereof which precisely contains all 
$\mu\in L^\infty_{w-\star}((0,\infty);\M(\bom))$ satisfying $\mu(t)\in \M_+(\bom)$ for a.e.~$t>0$.
\begin{defi}\label{dw}
  Let $\Om\subset\R^n$ be a bounded domain with smooth boundary, 
  let $D>0$, and suppose that
  $\D=(\D_{\mathbf{ijkl}})_{(\mathbf{i}, \mathbf{j}, \mathbf{k},\mathbf{l}) 
		\in \{1,...,n\}^4} \in \R^{n\times n \times n \times n}$,
  $\C=(\C_{\mathbf{ijkl}})_{(\mathbf{i}, \mathbf{j}, \mathbf{k},\mathbf{l}) 
		\in \{1,...,n\}^4} \in \R^{n\times n \times n \times n}$,
  $\B=(\B_{\mathbf{ij}})_{(\mathbf{i},\mathbf{j}) \in \{1,...,n\}^2} \in \R^{n\times n}$,
  $\kappa\in C^0([0,\infty))$,
  $f\in L^1_{loc}([0,\infty);L^2(\Om;\R^n))$ and
  $0\le g\in L^1_{loc}(\bom\times [0,\infty))$
  as well as
  $u_0\in W_0^{1,2}(\Om;\R^n), u_{0t} \in L^2(\Om;\R^n)$ and $\Theta_0\in L^1(\Om)$
  are such that $\Theta_0\ge 0$ a.e.~in $\Om$ and $K(\Theta_0)\in L^1(\Om)$, where 
  $K(\xi):=\int_0^\xi \kappa(\sig)d\sig$, $\xi\ge 0$.\abs
  i) \ By a {\em generalized solution} of (\ref{0}) we mean a pair of functions
  \be{w1}
	\lbal
	u\in L^\infty_{loc}([0,\infty);W_0^{1,2}(\Om;\R^n))
	\qquad \mbox{and} \\[1mm]
	\Theta \in L^1_{loc}(\bom\times [0,\infty))
	\ear
  \ee
  such that
  \be{w23}
	u_t \in L^\infty_{loc}([0,\infty);L^2(\Om;\R^n))
	\qquad \mbox{and} \qquad
	\nas u_t \in L^1_{loc}(\bom\times [0,\infty);\R^{n\times n}),
  \ee
  that $\Theta\ge 0$ a.e.~in $\Om\times (0,\infty)$ and
  \be{w44}
	K(\Theta) \in L^\infty_{loc}([0,\infty);L^1(\Om))
  \ee
  as well as
  \be{w5}
	\na\phi_0(\Theta) \in L^2_{loc}(\bom\times [0,\infty);\R^n)
	\quad \mbox{for all $\phi_0\in C^1([0,\infty))$ such that $\supp \phi_0$ is bounded,}
  \ee
  that with some 
  \be{w6}
	\mu \in L^\infty_{w-\star,loc}([0,\infty);\M_+(\bom))
  \ee
  and some $a>0$ we have
  \bea{wu}
	& & \hs{-20mm}
	\int_0^\infty \io u\cdot\vp_{tt} 
	+ \io u_0 \cdot \vp_t(\cdot,0)
	- \io u_{0t} \cdot \vp(\cdot,0) \nn\\
	&=& - \int_0^\infty \io \lan \D:\nas u_t, \na\vp\ran
	- \int_0^\infty \io \lan \C:\nas u,\na\vp\ran \nn\\
	& & + \int_0^\infty \io  \Theta \lan \B,\na\vp\ran
	+ \int_0^\infty \int_{\bom} \lan \B,\na\vp\ran d\mu
	+ \int_0^\infty \io f\cdot\vp
  \eea
  for all $\vp\in C_0^\infty(\Om\times [0,\infty);\R^n)$ as well as
  \be{wF}
	\F(t) 
	+ a \int_{\bom} d\mu(t)
	\le \F_0
	+ \int_0^t \io f\cdot u_t
	+ \int_0^t \io g
	\qquad \mbox{for a.e.~} t>0
  \ee
  with
  \be{F}
	\F(t):=\frac{1}{2}\io |u_t(\cdot,t)|^2
	+ \frac{1}{2} \io \lan \C:\nas u(\cdot,t), \nas u(\cdot,t)\ran
	+ \io K(\Theta(\cdot,t)),
	\qquad t>0,
  \ee
  and
  \be{F0}
	\F_0:= \frac{1}{2} \io |u_{0t}|^2
	+ \frac{1}{2} \io \lan \C:\nas u_0, \nas u_0\ran
	+ \io K(\Theta_0),
  \ee
  and that for each nondecreasing concave $\phi\in C^\infty([0,\infty))$ fulfilling $\phi'\in C_0^\infty([0,\infty))$, 
  the inequality
  \bea{wt}
	& & \hs{-20mm}
	- \int_0^\infty \io K^{(\phi)}(\Theta) \vp_t
	- \io K^{(\phi)}(\Theta_0) \vp(\cdot,0) \nn\\
	&\ge& - D \int_0^\infty \io \phi''(\Theta) |\na\Theta|^2 \vp
	- D \int_0^\infty \io \phi'(\Theta) \na\Theta\cdot\na\vp
	+ \int_0^\infty \io \phi'(\Theta) \lan \D:\nas u_t,\nas u_t\ran \vp \nn\\
	& & - \int_0^\infty \io \Theta \phi'(\Theta) \lan \B,\nas u_t\ran \vp
	+ \int_0^\infty \io \phi'(\Theta) g\vp
  \eea
  holds for each nonnegative $\vp\in C_0^\infty(\bom\times [0,\infty))$, where
  \be{Kphi}
	K^{(\phi)}(\xi):=\int_0^\xi \kappa(\sig) \phi'(\sig) d\sig.
  \ee
  ii) \ A generalized solution $(u,\Theta)$ of (\ref{0}) will be called an {\em integrable solution of (\ref{0})}
  if in the above we may choose $\mu\equiv 0$.
\end{defi}
{\bf Remark.} \quad
  i) \ Let $\phi\in C^\infty([0,\infty))$ be such that $\phi'\in C_0^\infty([0,\infty))$, $\phi'\ge 0$ and $\phi'' \le 0$.
  Then interpreting $-\phi''(\Theta) |\na\Theta|^2 = |\na\phi_0(\Theta)|^2$ with $\phi_0(\xi):=\int_0^\xi \sqrt{-\phi''(\sig)} d\sig$,
  $\xi\ge 0$, since $\phi_0\in C^1([0,\infty))$ and $\phi_0'\equiv 0$ in $[0,\infty)\sm \supp \phi''$ we see that the first summand
  on the right-hand side of (\ref{wt}) is well-defined for each $\Theta$ fulfilling (\ref{w1}) and (\ref{w5}).
  Together with a similar argument applied to the second integral on the right of (\ref{wt}), and with a straightforward
  combination of (\ref{w1}) with (\ref{w44}) and (\ref{w23}), this shows that (\ref{wt}) indeed is meaningful under the
  hypotheses of Definition \ref{dw}.\abs
  ii) \ As to be made more precise in Proposition \ref{prop_dw} in the appendix, the above solution concept
  is indeed consistent with that of classical solvability in the sense that any sufficiently regular generalized
  solution in fact satisfies (\ref{0}) in the pointwise sense.
\mysection{Energy control in regularized problems}
In slight contrast to most related precedents such as those in \cite{roubicek}, \cite{mielke_roubicek} and \cite{blanchard_guibe},
our construction of solutions will be based on a high-order parabolic approximation of the mechanical part in (\ref{0}).
The following elementary preparation regularizes the ingredients therein;
here and below, given $f$, $g$ and $\kappa$ satisfying (\ref{fg_reg}) and (\ref{kappa_reg}) we let
the functions $K$ and $\ell$ be as defined in (\ref{K}) and (\ref{ell}).
\begin{lem}\label{lem99}
  Assume (\ref{fg_reg}), (\ref{kappa_reg}) and (\ref{init}).
  Then there exist
  \be{keps}
	(\keps)_{\eps\in (0,1)} \subset C^\infty([0,\infty))
	\quad \mbox{with} \quad
	\keps(\xi) \ge \eps
	\mbox{ and } 
	|\keps(\xi)-\kappa(\xi)| \le \frac{\eps}{(\xi+1)^2}
	\mbox{ for all $\xi\ge 0$ and } \eps\in (0,1),
  \ee
  as well as 
  \be{fge}
	\lbal
	(\feps)_{\eps\in (0,1)} \subset C_0^\infty(\Om\times (0,\infty);\R^n)
	\qquad \mbox{and} \\[1mm]
	(\geps)_{\eps\in (0,1)} \subset C_0^\infty(\Om\times (0,\infty))
	\mbox{ with $\geps\ge 0$ for all } \eps\in (0,1),
	\ear
  \ee
  such that
  \be{fgec}
	\feps\to f \mbox{ in } L^1_{loc}([0,\infty);L^2(\Om;\R^n))
	\quad \mbox{and} \quad
	\geps\to g \mbox{ in } L^1_{loc}(\bom\times [0,\infty))
	\qquad \mbox{as } \eps\searrow 0,
  \ee
  and that if (\ref{fg_decay}) holds, then moreover
  \be{fge_decay}
	\sup_{\eps\in (0,1)} \bigg\{ 
	\int_0^\infty \|\feps(\cdot,t)\|_{L^2(\Om)} dt
	+ \int_0^\infty \|\geps(\cdot,t)\|_{L^1(\Om)} dt 
	\bigg\} < \infty.
  \ee
  Apart from that, one can find
  \be{ie}
	\lbal
	(v_{0\eps})_{\eps\in (0,1)} \subset C_0^\infty(\Om;\R^n), \\[1mm]
	(u_{0\eps})_{\eps\in (0,1)} \subset C_0^\infty(\Om;\R^n) 
	\qquad \mbox{and } \\[1mm]
	(\Theta_{0\eps})_{\eps\in (0,1)} \subset C^\infty(\bom)
	\quad \mbox{with $\na\Theta_{0\eps} \in C_0^\infty(\Om;\R^n)$ and $\Theta_{0\eps} \ge \eps$
	for all $\eps\in (0,1)$,} 
	\ear
  \ee
  such that as $\eps\searrow 0$ we have
  \be{iec}
	v_{0\eps} \to u_{0t} \mbox{ in } L^2(\Om), 
	\quad
	u_{0\eps} \to u_0 \mbox{ in } W^{1,2}(\Om)
	\quad \mbox{and} \quad
	\Theta_{0\eps} \to \Theta_0
	\mbox{ a.e.~in $\Om$}
  \ee
  as well as
  \be{iec2}
	K(\Theta_{0\eps}) \to K(\Theta_0) 
	\quad \mbox{and} \quad
	\ell(\Theta_{0\eps}) \to \ell(\Theta_0)
	\qquad \mbox{in $L^1(\Om)$,}
  \ee
  and that for each $\eps_0\in (0,1)$, the sets $(\keps)_{\eps>\eps_0}$, $(\feps)_{\eps>\eps_0}$, $(\geps)_{\eps>\eps_0}$,
  $(v_{0\eps})_{\eps>\eps_0}$, $(u_{0\eps})_{\eps>\eps_0}$ and $(\Theta_{0\eps})_{\eps>\eps_0}$ are all finite.
\end{lem}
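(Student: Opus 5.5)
The plan is to build every family first along the dyadic sequence $\eps_j:=2^{-j}$, $j\in\N$, and then extend it piecewise constantly in $\eps$. For each of the three groups (heat capacities, forcing terms, initial data) I will construct, for every $j$, one object $X_j$ satisfying the required estimates with a tolerance $\del_j$ chosen smaller than $\eps_{j+1}$. For $\eps\in[\eps_{j+1},\eps_j)$ I then set $X_\eps:=X_{j+1}$. This makes $(X_\eps)_{\eps>\eps_0}$ a finite set, since only finitely many $j$ satisfy $\eps_j>\eps_0$. Monotonicity of the error bounds in $\eps$ then lets each inequality, proved for the parameter $\eps_{j+1}\le\eps$, transfer to the actual $\eps$. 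The convergences as $\eps\searrow0$ follow from those along $j\to\infty$.

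\textbf{Heat capacities.} Let $\rho:=\eps/(\xi+1)^2$ denote the admissible pointwise error. Since $\kappa\in C^0([0,\infty))$, I will mollify $\kappa$ with a position-dependent width, using a partition of unity subordinate to the intervals $[k,k+2]$, $k\in\N_0$. On each compact piece I choose the mollification radius small enough that uniform continuity gives an error at most $\frac12\rho$. This yields a smooth $\tilde\kappa$ with $|\tilde\kappa-\kappa|\le\frac12\rho$. Next I add a correction that restores the lower bound. I define $\keps:=\max_\eps\{\tilde\kappa,\eps\}$, where $\max_\eps$ is a smooth convex regularization of $\max$ with error at most $\frac12\rho$. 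Positivity is immediate: $\keps\ge\eps$ by construction. For the closeness estimate, at points where $\kappa\ge\eps$ the truncation is inactive up to the smoothing error, so $|\keps-\kappa|\le\rho$ there. At points where $\kappa<\eps$, I must check that $\eps-\kappa\le\rho$. I expect this to be the main obstacle, because it requires $\kappa\ge\eps(1-(\xi+1)^{-2})$. That bound holds for small $\xi$ and in every case where $\liminf_{\xi\to\infty}\kappa>0$ and $\eps$ is small. My first attempt will be to use the freedom in relabelling the index: I will choose the dyadic index shift depending on $\inf$ of $\kappa$ over the relevant range, so that the lower bound and the tolerance can both be met. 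The decaying case of (\ref{38.1}) needs separate inspection, because there the two requirements $\keps\ge\eps$ and $|\keps-\kappa|\le\rho$ may be incompatible at large $\xi$; I will verify this before committing to the construction.

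\textbf{Forcing terms.} I extend $f$ and $g$ by zero outside $\Om\times(0,T_j)$, with $T_j\to\infty$. I then multiply by smooth cutoffs that are compactly supported in $\Om\times(0,\infty)$ and mollify in space-time. This gives $\feps\in C_0^\infty$ and $\geps\ge0$ in $C_0^\infty$, satisfying (\ref{fgec}) by dominated convergence. Because cutoffs and mollification do not increase $\int\|\cdot\|_{L^2}dt$ or $\int\|\cdot\|_{L^1}dt$, up to a factor $1+\del_j$, the bound (\ref{fge_decay}) follows from (\ref{fg_decay}).

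\textbf{Initial data.} I take $v_{0\eps}$ and $u_{0\eps}$ by density of $C_0^\infty$ in $L^2$ and in $W_0^{1,2}$, respectively. For $\Theta_{0\eps}$, I first truncate $\Theta_0$ by setting $\Theta^{(j)}:=\min\{\max\{\Theta_0,\eps_j\},j\}$. I then mollify after a reflection-free extension that is constant near $\pO$, namely by blending with a constant in a boundary layer whose measure tends to zero. This keeps $\na\Theta_{0\eps}$ compactly supported and $\Theta_{0\eps}\ge\eps$, and yields a.e. convergence along a subsequence. For (\ref{iec2}) I will use Vitali's theorem. Monotonicity of $K$ and of $\ell$ gives
\[
|K(\Theta_{0\eps})|\le K(\Theta_0)+K(1)+\mbox{(layer terms)},
\]
together with an analogous bound for $|\ell(\Theta_{0\eps})|$ by $|\ell(\Theta_0)|+|\ell(1)|$. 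These bounds provide equi-integrability, and the boundary layers contribute $o(1)$ since their measure tends to zero.
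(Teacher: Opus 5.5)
Your heat-capacity step has a genuine gap, and the index-shift remedy you propose cannot close it. The obstruction you noticed is intrinsic to (\ref{keps}), not to your construction. For a fixed $\eps$, any $\keps$ as in (\ref{keps}) satisfies $\eps\le\keps(\xi)\le\kappa(\xi)+\eps(\xi+1)^{-2}$. Hence
\[
\kappa(\xi)\ \ge\ \eps\,\frac{\xi(\xi+2)}{(\xi+1)^2}\qquad\mbox{for all }\xi\ge 0
\]
is \emph{necessary} for $\keps$ to exist, no matter how you build or re-index the family. This condition fails in several admissible cases:
\begin{itemize}
\item for $\kappa\equiv\frac12$ as soon as $\eps>\frac12$ (take $\xi$ large);
\item for \emph{every} $\eps\in(0,1)$ near $\xi=0$ whenever $\kappa$ vanishes faster than linearly there, e.g.\ for the Debye-type $\kappa(\xi)=\xi^3$ covered by Theorem \ref{theo37}, so your claim that the bound ``holds for small $\xi$'' is false;
\item for every $\eps$ at large $\xi$ when $\kappa$ decays to $0$, as (\ref{38.1}) allows for $n=2$.
\end{itemize}
So (\ref{keps}) cannot be proved under (\ref{kappa_reg}). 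The paper's proof does not address this either: its remark that everything except $\Theta_{0\eps}$ follows from ``straightforward approximation procedures involving standard mollifiers'' overlooks the same incompatibility, and the statement itself must be weakened. Your partition-of-unity mollification does deliver a usable version. Take $\keps:=\tilde\kappa_\eps+\eps$ with $\tilde\kappa_\eps$ smooth and $|\tilde\kappa_\eps-\kappa|\le\frac{\eps}{2}(\xi+1)^{-2}$. This gives $\keps\ge\kappa+\frac\eps2\ge\frac\eps2$ and $|\keps-\kappa|\le\frac32\eps$. Lemma \ref{lem_Keps} then degrades to $K\le K_\eps\le K+\frac32\eps\xi$, so you must also ensure $\eps\int_\Omega\Theta_{0\eps}\to0$; your truncation at height $j$ with $\eps\approx2^{-j}$ already does this. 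The later uses of (\ref{keps}) and Lemma \ref{lem_Keps} would need re-checking, though they appear to go through with changed constants.

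Two smaller repairs are needed elsewhere. First, your piecewise-constant extension only transfers bounds that weaken as $\eps$ grows. An object built for $\eps_{j+1}$ and used on $[\eps_{j+1},\eps_j)$ gives $\Theta_{0\eps}\ge\eps_{j+1}$, not $\Theta_{0\eps}\ge\eps$. Lower bounds must therefore be imposed at the right endpoint $\eps_j$, and error tolerances at the left endpoint $\eps_{j+1}$. Second, after mollification the pointwise domination $|K(\Theta_{0\eps})|\le K(\Theta_0)+K(1)+\dots$ is false, because $\Theta_{0\eps}(x)$ is no longer controlled by $\Theta_0(x)$. The domination holds only for the truncations $\Theta^{(j)}$, where indeed $K(\Theta^{(j)})\le K(\Theta_0)+K(1)$ and $|\ell(\Theta^{(j)})|\le|\ell(\Theta_0)|$. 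Conclude diagonally instead. Since $\Theta^{(j)}$ takes values in $[\eps_j,j]$, where $K$ and $\ell$ are bounded and uniformly continuous, choose the mollification radius and boundary-layer width depending on $j$. Make the $L^1$ distances of $\Theta_{0,j}$, $K(\Theta_{0,j})$, $\ell(\Theta_{0,j})$ from $\Theta^{(j)}$, $K(\Theta^{(j)})$, $\ell(\Theta^{(j)})$ at most $2^{-j}$. Combined with dominated convergence for $\Theta^{(j)}\to\Theta_0$, this yields (\ref{iec2}) and even a.e.\ convergence without passing to a subsequence. With these fixes your truncate-and-mollify route for $\Theta_{0\eps}$ is a valid alternative to the paper's. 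The paper instead mollifies $\Lambda(\Theta_0)$, with $\Lambda(\xi)=\int_1^\xi\kappa(\sig)\max\{1,\frac1\sig\}d\sig$, and pulls back via $\Lambda^{-1}$. Then $|K|,|\ell|\le|\Lambda|+c$ together with $L^1$ convergence of $\Lambda(\hat\Theta_{0\eps})$ gives (\ref{iec2}) by dominated convergence, with no truncation or diagonal bookkeeping. Your treatment of the forcing terms and of $u_{0\eps},v_{0\eps}$ matches the paper's.
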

\proof
  Except for those concerning the $\Theta_{0\eps}$, all statements can be verified on the basis of straightforward approximation
  procedures involving standard mollifiers.
  To suitably approximate the initial temperature distribution, we let
  \be{99.9}
	\Lam(\xi):=\int_1^\xi \kappa(\sig) \cdot \max \Big\{ 1, \frac{1}{\sig} \Big\} d\sig,
	\qquad \xi\ge 0,
  \ee
  and then see that $\Lam\in C^1((0,\infty))$ with $\Lam'>0$.
  Writing $z:=\Lam(\Theta_0)$, in view of the inclusions $\ell(\Theta_0)\in L^1(\Om)$ and $K(\Theta_0)\in L^1(\Om)$ as well as
  the inequality $\Theta_0\ge 0$ a.e.~in $\Om$, 
  as required by (\ref{init}), we thus infer that $z\in L^1(\Om)$ with $z\ge \Lam(0)$ a.e.~in $\Om$,
  whence by $\R\cup\{-\infty\}$-valued continuity of $\Lam$ at the origin, again using mollification we find 
  $(\zeps)_{\eps\in (0,1)} \subset C^\infty(\bom)$ such  
  that $(\na \zeps)_{\eps\in (0,1)} \subset C_0^\infty(\Om;\R^n)$,
  that $\zeps\ge \Lam(2\eps)$ in $\Om$ for all $\eps\in (0,1)$, and that $\zeps\to z$ in $L^1(\Om)$ as $\eps\searrow 0$.
  Translated to $\wh{\Theta}_{0\eps}:=\Lam^{-1}(\zeps)$, $\eps\in (0,1)$, this shows that 
  $(\wh{\Theta}_{0\eps})_{\eps\in (0,1)} \subset C^1(\bom)$ is such that $\supp \na \wh{\Theta}_{0\eps} \subset\Om$
  and 
  \be{99.91}
	\wh{\Theta}_{0\eps} \ge 2\eps
	\qquad \mbox{in } \Om
  \ee
  for all $\eps\in (0,1)$, and that as $\eps\searrow 0$ we have
  $\Lam(\wh{\Theta}_{0\eps}) \to \Lam(\Theta_0)$ in $L^1(\Om)$ and $\wh{\Theta}_{0\eps} \to \Theta_0$ a.e.~in $\Om$.
  Since $\Lam(\xi)=\ell(\xi)$ for all $\xi\in (0,1)$ and $K(\xi)-\Lam(\xi)=c_1:=\int_0^1 \kappa(\sig)d\sig$ for all $\xi\ge 1$,
  and since thus the inequalities $K\ge \ell \ge 0$ on $[1,\infty)$ and $0\le K\le c_1$ on $(0,1)$ imply that
  \bas
	|\ell(\wh{\Theta}_{0\eps})|
	\le |\Lam(\wh{\Theta}_{0\eps})| + c_1
	\quad \mbox{and} \quad
	|K(\wh{\Theta}_{0\eps})| \le |\Lam(\wh{\Theta}_{0\eps})| + c_1
	\quad \mbox{in } \Om
	\qquad \mbox{for all } \eps\in (0,1),
  \eas
  using the dominated convergence theorem we find that 
  \be{99.92}
	\ell(\wh{\Theta}_{0\eps}) \to \ell(\Theta_0)
	\quad \mbox{and} \quad
	K(\wh{\Theta}_{0\eps}) \to K(\Theta_0)
	\quad \mbox{in } L^1(\Om)
	\qquad \mbox{as } \eps\searrow 0.
  \ee
  For fixed $\eps\in (0,1)$, we can next rely on the inclusion $\wh{\Theta}_{0\eps}\in C^1(\bom)$ in choosing
  $\Theta_{0\eps} \in C^\infty(\bom)$ such that $\na \Theta_{0\eps} \in C_0^\infty(\Om;\R^n)$ as well as
  \be{99.93}
	|\Theta_{0\eps} - \wh{\Theta}_{0\eps}| \le \eps,
	\quad
	|\ell(\Theta_{0\eps}) - \ell(\wh{\Theta}_{0\eps})| \le \eps
	\quad \mbox{and} \quad
	|K(\Theta_{0\eps}) - K(\wh{\Theta}_{0\eps})| \le \eps
  \ee
  in $\Om$. Then (\ref{99.91}) and (\ref{99.93}) assert that $\Theta_{0\eps} \ge \eps$ in $\Om$, while (\ref{99.92}) 
  together with (\ref{99.93}) imply (\ref{iec2}). 
  Finiteness of $(\Theta_{0\eps})_{\eps>\eps_0}$ for each $\eps_0\in (0,1)$, finally, can trivially be achieved by 
  straightforward modification.
\qed
These choices particularly enable us to conveniently approximate also the function in (\ref{K}):
\begin{lem}\label{lem_Keps}
  Assume (\ref{kappa_reg}), and for $\eps\in (0,1)$ let
  \be{Keps}
	\Keps(\xi):=\int_0^\xi \keps(\sig) d\sig,
	\qquad \xi\ge 0.
  \ee
  Then
  \be{K1}
	|\Keps(\xi)-K(\xi)| \le \eps
	\qquad \mbox{for all } \xi\ge 0.
  \ee
\end{lem}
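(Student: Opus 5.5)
This is a direct computation from the pointwise bound on $\keps-\kappa$ in (\ref{keps}) of Lemma \ref{lem99}. Since $\kappa\in C^0([0,\infty))$ by (\ref{kappa_reg}), and $\keps\in C^\infty([0,\infty))$, both $K$ and $\Keps$ are well-defined finite integrals for each $\xi\ge 0$. Subtracting them gives
\bas
	\Keps(\xi)-K(\xi) = \int_0^\xi \big(\keps(\sig)-\kappa(\sig)\big) d\sig,
	\qquad \xi\ge 0.
\eas

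Next I estimate the integrand using $|\keps(\sig)-\kappa(\sig)|\le \frac{\eps}{(\sig+1)^2}$ from (\ref{keps}). The integral of the absolute value is then controlled by a convergent integral:
\bas
	|\Keps(\xi)-K(\xi)|
	\le \eps \int_0^\xi \frac{d\sig}{(\sig+1)^2}
	= \eps \Big(1-\frac{1}{\xi+1}\Big)
	\le \eps
\eas
for all $\xi\ge 0$. This is (\ref{K1}).

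There is no genuine obstacle. The only point requiring attention is that the decay factor $(\xi+1)^{-2}$ in (\ref{keps}) is integrable over $[0,\infty)$ with integral exactly $1$. This is what makes the bound uniform in $\xi$ rather than growing with $\xi$, and it is presumably why Lemma \ref{lem99} was formulated with that weight.
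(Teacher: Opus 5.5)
Your proposal is correct and is exactly the paper's argument. You write $\Keps-K$ as $\int_0^\xi(\keps-\kappa)\,d\sig$, apply the pointwise bound from (\ref{keps}), and use $\int_0^\infty(\sig+1)^{-2}\,d\sig=1$ to get a bound uniform in $\xi$.
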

\proof
  Since $\int_0^\infty \frac{d\sig}{(\sig+1)^2}=1$, from (\ref{keps}) it follows that, indeed,
  \bas
	|\Keps(\xi)-K(\xi)| 
	= \bigg| \int_0^\xi (\keps(\sig)-\kappa(\sig))d\sig\bigg|
	\le \int_0^\xi \frac{\eps}{(\sig+1)^2} d\sig \le \eps
  \eas
  for all $\xi\ge 0$.
\qed
Fixing any integer $m\ge 1$ such that
\be{m}
	m\ge \frac{n+4}{2},
\ee
for $\eps\in (0,1)$ we now consider the parabolic-ODE-parabolic regularization of (\ref{0}) given by
\be{0eps}
	\hs{-6mm}
	\lball
	v_{\eps t} + \eps \Del^{2m} \veps = \div (\D:\nas\veps) + \div (\C:\nas\ueps) - \div (\Teps\B) + \feps(x,t),
	\quad & x\in\Om, \ t>0, \\[1mm]
	u_{\eps t} = \veps, 
	\quad & x\in\Om, \ t>0, \\[1mm]
	\keps(\Teps) \Theta_{\eps t} = D\Del\Teps + \lan \D:\nas\veps,\nas\veps\ran - \Teps \lan \B,\nas\veps\ran + \geps(x,t),
	\quad & x\in\Om, \ t>0, \\[1mm]
	\Del^k \veps = \frac{\pa \Del^k \veps}{\pa\nu}=0, \ k\in \{0,...,m-1\}, 
		\quad \ueps=0, \quad \frac{\pa\Teps}{\pa\nu}=0,
	\quad & x\in\pO, \ t>0, \\[1mm]
	\veps(x,0)=v_{0\eps}(x), \quad \ueps(x,0)=u_{0\eps}(x), \quad \Teps(x,0)=\Theta_{0\eps}(x),
	\quad & x\in\Om.
	\ear
\ee
which can be seen to admit local classical solutions as well as a handy extensibility criterion. 
A technical but essentially straightforward derivation of the following statement in this regard will be postponed to
an appendix below.
\begin{lem}\label{lem1}
  Let $m\in\N$ satisfy $m>\frac{n+2}{4}$.
  For each $\eps\in (0,1)$ there exist $\tme\in (0,\infty]$ as well as
  \be{1.1}
	\lbal
	\veps\in C^0([0,\tme);W_0^{2m,2}(\Om;\R^n)) \cap C^\infty(\bom\times (0,\tme);\R^n), \\[1mm]
	\ueps\in C^1([0,\tme);W_0^{2m,2}(\Om;\R^n)) \cap C^\infty(\bom\times (0,\tme);\R^n)
	\qquad \mbox{and} \\[1mm]
	\Teps \in C^{2,1}(\bom\times [0,\tme)) \cap C^\infty(\bom\times (0,\tme))
	\ear
  \ee
  such that $\Teps>0$ in $\bom\times [0,\tme)$, that (\ref{0eps}) is solved in the classical sense in $\Om\times (0,\tme)$, and that
  \bea{ext}
	& & \hs{-20mm}
	\mbox{if $\tme<\infty$, \quad then \ as $t\nearrow\tme$ we have} \nn\\
	& & \|\veps(\cdot,t)\|_{W^{2m,2}(\Om)} + \|\ueps(\cdot,t)\|_{W^{2m,2}(\Om)} 
	+ \|\Teps(\cdot,t)\|_{W^{1,2}(\Om)} 
	+ \|\Teps(\cdot,t)\|_{L^\infty(\Om)} 
	\to \infty.
  \eea
\end{lem}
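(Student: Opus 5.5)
We would prove Lemma \ref{lem1} by a Banach fixed point argument in a space of bounded continuous functions over a short time interval, followed by standard bootstrapping and a continuation argument. The key observation is that the term $\eps\Del^{2m}\veps$ makes the first equation a parabolic problem of order $4m$. Its smoothing is strong enough to control every coupling term. For fixed $\eps\in(0,1)$, $T\in(0,1)$ and $R>0$ we let
\[
X:=C^0([0,T];W_0^{2m,2}(\Om;\R^n))\times C^0([0,T];W^{1,2}(\Om)\cap L^\infty(\Om)),
\]
and take $S$ to be the closed subset of pairs $(\ov{v},\ov{\Theta})$ with norm at most $R$ that satisfy $\ov{\Theta}\ge\eps/2$. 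Given such a pair, we set $\ov{u}(t):=u_{0\eps}+\int_0^t\ov{v}$. We then define $\Phi(\ov{v},\ov{\Theta}):=(v,\Theta)$ through two linear problems. The first is $v_t+\eps\Del^{2m}v=\div(\D:\nas\ov{v})+\div(\C:\nas\ov{u})-\div(\ov{\Theta}\B)+\feps$ with the Navier-type boundary conditions from (\ref{0eps}). The second is the quasilinear-in-coefficient heat equation $\Theta_t=\frac{D}{\keps(\ov{\Theta})}\Del\Theta+\frac{1}{\keps(\ov{\Theta})}\big(\lan\D:\nas\ov{v},\nas\ov{v}\ran-\ov{\Theta}\lan\B,\nas\ov{v}\ran+\geps\big)$ with Neumann data. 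Because $\keps\ge\eps$ by (\ref{keps}), the second problem is uniformly parabolic.

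For the $v$-problem we use the analytic semigroup generated by $\eps\Del^{2m}$ under these boundary conditions. Its fractional power spaces give the bound $\|e^{-t\eps\Del^{2m}}\div w\|_{W^{2m,2}}\le Ct^{-\frac{2m+1}{4m}}\|w\|_{L^2}$, and the exponent is less than $1$. All forcing terms lie in $L^\infty((0,T);H^{-1})$: $\D:\nas\ov{v}$, $\C:\nas\ov{u}$ and $\ov{\Theta}\B$ are $L^2$-bounded. Hence $v\in C^0([0,T];W^{2m,2})$, with a factor $T^{1-\frac{2m+1}{4m}}$ that becomes small as $T$ shrinks.

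For the heat equation we need $\nas\ov{v}\in L^\infty$. This follows from $W^{2m,2}\hra W^{1,\infty}$, which holds because $2m-1>\frac n2$, and this is exactly the hypothesis $m>\frac{n+2}{4}$. The source term is therefore bounded, the coefficient is continuous and positive, and standard $L^p$ and Schauder theory for nondivergence parabolic equations (frozen coefficients) gives $\Theta\in C^0([0,T];W^{1,2}\cap L^\infty)$ with small increments for small $T$. For positivity, the comparison principle applied to $\Theta_t-\frac{D}{\keps}\Del\Theta\ge -\frac{1}{\keps(\ov{\Theta})}\ov{\Theta}|\B||\nas\ov{v}|$, together with $\Theta_{0\eps}\ge\eps$, yields $\Theta\ge\eps e^{-CT}\ge\eps/2$. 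Here we use that $\lan\D:\A,\A\ran\ge0$ and $g_\eps\ge0$.

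Contraction is where we expect the main obstacle. The heat equation is quasilinear, since its coefficient depends on $\ov{\Theta}$, and the difference of two solutions has to be estimated through a term like $(\keps(\ov{\Theta}_1)^{-1}-\keps(\ov{\Theta}_2)^{-1})\Del\Theta_2$. This requires control of $\Del\Theta_2$ in some $L^p$. To avoid that, we would first try Schauder's fixed point theorem instead of Banach's. Compactness comes from the parabolic Hölder estimates, which place $\Theta$ in $C^{\alpha,\alpha/2}$ with $\nabla\Theta$ Hölder continuous, and from the gain in time regularity of $v$. The subset $S$ is also convex. Uniqueness is not required by the lemma.

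Once a mild solution exists, the smoothness in (\ref{1.1}) follows by bootstrapping. The data are smooth and compactly supported. Each gain in regularity of $\Teps$ feeds into the $v$-equation, and each gain in $v$ improves the heat source, through parabolic Schauder theory in both equations. Strict positivity of $\Teps$ on $[0,\tme)$ follows from the comparison argument above, now run on every finite subinterval. Finally we take $\tme$ to be the supremum of existence times. The local existence time depends only on an upper bound for $\|\veps\|_{W^{2m,2}}+\|\ueps\|_{W^{2m,2}}+\|\Teps\|_{W^{1,2}}+\|\Teps\|_{L^\infty}$ and on a positive lower bound for $\Teps$. The lower bound persists as long as $\nas\veps$ stays bounded, and this is controlled by the $W^{2m,2}$ norm. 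So if $\tme<\infty$ and the quantity in (\ref{ext}) stayed bounded, we could restart the construction near $\tme$ and extend the solution beyond it, which is a contradiction. This gives (\ref{ext}).
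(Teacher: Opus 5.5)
Your overall architecture (mild solution of the $4m$-th order equation via the analytic semigroup, Schauder's fixed point theorem, bootstrap, continuation) is the paper's, but the temperature step as you set it up does not go through. You let $\ov{\Theta}$ range over a ball in $C^0([0,T];W^{1,2}(\Om)\cap L^\infty(\Om))$ and $\ov{v}$ over a ball in $C^0([0,T];W_0^{2m,2}(\Om;\R^n))$, and then freeze the coefficient $D/\keps(\ov{\Theta})$. Three facts block this.
For $n\ge 2$, elements of $W^{1,2}\cap L^\infty$ need not be continuous (for $n\ge 3$ not even VMO; think of $\sin\ln|x|$).
A ball in $X$ controls no modulus of continuity.
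The data built from $\nas\ov{v}$ are only continuous in $t$.
So neither $W^{2,1}_p$ theory (continuous coefficients with a uniform modulus) nor Schauder theory (H\"older coefficients and data in $(x,t)$) applies. This leaves three things your scheme needs unsupported:
that $\Theta\in C^0([0,T];L^\infty(\Om))$ with norm at most $R$;
the comparison argument for $\Theta\ge\eps/2$, since maximum principles for nondivergence problems with rough coefficients need regularity you do not have;
the uniform H\"older bounds on $\na\Theta$ on which your compactness claim rests.
Testing by $\Theta_t$ would still give a $W^{2,1}_2$ solution, thanks to the structure $\keps(\ov{\Theta})\Theta_t=D\Del\Theta+\dots$. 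But that gives neither an $L^\infty$ bound nor compactness in $C^0([0,T];W^{1,2}\cap L^\infty)$. (Also, since your frozen source contains $\ov{\Theta}$ rather than $\Theta$, comparison would give $\Theta\ge\eps-CT$, not $\eps e^{-CT}$.)

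The missing idea is not to make $\Theta$ a fixed-point variable at all. The paper iterates on $\ov{v}$ alone, in $C^\vartheta([0,T];W_0^{2m,2}(\Om;\R^n))$. By $W^{2m,2}\hra C^{1+\iota}$, both $h_1=-\lan\B,\nas\ov{v}\ran$ and $h_2=\lan\D:\nas\ov{v},\nas\ov{v}\ran+\geps\ge 0$ are then H\"older in $(x,t)$. The paper solves the genuinely quasilinear problem $\keps(\Theta)\Theta_t=D\Del\Theta+h_1\Theta+h_2$ by standard theory. Since the $\B$-term is then linear in the unknown and $h_2\ge 0$, nonnegativity is automatic. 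As $\keps\ge\eps$ on all of $[0,\infty)$, no quantitative lower bound is needed. The remaining pieces are as follows:
comparison with $\|\Theta_{0\eps}\|_{L^\infty(\Om)}e^{\lam t}$ gives the $L^\infty$ bound and existence up to $T$;
testing by $\Theta_t$ gives the $W^{1,2}$ bound;
compactness of $\Phi(S)$ comes from H\"older regularity in time of $\Phi(\ov{v})$ with values in $D(A^\gamma)$, $\gamma>\frac12$, where $A$ realizes $\eps\Del^{2m}$;
continuity of $\Phi$ follows from Schauder estimates plus uniqueness.
This also removes the dependence of the local existence time on $\min\Teps$. In your scheme that lower bound is controlled only if the norms stay bounded on all of $(0,\tme)$. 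So as written your continuation argument yields $\limsup=\infty$ rather than the full limit asserted in (\ref{ext}); that suffices for Lemma \ref{lem9} but is weaker than the statement.
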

Thanks to the fact that the second equation in (\ref{0eps}) does not involve a parabolic regularization such as the first one,
the regularization made in (\ref{0eps}) is fully consistent with the property of essential energy nonincrease suggested
by (\ref{energy}):
\begin{lem}\label{lem3}
  Let $\eps\in (0,1)$.  Then
  \bea{3.1}
	& & \hs{-30mm}
	\frac{d}{dt} \bigg\{ \frac{1}{2} \io |\veps|^2 + \frac{1}{2} \io \lan \C:\nas\ueps,\nas\ueps\ran + \io \Keps(\Teps) \bigg\}
	+ \eps \io |\Del^m \veps|^2 \nn\\
	&\le& \io \feps\cdot\veps 
	+ \io \geps
	\qquad \mbox{for all } t\in (0,\tme).
  \eea
\end{lem}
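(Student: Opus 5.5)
The plan is to test each equation in (\ref{0eps}) against the natural multiplier, integrate by parts using the boundary conditions, and add the results. Lemma \ref{lem1} guarantees that all functions are smooth in $\bom\times(0,\tme)$, so every manipulation below is legitimate.

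\textbf{Mechanical part.} I multiply the first equation in (\ref{0eps}) by $\veps$ and integrate over $\Om$.

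For the biharmonic-type term I integrate by parts $2m$ times, using $\Del^k\veps=\frac{\pa\Del^k\veps}{\pa\nu}=0$ on $\pO$ for $k\in\{0,\dots,m-1\}$. Every boundary integral contains one of these quantities, so
\[
\eps\io \Del^{2m}\veps\cdot\veps=\eps\io|\Del^m\veps|^2 .
\]

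For the second-order terms I use $\veps=0$ on $\pO$ and the symmetry $\D_{\mathbf{ijkl}}=\D_{\mathbf{jikl}}$ from (\ref{DCsym}), which lets $\na\veps$ be replaced by $\nas\veps$. This gives
\[
-\io\lan\D:\nas\veps,\nas\veps\ran, \qquad -\io\lan\C:\nas\ueps,\nas\veps\ran .
\]
Since $\veps=u_{\eps t}$ and $\C$ has the major symmetry $\C_{\mathbf{ijkl}}=\C_{\mathbf{klij}}$, the second of these equals $-\frac12\frac{d}{dt}\io\lan\C:\nas\ueps,\nas\ueps\ran$.

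For the thermal-dilation term, the symmetry of $\B$ yields
\[
-\io\div(\Teps\B)\cdot\veps=\io\Teps\lan\B,\nas\veps\ran .
\]
The forcing term contributes $\io\feps\cdot\veps$ unchanged.

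\textbf{Thermal part.} I integrate the third equation in (\ref{0eps}) over $\Om$. The left side is $\io\keps(\Teps)\Theta_{\eps t}=\frac{d}{dt}\io\Keps(\Teps)$ by (\ref{Keps}). The diffusion term integrates to zero because of the Neumann condition. What remains on the right is
\[
\io\lan\D:\nas\veps,\nas\veps\ran-\io\Teps\lan\B,\nas\veps\ran+\io\geps .
\]

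\textbf{Conclusion.} When the two identities are added, the viscous dissipation $\io\lan\D:\nas\veps,\nas\veps\ran$ and the coupling term $\io\Teps\lan\B,\nas\veps\ran$ each appear once with each sign and cancel. This leaves (\ref{3.1}) as an exact identity, hence in particular as the stated inequality.

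There is no genuine obstacle. The only care needed is in the high-order integration by parts: one has to check that the boundary conditions in (\ref{0eps}) are exactly what is required for the boundary terms to vanish, and that the smoothness in (\ref{1.1}) holds up to the boundary for $t>0$.
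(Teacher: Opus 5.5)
Your proposal is correct and follows the paper's proof essentially step by step. You test the first equation by $\veps$, pass to symmetric gradients via the minor symmetries of $\D$, $\C$ and the symmetry of $\B$, and use the major symmetry of $\C$ together with $u_{\eps t}=\veps$. You then integrate the heat equation using $\Keps'=\keps$ and add, so the dissipation and coupling terms cancel. Your explicit check that the boundary conditions annihilate every boundary term in $\io \Del^{2m}\veps\cdot\veps$ is a detail the paper leaves implicit.
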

\proof
  Since $\D_{\mathbf{ijkl}}=\D_{\mathbf{jikl}}$ and $\C_{\mathbf{ijkl}}=\C_{\mathbf{jikl}}$ 
  for $(\mathbf{i},\mathbf{j},\mathbf{k},\mathbf{l})\in\{1,...,n\}^4$, and since $\B_{\mathbf{ij}}=\B_{\mathbf{ji}}$
  for $(\mathbf{i},\mathbf{j})\in\{1,...,n\}^2$, 
  it readily follows that for any $\vp\in C^1(\Om;\R^n)$ and $\psi\in C^1(\Om;\R^n)$ we have
  \bas
	\lan \D:\nas\vp,\na\psi\ran=\lan \D:\nas\vp,\nas\psi\ran
	\qquad \mbox{and} \qquad
	\lan \C:\nas\vp,\na\psi\ran=\lan \C:\nas\vp,\nas\psi\ran
  \eas
  as well as $\lan \B,\na\vp\ran=\lan\B,\nas \vp\ran$ in $\Om$. 
  On testing the first equation in (\ref{0eps}) by $\veps$, we thus obtain that
  for all $t\in (0,\tme)$,
  \bea{3.2}
	& & \hs{-20mm}
	\frac{1}{2} \frac{d}{dt} \io |\veps|^2
	+ \eps \io |\Del^m \veps|^2 \nn\\
	&=& - \io \lan\D:\nas\veps,\na\veps\ran
	- \io \lan\C:\nas\ueps,\nas\veps\ran 
	+ \io \Teps \lan \B,\na\veps\ran
	+ \io \feps\cdot\veps \nn\\
	&=& - \io \lan\D:\nas\veps,\nas\veps\ran
	- \io \lan\C:\nas\ueps,\nas\veps\ran
	+ \io \Teps \lan \B,\nas\veps\ran
	+ \io \feps\cdot\veps,
  \eea
  where in view of the identity $\veps=u_{\eps t}$,
  \be{3.3}
	\hs{-4mm}
	- \io \lan\C:\nas\ueps,\nas\veps\ran
	= - \frac{1}{2} \frac{d}{dt} \io \lan\C:\nas\ueps,\nas\ueps\ran
	\qquad \mbox{for all } t\in (0,\tme),
  \ee
  because the symmetry assumption $\C_{\mathbf{ijkl}}=\C_{\mathbf{klij}}$ 
  for $(\mathbf{i},\mathbf{j},\mathbf{k},\mathbf{l})\in\{1,...,n\}^4$ warrants that whenever $T>0$ and
  $\vp\in C^1(\Om\times (0,T);\R^{n\times n}_{sym})$,
  \bas
	\pa_t \lan\C:\vp,\vp\ran
	= 2\lan\C:\vp,\vp_t\ran
	\qquad \mbox{in } \Om\times (0,T).
  \eas
  Apart from that, in view of (\ref{Keps}) the third equation in (\ref{0eps}) implies that
  \bas
	\frac{d}{dt} \io \Keps(\Teps) = \io \lan\D:\nas\veps,\nas\veps\ran
	- \io \Teps \lan\B,\nas\veps\ran
	+ \io \geps
	\qquad \mbox{for all } t\in (0,\tme),
  \eas
  which when added to (\ref{3.2}) yields (\ref{3.1}).
\qed
Appropriately exploiting this will be facilitated by the following elementary tool.
\begin{lem}\label{lem5}
  If $K>0, T\in (0,\infty]$ and $\beta\in (0,1]$, and if $y\in C^0([0,T)) \cap C^1((0,T))$ and $h\in C^0((0,T))$ are nonnegative
  and such that
  \be{5.1}
	y(0) \le K
	\qquad \mbox{and} \qquad
	\int_0^T h(t) dt \le K
  \ee
  as well as
  \be{5.2}
	y'(t) \le h(t) y^\beta(t) + h(t)
	\qquad \mbox{for all } t\in (0,T),
  \ee
  then
  \be{5.3}
	y(t) \le 3K e^K
	\qquad \mbox{for all } t\in (0,T).
  \ee
\end{lem}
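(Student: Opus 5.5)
The idea is to remove the sublinear term $h y^\beta$ by the elementary bound $y^\beta\le 1+y$, valid for $y\ge 0$ and $\beta\in(0,1]$, and then apply a Gronwall argument. This turns (\ref{5.2}) into the linear differential inequality
\bas
	y'(t)\le h(t)y(t)+2h(t)
	\qquad \mbox{for all } t\in (0,T).
\eas

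Next we introduce an integrating factor. Let $H(t):=\int_0^t h(s)\,ds$, which is well defined as a nondecreasing continuous function with $0\le H\le K$ by (\ref{5.1}). Then $z(t):=e^{-H(t)}y(t)$ satisfies
\bas
	z'(t)\le 2h(t)e^{-H(t)}=-2\frac{d}{dt}e^{-H(t)}
	\qquad \mbox{for all } t\in (0,T).
\eas
For $0<s<t<T$ we integrate over $(s,t)$ and let $s\searrow 0$, using the continuity of $y$ at $0$. This yields
\bas
	z(t)\le y(0)+2\big(1-e^{-H(t)}\big)\le K+2.
\eas
Hence $y(t)\le (K+2)e^{H(t)}\le (K+2)e^K$.

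This is not yet (\ref{5.3}), since $K+2\le 3K$ requires $K\ge 1$. So the bookkeeping must be done more sharply to reach $3Ke^K$ for every $K>0$; this constant is the only genuinely delicate point. Rather than bounding $y^\beta$ by $1+y$, we would compare $y$ with the solution of the linear inequality $Y'=hY+2h$ from $Y(0)=K$. Alternatively, we split according to whether $y\le 1$ or $y>1$. On intervals where $y\ge 1$ we have $y^\beta\le y$, so $y'\le h(y+1)$. Then $\ln(y+1)$ grows by at most $\int h\le K$, which gives $y+1\le (\max\{y(0),1\}+1)e^K$ after restarting at the last time $y=1$.

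If $y(0)\ge1$, then $K\ge1$ and we get $y\le (K+1)e^K\le 2Ke^K$. If instead the last crossing is at level $1$, we get $y\le 2e^K-1$. This is bounded by $3Ke^K$ when $K\ge 2/3$. For small $K$ we refine once more: on $\{y\le 1\}$ we use $y^\beta\le 1$ directly, so $y'\le 2h$ and $y\le K+2K=3K$ as long as $y$ stays below $1$, for which $K<1/3$ suffices. Combining these cases, and checking the intermediate range of $K$ with a similar restart argument, gives $y\le 3Ke^K$ throughout $(0,T)$.

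I expect the main obstacle to be only this case bookkeeping needed to obtain exactly the constant $3Ke^K$. The structural Gronwall step via $y^\beta\le 1+y$ is immediate.
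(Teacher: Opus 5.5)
Your reduction to $y'\le hy+2h$ via $y^\beta\le 1+y$ and your integrating-factor step are exactly the paper's argument. Your intermediate inequality $z(t)\le y(0)+2\big(1-e^{-H(t)}\big)$ is also correct. The gap is in the next step: bounding $1-e^{-H(t)}\le 1$ throws away exactly the smallness you need. Since $1-e^{-x}\le x$ for $x\ge 0$ and $H(t)\le K$, you get $z(t)\le K+2K=3K$. Hence $y(t)\le 3Ke^{H(t)}\le 3Ke^K$ for every $K>0$, with no case distinction at all. This is what the paper does, in the equivalent Duhamel form $y(t)\le y(0)e^{\int_0^t h}+\int_0^t e^{\int_s^t h}\,2h(s)\,ds\le Ke^K+e^K\cdot 2K$. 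There is no ``genuinely delicate point'' about the constant.

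The case analysis you substitute for this does not close as written. The range $K\in[\frac13,\frac23)$ is only asserted to follow from ``a similar restart argument''. However, the restart bound you actually derived, $y\le 2e^K-1$, exceeds $3Ke^K$ on part of that range. At $K=\frac13$ it gives $2e^{1/3}-1\approx 1.79$, against the target $e^{1/3}\approx 1.40$.

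To rescue that route you would need a further observation. While $y\le 1$ you have $y'\le 2h$, so reaching level $1$ from $y(0)\le K$ consumes at least $\frac{1-K}{2}$ of $\int_0^T h$. After any restart at level $1$ this gives $y\le 2e^{(3K-1)/2}-1$, which does stay below $3Ke^K$ for $K\in[\frac13,\frac23)$. All of this becomes unnecessary once you keep the factor $H(t)\le K$ in the Duhamel term.
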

\proof
  If $T, y$ and $h$ have the assumed properties, then by Young's inequality, (\ref{5.2}) implies that
  $y'(t) \le h(t) y(t) + 2h(t)$
  for all $t\in (0,T)$, so that by a Gr\"onwall lemma and (\ref{5.1}), indeed,
  \bas
	y(t)
	\le y(0) e^{\int_0^t h(s) ds}
	+ \int_0^t e^{\int_s^t h(\sig) d\sig} \cdot 2h(s) ds
	\le K e^K + e^K \cdot 2K
  \eas
  for all $t\in (0,T)$.
\qed
In fact, Lemma \ref{lem3} can thereby be seen to entail some basic estimates which in their essential parts coincide with
what has been predicted by (\ref{energy}) already.
\begin{lem}\label{lem6}
  There exists $(C(T))_{T>0} \subset (0,\infty)$ such that whenever $T>0$,
  \be{6.1}
	\io |\veps(\cdot,t)|^2 \le C(T)
	\qquad \mbox{for all $t\in (0,T)\cap (0,\tme)$ and } \eps\in (0,1)
  \ee
  and
  \be{6.2}
	\io |\na\ueps(\cdot,t)|^2 \le C(T)
	\qquad \mbox{for all $t\in (0,T)\cap (0,\tme)$ and } \eps\in (0,1)
  \ee
  and
  \be{6.3}
	\io \Keps(\Teps(\cdot,t)) \le C(T)
	\qquad \mbox{for all $t\in (0,T)\cap (0,\tme)$ and } \eps\in (0,1)
  \ee
  as well as
  \be{6.4}
	\eps \int_0^t \|\veps(\cdot,s)\|_{W^{2m,2}(\Om)}^2 \le C(T)
	\qquad \mbox{for all $t\in (0,T)\cap (0,\tme)$ and } \eps\in (0,1),
  \ee
  and that, moreover,
  \be{6.5}
	\sup_{T>0} C(T)<\infty
	\quad \mbox{if (\ref{fg_decay}) holds.}
  \ee
\end{lem}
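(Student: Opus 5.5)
We integrate the energy inequality from Lemma \ref{lem3} and then close the estimate with Lemma \ref{lem5}. Write
\bas
	\yeps(t):=\frac{1}{2}\io |\veps(\cdot,t)|^2 + \frac{1}{2}\io \lan\C:\nas\ueps(\cdot,t),\nas\ueps(\cdot,t)\ran + \io \Keps(\Teps(\cdot,t)) + 1,
\eas
which is nonnegative since $\Keps\ge 0$ and $\C$ is positive definite on symmetric matrices by (\ref{DC_lower}). By the Cauchy–Schwarz inequality,
\bas
	\io \feps\cdot\veps \le \|\feps(\cdot,t)\|_{L^2(\Om)}\,\sqrt{2\yeps(t)} .
\eas
Hence (\ref{3.1}) yields
\bas
	\yeps'(t)+\eps\io|\Del^m\veps|^2 \le \heps(t)\,\yeps^{1/2}(t)+\heps(t)
	\qquad \mbox{for all } t\in (0,\tme),
\eas
where we set
\bas
	\heps(t):=\sqrt 2\,\|\feps(\cdot,t)\|_{L^2(\Om)}+\|\geps(\cdot,t)\|_{L^1(\Om)} .
\eas

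For fixed $T>0$ we apply Lemma \ref{lem5} with $\beta=\frac12$ on $(0,\min\{T,\tme\})$. This requires a constant $K(T)$, independent of $\eps$, such that $\yeps(0)\le K(T)$ and $\int_0^T\heps\le K(T)$. The bound on $\yeps(0)$ follows from (\ref{iec}), (\ref{iec2}) and Lemma \ref{lem_Keps}: since $|\Keps-K|\le\eps$, we have $\io\Keps(\Theta_{0\eps})\le\io K(\Theta_{0\eps})+|\Om|$, and this right-hand side is bounded as $\eps\searrow0$. The bound on $\int_0^T\heps$ follows from (\ref{fgec}) for small $\eps$.

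Uniformity over all $\eps\in(0,1)$ needs one more observation. The convergences only control $\eps$ near $0$, but by Lemma \ref{lem99}, for each $\eps_0$ the families with $\eps>\eps_0$ are finite. So the supremum over $\eps\in(0,1)$ is a maximum of finitely many finite numbers together with a bound that is uniform near zero, and is therefore finite. Lemma \ref{lem5} then gives $\yeps\le 3K(T)e^{K(T)}$ on $(0,T)\cap(0,\tme)$. This implies (\ref{6.1}) and (\ref{6.3}), the latter because $\Keps\ge0$ and $\io\Keps(\Teps)\le\yeps$.

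For (\ref{6.2}), (\ref{DC_lower}) gives $\io\lan\C:\nas\ueps,\nas\ueps\ran\ge\kC\io|\nas\ueps|^2$. Korn's inequality on $W_0^{1,2}(\Om;\R^n)$ in the form $\io|\na\ueps|^2=2\io|\nas\ueps|^2-\io|\div\ueps|^2\le 2\io|\nas\ueps|^2$ then bounds $\io|\na\ueps|^2$. The identity used here is valid for smooth compactly supported fields, and the boundary conditions in (\ref{1.1}) allow us to apply it.

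For (\ref{6.4}), we integrate the differential inequality in time. This gives
\bas
	\eps\int_0^t\io|\Del^m\veps|^2\le \yeps(0)+\int_0^t\heps\,(\yeps^{1/2}+1),
\eas
and the right-hand side is bounded by the estimate just obtained. The boundary conditions $\Del^k\veps=\frac{\pa\Del^k\veps}{\pa\nu}=0$ for $k\le m-1$ make $\|\Del^m\cdot\|_{L^2(\Om)}$ an equivalent norm on the relevant subspace of $W^{2m,2}(\Om)$. This follows by iterating elliptic regularity for the Dirichlet Laplacian, using that $\Del^k\veps=0$ on $\pO$. Hence $\eps\int_0^t\|\veps\|_{W^{2m,2}(\Om)}^2\le C(T)$.

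Finally, for (\ref{6.5}) we note that under (\ref{fg_decay}) the bound (\ref{fge_decay}) makes $\int_0^\infty\heps$ uniformly bounded. The same choice of $K$ then works with $T=\infty$, so $C(T)$ can be taken independent of $T$.

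The main obstacle is the bookkeeping for uniformity in $\eps$, namely the finiteness trick from Lemma \ref{lem99}. The elliptic norm equivalence used for (\ref{6.4}) is standard.
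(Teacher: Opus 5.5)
Your proposal is correct and follows essentially the same route as the paper: the energy functional from Lemma \ref{lem3}, Cauchy--Schwarz to reach $y_\eps'\le h_\eps y_\eps^{1/2}+h_\eps$, Lemma \ref{lem5} with $\beta=\frac12$, time integration together with the elliptic norm equivalence for (\ref{6.4}), Korn's inequality for (\ref{6.2}), and (\ref{fge_decay}) with $T=\infty$ for (\ref{6.5}). Your explicit appeal to the finiteness clause of Lemma \ref{lem99} for uniformity in $\eps\in(0,1)$ only spells out what the paper leaves implicit when it asserts $\sup_{\eps'\in(0,1)} y_{\eps'}(0)<\infty$.
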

\proof
  According to (\ref{DC_lower}) and Korn's inequality, there exists $c_1>0$ such that
  \be{6.6}
	\frac{1}{2} \io \lan\C:\nas\vp,\nas\vp\ran 
	\ge c_1 \io |\na\vp|^2
	\qquad \mbox{for all } \vp\in W_0^{1,2}(\Om;\R^n),
  \ee
  which in particular asserts nonnegativity of
  \be{6.7}
	\yeps(t):=\frac{1}{2} \io |\veps(\cdot,t)|^2
	+ \frac{1}{2} \io \lan\C:\nas\ueps(\cdot,t),\nas\ueps(\cdot,t)\ran
	+ \io \Keps(\Teps(\cdot,t)),
	\qquad t\in [0,\tme), \ \eps\in (0,1).
  \ee
  Fixing $\eps\in (0,1)$ now, from Lemma \ref{lem3} and the Cauchy-Schwarz inequality we obtain that
  \bea{6.8}
	\yeps'(t) + \eps\io |\Del^m \veps|^2
	&\le& \io \feps\cdot\veps + \io \geps \nn\\
	&\le& \|\feps\|_{L^2(\Om)} \|\veps\|_{L^2(\Om)} + \|\geps\|_{L^1(\Om)} \nn\\
	&\le& \sqrt{2} \|\feps\|_{L^2(\Om)} \sqrt{\yeps(t)} + \|\geps\|_{L^1(\Om)}
	\qquad \mbox{for all $t\in (0,\tme)$,}
  \eea
  whence an application of Lemma \ref{lem5} shows that for each $T>0$,
  \be{6.9}
	\yeps(t) \le c_2(T) := 3c_3(T) e^{c_3(T)}
	\qquad \mbox{for all $t\in (0,T) \cap (0,\tme)$,}
  \ee
  where
  \be{6.10}
	c_3(T):=\max \bigg\{ c_4 \, , \, \sqrt{2} \sup_{\eps'\in (0,1)} \int_0^T \|f_{\eps'}(\cdot,t)\|_{L^2(\Om)} \, , \, 
		\sup_{\eps'\in (0,1)} \int_0^T \|g_{\eps'}(\cdot,t)\|_{L^1(\Om)} dt \bigg\}
  \ee
  with $c_4:=\sup_{\eps'\in (0,1)} y_{\eps'}(0)$ being finite according to (\ref{iec}) and (\ref{iec2}).
  By integration in (\ref{6.8}), we thereupon see that
  for all $t\in (0,T)\cap (0,\tme)$,
  \bas
	\eps \int_0^t \io |\Del^m \veps|^2
	\le c_4 + \sqrt{2c_2(T)} \int_0^T \|\feps(\cdot,s)\|_{L^2(\Om)} ds
	+ \int_0^T \|\geps(\cdot,s)\|_{L^1(\Om)} ds.
  \eas
  Thanks to the fact that $\|\Del^m (\cdot)\|_{L^2(\Om)}$ defines a norm equaivalent to $\|\cdot\|_{W^{2m,2}(\Om)}$
  in $W_0^{2m,2}(\Om;\R^n)$ due to elliptic regularity theory (\cite{friedman}),
  together with (\ref{6.9}) and (\ref{6.6}) this yields (\ref{6.1})-(\ref{6.4}) with some $(C(T))_{T>0}$ that satisfies
  (\ref{6.5}) due to (\ref{6.10}) and (\ref{fge_decay}).
\qed
\subsection{Global existence in the approximate problems}
Our first two applications of Lemma \ref{lem6} will operate at levels of fixed $\eps\in (0,1)$ in order to make sure
that each of the solutions obtained in Lemma \ref{lem1} in fact exists globally. 
A combination of a comparison argument with a basic variational reasoning yields the first step into this direction:
\begin{lem}\label{lem7}
  If $\tme<\infty$ for some $\eps\in (0,1)$, then there exists $C(\eps)>0$ such that
  \be{7.1}
	\|\Teps(\cdot,t)\|_{L^\infty(\Om)} \le C(\eps)
	\qquad \mbox{for all } t\in (0,\tme)
  \ee
  and
  \be{7.2}
	\int_0^t \io |\na\Teps|^2 \le C(\eps)
	\qquad \mbox{for all } t\in (0,\tme).
  \ee
\end{lem}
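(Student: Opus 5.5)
The plan is to work at fixed $\eps\in (0,1)$ with $T:=\tme<\infty$. I would use only the bound (\ref{6.4}) from Lemma \ref{lem6}, together with the regularizing choice (\ref{m}) of $m$. Since $2m\ge n+4>\frac{n}{2}+1$, the Sobolev embedding $W^{2m,2}(\Om)\hra W^{1,\infty}(\Om)$ and (\ref{6.4}) give
\bas
	\int_0^{\tme} \|\na\veps(\cdot,t)\|_{L^\infty(\Om)}^2 dt \le c_1(\eps),
\eas
hence also $\int_0^{\tme}\|\na\veps\|_{L^\infty(\Om)}dt\le c_2(\eps)$, because $\tme$ is finite. Moreover $\geps$ is smooth with compact support by (\ref{fge}), so $\|\geps(\cdot,t)\|_{L^\infty(\Om)}$ is bounded on $(0,\tme)$. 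Put
\bas
	a(t):=c_3\big(\|\na\veps(\cdot,t)\|_{L^\infty(\Om)}^2+\|\na\veps(\cdot,t)\|_{L^\infty(\Om)}+\|\geps(\cdot,t)\|_{L^\infty(\Om)}\big),
\eas
with $c_3$ depending on $|\D|$ and $|\B|$. Then $a\in L^1((0,\tme))$, $a$ is continuous on $(0,\tme)$ by (\ref{1.1}), and the heat source in the third equation of (\ref{0eps}) satisfies
\bas
	\lan \D:\nas\veps,\nas\veps\ran-\Teps\lan\B,\nas\veps\ran+\geps \le a(t)(1+\Teps).
\eas
Here I use $\Teps>0$ from Lemma \ref{lem1}.

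For (\ref{7.1}) I would use comparison with a spatially homogeneous supersolution. Let $\Phi_\eps(\xi):=\int_0^\xi \frac{\keps(\sig)}{1+\sig}d\sig$. Because $\keps\ge\eps$ by (\ref{keps}), $\Phi_\eps$ is a strictly increasing bijection of $[0,\infty)$ onto $[0,\infty)$, with $\Phi_\eps(\xi)\ge \eps\ln(1+\xi)$. Define $\ov{y}$ by
\bas
	\Phi_\eps(\ov{y}(t))=\Phi_\eps\big(\|\Theta_{0\eps}\|_{L^\infty(\Om)}\big)+\int_0^t a(s)ds .
\eas
Then $\keps(\ov{y})\ov{y}'=a(t)(1+\ov{y})$, and $\ov{y}$ is bounded on $[0,\tme)$ by $\Phi_\eps^{-1}\big(\Phi_\eps(\|\Theta_{0\eps}\|_{L^\infty(\Om)})+\|a\|_{L^1((0,\tme))}\big)$. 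Dividing the heat equation by the smooth positive function $\keps(\Teps)$ gives a nondegenerate quasilinear problem $\Theta_t=\frac{D\Del\Theta+\text{source}}{\keps(\Theta)}$ under Neumann conditions. For such problems the classical comparison principle applies: $\ov{y}$ is a supersolution, since its Laplacian vanishes and the source estimate above holds. Hence $\Teps\le \ov{y}$, which yields (\ref{7.1}).

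For (\ref{7.2}) I would test the third equation in (\ref{0eps}) by $\Teps$. With $J_\eps(\xi):=\int_0^\xi \keps(\sig)\sig d\sig\ge 0$, this gives
\bas
	\frac{d}{dt}\io J_\eps(\Teps)+D\io|\na\Teps|^2 \le \io a(t)(1+\Teps)\Teps \le c_4(\eps)\, a(t),
\eas
where the last inequality uses (\ref{7.1}). Integrating over $(0,t)$ and using $a\in L^1$ together with the boundedness of $\io J_\eps(\Theta_{0\eps})$ gives (\ref{7.2}).

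I expect the main technical point to be the rigorous comparison step. The coefficient $a$ is only integrable near $t=0$ and continuous on $(0,\tme)$, and comparison must be carried out for the non-divergence quasilinear form. To handle this I would first replace $a$ by a continuous majorant $\tilde a$ with $\int\tilde a\le\int a+1$ and add a small $\delta$ to the supersolution to obtain strict inequality. I would then argue at a first touching point, where $\Del(\Teps-\ov{y})\le0$ at interior maxima, and at boundary maxima I would use the Neumann condition via the Hopf lemma or an even reflection argument. Finally I would let $\delta\searrow0$.
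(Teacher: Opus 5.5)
Your argument is correct. The gradient estimate obtained by testing with $\Teps$ is exactly the paper's, but your comparison step differs.

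The paper freezes the coefficient. It regards $\keps(\Teps(x,t))\ge\eps$ as a given function, so that $\Teps$ solves a \emph{linear} problem. It then compares with the explicit solution $\ov{\Theta}$ of the linear ODE $\eps\ov{\Theta}'=\|h_{1\eps}\|_{L^\infty(\Om)}\ov{\Theta}+\|h_{2\eps}\|_{L^\infty(\Om)}$, $\ov{\Theta}(0)=\|\Theta_{0\eps}\|_{L^\infty(\Om)}$, where $h_{1\eps}=-\lan\B,\nas\veps\ran$ and $h_{2\eps}=\lan\D:\nas\veps,\nas\veps\ran+\geps$. Because $\ov{\Theta}$ is nondecreasing, $\keps(\Teps)\ov{\Theta}_t\ge\eps\ov{\Theta}_t$. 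Hence only the standard linear comparison principle is needed, and the result is an explicit bound of order $e^{c/\eps}$, harmless at fixed $\eps$.

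Your $\ov{y}$ instead solves $\keps(\ov{y})\ov{y}'=a(1+\ov{y})$, so it is adapted to the actual nonlinearity. It uses $\keps\ge\eps$ only through $\Phi_\eps(\xi)\to\infty$. It would therefore survive if $\keps$ were merely positive with $\int^\infty\frac{\keps(\sig)}{1+\sig}\,d\sig=\infty$, whereas the frozen-coefficient argument relies on a uniform positive lower bound for $\keps$. The flip side is that $\ov{y}$ is \emph{not} a supersolution of the frozen linear equation, since $\keps(\Teps)$ may be smaller than $\keps(\ov{y})$. So you genuinely need your touching-point argument for the quasilinear form. It works because at a touching point both $\keps$ and the affine source are evaluated at the same value.

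Two of your anticipated technical difficulties disappear on closer inspection.
\begin{itemize}
\item By (\ref{1.1}) you have $\veps\in C^0([0,\tme);W_0^{2m,2}(\Om;\R^n))$, so $a$ is continuous on $[0,\tme)$; its only possible singularity is at $t=\tme$. It therefore suffices to compare on $[0,T']$ for each $T'<\tme$, and the resulting bound is independent of $T'$. No continuous majorant is required.
\item At a boundary touching point $x^\star$ you need neither the Hopf lemma nor reflection. Tangential derivatives of $\Teps(\cdot,t^\star)$ vanish by maximality, and the normal derivative vanishes by the Neumann condition, so $\na\Teps(x^\star,t^\star)=0$. A second-order Taylor expansion along inward directions then shows the Hessian is negative semidefinite, hence $\Del\Teps(x^\star,t^\star)\le 0$, just as at an interior point.
\end{itemize}
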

\proof
  From Lemma \ref{lem6} and the presupposed finiteness of $\tme$ we obtain $c_1(\eps)>0$ such that
  $\int_0^t \big\{ \|\veps(\cdot,s)\|_{W^{2m,2}(\Om)} + \|\veps(\cdot,s)\|_{W^{2m,2}(\Om)}^2 \big\} ds \le c_1(\eps)$  
  for all $t\in (0,\tme)$. Again relying on the fact that our assumption $m>\frac{n+2}{4}$ ensures continuity of the embedding
  $W^{2m,2}(\Om)\hra W^{1,\infty}(\Om)$, in view of (\ref{fge}) we thus infer the existence of $c_2(\eps)>0$ and
  $c_3(\eps)>0$ such that $h_{1\eps}:=-\lan\B,\nas\veps\ran$ and $h_{2\eps}:=\lan\D:\nas\veps,\nas\veps\ran + \geps$ satisfy
  \be{7.3}
	\int_0^t \|h_{1\eps}(\cdot,s)\|_{L^\infty(\Om)} ds \le c_2(\eps)
	\quad \mbox{and} \quad
	\int_0^t \|h_{2\eps}(\cdot,s)\|_{L^\infty(\Om)} ds \le c_3(\eps)
	\qquad \mbox{for all } t\in (0,\tme).
  \ee
  We now let $c_4(\eps):=\|\Theta_{0\eps}\|_{L^\infty(\Om)}$ and
  \bas
	\ov{\Theta}(x,t)
	&:=&
	c_4(\eps) \exp \bigg\{ \frac{1}{\eps} \int_0^t \|h_{1\eps}(\cdot,s)\|_{L^\infty(\Om)} ds \bigg\} \nn\\
	& & + \int_0^t \exp \bigg\{ \frac{1}{\eps} \int_s^t \|h_{1\eps}(\cdot,\sig)\|_{L^\infty(\Om)} d\sig \bigg\} 
		\cdot \|h_{2\eps}(\cdot,s)\|_{L^\infty(\Om)} ds,
	\qquad t\in [0,\tme),
  \eas
  and then see that since $\keps\ge\eps$ by (\ref{keps}),
  \bas
	\keps(\Teps) \ov{\Theta}_t
	- D\Del \ov{\Theta} - h_{1\eps} \ov{\Theta} - h_{2\eps}
	&=& \keps(\Teps) \cdot \Big\{ \frac{1}{\eps} \|h_{1\eps}\|_{L^\infty(\Om)} \ov{\Theta} 
		+ \frac{1}{\eps} \|h_{2\eps}\|_{L^\infty(\Om)} \Big\} 
	- h_{1\eps} \ov{\Theta} - h_{2\eps} \\
	&\ge& \|h_{1\eps}\|_{L^\infty(\Om)} \ov{\Theta} 
		+ \|h_{2\eps}\|_{L^\infty(\Om)} 
	- h_{1\eps} \ov{\Theta} - h_{2\eps} \\[2mm]
	&\ge& 0
	\qquad \mbox{in } \Om\times (0,\tme).
  \eas
  As clearly $\ov{\Theta}(\cdot,0)=c_4(\eps)\ge\Theta_{0\eps}$ in $\Om$ and $\frac{\pa\ov{\Theta}}{\pa\nu}=0$ on $\pO\times (0,\tme)$,
  on the basis of the identity
  \be{7.33}
	\keps(\Teps) \Theta_{\eps t} = D\Del\Teps + h_{1\eps}(x,t)\Teps + h_{2\eps}(x,t),
	\qquad x\in\Om, \ t\in (0,\tme),
  \ee
  a comparison principle guarantees that $\Teps\le\ov{\Theta}$ in $\Om\times (0,\tme)$ and hence, by (\ref{7.3})
  and the nonnegativity of $\Teps$,
  \be{7.4}
	\|\Teps(\cdot,t)\|_{L^\infty(\Om)}
	\le c_5(\eps) := c_4(\eps) e^\frac{c_2(\eps)}{\eps} + e^\frac{c_2(\eps)}{\eps} c_3(\eps)
	\qquad \mbox{for all } t\in (0,\tme).
  \ee
  In particular, this implies that if we multiply (\ref{7.33}) by $\Teps$ and integrate by parts, then writing 
  $\wh{K}_\eps(\xi):=\int_0^\xi \sig\keps(\sig)d\sig$, $\xi\ge 0$, for all $t\in (0,\tme)$ we have
  \bas
	\frac{d}{dt} \io \wh{K}_\eps(\Teps)
	+ D \io |\na\Teps|^2
	= \io h_{1\eps} \Teps^2
	+ \io h_{2\eps} \Teps
	\le c_5^2(\eps) |\Om| \cdot \|h_{1\eps}\|_{L^\infty(\Om)}
	+ c_5(\eps) |\Om| \cdot \|h_{2\eps}\|_{L^\infty(\Om)}
  \eas
  and thus
  \bas
	& & \hs{-20mm}
	\io \wh{K}_\eps(\Teps(\cdot,t))
	+ D \int_0^t \io |\na\Teps|^2 \\
	&\le& \io \wh{K}_\eps(\Theta_{0\eps})
	+ c_5^2(\eps) |\Om| \int_0^t \|h_{1\eps}(\cdot,s)\|_{L^\infty(\Om)} ds
	+ c_5(\eps) |\Om| \int_0^t \|h_{2\eps}(\cdot,s)\|_{L^\infty(\Om)} ds \\
	&\le& \io \wh{K}_\eps(\Theta_{0\eps})
	+ c_5^2(\eps) |\Om| \cdot c_2(\eps)
	+ c_5(\eps) |\Om| \cdot c_3(\eps)
	\qquad \mbox{for all } t\in (0,\tme),
  \eas
  once more because of (\ref{7.3}). By nonnegativity and continuity of $\wh{K}_\eps$, 
  this establishes (\ref{7.2}), while (\ref{7.1}) has been achieved in (\ref{7.4}).
\qed
Another standard testing procedure turns the gradient estimate in (\ref{7.2}) into some possibly $\eps$-dependent but time-independent
control of the displacement variable:
\begin{lem}\label{lem8}
  Suppose that $\eps\in (0,1)$ is such that $\tme<\infty$. Then there exists $C(\eps)>0$ such that
  \be{8.1}
	\|\veps(\cdot,t)\|_{W^{2m,2}(\Om)} \le C(\eps)
	\qquad \mbox{for all } t\in (0,\tme)
  \ee
  and
  \be{8.2}
	\|\ueps(\cdot,t)\|_{W^{2m,2}(\Om)} \le C(\eps)
	\qquad \mbox{for all } t\in (0,\tme).
  \ee
\end{lem}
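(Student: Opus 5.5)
We will test the first equation of (\ref{0eps}) by $\Del^{2m}\veps$ and treat the result as a linear parabolic estimate with a forcing term controlled by Lemma \ref{lem6} and Lemma \ref{lem7}. Since $\Del^k\veps=\frac{\pa\Del^k\veps}{\pa\nu}=0$ on $\pO$ for $k\le m-1$, repeated integration by parts gives
\[
	\frac{1}{2}\frac{d}{dt}\io |\Del^m\veps|^2 + \eps\io |\Del^{2m}\veps|^2
	= \io \Big\{ \div(\D:\nas\veps) + \div(\C:\nas\ueps) - \div(\Teps\B) + \feps\Big\}\cdot \Del^{2m}\veps .
\]
By Young's inequality the right-hand side is bounded by
\[
	\frac{\eps}{2}\io|\Del^{2m}\veps|^2 + \frac{c}{\eps}\Big\{ \|\veps\|_{W^{2,2}(\Om)}^2 + \|\ueps\|_{W^{2,2}(\Om)}^2 + \io|\na\Teps|^2 + \io|\feps|^2\Big\}.
\]
Here the boundedness of $\D,\C,\B$ is used, and no derivative of $\Teps$ beyond the first appears because $\div(\Teps\B)=\B\na\Teps$.

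The lower-order terms are handled as follows.
\begin{itemize}
\item For $\veps$, interpolation between $L^2$ (bounded by (\ref{6.1})) and $W^{4m,2}$, together with elliptic regularity, gives $\|\veps\|_{W^{2,2}}^2\le \eta\|\Del^{2m}\veps\|_{L^2}^2+C_\eta\|\veps\|_{L^2}^2$. This term is therefore absorbed.
\item For $\ueps$, we use $\ueps(t)=u_{0\eps}+\int_0^t\veps$, which is legitimate because the finiteness of $\tme$ bounds $t$. Then $\|\ueps(t)\|_{W^{2,2}}\le c+\int_0^t\|\veps\|_{W^{2m,2}}$, and by (\ref{6.4}) together with Cauchy–Schwarz the right side is bounded by some $c(\eps)$. (Alternatively one may keep $\|\ueps\|_{W^{2m,2}}$ in a Grönwall quantity.)
\item The gradient term $\int_0^t\io|\na\Teps|^2$ is bounded by (\ref{7.2}).
\item The term $\int_0^t\io|\feps|^2$ is finite by (\ref{fge}).
\end{itemize}
Setting $y(t):=\io|\Del^m\veps|^2$, we then obtain $y'\le h(t)$ with $\int_0^{\tme}h\le c(\eps)$. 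Hence $y$ stays bounded, and elliptic regularity (the norm equivalence already used in Lemma \ref{lem6}) yields (\ref{8.1}). Finally, (\ref{8.2}) follows from $\ueps(t)=u_{0\eps}+\int_0^t\veps(s)\,ds$, (\ref{8.1}) and $\tme<\infty$.

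The main obstacle is justifying the high-order testing near $t=0$, since $\veps$ is merely in $C^0([0,\tme);W_0^{2m,2})$ and $\Del^{2m}\veps$ need not be square integrable up to $t=0$. We will circumvent this by starting the estimate from a small positive time $t_0$, where smoothness from Lemma \ref{lem1} is available. Alternatively, we will note that $v_{0\eps}\in C_0^\infty$ so that $\Del^m v_{0\eps}\in L^2$, and pass to the limit $t_0\searrow0$ via the continuity in (\ref{1.1}). A second point to check is that the boundary terms genuinely vanish: the $m$ integrations by parts moving $\Del^m$ from $\Del^{2m}\veps$ onto the right-hand factors act only through $\veps$'s boundary conditions when the weak formulation $\io \lan \D:\nas\veps,\nas\Del^{2m}\veps\ran$ is rewritten. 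To keep this clean, we will instead test against $\Del^{2m}\veps$ and use the bound $\|\div(\cdots)\|_{L^2}$ directly, as written above, which avoids any boundary term.
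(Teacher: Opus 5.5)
Your proposal is correct and follows essentially the same route as the paper. You test the first equation of (\ref{0eps}) by $\Del^{2m}\veps$, bound $\div(\D:\nas\veps)+\div(\C:\nas\ueps)-\div(\Teps\B)+\feps$ in $L^2(\Om\times(0,\tme))$ via (\ref{6.4}), (\ref{7.2}) and $\ueps=u_{0\eps}+\int_0^t\veps$, integrate in time, and conclude by elliptic regularity. The differences are harmless: you absorb the $\|\veps\|_{W^{2,2}(\Om)}^2$ term into the dissipation by interpolation, whereas the paper simply uses that it is time-integrable by (\ref{6.4}) because $m\ge 1$ and $\eps$ is fixed. You also add a justification of the testing procedure near $t=0$, which the paper leaves implicit.
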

\proof
  As $m\ge 1$, finiteness of $\tme$ ensures that due to Lemma \ref{lem6} we have
  \be{8.3}
	\int_0^t \|\veps(\cdot,s)\|_{W^{2,2}(\Om)} ds
	\le c_1(\eps)
	\qquad \mbox{for all } t\in (0,\tme)
  \ee
  with some $c_1(\eps)>0$, so that since
  \bea{8.4}
	\hs{-6mm}
	\|\ueps(\cdot,t)\|_{W^{2m,2}(\Om)}
	&=& \bigg\| u_{0\eps} + \int_0^t \veps(\cdot,s) ds \bigg\|_{W^{2m,2}(\Om)} \nn\\
	&\le& \|u_{0\eps}\|_{W^{2m,2}(\Om)}
	+ \tme \int_0^t \|\veps(\cdot,s)\|_{W^{2m,2}(\Om)}
	\qquad \mbox{for all } t\in (0,\tme),
  \eea
  by using Lemma \ref{lem7} and (\ref{fge}) we find $c_2(\eps)>0$ such that for
  $\heps:=\div (\D:\nas\veps) + \div (\C:\nas\ueps) - \div (\Teps\B) + \feps$ we have
  \be{8.5}
	\int_0^t \io |\heps|^2 \le c_2(\eps)
	\qquad \mbox{for all } t\in (0,\tme).
  \ee
  Now testing the identity $v_{\eps t} + \Del^{2m} \veps = \heps$ by $\Del^{2m} \veps$ and employing Young's inequality yields
  \bas
	\frac{1}{2} \frac{d}{dt} \io |\Del^m \veps|^2
	+ \eps\io |\Del^{2m}\veps|^2
	= \io \heps\cdot \Del^{2m} \veps
	\le \eps \io |\Del^{2m}\veps|^2
	+ \frac{1}{4\eps} \io |\heps|^2
	\quad \mbox{for all } t\in (0,\tme),
  \eas
  so that (\ref{8.5}) warrants that
  \bas
	\io |\Del^m \veps|^2
	\le \io |\Del^m v_{0\eps}|^2
	+ \frac{1}{2\eps} \int_0^t \io |\heps|^2
	\le \io |\Del^m v_{0\eps}|^2
	+ \frac{c_2(\eps)}{2\eps}
	\qquad \mbox{for all } t\in (0,\tme).
  \eas
  Again thanks to elliptic regularity theory, this confirms (\ref{8.1}) and thereby, through (\ref{8.4}), also entails (\ref{8.2}).
\qed
This improves our knowledge on regularity of the temperature gradient:
\begin{lem}\label{lem87}
  If $\eps\in (0,1)$ is such that $\tme<\infty$, then there exists $C(\eps)>0$ such that
  \be{87.1}
	\io |\na\Teps(\cdot,t)|^2 \le C(\eps)
	\qquad \mbox{for all } t\in (0,\tme).
  \ee
\end{lem}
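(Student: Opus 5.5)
We plan to test the third equation in (\ref{0eps}) by $\Theta_{\eps t}$. This produces $\frac{D}{2}\frac{d}{dt}\io|\na\Teps|^2$ on one side and the coercive term $\io\keps(\Teps)\Theta_{\eps t}^2$ on the other, and the latter is bounded from below by $\eps\io\Theta_{\eps t}^2$ thanks to (\ref{keps}). Since $\na\Theta_{0\eps}$ has compact support in $\Om$ and $\frac{\pa\Teps}{\pa\nu}=0$, the boundary terms vanish. Moreover, because of (\ref{1.1}) the solution is regular enough near $t=0$ for the computation to be justified, at least after integrating over $(t_0,t)$ and letting $t_0\searrow 0$. Writing the equation as in (\ref{7.33}), with $h_{1\eps}=-\lan\B,\nas\veps\ran$ and $h_{2\eps}=\lan\D:\nas\veps,\nas\veps\ran+\geps$, we obtain
\bas
	\frac{D}{2}\frac{d}{dt}\io|\na\Teps|^2 + \io\keps(\Teps)\Theta_{\eps t}^2
	= \io \big(h_{1\eps}\Teps+h_{2\eps}\big)\Theta_{\eps t}
	\qquad \mbox{for all } t\in (0,\tme).
\eas

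For the right-hand side we apply Young's inequality and absorb into $\eps\io\Theta_{\eps t}^2$, which leaves a bound of $\frac{1}{\eps}\io(h_{1\eps}\Teps+h_{2\eps})^2$. The key point is that this quantity is now bounded uniformly in $t\in(0,\tme)$. Indeed, Lemma \ref{lem8} yields $\|\veps(\cdot,t)\|_{W^{2m,2}(\Om)}\le C(\eps)$, and since $m>\frac{n+2}{4}$ the embedding $W^{2m,2}(\Om)\hra W^{1,\infty}(\Om)$ converts this into $L^\infty$ bounds for $\nas\veps$, hence for $h_{1\eps}$ and for the first part of $h_{2\eps}$. Lemma \ref{lem7} gives $\|\Teps\|_{L^\infty(\Om)}\le C(\eps)$, and $\geps$ is smooth and compactly supported by (\ref{fge}).

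With these ingredients, $\frac{d}{dt}\io|\na\Teps|^2\le c(\eps)$ on $(0,\tme)$. Integrating over the finite interval $(0,t)$ with $t<\tme<\infty$ then gives $\io|\na\Teps(\cdot,t)|^2\le\io|\na\Theta_{0\eps}|^2+c(\eps)\tme$, and the first term is finite by (\ref{ie}). This is precisely (\ref{87.1}).

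We do not expect a genuine obstacle. The only delicate point is whether testing by $\Theta_{\eps t}$ is legitimate up to $t=0$, because (\ref{1.1}) asserts $\Teps\in C^{2,1}(\bom\times[0,\tme))$. That regularity makes $t\mapsto\io|\na\Teps|^2$ continuous on $[0,\tme)$, so working on $(t_0,t)$ and letting $t_0\to0$ suffices.
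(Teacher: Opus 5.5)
Your proposal is correct and matches the paper's proof essentially step for step. Both test the heat equation by $\Theta_{\eps t}$, use $\keps\ge\eps$ to absorb the Young remainder, bound $\io h_\eps^2$ uniformly on $(0,\tme)$ via Lemma \ref{lem8} and Lemma \ref{lem7}, and integrate over the finite interval $(0,t)$. You also correctly keep the factor $D$ in front of $\frac{1}{2}\io|\na\Teps|^2$, which the paper's written computation omits.
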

\proof
  Again since $m\ge 1$, from Lemma \ref{lem8} and Lemma \ref{lem7} we obtain $c_1(\eps)>0$ such that 
  $h_\eps:=\lan \D:\nas\veps,\nas\veps\ran - \Teps \lan\B,\nas\veps\ran+\geps$ satisfies $\io \heps^2 \le c_1(\eps)$
  for all $t\in (0,\tme)$.
  Using that $\keps\ge \eps$, from the third equation in (\ref{0eps}) and Young's inequality we thus infer that
  \bas
	\eps \int_0^t \io \Tepst^2
	&\le& \int_0^t \io \keps(\Teps) \Tepst^2 
	= \int_0^t \io \Tepst \cdot \big\{ D\Del\Teps + \heps\big\} \\
	&=& - \frac{1}{2} \io |\na\Teps(\cdot,t)|^2 + \frac{1}{2} \io |\na\Theta_{0\eps}|^2
	+ \int_0^t \io \Tepst \heps \\
	&\le& - \frac{1}{2} \io |\na\Teps(\cdot,t)|^2 + \frac{1}{2} \io |\na\Theta_{0\eps}|^2
	+ \eps \int_0^t \io \Tepst^2
	+ \frac{c_1 \tme}{4\eps}
	\qquad \mbox{for all } t\in (0,\tme),
  \eas
  so that (\ref{87.1}) follows if we let $C(\eps):=\io |\na \Theta_{0\eps}|^2 + \frac{c_1 \tme}{2\eps}$.
\qed
In conclusion:
\begin{lem}\label{lem9}
  For each $\eps\in (0,1)$, the solution of (\ref{0eps}) is global in time; that is, in Lemma \ref{lem1} we have $\tme=\infty$.
\end{lem}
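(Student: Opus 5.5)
We argue by contradiction against the extensibility criterion (\ref{ext}) from Lemma \ref{lem1}. Fix $\eps\in (0,1)$ and suppose that $\tme<\infty$. The goal is to show that every quantity appearing in (\ref{ext}) remains bounded as $t\nearrow\tme$, which contradicts (\ref{ext}) and therefore forces $\tme=\infty$.

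The argument only needs to collect the $\eps$-dependent bounds already established under the hypothesis $\tme<\infty$.

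\textbf{Step 1 (the $W^{2m,2}$ terms).} Lemma \ref{lem8} gives $\|\veps(\cdot,t)\|_{W^{2m,2}(\Om)}+\|\ueps(\cdot,t)\|_{W^{2m,2}(\Om)}\le 2C(\eps)$ for all $t\in (0,\tme)$. This controls the first two summands in (\ref{ext}).

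\textbf{Step 2 (the $L^\infty$ term).} By (\ref{7.1}) in Lemma \ref{lem7}, $\|\Teps(\cdot,t)\|_{L^\infty(\Om)}$ is bounded on $(0,\tme)$. This bound comes from the comparison argument with the supersolution $\ov{\Theta}$.

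\textbf{Step 3 (the $W^{1,2}$ term).} We estimate $\|\Teps(\cdot,t)\|_{W^{1,2}(\Om)}$ by splitting it into two pieces:
\begin{itemize}
\item the $L^2$ part, which is at most $|\Om|^{1/2}\|\Teps(\cdot,t)\|_{L^\infty(\Om)}$ and hence bounded by Step 2;
\item the gradient part, which is bounded by Lemma \ref{lem87} through (\ref{87.1}).
\end{itemize}

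Summing Steps 1--3, the expression in (\ref{ext}) stays below a finite $\eps$-dependent constant throughout $(0,\tme)$. This is incompatible with the divergence that (\ref{ext}) requires when $\tme<\infty$, so $\tme=\infty$.

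There is no real obstacle here, since the substantive work was done in Lemmas \ref{lem7}--\ref{lem87}. The only point to check is that each of those lemmas was proved exactly under the hypothesis $\tme<\infty$ for this fixed $\eps$. That is the case: their constants depend on $\tme$ only through its finiteness, as in (\ref{8.4}) and in the choice of $C(\eps)$ in Lemma \ref{lem87}.
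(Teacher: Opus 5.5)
Your proof is correct and is the paper's argument: the paper also obtains $\tme=\infty$ by combining the $\eps$-dependent bounds of Lemma \ref{lem8}, Lemma \ref{lem7} and Lemma \ref{lem87} with the extensibility criterion (\ref{ext}). You additionally spell out how the $L^2$ part of $\|\Teps(\cdot,t)\|_{W^{1,2}(\Om)}$ is controlled, namely through the $L^\infty$ bound, which the paper leaves implicit.
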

\proof
  In view of Lemma \ref{lem8}, Lemma \ref{lem7} and Lemma \ref{lem87}, 
  this is an evident consequence of the extensibility criterion (\ref{ext}).
\qed
\mysection{The test function $\frac{\ln^2(\Teps+M)}{\Teps+M}$ in the thermal part}
Now the core of our existence theory consists in the following rigorous counterpart of (\ref{corner}).
\begin{lem}\label{lem12}
  There exists $(C(T))_{T>0}\subset (0,\infty)$ such that
  \be{12.1}
	\int_t^{t+1} \io \frac{\ln^2(\Teps+e)|\na\Teps|^2}{(\Teps+1)^2} \le C(T)
	\qquad \mbox{for all $t>0$ and } \eps\in (0,1),
  \ee
  that
  \be{12.2}
	\int_t^{t+1} \io \frac{\ln^2(\Teps+e)|\nas\veps|^2}{\Teps+1} \le C(T)
	\qquad \mbox{for all $t>0$ and } \eps\in (0,1),
  \ee
  and that
  \be{12.3}
	\sup_{T>0} C(T)<\infty
	\quad \mbox{if (\ref{fg_decay}) holds.}
  \ee
\end{lem}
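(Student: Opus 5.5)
The plan is to test the third equation in (\ref{0eps}) by $\psi(\Teps)$, where
\[
	\psi(\xi):=\frac{\ln^2(\xi+M)}{\xi+M}, \qquad \xi\ge 0,
\]
with $M\ge e$ fixed and large. This gives a rigorous version of (\ref{corner}), which I then integrate over $(t,t+1)$. Concretely, with $\lleps(\xi):=\int_0^\xi \keps(\sig)\psi(\sig)d\sig$, the equation and integration by parts (using $\frac{\pa\Teps}{\pa\nu}=0$) give
\[
	\frac{d}{dt}\io \lleps(\Teps)
	= -D\io \psi'(\Teps)|\na\Teps|^2
	+ \io \psi(\Teps)\lan\D:\nas\veps,\nas\veps\ran
	- \io \Teps\psi(\Teps)\lan\B,\nas\veps\ran
	+ \io \psi(\Teps)\geps .
\]

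The two dissipative terms are handled directly. A computation shows $\psi'(\xi)=\frac{2\ln(\xi+M)-\ln^2(\xi+M)}{(\xi+M)^2}$. So if $\ln M\ge 4$, then $-\psi'(\xi)\ge \frac{\ln^2(\xi+M)}{2(\xi+M)^2}$, and the first term is at least $\frac D2\io \frac{\ln^2(\Teps+M)|\na\Teps|^2}{(\Teps+M)^2}$. By (\ref{DC_lower}), the second term is at least $\kD\io\psi(\Teps)|\nas\veps|^2$. Since $\psi$ is bounded on $[0,\infty)$, the source satisfies $0\le\io\psi(\Teps)\geps\le c\|\geps\|_{L^1(\Om)}$.

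The crucial step is the thermal dilation term. A naive Young inequality leaves $\io \Teps\ln^2(\Teps+M)$, which the energy bound (\ref{6.3}) does not control. I would instead use the splitting
\[
	\Teps\psi(\Teps)=\ln^2(\Teps+M)-M\psi(\Teps).
\]
The part with $M\psi$ is harmless: by Young's inequality, $M\psi|\B||\nas\veps|\le \frac{\kD}{2}\psi|\nas\veps|^2+C\psi$, and $\psi$ is bounded. For the part with $\ln^2$, I use $\lan\B,\nas\veps\ran=\lan\B,\na\veps\ran$ and integrate by parts; there is no boundary term since $\veps=0$ on $\pO$. This gives
\[
	\io \ln^2(\Teps+M)\lan\B,\na\veps\ran
	= -2\io \frac{\ln(\Teps+M)}{\Teps+M}\,\veps\cdot\B\na\Teps .
\]
Young's inequality bounds the right-hand side by $\frac D4\io\frac{\ln^2(\Teps+M)|\na\Teps|^2}{(\Teps+M)^2}+C\io|\veps|^2$, and the last integral is controlled by (\ref{6.1}). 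Combining everything yields
\[
	\frac{d}{dt}\io\lleps(\Teps)\ge \frac1C\io\frac{\ln^2(\Teps+M)|\na\Teps|^2}{(\Teps+M)^2}+\frac1C\io\frac{\ln^2(\Teps+M)|\nas\veps|^2}{\Teps+M}-C\io|\veps|^2-C,
\]
which is the rigorous form of (\ref{corner}).

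It remains to integrate over $(t,t+1)$ and control the boundary values. Since $\lleps\ge 0$ and $\psi\le c$, we have $0\le \lleps\le cK_\eps$, so the value $\io\lleps(\Teps(\cdot,t+1))$ is bounded via (\ref{6.3}), and the value at time $t$ drops out by nonnegativity. The remaining integrals are bounded by (\ref{6.1}) and by $\int_t^{t+1}\|\geps\|_{L^1(\Om)}$. Comparability of weights then gives (\ref{12.1}) and (\ref{12.2}): $\ln(\xi+e)\le\ln(\xi+M)$, and $(\xi+M)\le M(\xi+1)$. Here the bound is read for $t\in(0,T)$, with constants depending on $T$ through Lemma \ref{lem6}. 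If (\ref{fg_decay}) holds, then by (\ref{6.5}) and (\ref{fge_decay}) all constants are independent of $T$, which gives (\ref{12.3}). The main obstacle is the dilation term, and the splitting of $\Teps\psi(\Teps)$ followed by integration by parts is what resolves it.
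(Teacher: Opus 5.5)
Your proposal is correct and follows essentially the same route as the paper: you test with $\frac{\ln^2(\Teps+M)}{\Teps+M}$ for $\ln M\ge 4$, split $\Teps\psi(\Teps)=\ln^2(\Teps+M)-M\psi(\Teps)$, integrate the $\ln^2$ part by parts so it can be absorbed by the diffusion term, and bound the endpoint value through $0\le\wh{\ell}_\eps\le\frac{4}{e^2}\Keps$ and Lemma \ref{lem6}. The bound on the $\geps$ term is unnecessary, since that term can simply be dropped by nonnegativity, and you are right that the estimate should be read for $t\in(0,T)$.
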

\proof
  We fix any $M\ge e^4$, and for $\eps\in (0,1)$ we let
  \bas
	\helleps(\xi):=\int_0^\xi \frac{\ln^2(\sig+M)\keps(\sig)}{\sig+M} d\sig,
	\qquad \xi\ge 0,
  \eas
  noting that
  \be{12.33}
	\helleps'(\xi) = \frac{\ln^2(\xi+M)\keps(\xi)}{\xi+M}
	\qquad \mbox{for all } \xi\ge 0,
  \ee
  and that since 
  \be{12.4}
	\ln \sig \le \frac{2}{e} \sqrt{\sig}
	\qquad \mbox{for all } \sig\ge 1,
  \ee
  we have
  \be{12.5}
	0 \le \helleps(\xi) \le \frac{4}{e^2} \int_0^\xi \keps(\sig) d\sig
	= \frac{4}{e^2} \Keps(\xi)
	\qquad \mbox{for all } \xi\ge 0
  \ee
  according to (\ref{Keps}).\abs
  Now testing the third equation in (\ref{0eps}) against $\frac{\ln^2(\Teps+M)}{\Teps+M}$, in line with (\ref{12.33}) we obtain the
  identity
  \bea{12.6}
	\hs{-8mm}
	\frac{d}{dt} \io \helleps(\Teps)
	&=& \io \frac{\ln^2(\Teps+M)}{\Teps+M} \cdot \keps(\Teps) \Theta_{\eps t} \nn\\
	&=& \io \frac{\ln^2(\Teps+M)}{\Teps+M} \cdot \Big\{ D\Del\Teps + \lan\D:\nas\veps,\nas\veps\ran 
		- \Teps\lan\B,\nas\veps\ran + \geps \Big\} \nn\\
	&=& D \io \bigg\{ \frac{\ln^2(\Teps+M)}{(\Teps+M)^2} - \frac{2\ln(\Teps+M)}{(\Teps+M)^2} \bigg\} \cdot |\na\Teps|^2 \nn\\
	& & + \io \frac{\ln^2(\Teps+M)}{\Teps+M} \cdot \lan \D:\nas\veps,\nas\veps\ran \nn\\
	& & - \io \frac{\Teps\ln^2(\Teps+M)}{\Teps+M} \cdot \lan\B,\nas\veps\ran
	+ \io \frac{\ln^2(\Teps+M)}{\Teps+M} \cdot\geps
	\qquad \mbox{for all } t>0,
  \eea
  where (\ref{DC_lower}) and the nonnegativity of $\geps$ ensure that
  \be{12.7}
	\io \frac{\ln^2(\Teps+M)}{\Teps+M} \cdot \lan \D:\nas\veps,\nas\veps\ran
	\ge \kD \io \frac{\ln^2(\Teps+M) |\nas\veps|^2}{\Teps+M}
	\qquad \mbox{for all } t>0
  \ee
  and
  \be{12.8}
	\io \frac{\ln^2(\Teps+M)}{\Teps+M} \cdot\geps \ge 0
	\qquad \mbox{for all } t>0.
  \ee
  Moreover, our restriction on $M$ warrants that $\ln M\ge 4$ and hence  
  \bas
	\ln^2(\xi+M)-2\ln(\xi+M)
	&=& \ln^2(\xi+M) \cdot \Big\{ 1 - \frac{2}{\ln(\xi+M)} \Big\} \\
	&\ge& \ln^2(\xi+M) \cdot \Big\{ 1 - \frac{2}{\ln M} \Big\} \\
	&\ge& \frac{1}{2} \ln^2(\xi+M)
	\qquad \mbox{for all } \xi\ge 0,
  \eas
  so that
  \be{12.9}
	D \io \bigg\{ \frac{\ln^2(\Teps+M)}{(\Teps+M)^2} - \frac{2\ln(\Teps+M)}{(\Teps+M)^2} \bigg\} \cdot |\na\Teps|^2 
	\ge \frac{D}{2} \io \frac{\ln^2(\Teps+M) |\na\Teps|^2}{(\Teps+M)^2}
	\qquad \mbox{for all } t>0.
  \ee
  In the crucial second to last integral in (\ref{12.6}), finally, we first split according to
  \bea{12.10}
	& & \hs{-20mm}
	- \io \frac{\Teps\ln^2(\Teps+M)}{\Teps+M} \cdot \lan\B,\nas\veps\ran \nn\\
	&=& - \io \frac{(\Teps+M-M)\ln^2(\Teps+M)}{\Teps+M} \cdot \lan\B,\nas\veps\ran \nn\\
	&=& - \io \ln^2(\Teps+M) \cdot \lan\B,\nas\veps\ran
	+ M \io \frac{\ln^2(\Teps+M)}{\Teps+M} \cdot \lan\B,\nas\veps\ran
	\qquad \mbox{for all } t>0,
  \eea
  where by Young's inequality, and again by (\ref{12.4}),
  \bea{12.11}
	\hs{-8mm}
	\bigg| 
	M \io \frac{\ln^2(\Teps+M)}{\Teps+M} \cdot \lan\B,\nas\veps\ran
	\bigg|
	&\le& M |\B| \io \frac{\ln^2(\Teps+M)}{\Teps+M} \cdot |\nas\veps| \nn\\
	&\le& \frac{\kD}{2} \io \frac{\ln^2(\Teps+M) |\nas\veps|^2}{\Teps+M}
	+ \frac{M^2 |\B|^2}{2\kD} \io \frac{\ln^2(\Teps+M)}{\Teps+M} \nn\\
	&\le& \frac{\kD}{2} \io \frac{\ln^2(\Teps+M) |\nas\veps|^2}{\Teps+M}
	+ \frac{2 M^2 |\B|^2 |\Om|}{e^2 \kD}
	\quad \mbox{for all } t>0.
  \eea
  In the first summand on the right of (\ref{12.10}), however, we once more integrate by parts to verify that
  \bas
	& & \hs{-20mm}
	- \io \ln^2(\Teps+M) \cdot \lan\B,\nas\veps\ran \\
	&=& - \frac{1}{2} \sum_{i,j=1}^n \io \ln^2(\Teps+M) \B_{ij} \cdot \big\{ \pa_j (\veps)_i + \pa_i (\veps)_j \big\} \\
	&=& \sum_{i,j=1}^n \io \frac{\ln(\Teps+M)}{\Teps+M} \cdot \B_{ij} \pa_j \Teps (\veps)_i
	+ \sum_{i,j=1}^n \io \frac{\ln(\Teps+M)}{\Teps+M} \cdot \B_{ij} \pa_i \Teps (\veps)_j \\
	&=& \io \frac{\ln(\Teps+M)}{\Teps+M} (\B\cdot\na\Teps) \cdot\veps
	+ \io \frac{\ln(\Teps+M)}{\Teps+M} \na\Teps \cdot (\B\cdot\veps)
	\qquad \mbox{for all } t>0,
  \eas
  so that we may here estimate against the diffusion-induced contribution to (\ref{12.6}) by using Young's inequality according to
  \bas
	\bigg| 	- \io \ln^2(\Teps+M) \cdot \lan\B,\nas\veps\ran \bigg|
	&\le& 2|\B| \io \frac{\ln(\Teps+M)}{\Teps+M} |\na\Teps| \cdot |\veps| \nn\\
	&\le& \frac{D}{4} \io \frac{\ln^2(\Teps+M) |\na\Teps|^2}{(\Teps+M)^2}
	+ \frac{4|\B|^2}{D} \io |\veps|^2
	\qquad \mbox{for all } t>0.
  \eas
  Together with (\ref{12.11}) inserted into (\ref{12.10}), this shows that (\ref{12.6})-(\ref{12.9}) imply the inequality
  \bas
	\frac{d}{dt} \io \helleps(\Teps)
	&\ge& \frac{D}{4} \io \frac{\ln^2(\Teps+M) |\na\Teps|^2}{(\Teps+M)^2}
	+ \frac{\kD}{2} \io \frac{\ln^2(\Teps+M) |\nas\veps|^2}{\Teps+M} \\
	& & - c_1 \io |\veps|^2
	- c_2
	\qquad \mbox{for all } t>0
  \eas
  with $c_1:=\frac{4|\B|^2}{D}$ and $c_2:=\frac{2M^2 |\B|^2 |\Om|}{e^2 \kD}$, which upon a time integration entails that
  \bas
	& & \hs{-36mm}
	\frac{D}{4} \int_t^{t+1} \io \frac{\ln^2(\Teps+M) |\na\Teps|^2}{(\Teps+M)^2}
	+ \frac{\kD}{2} \int_t^{t+1} \io \frac{\ln^2(\Teps+M) |\nas\veps|^2}{\Teps+M} \nn\\
	&\le& \io \helleps\big(\Teps(\cdot,t+1)\big)
	- \io \helleps\big(\Teps(\cdot,t)\big) 
	+ c_1 \int_t^{t+1} \io |\veps|^2
	+ c_2 \\
	&\le& \frac{4}{e^2} \io \Keps\big(\Teps(\cdot,t+1)\big)
	+ c_1 \int_t^{t+1} \io |\veps|^2
	+ c_2 
	\qquad \mbox{for all } t>0
  \eas
  because of (\ref{12.5}).
  An application of Lemma \ref{lem6} therefore leads to the claimed conclusion, as clearly
  $\ln^2(\xi+M)\ge\ln^2(\xi+e)$ and $\xi+M=\frac{\xi+M}{\xi+1}\cdot (\xi+1) \le M\cdot (\xi+1)$ for all $\xi\ge 0$.
\qed
\mysection{$L^1$ boundedness and uniform integrability properties of $\Teps$}
In order to turn (\ref{12.2}) into an $L\log L$ bound for $\nas\veps$ in the style of that announced in (\ref{LlogL}),
but also to prepare an appropriate handling of the temperature-driven contribution to the first equation in (\ref{0eps}),
this section aims at deriving some $L^1$ bounds and equi-integrability features of $(\Teps)_{\eps\in (0,1)}$
in the settings either of Theorem \ref{theo37} or of Theorem \ref{theo38}.
\subsection{A time-independent $L^1$ bound for $\Teps$ when $\liminf_{\xi\to\infty} \kappa(\xi) >0$}
In the case when $\kappa$ is such that $\liminf_{\xi\to\infty} \kappa(\xi)>0$,
its primitive $K$ grows at least linearly, so that (\ref{energy}) suggests time-independent boundedness properties
of $\io \Theta$. 
In making this precise on the basis of Lemma \ref{lem6}, we will utilize the following decomposition of the function 
in (\ref{Keps}) that will also be used in the limit passage performed in Lemma \ref{lem35} later on.
\begin{lem}\label{lem131}
  Assume (\ref{131.1}).
  Then there exist $a>0$, $\xis>0$, $(\Keps^{(1)})_{\eps\in (0,1)} \subset C^0([0,\infty))$ and 
  $(\Keps^{(2)})_{\eps\in (0,1)} \subset C^0([0,\infty))$ such that
  \be{131.2}
	\Keps(\xi) = a\xi + \Keps^{(1)}(\xi) - \Keps^{(2)}(\xi)
	\qquad \mbox{for all $\xi\ge 0$ and } \eps\in (0,1),
  \ee
  that
  \be{131.3}
	0 \le \Keps^{(1)} \le \Keps
	\quad \mbox{and} \quad
	0 \le \Keps^{(2)} \le a\xis
	\qquad \mbox{on } [0,\infty),
  \ee
  and that
  \be{131.5}
	\Keps^{(1)} \to K^{(1)}
	\quad \mbox{and} \quad
	\Keps^{(2)} \to K^{(2)}
	\quad \mbox{in } C^0_{loc}([0,\infty))
	\qquad \mbox{as } \eps\searrow 0
  \ee
  with some $K^{(1)} \in C^0([0,\infty))$ and $K^{(2)} \in C^0([0,\infty))$.
\end{lem}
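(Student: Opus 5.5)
By (\ref{131.1}) we can fix $a>0$ and $\xis>0$ such that $\kappa(\xi)\ge 2a$ for all $\xi\ge\xis$. By (\ref{keps}), $\keps(\xi)\ge \kappa(\xi)-\eps \ge 2a-1$, which is not yet enough. So we first shrink $a$: since $|\keps-\kappa|\le \frac{\eps}{(\xi+1)^2}$, enlarging $\xis$ if necessary so that $\frac{1}{(\xis+1)^2}\le a$ ensures
\[
\keps(\xi)\ge 2a-a=a \qquad \mbox{for all } \xi\ge\xis \mbox{ and all } \eps\in(0,1).
\]

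The natural decomposition uses positive and negative parts of $\keps-a$. We define
\[
\Keps^{(1)}(\xi):=\int_0^\xi (\keps(\sig)-a)_+\,d\sig
\qquad \mbox{and} \qquad
\Keps^{(2)}(\xi):=\int_0^\xi (\keps(\sig)-a)_-\,d\sig,
\qquad \xi\ge 0.
\]
Then (\ref{131.2}) is immediate from (\ref{Keps}), since $\keps=a+(\keps-a)_+-(\keps-a)_-$. Both functions are continuous and nonnegative. Moreover $(\keps-a)_+\le\keps$ because $\keps\ge\eps>0$, which gives $\Keps^{(1)}\le\Keps$. For the second function, $(\keps-a)_-$ vanishes on $[\xis,\infty)$ and is bounded by $a$ on $[0,\xis)$ since $\keps\ge0$. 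Hence $0\le\Keps^{(2)}(\xi)\le a\min\{\xi,\xis\}\le a\xis$, which establishes (\ref{131.3}).

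For (\ref{131.5}) we set
\[
K^{(1)}(\xi):=\int_0^\xi(\kappa-a)_+
\qquad \mbox{and} \qquad
K^{(2)}(\xi):=\int_0^\xi(\kappa-a)_-.
\]
The maps $s\mapsto s_\pm$ are $1$-Lipschitz, so $|(\keps-a)_\pm-(\kappa-a)_\pm|\le|\keps-\kappa|\le\frac{\eps}{(\sig+1)^2}$. Exactly as in Lemma \ref{lem_Keps}, this yields $|\Keps^{(i)}-K^{(i)}|\le\eps$ on $[0,\infty)$ for $i\in\{1,2\}$, which is even uniform convergence.

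The only delicate point is the first one: the lower bound for $\keps$ on $[\xis,\infty)$ has to hold uniformly in $\eps$. This is exactly what the decaying approximation error $\frac{\eps}{(\xi+1)^2}$ from (\ref{keps}) provides, once $\xis$ is taken large.
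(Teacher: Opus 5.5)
Your proof is correct, but your decomposition differs from the paper's. The paper fixes $c_1>0$ with $\kappa\ge c_1$ on $[1,\infty)$ and chooses $\xi_1\ge 1$ so that $\keps\ge\frac{c_1}{2}$ on $[\xi_1,\infty)$. It then splits the \emph{function} $\Keps(\xi)-a\xi$, with $a=\frac{c_1}{4}$, into its positive and negative parts, $\Keps^{(1)}:=(\Keps-a\xi)_+$ and $\Keps^{(2)}:=(\Keps-a\xi)_-$. A short computation is then needed to show that $\Keps(\xi)-a\xi\ge\frac{c_1\xi}{4}-\frac{c_1\xi_1}{2}\ge 0$ for $\xi\ge 2\xi_1$. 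This is why $a$ is only half of the lower bound $\frac{c_1}{2}$ and why $\xis=2\xi_1$.

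You instead split the \emph{density} $\keps-a$, with $a$ equal to the full uniform lower bound of $\keps$ on $[\xis,\infty)$. This buys three things:
\begin{itemize}
\item the bound $\Keps^{(2)}\le a\min\{\xi,\xis\}$ is immediate, since the integrand vanishes beyond $\xis$ and is at most $a$ before;
\item both pieces are $C^1$ and nondecreasing;
\item uniform convergence with rate $\eps$ follows directly from the $1$-Lipschitz property of $s\mapsto s_\pm$, exactly as in Lemma \ref{lem_Keps}.
\end{itemize}
The paper's pieces, in turn, are never simultaneously positive, and its $\Keps^{(2)}$ vanishes identically on $[\xis,\infty)$, whereas yours is only constant there. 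The paper's convergence is in fact also uniform, via Lemma \ref{lem_Keps} composed with $s\mapsto s_\pm$.

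None of these extra features is used later. Lemma \ref{lem13} needs only $\Keps^{(1)}\ge 0$ and $0\le\Keps^{(2)}\le a\xis$. Lemma \ref{lem35} needs the uniform bound on $\Keps^{(2)}$, nonnegativity of $\Keps^{(1)}$, local uniform convergence, and the limit identity $K=a\xi+K^{(1)}-K^{(2)}$. Your limits satisfy that identity directly, since $\kappa=a+(\kappa-a)_+-(\kappa-a)_-$. So your version serves equally well.

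One minor wording slip: you say you ``shrink $a$'', but what you actually do, correctly, is enlarge $\xis$.
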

\proof
  Due to (\ref{131.1}) we can fix $c_1>0$ such that $\kappa(\xi)\ge c_1$ for all $\xi\ge 1$, and choosing $\xi_1\ge 1$
  such that $\frac{1}{(\xi_1+1)^2} \le \frac{c_1}{2}$, from (\ref{keps}) we obtain that
  \bas
	\keps(\xi) \ge \kappa(\xi) - \frac{1}{(\xi+1)^2} \ge c_1 - \frac{c_1}{2} = \frac{c_1}{2}
	\qquad \mbox{for all } \xi\ge\xi_1.
  \eas
  Therefore, writing
  \be{131.6}
	\Keps^{(1)}(\xi):=\Big( \Keps(\xi)-\frac{c_1\xi}{4}\Big)_+
	\quad \mbox{and} \quad
	\Keps^{(2)}(\xi):=\Big( \Keps(\xi)-\frac{c_1\xi}{4}\Big)_-
	\qquad \mbox{for $\xi\ge 0$ and } \eps\in (0,1),
  \ee
  we obtain nonnegative continuous functions on $[0,\infty)$ which for each 
  $\eps\in (0,1)$ satisfy $\Keps^{(1)}(\xi)=\Keps(\xi)-\frac{c_1\xi}{4}$
  and $\Keps^{(2)}(\xi)=0$ for all $\xi\ge 2\xi_1$, because for any such $\xi$ we have $\frac{\xi}{4}\ge \frac{\xi_1}{2}$ and hence
  \bas
	\Keps(\xi) - \frac{c_1 \xi}{4}
	&=& \int_0^\xi \keps(\sig) d\sig - \frac{c_1 \xi}{4}
	\ge \int_{\xi_1}^\xi \keps(\sig) d\sig - \frac{c_1 \xi}{4} \\
	&\ge& \frac{c_1}{2}(\xi-\xi_1) - \frac{c_1 \xi}{4}
	= \frac{c_1 \xi}{4} - \frac{c_1 \xi_1}{2}
	\ge 0.
  \eas
  The claim thus follows if we let $a:=\frac{c_1}{4}$ and $\xis:=2\xi_1$ as well as 
  $K^{(1)}(\xi):=(K(\xi)-\frac{c_1}{4}\xi)_+$ and $K^{(2)}(\xi):=(K(\xi)-\frac{c_1}{4}\xi)_-$ for $\xi\ge 0$,
  because $\Keps^{(2)}(\xi) \le \frac{c_1 \xi}{4} \le \frac{c_1}{4} \cdot 2\xi_1$
  for all $\xi\in [0,2\xi_1]$ and $\eps\in (0,1)$,
  and because $K_\eps \to K$ in $C^0_{loc}([0,\infty))$ as $\eps\searrow 0$ by (\ref{keps}), implying that (\ref{131.5})
  follows from (\ref{131.6}).
\qed
Indeed, (\ref{6.3}) can thereby be seen to entail the expected consequence:
\begin{lem}\label{lem13}
  If (\ref{131.1}) holds, 
  then there exists $(C(T))_{T>0} \subset (0,\infty)$ such that
  \be{13.2}
	\io \Teps(\cdot,t) \le C(T)
	\qquad \mbox{for all $t\in (0,T)$ and } \eps\in (0,1),
  \ee
  and that
  \be{13.3}
	\sup_{T>0} C(T)<\infty
	\quad \mbox{if (\ref{fg_decay}) holds.}
  \ee
\end{lem}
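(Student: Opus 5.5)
The plan is to combine the decomposition from Lemma \ref{lem131} with the energy bound (\ref{6.3}) from Lemma \ref{lem6}. Lemma \ref{lem131} provides $a>0$ and $\xis>0$ with
\bas
	a\xi = \Keps(\xi) - \Keps^{(1)}(\xi) + \Keps^{(2)}(\xi)
	\le \Keps(\xi) + a\xis
	\qquad \mbox{for all $\xi\ge 0$ and } \eps\in (0,1),
\eas
where we use $\Keps^{(1)}\ge 0$ and $\Keps^{(2)}\le a\xis$ from (\ref{131.3}).

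Since $\Teps>0$ by Lemma \ref{lem1}, we may evaluate this at $\xi=\Teps(x,t)$ and integrate over $\Om$. This yields
\bas
	\io \Teps(\cdot,t) \le \frac{1}{a} \io \Keps(\Teps(\cdot,t)) + \xis |\Om|
	\qquad \mbox{for all $t>0$ and } \eps\in (0,1).
\eas
By Lemma \ref{lem9} we have $\tme=\infty$, so (\ref{6.3}) holds on all of $(0,T)$. Hence (\ref{13.2}) follows with the constant $\frac{1}{a}C_6(T)+\xis|\Om|$, where $C_6(T)$ is the constant from Lemma \ref{lem6}.

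Moreover, $a$ and $\xis$ do not depend on $T$. Therefore (\ref{13.3}) is inherited directly from (\ref{6.5}) whenever (\ref{fg_decay}) holds.

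There is no genuine obstacle. The only point needing care is that the lower bound $\Keps(\xi)\ge a\xi - a\xis$ must be uniform in $\eps$, and this is exactly what Lemma \ref{lem131} guarantees through the bound on $\Keps^{(2)}$.
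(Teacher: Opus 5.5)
Your proposal is correct and follows essentially the same argument as the paper: you combine the decomposition from Lemma \ref{lem131} (using $\Keps^{(1)}\ge 0$ and $\Keps^{(2)}\le a\xis$) with the bound (\ref{6.3}) and its time-uniform version (\ref{6.5}) from Lemma \ref{lem6}. Your explicit appeal to Lemma \ref{lem9} to remove the restriction $t<\tme$ is a harmless extra detail that the paper leaves implicit.
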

\proof
  We let $a, \xis, (\Keps^{(1)})_{\eps\in (0,1)}$ and $(\Keps^{(2)})_{\eps\in (0,1)}$ be as in Lemma \ref{lem131}, and then obtain
  from (\ref{131.2}) and (\ref{131.3}) that
  \bas
	a \io \Teps
	&=& \io \Keps(\Teps)
	- \io \Keps^{(1)}(\Teps)
	+ \io \Keps^{(2)}(\Teps) \\
	&\le& \io \Keps(\Teps)
	+ a\xis |\Om|
	\qquad \mbox{for all $t>0$ and } \eps\in (0,1).
  \eas
  The claim therefore directly results from Lemma \ref{lem6}.
\qed
\subsection{Uniform integrability properties of $\Teps$ when $\lim_{\xi\to\infty} \kappa(\xi) \ln^{(3-n)_+} \xi = +\infty$}
Next concerned with the scenario of Theorem \ref{theo38} in which (\ref{38.1}) is presupposed, 
we note that when $n\ge 3$ and thus $\kappa(\xi)$ is required to diverge to $+\infty$ as $\xi\to\infty$,
we may draw on the following simple observation on uniform integrability, actually valid irrespective of the size of $n$.
\begin{lem}\label{lem15}
  Suppose that
  \be{15.1}
	\kappa(\xi) \to + \infty
	\qquad \mbox{as } \xi\to\infty.
  \ee
  Then
  \be{15.2}
	\big(\Teps(\cdot,t)\big)_{t\in (0,T), \eps\in (0,1)}
	\mbox{ is uniformly integrable over $\Om$ for all } T>0,
  \ee
  and if moreover (\ref{fg_decay}) holds, then
  \be{15.3}
	\big(\Teps(\cdot,t)\big)_{t>0, \eps\in (0,1)}
	\mbox{ is uniformly integrable over $\Om$.}
  \ee
\end{lem}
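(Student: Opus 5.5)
The plan is to derive uniform integrability from the uniform bound (\ref{6.3}) on $\io \Keps(\Teps(\cdot,t))$ from Lemma \ref{lem6}, via a de la Vall\'ee Poussin type argument. The point is that under (\ref{15.1}) the primitive $K$ grows superlinearly, and by Lemma \ref{lem_Keps} the same holds for $\Keps$, uniformly in $\eps\in(0,1)$.

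First, I will show that $\frac{\Keps(\xi)}{\xi}\to\infty$ as $\xi\to\infty$, uniformly in $\eps$. Given $L>0$, (\ref{15.1}) yields $\xi_L\ge 1$ with $\kappa\ge 2L$ on $[\xi_L,\infty)$. Since $\kappa\ge 0$, for $\xi\ge 2\xi_L$ we get
\[
K(\xi)\ge 2L(\xi-\xi_L)\ge L\xi .
\]
By (\ref{K1}) this gives $\Keps(\xi)\ge L\xi-1$ for all $\xi\ge 2\xi_L$ and all $\eps\in(0,1)$. Hence we may take $\xi_L'\ge 2\xi_L$ so large that $\Keps(\xi)\ge \frac{L}{2}\xi$ for all $\xi\ge \xi_L'$ and $\eps\in(0,1)$.

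Next, I will control the mass of $\Teps$ on sets of small measure. Fix $T>0$, and let $C(T)$ be as in (\ref{6.3}), noting that $\tme=\infty$ by Lemma \ref{lem9}. For any measurable $E\subset\Om$, any $t\in(0,T)$ and any $\eps\in(0,1)$, split $E$ according to whether $\Teps\le\xi_L'$ or $\Teps>\xi_L'$. This gives
\[
\int_E \Teps(\cdot,t)\le \xi_L'|E| + \frac{2}{L}\int_{\{\Teps>\xi_L'\}}\Keps(\Teps(\cdot,t))\le \xi_L'|E|+\frac{2C(T)}{L},
\]
where nonnegativity of $\Keps$ is used. Given $\eta>0$, I first choose $L$ with $\frac{2C(T)}{L}<\frac{\eta}{2}$ and then require $|E|<\frac{\eta}{2\xi_L'}$. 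This yields $\int_E\Teps(\cdot,t)<\eta$ uniformly in $t\in(0,T)$ and $\eps$. Since $\Om$ is bounded, the same splitting with $E=\Om$ also gives a uniform $L^1$ bound, so (\ref{15.2}) follows.

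For (\ref{15.3}), the same computation applies with $C(T)$ replaced by $\sup_{T>0}C(T)$, which is finite by (\ref{6.5}) under (\ref{fg_decay}). There is no real obstacle here. The only point needing care is that the superlinearity of $\Keps$ is uniform in $\eps$, which is exactly what (\ref{K1}) provides.
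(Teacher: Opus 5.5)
Your proposal is correct and follows essentially the paper's route: $K$ grows superlinearly under (\ref{15.1}), this transfers uniformly in $\eps$ to $\Keps$ through $\Keps\ge K-1$ (from (\ref{keps}), or equivalently (\ref{K1})), and the conclusion then follows from (\ref{6.3}) and (\ref{6.5}). The only difference is that you carry out the de la Vall\'ee-Poussin step by hand, splitting at the level $\xi_L'$, whereas the paper first notes $\io K(\Teps)\le\io\Keps(\Teps)+|\Om|$ and then cites that theorem.
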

\proof
  Since $\keps(\xi)\ge\kappa(\xi)-\frac{1}{(\xi+1)^2}$ for all $\xi\ge 0$ and $\eps\in (0,1)$ by (\ref{keps}), it follows that
  the functions in (\ref{Keps}) and (\ref{K}) satisfy
  \bas
	\Keps(\xi) \ge \int_0^\xi \kappa(\sig) d\sig - \int_0^\xi \frac{d\sig}{(\sig+1)^2}
	= K(\xi) - 1 + \frac{1}{\xi+1} \ge K(\xi)-1
	\qquad \mbox{for all $\xi\ge 0$ and } \eps\in (0,1)
  \eas
  and thus
  \bas
	\io K(\Teps) \le \io \Keps(\Teps) + |\Om|
	\qquad \mbox{for all $t>0$ and } \eps\in (0,1).
  \eas
  Since, on the other hand, (\ref{15.1}) implies that
  \bas
	\frac{K(\xi)}{\xi} 
	= \frac{1}{\xi} \int_0^\xi \kappa(\sig) d\sig
	\ge \frac{1}{\xi} \int_{\frac{\xi}{2}}^\xi \kappa(\sig) d\sig
	\ge \frac{1}{2} \inf_{\sig>\frac{\xi}{2}} \kappa(\sig)
	\ \to + \infty
	\qquad \mbox{as } \xi\to\infty,
  \eas
  in view of the de la Vall\'ee-Poussin theorem both properties in (\ref{15.2}) and (\ref{15.3}) result from (\ref{6.3}) 
  and (\ref{6.5}).
\qed
We next focus on the case when $n=2$, in which (\ref{38.1}) in fact allows for some $\kappa$ which are asymptotically singular
in the sense that $\kappa(\xi)\to 0$ as $\xi\to\infty$.
In such situations, (\ref{6.3}) apparently fails to provide $L^1$ estimates for the $\Teps$, so that then an additional use
of the diffusion-related regularity feature recorded in (\ref{12.1}) seems in order.
As to be seen in Lemma \ref{lem17}, in planar settings this indeed leads to some information at least in space-time $L^1$ settings
thanks to a corresponding Moser-Trudinger inequality.\abs
As a first preparation for this, the following statement on the existence of functions exhibiting 
arbitarily slow growth has taken from \cite[Lemma 2.1]{wang_win_JLMS}.
\begin{lem}\label{lem_JLMS}
  $\rho_0: [0,\infty)\to (0,\infty)$ be nondecreasing and such that $\rho_0(\xi)\to +\infty$ as $\xi\to\infty$.
  Then there exists a strictly increasing function $\rho\in C^0([0,\infty))$ such that $\rho(\xi) \to +\infty$ as 
  $\xi\to\infty$, that
  \bas
	0 < \rho(\xi) \le \rho_0(\xi)
	\qquad \mbox{for all } \xi\ge 0,
  \eas
  and that
  \bas
	\rho(\xi^2) \le 2\rho(\xi)
	\qquad \mbox{for all } \xi\ge 0.
  \eas
\end{lem}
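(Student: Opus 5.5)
The plan is to build $\rho$ by a two-step construction: first make a slowly growing minorant of $\rho_0$ that is compatible with squaring, then smooth it into a strictly increasing continuous function. It is convenient to work in the variable $s=\ln\ln\xi$ for large $\xi$, since squaring $\xi$ corresponds to $s\mapsto s+\ln 2$. The target condition $\rho(\xi^2)\le 2\rho(\xi)$ then says, roughly, that the function $\tilde\rho(s):=\rho(e^{e^s})$ may at most double when $s$ moves forward by $\ln 2$. Any nondecreasing function with $\tilde\rho(s+\ln 2)\le\tilde\rho(s)+1$ and $\tilde\rho\ge 1$ satisfies this, because then $\tilde\rho(s)+1\le 2\tilde\rho(s)$. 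For $\xi\in[0,e]$ we have $\xi^2$ again in a bounded range, and there we handle the condition by taking $\rho$ constant-ish (nearly constant) on a bounded set.

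The concrete plan has three steps.

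\textbf{Step 1 (rescale).} Replace $\rho_0$ by $\min\{\rho_0,\rho_0(0)\}$ on a neighbourhood of zero. Choose $c:=\min\{\rho_0(0),1\}/2>0$. We will ensure $c\le\rho\le\rho_0$ throughout.

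\textbf{Step 2 (slow integer-valued staircase).} Choose thresholds $e^e=\xi_0<\xi_1<\xi_2<\cdots$ with $\xi_{k+1}\ge\xi_k^2$ and $\rho_0(\xi_k)\ge c(k+2)$; this is possible because $\rho_0\to\infty$ and $\rho_0$ is nondecreasing. On $[\xi_k,\xi_{k+1}]$ define $\rho$ by linear interpolation between $c(k+1)$ and $c(k+2)$, that is, piecewise linear in $\xi$. On $[0,\xi_0]$ let $\rho$ increase linearly from $c/2$ to $c$.

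Then the required properties follow:
\begin{itemize}
\item $\rho$ is continuous and strictly increasing, and $\rho\to\infty$.
\item On $[\xi_k,\xi_{k+1}]$ we have $\rho\le c(k+2)\le\rho_0(\xi_k)\le\rho_0(\xi)$, and on $[0,\xi_0]$ we have $\rho\le c\le\rho_0(0)\le\rho_0$.
\end{itemize}

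\textbf{Step 3 (the doubling estimate).} This is the main point. Take $\xi\in[\xi_k,\xi_{k+1}]$. Then $\xi^2\le\xi_{k+1}^2\le\xi_{k+2}$, so
\[
\rho(\xi^2)\le c(k+3)\le 2c(k+1)\le 2\rho(\xi)
\]
for $k\ge 1$. For $k=0$, $c\cdot 3\le 2c$ fails. I would fix this by starting the staircase values at $c(k+2)$ instead, that is, $\rho(\xi_k)=c(k+2)$, with the requirement $\rho_0(\xi_k)\ge c(k+3)$. Then $\rho(\xi^2)\le c(k+4)\le 2c(k+2)$ for all $k\ge 0$, and $\rho(\xi_0)=2c\le\rho_0(0)$ because $c\le\rho_0(0)/2$. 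The initial segment on $[0,\xi_0]$ then runs from $c$ to $2c$.

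It remains to check $\xi\in[0,\xi_0)$:
\begin{itemize}
\item If $\xi\le 1$, then $\xi^2\le\xi$, so $\rho(\xi^2)\le\rho(\xi)$.
\item If $1<\xi<\xi_0$, then $\xi^2<\xi_0^2\le\xi_1$, so $\rho(\xi^2)\le 3c\le 2\rho(\xi)$ provided $\rho(\xi)\ge\tfrac32c$. I arrange this by letting $\rho$ rise from $c$ to $\tfrac32c$ on $[0,1]$ and from $\tfrac32c$ to $2c$ on $[1,\xi_0]$.
\end{itemize}

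The only delicate point is this bookkeeping of the constants at the start. Everything else follows from $\xi_{k+1}\ge\xi_k^2$ together with the monotonicity of $\rho_0$.
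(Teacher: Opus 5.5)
The paper does not prove this lemma; it imports it from \cite{wang_win_JLMS}, so there is no in-paper argument to compare with, and your proposal has to stand on its own. In its final form it essentially does. Take $\rho$ piecewise linear through the values $c$, $\tfrac32 c$, $2c$ at $0$, $1$, $\xi_0$ and $c(k+2)$ at $\xi_k$, where $\xi_0>1$ and $\xi_{k+1}\ge\xi_k^2$. Then $\rho$ is continuous, strictly increasing, satisfies $\rho\ge c>0$, and tends to $\infty$. Note that $\xi_k\ge\xi_0^{2^k}\to\infty$, which you should state, since it is what makes $\rho$ defined on all of $[0,\infty)$. Your three cases $\xi\le 1$, $1<\xi<\xi_0$ and $\xi\in[\xi_k,\xi_{k+1}]$ correctly give $\rho(\xi^2)\le 2\rho(\xi)$. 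The last case runs via $\rho(\xi^2)\le\rho(\xi_{k+2})=c(k+4)\le 2c(k+2)\le 2\rho(\xi)$.

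There is one bookkeeping slip, exactly at the spot you flagged as delicate. After shifting the staircase, the comparison $\rho\le\rho_0$ on $[\xi_0,\xi_1]$ needs $\rho_0(\xi_0)\ge 3c$. You fixed $\xi_0=e^e$, however, and from $c\le\rho_0(0)/2$ you only know $\rho_0(\xi_0)\ge\rho_0(0)\ge 2c$. So your requirement $\rho_0(\xi_k)\ge c(k+3)$ at $k=0$ cannot be imposed, and it can genuinely fail. For example, if $\rho_0\equiv 1$ on $[0,100]$, then $c=\tfrac12$. Since $\xi_1\ge e^{2e}>100$, $\rho$ rises linearly from $1$ to $\tfrac32$ on an interval containing $[e^e,100]$, so $\rho>\rho_0$ on $(e^e,100]$.

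The repair is immediate. Nothing in your argument uses $\xi_0=e^e$ beyond $\xi_0>1$, so choose any $\xi_0>1$ with $\rho_0(\xi_0)\ge 3c$, which is possible because $\rho_0\to\infty$. Alternatively, set $c:=\min\{\rho_0(0),1\}/3$ so that $3c\le\rho_0(0)\le\rho_0(\xi_0)$ holds automatically. Your Step 1 plays no role (only $2c\le\rho_0(0)$ is used), and the $\ln\ln\xi$ heuristic is not needed for the proof.
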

Now if $\kappa$ does not decay too rapidly, then an elementary construction on the basis of this lemma may be used to find
a function which has two favorable growth properties, and which, in essence, is dominated by all of the $\Keps$ simultaneously:
\begin{lem}\label{lem16}
  Suppose that
  \be{16.1}
	\kappa(\xi) \cdot \ln \xi \to + \infty
	\qquad \mbox{as } \xi\to\infty.
  \ee
  Then there exist $C>0$ and $\uK\in C^1([0,\infty))$ such that $\uK>0$ on $[0,\infty)$ and
  \be{16.2}
	\uK(\xi) \le C \Keps(\xi) + C
	\qquad \mbox{for all $\xi\ge 0$ and } \eps\in (0,1),
  \ee
  that
  \be{16.3}
	0 \le \frac{(\xi+1) \uK'(\xi)}{\uK(\xi)+1} \le C
	\qquad \mbox{for all $\xi\ge 0$,}
  \ee
  and that
  \be{16.4}
	\frac{\uK(\xi) \ln \uK(\xi)}{\xi} \to + \infty
	\qquad \mbox{as } \xi\to\infty.
  \ee
\end{lem}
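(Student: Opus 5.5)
The plan is to build $\uK$ as the primitive of a slowly growing weight divided by a logarithm, where the weight comes from Lemma \ref{lem_JLMS}. The doubling property $\rho(\xi^2)\le 2\rho(\xi)$ supplied by that lemma is what gives (\ref{16.3}).

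\emph{Step 1: the weight.} By (\ref{16.1}) we fix $\xi_0\ge e$ such that $\kappa(\sig)\ln\sig\ge 1$ for all $\sig\ge\xi_0$. We then set
\[
\rho_0(\xi):=\inf_{\sig\ge\max\{\xi,\xi_0\}}\kappa(\sig)\ln\sig, \qquad \xi\ge 0 .
\]
This $\rho_0$ is nondecreasing, satisfies $\rho_0\ge 1$, and tends to $+\infty$ as $\xi\to\infty$ by (\ref{16.1}). Lemma \ref{lem_JLMS} then yields a continuous, strictly increasing $\rho$ with $0<\rho\le\rho_0$, $\rho(\xi)\to\infty$ and $\rho(\xi^2)\le 2\rho(\xi)$. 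We define
\[
\uK(\xi):=1+\int_0^\xi\frac{\rho(\sig)}{\ln(\sig+e)}\,d\sig, \qquad \xi\ge 0 .
\]
Then $\uK\in C^1([0,\infty))$ and $\uK\ge 1>0$.

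\emph{Step 2: (\ref{16.2}).} For $\sig\ge\xi_0$ we have $\rho(\sig)/\ln(\sig+e)\le\rho_0(\sig)/\ln\sig\le\kappa(\sig)$. Hence, for $\xi\ge\xi_0$, $\uK(\xi)\le c_1+K(\xi)$ with $c_1:=1+\int_0^{\xi_0}\rho/\ln(\sig+e)$. The same bound holds trivially for $\xi<\xi_0$, since $K\ge 0$ and $\uK$ is increasing. As in the proof of Lemma \ref{lem15}, (\ref{keps}) gives $K\le\Keps+1$. Together these give (\ref{16.2}).

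\emph{Step 3: the key lower bound.} For $\xi\ge 1$, monotonicity of $\rho$ and the doubling property give
\[
\uK(\xi)\ge\int_{\sqrt\xi}^{\xi}\frac{\rho(\sig)}{\ln(\sig+e)}\,d\sig\ge(\xi-\sqrt\xi)\,\frac{\rho(\sqrt\xi)}{\ln(\xi+e)}\ge\frac{(\xi-\sqrt\xi)\,\rho(\xi)}{2\ln(\xi+e)} .
\]
So for $\xi\ge 4$ we get $\uK(\xi)\ge\frac{\xi\rho(\xi)}{4\ln(\xi+e)}$.

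\emph{Step 4: (\ref{16.3}).} Since $(\xi+1)\uK'(\xi)=(\xi+1)\rho(\xi)/\ln(\xi+e)$, Step 3 bounds the quotient in (\ref{16.3}) by $4(\xi+1)/\xi\le 5$ for $\xi\ge 4$. On $[0,4]$ the quotient is a continuous function, hence bounded. Nonnegativity is clear.

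\emph{Step 5: (\ref{16.4}).} By Step 3 and $\rho\to\infty$, we have $\uK(\xi)\ge\sqrt\xi$ for large $\xi$. Hence $\ln\uK(\xi)\ge\frac12\ln\xi$ for large $\xi$, and
\[
\frac{\uK(\xi)\ln\uK(\xi)}{\xi}\ge\frac{\rho(\xi)\ln\xi}{8\ln(\xi+e)}\to\infty .
\]

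The main obstacle is Step 4, namely preventing $\uK'$ from being large relative to $\uK/\xi$. This is exactly why the weight is first tamed via Lemma \ref{lem_JLMS}, rather than using $\kappa$ itself, which may oscillate.
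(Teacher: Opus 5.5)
Your proof is correct and follows essentially the paper's route. Both take $\rho_0$ as a running infimum of $\kappa$ times a logarithm, regularize it through Lemma \ref{lem_JLMS}, and use the doubling property $\rho(\xi^2)\le 2\rho(\xi)$ to get $\uK(\xi)\gtrsim \xi\rho(\xi)/\ln(\xi+e)$, which yields both (\ref{16.3}) and (\ref{16.4}). The only difference is that you place the logarithm inside the integral, whereas the paper takes $\uK(\xi)=\frac{1}{\ln(\xi+e)}\int_0^\xi\rho$. As a result, your (\ref{16.2}) reduces to the pointwise comparison $\rho(\sig)/\ln(\sig+e)\le\kappa(\sig)$ plus Lemma \ref{lem_Keps}, which is slightly simpler than the paper's lower estimate for $\Keps$ via $\keps\ge\frac12\kappa$.
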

\proof
  Using that (\ref{16.1}) trivially implies that $(\xi+1)^2 \kappa(\xi) \to + \infty$ as $\xi\to\infty$, we pick $\xi_1>0$
  such that $\frac{1}{(\xi+1)^2} \le \frac{1}{2} \kappa(\xi)$ for all $\xi\ge\xi_1$, and hence infer from (\ref{keps}) that
  \be{16.44}
	\keps(\xi) \ge \kappa(\xi) - \frac{1}{(\xi+1)^2}
	\ge \frac{1}{2} \kappa(\xi)
	\qquad \mbox{for all $\xi\ge \xi_1$ and } \eps\in (0,1).
  \ee
  With this value of $\xi_1$ fixed henceforth, we let
  \be{16.5}
	\rho_0(\xi):=
	\lball
	\inf_{\sig\ge \xi_1} \big\{ \kappa(\sig) \cdot \ln (\sig+e) \big\},
	\qquad & \xi\in [0,\xi_1], \\[1mm]
	\inf_{\sig\ge \xi} \big\{ \kappa(\sig) \cdot \ln (\sig+e) \big\},
	\qquad & \xi>\xi_1,
	\ear
  \ee
  and thus obtain a positive and nondecreasing function $\rho_0$ on $[0,\infty)$	
  which due to (\ref{16.1}), now used in its full strength, satisfies $\rho_0(\xi)\to + \infty$ as $\xi\to\infty$.
  An application of Lemma \ref{lem_JLMS} therefore yields an increasing function $\rho\in C^0([0,\infty))$ which is such that
  \be{16.6}
	0 < \rho(\xi) \le \rho_0(\xi)
	\qquad \mbox{for all } \xi\ge 0,
  \ee
  that
  \be{16.66}
	\rho(\xi) \to + \infty
	\qquad \mbox{as } \xi\to\infty,
  \ee
  and that
  \be{16.7}
	\rho(\xi^2) \le 2\rho(\xi)
	\qquad \mbox{for all } \xi\ge 0.
  \ee
  We now let
  \be{16.8}
	\chi(\xi):=\frac{1}{\xi+1} \int_0^\xi \rho(\sig)d\sig,
	\qquad \xi\ge 0,
  \ee
  and define
  \be{16.9}
	\uK(\xi):=\chi(\xi) \cdot \frac{\xi+1}{\ln(\xi+e)},
	\qquad \xi\ge 0,
  \ee
  noting that $\chi$ and hence also $\uK$ belong to $C^1([0,\infty))$ by continuity of $\rho$, with
  \be{16.10}
	\chi'(\xi)
	= \frac{1}{\xi+1} \cdot \rho(\xi) - \frac{1}{(\xi+1)^2} \int_0^\xi \rho(\sig)d\sig
	\qquad \mbox{for all } \xi\ge 0.
  \ee
  As $\rho$ is nonnegative and nondecreasing, this implies that
  \bas
	\frac{1}{\xi+1} \cdot \rho(\xi)
	\ge \chi'(\xi)
	\ge \frac{1}{\xi+1} \cdot \rho(\xi) - \frac{1}{(\xi+1)^2} \cdot \xi\rho(\xi)
	\ge 0
	\qquad \mbox{for all } \xi\ge 0,
  \eas
  whence computing
  \bas
	\uK'(\xi) 
	= \chi'(\xi) \cdot \frac{\xi+1}{\ln(\xi+e)}
	+ \chi(\xi) \cdot \frac{\ln(\xi+e) - \frac{\xi+1}{\xi+e}}{\ln^2(\xi+e)}
	\qquad \mbox{for } \xi\ge 0,
  \eas
  we see that since $\frac{\xi+1}{\xi+e} \le 1 \le \ln(\xi+e)$ for all $\xi\ge 0$,
  \be{16.11}
	0 \le \uK'(\xi)
	\le \chi'(\xi) \cdot \frac{\xi+1}{\ln(\xi+e)}
	+ \frac{\chi(\xi)}{\ln(\xi+e)}
	\le \frac{\rho(\xi)+\chi(\xi)}{\ln(\xi+e)}
	\qquad \mbox{for all } \xi\ge 0.
  \ee
  We now observe that if we let $\xi_2:=\max\{4,2\xi_1\}$, then $(\frac{\xi}{2})^2 \ge \xi$ for all $\xi\ge \xi_2$, and that thus
  (\ref{16.7}) applies so as to ensure that since $\rho$ is nondecreasing,
  \be{16.111}
	\rho\Big(\frac{\xi}{2}\Big) \ge \frac{1}{2} \cdot \rho \Big( \Big(\frac{\xi}{2}\Big)^2\Big) \ge \frac{1}{2} \rho(\xi)
	\qquad \mbox{for all } \xi\ge \xi_2,
  \ee
  in line with (\ref{16.8}) meaning that
  \be{16.12}
	\chi(\xi)
	\ge \frac{1}{\xi+1} \int_{\frac{\xi}{2}}^\xi \rho\Big(\frac{\xi}{2}\Big) d\sig
	\ge \frac{1}{2(\xi+1)} \int_{\frac{\xi}{2}}^\xi \rho(\xi) d\sig
	= \frac{\xi}{4(\xi+1)} \rho(\xi) 
	\ge \frac{1}{5} \rho(\xi)
	\qquad \mbox{for all } \xi\ge \xi_2,
  \ee
  because $\frac{\xi}{\xi+1} \ge \frac{4}{5}$ for all $\xi\ge 4$.
  Therefore, (\ref{16.11}) together with (\ref{16.9}) implies that
  \be{16.13}
	0 \le \frac{(\xi+1)\uK'(\xi)}{\uK(\xi)+1}
	\le \frac{(\xi+1) \uK'(\xi)}{\uK(\xi)}
	\le \frac{(\xi+1)\cdot\frac{\rho(\xi)+\chi(\xi)}{\ln(\xi+e)}}{\chi(\xi)\cdot\frac{\xi+1}{\ln(\xi+e)}}
	= \frac{\rho(\xi)+\chi(\xi)}{\chi(\xi)}
	\le 6
	\qquad \mbox{for all } \xi\ge \xi_2,
  \ee
  while clearly
  \be{16.14}
	0 \le \frac{(\xi+1)\uK'(\xi)}{\uK(\xi)+1}
	\le (\xi+1)\uK'(\xi)
	\le (\xi+1) \cdot\frac{\rho(\xi)+\chi(\xi)}{\ln(\xi+e)}
	\le c_1:=(\xi_2+1)\cdot \big\{ \rho(\xi_2)+\chi(\xi_2)\big\}
	\ \mbox{for all } \xi\in [0,\xi_2),
  \ee
  so that (\ref{16.3}) holds whenever $C\ge\max\{6,c_1\}$.\abs
  Apart from that, (\ref{16.12}) along with (\ref{16.9}) implies that if we take $\xi_3\ge\xi_2$ such that in line with the
  unboundedness of $\rho$ we have $\frac{1}{5} \rho(\xi)\ge 2$ for all $\xi\ge \xi_3$, and that furthermore
  $\frac{\xi+1}{\ln(\xi+e)} \ge \frac{1}{2} \sqrt{\xi+e}$ for all $\xi\ge\xi_3$, then
  \bas
	\frac{\uK(\xi) \cdot \ln\uK(\xi)}{\xi}
	&\ge& \frac{\frac{1}{5} \rho(\xi) \cdot \frac{\xi+1}{\ln(\xi+e)} \cdot 
		\ln \Big\{ \frac{1}{5} \rho(\xi) \cdot \frac{\xi+1}{\ln(\xi+e)} \Big\}}{\xi} \\
	&\ge& \frac{\frac{1}{5} \rho(\xi) \cdot \frac{\xi+1}{\ln(\xi+e)} \cdot \ln \sqrt{\xi+e}}{\xi}
	= \frac{\xi+1}{10\xi} \rho(\xi)
	\ge \frac{1}{10} \rho(\xi)
	\qquad \mbox{for all } \xi\ge\xi_3,
  \eas
  so that (\ref{16.4}) results from (\ref{16.66}).\abs
  The estimate in (\ref{16.2}), finally, can be verified by combining (\ref{keps}) with (\ref{16.44}), (\ref{16.5}), (\ref{16.6})
  and (\ref{16.111}) 
  to see that
  \bas
	\Keps(\xi)
	&=& \int_0^\xi \keps(\sig) d\sig
	\ge \frac{1}{2} \int_{\xi_2}^\xi \kappa(\sig) d\sig
	= \frac{1}{2} \int_{\xi_2}^\xi \big\{ \kappa(\sig)\cdot\ln (\sig+e)\big\}\cdot\frac{1}{\ln(\sig+e)} d\sig \\
	&\ge& \frac{1}{2\ln(\xi+e)} \int_{\xi_2}^\xi \kappa(\sig)\cdot\ln (\sig+e) d\sig
	\ge \frac{1}{2\ln(\xi+e)} \int_{\xi_2}^\xi \rho_0(\sig) d\sig
	\ge \frac{1}{2\ln(\xi+e)} \int_{\xi_2}^\xi \rho(\sig) d\sig \\
	&\ge& \frac{1}{2\ln(\xi+e)} \int_{\frac{\xi}{2}}^\xi \rho\Big(\frac{\xi}{2}\Big) d\sig
	\ge \frac{\xi}{8\ln(\xi+e)} \rho(\xi)
	\qquad \mbox{for all $\xi\ge 2\xi_2$ and } \eps\in (0,1),
  \eas
  because $\xi_2\ge\xi_1$. Since $\chi(\xi)\le\frac{\xi}{\xi+1} \rho(\xi)$ for all $\xi\ge 0$ by (\ref{16.8}), namely,
  in view of (\ref{16.9}) this implies that $\uK(\xi) \le 8\Keps(\xi)$ for all $\xi\ge 2\xi_2$ and $\eps\in (0,1)$,
  and that thus (\ref{16.2}) holds whenever $C\ge\max\{8,\uK(2\xi_2)\}$.
\qed
As a last ingredient, let us recall from \cite[Lemma 2.3]{win_SIMA2020} the following consequence of the two-dimensional
Moser-Trudinger inequality.
\begin{lem}\label{lem444}
  Let $n=2$. Then for each $\eta>0$ there exists $C(\eta)>0$ with the property that
  whenever $\vp\in C^1(\bom)$ is nonnegative with $\vp\not\equiv 0$, we have
  \be{444.1}
	\io \vp \ln (\vp+1)
	\le \frac{1+\eta}{2\pi} \cdot \bigg\{ \io \vp \bigg\} \cdot \io\frac{|\nabla\vp|^2}{(\vp+1)^2}
	+  C(\eta) \cdot \bigg\{ \io \vp \bigg\}^3
	+ \Bigg\{ C(\eta) - \ln \bigg\{ \frac{1}{|\Omega|} \io \vp \bigg\} \Bigg\} \cdot \io \vp.
  \ee
\end{lem}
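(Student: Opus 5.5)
We plan to obtain (\ref{444.1}) by combining a duality (Jensen) argument with the two-dimensional Moser--Trudinger inequality on smoothly bounded planar domains. We will use the latter in the following form, due to Chang and Yang: for each $\eta>0$ there is $c_1(\eta)>0$ such that
\bas
	\ln \bigg\{ \frac{1}{|\Om|} \io e^{w} \bigg\}
	\le \frac{1+\eta}{8\pi} \io |\na w|^2 + \frac{1}{|\Om|} \io w + c_1(\eta)
	\qquad \mbox{for all } w\in W^{1,2}(\Om).
\eas
The constant $8\pi$, rather than $16\pi$, is the one appropriate without boundary conditions, because of boundary concentration. The essential choice will be $w:=2\ln(\vp+1)$. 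Then $\io e^w=\io (\vp+1)^2$ and $|\na w|^2=\frac{4|\na\vp|^2}{(\vp+1)^2}$, so that the prefactor $\frac{1+\eta}{8\pi}\cdot 4$ exactly produces the claimed $\frac{1+\eta}{2\pi}$.

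Write $m:=\io\vp>0$ and $\psi:=\vp+1$. We will apply Jensen's inequality to the concave function $\ln$ with respect to the probability measure $\frac{\vp}{m}dx$. We must \emph{not} use $\frac{\psi}{\io\psi}dx$ here: that choice would produce a factor $\io\psi=m+|\Om|$ in front of the gradient term, and the $|\Om|$-part could not be controlled by $m$. This gives
\bas
	\io \vp\ln\psi
	= m \io \frac{\vp}{m}\ln\psi
	\le m \ln\bigg\{ \frac{1}{m}\io \vp\psi\bigg\}
	\le m \ln \io \psi^2 - m\ln m,
\eas
since $\vp\psi\le\psi^2$. Inserting the Moser--Trudinger bound with $w=2\ln\psi$ yields
\bas
	\ln\io\psi^2
	\le \frac{1+\eta}{2\pi}\io\frac{|\na\vp|^2}{(\vp+1)^2}
	+ \frac{2}{|\Om|}\io\ln(\vp+1) + c_1(\eta) + \ln|\Om|.
\eas
We then use $\ln(\vp+1)\le\vp$ to get $\frac{2}{|\Om|}\io\ln(\vp+1)\le\frac{2m}{|\Om|}$. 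Rewriting $-m\ln m$ as $-m\ln\frac{m}{|\Om|}-m\ln|\Om|$, we arrive at
\bas
	\io\vp\ln(\vp+1)
	\le \frac{1+\eta}{2\pi}\, m\io\frac{|\na\vp|^2}{(\vp+1)^2}
	+ \frac{2m^2}{|\Om|} + c_1(\eta)\, m - m\ln\Big\{\frac{m}{|\Om|}\Big\}.
\eas

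It then remains only to absorb the quadratic term. By Young's inequality $m^2\le m^3+m$, so $\frac{2m^2}{|\Om|}\le \frac{2}{|\Om|}m^3+\frac{2}{|\Om|}m$. Choosing $C(\eta):=\max\{c_1(\eta)+\frac{2}{|\Om|},\frac{2}{|\Om|}\}$ then gives (\ref{444.1}).

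The only genuinely nontrivial ingredient is the Moser--Trudinger inequality with the sharp constant $\frac{1+\eta}{8\pi}$ on domains with boundary and with the mean $\frac{1}{|\Om|}\io w$ appearing linearly. We would quote this from the literature (Chang--Yang) rather than prove it. Everything else consists of the elementary Jensen step and the bookkeeping above. The main point requiring care is weighting Jensen's inequality by $\vp$ rather than by $\vp+1$, so that the gradient term carries the factor $\io\vp$ as stated.
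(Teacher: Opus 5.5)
Your proof is correct. Each step checks out:
\begin{itemize}
\item Jensen's inequality with respect to the probability measure $\frac{\varphi}{\int_\Omega\varphi}\,dx$;
\item the bound $\varphi(\varphi+1)\le(\varphi+1)^2$;
\item the Chang--Yang form of the Moser--Trudinger inequality applied to $w=2\ln(\varphi+1)$, which yields exactly $4\cdot\frac{1+\eta}{8\pi}=\frac{1+\eta}{2\pi}$;
\item the cancellation of the $\ln|\Omega|$ terms;
\item the final absorption $m^2\le m^3+m$.
\end{itemize}

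The paper gives no proof of this lemma; it only recalls it from \cite[Lemma 2.3]{win_SIMA2020}. Your argument is therefore a self-contained derivation of what the paper takes for granted. Before the final absorption it even gives the slightly sharper term $\frac{2}{|\Omega|}\big(\int_\Omega\varphi\big)^2$ in place of the cubic one.
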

In fact, a combination of (\ref{12.1}) with (\ref{6.3}) can therefore be seen to entail some uniform integrability properties
of the $\Teps$ also for some decaying $\kappa$ when $n=2$:
\begin{lem}\label{lem17}
  Let $n=2$, and assume that $\kappa$ satisfies (\ref{16.1}).
  Then there exists $(C(T))_{T>0}\subset (0,\infty)$ such that
  \be{17.01}
	\int_t^{t+1} \io \Teps \le C(T)
	\qquad \mbox{for all $t\in (0,T)$ and } \eps\in (0,1),
  \ee
  and that
  \be{17.02}
	\sup_{T>0} C(T)<\infty
	\qquad \mbox{if (\ref{fg_decay}) holds.}
  \ee
  Moreover, 
  \be{17.1}
	(\Teps)_{\eps\in (0,1)}
	\mbox{ is uniformly integrable over $\Om\times (0,T)$ for all $T>0$,}
  \ee
  and if (\ref{fg_decay}) holds, then
  \be{17.2}
	\Big( \Teps(\cdot,\cdot+t_0) \Big)_{t_0\ge 0, \eps\in (0,1)}
	\mbox{ is uniformly integrable over $\Om\times (0,1)$.}
  \ee
\end{lem}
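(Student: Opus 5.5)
The plan is to combine Lemma \ref{lem444} with the function $\uK$ from Lemma \ref{lem16}, applied not to $\Teps$ itself but to $\vp:=\uK(\Teps)$. The point is that $\io \uK(\Teps)\le C\io\Keps(\Teps)+C|\Om|$ is bounded on $(0,T)$ by (\ref{16.2}) and (\ref{6.3}), uniformly in $T$ under (\ref{fg_decay}) by (\ref{6.5}).

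For the gradient term we use (\ref{16.3}):
\[
\frac{|\na\uK(\Teps)|^2}{(\uK(\Teps)+1)^2}=\frac{\uK'(\Teps)^2|\na\Teps|^2}{(\uK(\Teps)+1)^2}\le \frac{C^2|\na\Teps|^2}{(\Teps+1)^2}.
\]
This is bounded by $C^2\frac{\ln^2(\Teps+e)|\na\Teps|^2}{(\Teps+1)^2}$, whose integral over $(t,t+1)$ is controlled by (\ref{12.1}).

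We then insert these into (\ref{444.1}) with $\eta=1$. The last term there, $\{C(1)-\ln(\frac{1}{|\Om|}\io\vp)\}\io\vp$, is bounded above by a constant, since $s\mapsto -s\ln s$ is bounded above on $(0,\infty)$ and $\io\vp$ is bounded. Because $\uK>0$, we have $\vp\not\equiv0$. Integrating over $(t,t+1)$ gives
\[
\int_t^{t+1}\io \uK(\Teps)\ln(\uK(\Teps)+1)\le C(T)
\]
for all $t\in(0,T)$ and $\eps\in(0,1)$, with a $T$-independent bound under (\ref{fg_decay}), by (\ref{12.3}) and (\ref{6.5}).

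From (\ref{16.4}) the function $\Psi(\xi):=\uK(\xi)\ln(\uK(\xi)+1)$ satisfies $\Psi(\xi)/\xi\to\infty$ as $\xi\to\infty$. It is also bounded below, since it is nonnegative. Hence $\xi\le \Psi(\xi)+c$ for all $\xi\ge0$, which yields (\ref{17.01}) and (\ref{17.02}). Since $\Psi$ is superlinear, the de la Vall\'ee-Poussin criterion gives uniform integrability over each window $\Om\times(t,t+1)$ of $(\Teps)_\eps$, uniformly in the window position for $t<T$. Covering $(0,T)$ by finitely many unit windows yields (\ref{17.1}). Under (\ref{fg_decay}) the window bound is independent of $t_0$, which gives (\ref{17.2}).

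The main obstacle is the application of Lemma \ref{lem444}, which requires $\vp\in C^1(\bom)$; this holds because $\uK\in C^1$ and $\Teps$ is smooth. A second concern is keeping the constants uniform in $\eps$. This is exactly why $\uK$ is built to be dominated by every $\Keps$ at once, and why the $\ln^2$ weight in (\ref{12.1}) is more than enough to absorb the bounded factor coming from (\ref{16.3}).
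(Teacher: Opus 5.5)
Your proposal is correct and follows the paper's proof essentially step by step: the paper likewise applies Lemma \ref{lem444} to $\vp=\uK(\Teps)$, bounds $\io\uK(\Teps)$ via (\ref{16.2}) and Lemma \ref{lem6}, bounds the gradient term via (\ref{16.3}) and Lemma \ref{lem12}, and concludes with the de la Vall\'ee-Poussin theorem using (\ref{16.4}). The differences are cosmetic: the paper absorbs the $-\ln(\frac{1}{|\Om|}\io\vp)$ term by Young's inequality rather than through boundedness of $-s\ln s$, and for windows $(t,t+1)$ with $t<T$ you need the bound from Lemma \ref{lem6} on $(0,T+1)$ rather than on $(0,T)$.
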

\proof
  Trivially estimating $\ln(\xi+e)\ge 1$ for $\xi\ge 1$ here, 
  from Lemma \ref{lem6} and Lemma \ref{lem12} we particularly infer the existence of $(c_i(T))_{T>0} \subset (0,\infty)$,
  $i\in\{1,2\}$, such that $\sup_{T>0} \big(c_1(T)+c_2(T)\big)<\infty$ if (\ref{fg_decay}) holds, and that for each $T>0$,
  \bas
	\io \Keps(\Teps) \le c_1(T)
	\qquad \mbox{for all $t\in (0,T+1)$ and } \eps\in (0,1)
  \eas
  as well as
  \bas
	\int_t^{t+1} \io \frac{|\na\Teps|^2}{(\Teps+1)^2} \le c_2(T)
	\qquad \mbox{for all $t\in (0,T)$ and } \eps\in (0,1).
  \eas
  We moreover combine Lemma \ref{lem444}, applied here to any $\eta>0$, with Young's inequality to find $c_3>0$ fulfilling
  \bas
	\io \vp \ln (\vp+1) \le c_3\cdot \bigg\{ \io \vp \bigg\} \cdot \io \frac{|\na\vp|^2}{(\vp+1)^2} 
	+ c_3 \cdot \bigg\{ \io \vp \bigg\}^3 + c_3
	\qquad \mbox{for all nonnegative } \vp\in C^1(\bom),
  \eas
  whereas Lemma \ref{lem16} yields $c_4>0$ and $c_5>0$ such that for the function $\uK$ constructed there we have
  \bas
	\uK(\xi) \le c_4\Keps(\xi)+c_4
	\quad \mbox{and} \quad
	0 \le \frac{(\xi+1)\uK'(\xi)}{\uK(\xi)+1} \le c_5
	\qquad \mbox{for all $\xi\ge 0$ and } \eps\in (0,1).
  \eas
  Given $T>0$, $t\in (0,T)$ and $\eps\in (0,1)$, we can therefore estimate
  \bas
	\int_t^{t+1} \io \uK(\Teps) \ln \big( \uK(\Teps)+1\big)
	&\le& c_3 \int_t^{t+1} \bigg\{ \io \uK(\Teps) \bigg\} \cdot \io \frac{\uK'^2(\Teps) |\na\Teps|^2}{(\uK(\Teps)+1)^2} \\
	& & + c_3 \int_t^{t+1} \bigg\{ \io \uK(\Teps)\bigg\}^3 + c_3 \\
	&\le& c_3 \int_t^{t+1} \bigg\{ c_4 \io \Keps(\Teps(\cdot,s)) + c_4 |\Om| \bigg\} 
		\cdot c_5^2 \io \frac{|\na\Teps(\cdot,s)|^2}{(\Teps(\cdot,s)+1)^2} ds \\
	& & + c_3 \int_t^{t+1} \bigg\{ c_4 \io \Keps(\Teps(\cdot,s)) + c_4 |\Om| \bigg\}^3 ds + c_3 \\
	&\le& c_3 c_4 \cdot (c_1(T)+|\Om|) c_5^2 c_2(T)
	+ c_3 c_4^3 \cdot (c_1(T)+|\Om|)^3 + c_3,
  \eas
  whence noting that according to (\ref{16.4}) we have $\frac{\uK(\xi) \ln (\uK(\xi)+1)}{\xi} \to + \infty$ as $\xi\to\infty$
  we do not only obtain (\ref{17.01})-(\ref{17.02}) as a particular consequence, but also infer (\ref{17.1})-(\ref{17.2})
  thanks to the de la Vall\'ee-Poussin theorem.
\qed
\mysection{An estimate for $\nas\veps$ in $L\log L$}
The following basic interpolation property will enable us to further develop (\ref{12.2}) by means of the $L^1$ bounds for
$\Teps$ obtained in the previous section.
\begin{lem}\label{lem231}
  If $\xi\ge e$ and $\eta\ge e^2$, then
  \be{231.1}
	\xi\ln\xi \le \frac{\xi^2 \ln^2 \eta}{\eta} + \eta.
  \ee
\end{lem}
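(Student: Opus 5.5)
Because $\eta$ enters only as a parameter on the right of (\ref{231.1}), I will split into two cases according to how $\xi$ compares with $\eta$. Both cases use only monotonicity of the logarithm and the assumptions $\xi\ge e$, $\eta\ge e^2$.

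\emph{Case $\xi\le\eta/\ln\eta$.} I will bound $\ln\xi\le\ln\eta$; this holds since $\eta/\ln\eta\le\eta$, because $\ln\eta\ge 2$. Hence
\[
	\xi\ln\xi \le \frac{\eta}{\ln\eta}\cdot\ln\eta = \eta ,
\]
and (\ref{231.1}) follows because the first summand on its right-hand side is nonnegative.

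\emph{Case $\xi>\eta/\ln\eta$.} Here I will compare $\xi\ln\xi$ with the first term $\xi^2\ln^2\eta/\eta$. The target inequality $\xi\ln\xi\le \xi^2\ln^2\eta/\eta$ is equivalent to
\[
	\eta\ln\xi\le \xi\ln^2\eta .
\]
The function $s\mapsto s/\ln s$ is nondecreasing on $[e,\infty)$, and $\xi\ge e$. Setting $s_0:=\max\{e,\eta/\ln\eta\}$, we have $\xi\ge s_0$, and therefore
\[
	\frac{\xi}{\ln\xi}\ge \frac{s_0}{\ln s_0}.
\]
If $s_0=\eta/\ln\eta$, then $\ln s_0\le\ln\eta$, so
\[
	\frac{s_0}{\ln s_0}\ge \frac{\eta/\ln\eta}{\ln\eta}=\frac{\eta}{\ln^2\eta},
\]
which is exactly the needed bound. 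If instead $s_0=e$, then $\eta/\ln\eta\le e$. This forces $\eta=e^2$ within the range $\eta\ge e^2$: at $\eta=e^2$ we have $\eta/\ln\eta=e^2/2>e$, and $\eta/\ln\eta$ is increasing there. In fact $\eta/\ln\eta\ge e^2/2>e$ for all $\eta\ge e^2$, so this subcase is empty and $s_0=\eta/\ln\eta$ always.

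The only point needing care is the monotonicity of $s/\ln s$ on $[e,\infty)$. Its derivative is $(\ln s-1)/\ln^2 s\ge 0$ there, and the threshold $\eta\ge e^2$ makes $\eta/\ln\eta\ge e$. Combining the two cases gives (\ref{231.1}).
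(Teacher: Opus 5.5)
Your proof is correct, and it takes a slightly different route from the paper's. The paper splits according to whether $\frac{\xi}{\ln\xi}\ge\frac{\eta}{\ln^2\eta}$. In that case the bound by the first summand is a tautology. In the complementary case it shows $\xi\ln\xi\le\eta$ by contradiction, using that $\sigma\mapsto\frac{\sigma}{\ln^2\sigma}$ is nondecreasing on $[e^2,\infty)$ and comparing its values at $\eta$ and at $\xi\ln\xi$.

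You instead split at the explicit threshold $\xi=\frac{\eta}{\ln\eta}$. This makes the branch $\xi\ln\xi\le\eta$ trivial and moves the monotonicity argument into the branch where the first summand dominates, now for the simpler function $s\mapsto\frac{s}{\ln s}$ on $[e,\infty)$. Your argument is direct rather than by contradiction, and it shows explicitly where the two terms trade places.

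A small bonus of your route: it only uses $\ln\eta\ge 1$ and $\frac{\eta}{\ln\eta}\ge e$. The latter holds for every $\eta>1$, since the minimum of $\frac{\eta}{\ln\eta}$ on $(1,\infty)$ is $e$, attained at $\eta=e$. So you actually prove (\ref{231.1}) for all $\eta\ge e$. The paper's argument needs $\eta\ge e^2$ for the monotonicity of $\frac{\sigma}{\ln^2\sigma}$, which is harmless in its application with $\eta=\Theta_\varepsilon+e^2$.

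One presentational point: the sentence claiming that $\frac{\eta}{\ln\eta}\le e$ ``forces $\eta=e^2$'' is false as written and contradicts your very next clause. Simply note that $\frac{\eta}{\ln\eta}\ge\frac{e^2}{2}>e$ for $\eta\ge e^2$. Then $s_0=\frac{\eta}{\ln\eta}$ always, and the auxiliary $s_0$ can be dropped altogether.
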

\proof
  In the case when $\xi\ge e$ and $\eta\ge e^2$ are such that $\frac {\xi}{\ln\xi} \ge \frac{\eta}{\ln^2\eta}$, we have
  \bas
	\frac{\xi\ln\xi}{\frac{\xi^2 \ln^2\eta}{\eta}} 
	= \frac{\ln \xi}{\xi} \cdot \frac{\eta}{\ln^2\eta} \le 1,
  \eas
  so that (\ref{231.1}) trivially follows.
  If, conversely, $\xi\ge e$ and $\eta\ge e^2$ have the property that $\frac{\xi}{\ln\xi}<\frac{\eta}{\ln^2 \eta}$,
  then we may use that for 
  $\vp(\sig):=\frac{\sig}{\ln^2\sig}$, $\sig\ge e^2$, we have $\vp'(\sig)=\frac{\ln \sig -2}{\ln^3 \sig} \ge 0$ for all $\sig\ge e^2$.
  Therefore, namely, any such $\xi$ and $\eta$ must satisfy $\xi\ln\xi\le \eta$, for otherwise $\eta<\xi\ln\xi$ and hence,
  as $\ln\xi\ge 1$,
  \bas
	\frac{\eta}{\ln^2\eta}
	= \vp(\eta) 
	\le \vp(\xi\ln\xi)
	= \frac{\xi\ln\xi}{\ln^2(\xi\ln\xi)}
	\le \frac{\xi\ln\xi}{\ln^2\xi} = \frac{\xi}{\ln\xi},
  \eas
  which is absurd. Consequently, (\ref{231.1}) holds also in this case.
\qed
Indeed, in either of the scenarios in Theorem \ref{theo37} and Theorem \ref{theo38} we can hence establish 
the following statement of fundamental importance for our existence theory.
\begin{lem}\label{lem23}
  Suppose that either (\ref{131.1}) or (\ref{38.1}) holds.
  Then for every $T>0$ there exists $C(T)>0$ such that
  \be{23.1}
	\int_0^T \io |\nas\veps| \ln \big( |\nas\veps|+e \big)
	\le C(T)
	\qquad \mbox{for all } \eps\in (0,1);
  \ee
  in particular, 
  \be{23.2}
	(\nas\veps)_{\eps\in (0,1)}
	\mbox{ is uniformly integrable over } \Om\times (0,T).
  \ee
\end{lem}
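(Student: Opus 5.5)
We apply Lemma \ref{lem231} pointwise with $\xi:=|\nas\veps|+e$ and $\eta:=\Teps+e^2$. Since $\xi\ge e$ and $\eta\ge e^2$, this gives in $\Om\times(0,T)$
\bas
	|\nas\veps|\ln\big(|\nas\veps|+e\big)
	\le \xi\ln\xi
	\le \frac{(|\nas\veps|+e)^2\ln^2(\Teps+e^2)}{\Teps+e^2} + \Teps + e^2 .
\eas
We then expand $(|\nas\veps|+e)^2\le 2|\nas\veps|^2+2e^2$. The logarithmic factor is controlled by $\ln(\xi+e^2)\le 2\ln(\xi+e)$ for $\xi\ge 0$, which holds because $(\xi+e)^2\ge \xi+e^2$. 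The denominator is controlled by $\xi+e^2\ge \xi+1$. Together these give
\bas
	\frac{(|\nas\veps|+e)^2\ln^2(\Teps+e^2)}{\Teps+e^2}
	\le 8\,\frac{\ln^2(\Teps+e)|\nas\veps|^2}{\Teps+1}
	+ 2e^2\,\frac{\ln^2(\Teps+e^2)}{\Teps+e^2} .
\eas
The last summand is bounded by a constant $c$, since $\sigma\mapsto \ln^2\sigma/\sigma$ is bounded on $[e^2,\infty)$; this follows from (\ref{12.4}).

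We integrate over $\Om\times(0,T)$ and cover $(0,T)$ by at most $\lceil T\rceil$ unit intervals $(t,t+1)$. The first term is then bounded uniformly in $\eps$ by (\ref{12.2}) from Lemma \ref{lem12}, using $\ln^2(\Teps+e)$ exactly as it appears there. The constant terms contribute at most $(c+e^2)|\Om|T$. The only remaining term is $\int_0^T\io\Teps$, and this is the one step that depends on which hypothesis holds.

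Under (\ref{131.1}), Lemma \ref{lem13} gives $\io\Teps(\cdot,t)\le C(T)$ for all $t<T$, so the term is bounded. Under (\ref{38.1}) we split into two cases.
\begin{itemize}
\item If $n\ge3$, then $\kappa(\xi)\to\infty$, so $K$ grows superlinearly. Lemma \ref{lem15} then gives uniform integrability, and in particular a uniform $L^1$ bound of $\Teps(\cdot,t)$ for $t\in(0,T)$. Alternatively, the bound follows directly from $K(\xi)\ge \xi - c$ combined with (\ref{6.3}) and the estimate $\Keps\ge K-1$.
\item If $n=2$, then (\ref{38.1}) is exactly (\ref{16.1}), so (\ref{17.01}) in Lemma \ref{lem17} bounds $\int_t^{t+1}\io\Teps$, again summed over unit intervals.
\end{itemize}
This yields (\ref{23.1}).

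Finally, (\ref{23.2}) follows from (\ref{23.1}) by the de la Vall\'ee-Poussin theorem, because $s\mapsto s\ln(s+e)$ is superlinear. The only point needing care in the whole argument is the elementary comparison of logarithmic factors in the first step; everything else is bookkeeping over earlier lemmas.
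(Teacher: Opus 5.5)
Your proof is correct and follows the paper's argument: you apply Lemma \ref{lem231} with $\xi=|\nas\veps|+e$ and $\eta=\Teps+e^2$, bound the weighted quadratic term via (\ref{12.2}), and control $\int_0^T\io\Teps$ through Lemma \ref{lem13} or Lemma \ref{lem17}. The deviations are only cosmetic. You bound $\frac{\ln^2(\Teps+e^2)}{\Teps+e^2}$ by a constant, where the paper uses the cruder bound $\Teps+e$. You also treat the case $n\ge 3$ under (\ref{38.1}) explicitly, which the paper covers implicitly because $\kappa\to\infty$ already entails (\ref{131.1}).
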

\proof
  From Lemma \ref{lem231} we know that
  \bea{23.3}
	& & \hs{-16mm}
	\int_0^T \io \big( |\nas\veps|+e\big) \ln \big( |\nas\veps|+e\big) \nn\\
	&\le& \int_0^T \io \frac{(|\nas\veps|+e)^2 \ln^2(\Teps+e^2)}{\Teps+e^2}
	+ \int_0^T \io (\Teps+e^2)
	\qquad \mbox{for all $T>0$ and } \eps\in (0,1).
  \eea
  Since $\ln(\xi+e^2) \le \ln \big\{ (\xi+e)^2 \big\} = 2\ln (\xi+e)$ and $\ln (\xi+e) \le \xi+e$ for all $\xi\ge 0$, 
  due to Young's inequality we here have
  \bas
	\int_0^T \io \frac{(|\nas\veps|+e)^2 \ln^2(\Teps+e^2)}{\Teps+e^2}
	&\le& 8 \int_0^T \io \frac{\ln^2(\Teps+e) |\nas\veps|^2}{\Teps+1}
	+ 8e^2 \int_0^T \io \frac{\ln^2(\Teps+e)}{\Teps+e} \\
	&\le& 8 \int_0^T \io \frac{\ln^2(\Teps+e) |\nas\veps|^2}{\Teps+1}
	+ 8e^2 \int_0^T \io (\Teps+e),
  \eas
  whence (\ref{23.3}) implies that
  \bas
	\hs{-2mm}
	\int_0^T \io |\nas\veps| \ln \big( |\nas\veps|+e\big)
	&\le&
	\int_0^T \io \big( |\nas\veps|+e\big) \ln \big( |\nas\veps|+e\big) \\
	&\le& 8 \int_0^T \io \frac{\ln^2(\Teps+e) |\nas\veps|^2}{\Teps+1} \\
	& & + (8e^2+1) \int_0^T \io \Teps
	+ (8e^3+e^2) |\Om| T
	\quad \mbox{for all $T>0$ and } \eps\in (0,1).
  \eas
  In both cases under consideration, the claim therefore results from Lemma \ref{lem12} when combined with either Lemma \ref{lem13}
  or Lemma \ref{lem17}.
\qed

\mysection{First-order estimates for renormalized versions of $\Teps$}
The following two further consequences of Lemma \ref{lem12} on regularity of the temperature variable, 
mainly needed for the extraction of a corresponding 
pointwise a.e.~convergent subsequence, can be obtained in a rather straightforward manner.
\begin{lem}\label{lem20}
  If either (\ref{131.1}) or (\ref{38.1}) holds, 
  then given any $\phi\in C^\infty([0,\infty))$
  fulfilling $\phi'\in C_0^\infty([0,\infty))$, for each $T>0$ one can find $C(\phi,T)>0$ and $(C(\phi,T,p))_{p>n} \subset (0,\infty)$
  such that
  \be{20.1}
	\int_0^T \io \Big| \na \phi\big(\Keps(\Teps)\big)\Big|^2 \le C(\phi,T)
	\qquad \mbox{for all } \eps\in (0,1),
  \ee
  and that whenever $p>n$,
  \be{20.2}
	\int_0^T \Big\| \pa_t \phi\big(\Keps(\Teps(\cdot,t)+1)\big)\Big\|_{(W^{1,p}(\Om))^\star} dt \le C(\phi,T,p)
	\qquad \mbox{for all } \eps\in (0,1).
  \ee
\end{lem}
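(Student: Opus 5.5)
Since $\phi'$ has compact support, say $\supp\phi'\subset[0,s_0]$, every term in $\na\phi(\Keps(\Teps))$ and $\pa_t\phi(\Keps(\Teps))$ vanishes wherever $\Keps(\Teps)>s_0$. The plan is to reduce everything to the region where $\Teps$ is bounded uniformly in $\eps$, and there to use Lemma \ref{lem12}.

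\textbf{Localization.} Under either (\ref{131.1}) or (\ref{38.1}), $K(\xi)\to\infty$ as $\xi\to\infty$. Together with Lemma \ref{lem_Keps} ($|\Keps-K|\le\eps$), this gives some $\xi_0=\xi_0(\phi)$ such that $\Keps(\xi)>s_0$ for all $\xi\ge\xi_0$ and all $\eps\in(0,1)$. Hence $\phi'(\Keps(\Teps))$ is supported in $\{\Teps<\xi_0\}$. Moreover $\keps\le\kappa+1\le c_1:=\max_{[0,\xi_0]}\kappa+1$ on $[0,\xi_0]$ by (\ref{keps}).

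\textbf{Gradient bound (\ref{20.1}).} Pointwise,
\[
|\na\phi(\Keps(\Teps))|\le\|\phi'\|_\infty c_1|\na\Teps|\chi_{\{\Teps<\xi_0\}} .
\]
On $\{\Teps<\xi_0\}$ we have $|\na\Teps|^2\le(\xi_0+1)^2\frac{\ln^2(\Teps+e)|\na\Teps|^2}{(\Teps+1)^2}$, since $\ln(\Teps+e)\ge1$. Summing (\ref{12.1}) over the unit intervals covering $(0,T)$ then yields (\ref{20.1}).

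\textbf{Time derivative (\ref{20.2}).} Fix $\psi\in W^{1,p}(\Om)$ with $p>n$, so that $\|\psi\|_{L^\infty}\le c\|\psi\|_{W^{1,p}}$. By the third equation in (\ref{0eps}),
\[
\pa_t\phi(\Keps(\Teps))=\phi'(\Keps(\Teps))\keps(\Teps)\Tepst
=\phi'(\Keps(\Teps))\{D\Del\Teps+\lan\D:\nas\veps,\nas\veps\ran-\Teps\lan\B,\nas\veps\ran+\geps\}.
\]
(The statement writes $\Teps+1$ in the argument; the same computation applies with the shifted argument.) Multiply by $\psi$ and integrate by parts in the diffusion term, using the Neumann condition. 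This produces
\[
-D\io\phi''(\Keps)\keps(\Teps)|\na\Teps|^2\psi-D\io\phi'(\Keps)\na\Teps\cdot\na\psi .
\]
Both pieces are localized to $\{\Teps<\xi_0\}$. The first is bounded by $c\|\psi\|_{L^\infty}\io|\na\Teps|^2\chi_{\{\Teps<\xi_0\}}$. The second is bounded by $c\|\na\Teps\chi_{\{\Teps<\xi_0\}}\|_{L^2}\|\na\psi\|_{L^2}$, and $\|\na\psi\|_{L^2}\le c\|\psi\|_{W^{1,p}}$ because $p>n\ge2$.

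For the remaining terms:
\begin{itemize}
\item The dissipation term is bounded by $c\|\psi\|_{L^\infty}\io|\nas\veps|^2\chi_{\{\Teps<\xi_0\}}$. Using $\frac{\ln^2(\Teps+e)}{\Teps+1}\ge\frac1{\xi_0+1}$ there, (\ref{12.2}) controls its time integral.
\item The term involving $\Teps\lan\B,\nas\veps\ran$ is bounded by $c\xi_0\io|\nas\veps|\chi_{\{\Teps<\xi_0\}}$, which is handled the same way after Cauchy–Schwarz.
\item The $\geps$ term is bounded by $c\|\geps\|_{L^1}\|\psi\|_{L^\infty}$, which is integrable in time by (\ref{fgec}).
\end{itemize}
Taking the supremum over $\|\psi\|_{W^{1,p}}\le1$ and integrating over $(0,T)$ gives (\ref{20.2}).

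\textbf{Main obstacle.} The only delicate point is the uniform-in-$\eps$ localization threshold $\xi_0$. This is exactly why (\ref{131.1}) or (\ref{38.1}) is required: each guarantees that $K$ is unbounded, since $\kappa\ln\xi\to\infty$ forces $\int^\infty\kappa=\infty$. Everything else is a direct consequence of Lemma \ref{lem12}, with no need for the $L^1$ bounds on $\Teps$.
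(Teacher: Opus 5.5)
Your argument is correct and is essentially the paper's proof. There too, divergence of $\int_0^\infty\kappa$ together with $|\Keps-K|\le\eps$ gives a threshold beyond which $\phi'(\Keps(\cdot))$ vanishes uniformly in $\eps$; the paper encodes this in the weighted bounds (\ref{20.4})--(\ref{20.5}) rather than your indicator functions, after which both estimates reduce to Lemma \ref{lem12} and (\ref{fgec}).

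One thing to retract is your parenthetical about the ``$+1$'' in (\ref{20.2}). That $+1$ is a typo: the paper's proof and its later use in Lemma \ref{lem24} concern $\pa_t\phi(\Keps(\Teps))$. For the shifted argument the computation does not carry over, since $\pa_t\phi(\Keps(\Teps+1))$ contains $\keps(\Teps+1)\Tepst$ instead of $\keps(\Teps)\Tepst$. This produces a factor $\keps(\Teps+1)/\keps(\Teps)$ that is not bounded uniformly in $\eps$ when $\kappa(0)=0$.
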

\proof
  Since $\int_2^\infty \frac{d\xi}{\ln\xi}=+\infty$, 
  in both considered cases we have $\int_0^\infty \kappa(\xi)d\xi= + \infty$.
  Thus, if we fix $c_1=c_1(\phi)>0$ such that $\phi'\equiv 0$ on $[c_1,\infty)$, then we can choose $\xi_1=\xi_1(\phi)>0$ in such 
  a way that $\int_0^{\xi_1} \kappa(\sig)d\sig \ge c_1+1$. As therefore
  \bas
	\Keps(\xi) \ge \int_0^\xi \kappa(\sig)d\sig - \int_0^\xi \frac{d\sig}{(\sig+1)^2}
	\ge c_1+1 - \int_0^\xi \frac{d\sig}{(\sig+1)^2} \ge c_1
	\qquad \mbox{for all $\xi\ge \xi_1$ and } \eps\in (0,1)
  \eas
  according to (\ref{keps}), writing $c_2\equiv c_2(\phi):=\|\kappa\|_{L^\infty((0,\xi_1))} +1$ and again relying on (\ref{keps})
  we obtain that
  \bea{20.4}
	& & \hs{-30mm}
	\big\{ (\xi+1)\keps(\xi) + (\xi+1) + \xi (\xi+1)^\frac{1}{2} \big\} \cdot \big| \phi'(\Keps(\xi))\big| \nn\\
	&\le& c_3\equiv c_3(\phi)
	:=\big\{ ((\xi_1+1)c_2 + (\xi_1+1) + \xi_1 (\xi_1+1)^\frac{1}{2} \big\} \cdot \|\phi'\|_{L^\infty((0,\infty))}
  \eea
  as well as
  \be{20.5}
	(\xi+1)^2 \cdot \big| \phi''(\Keps(\xi))\big| \cdot\keps(\xi)
	\le (\xi+1)^2 \cdot \big| \phi''(\Keps(\xi))\big| \cdot (\kappa(\xi)+1)
	\le c_4\equiv c_4(\phi):=(\xi_1+1)^2 \|\phi''\|_{L^\infty((0,\infty))} \cdot c_2
  \ee
  for all $\xi\ge 0$ and $\eps\in (0,1)$.
  Now for fixed $T>0$, (\ref{20.4}) guarantees that
  \bas
	\int_0^T \io \Big| \na\phi\big(\Keps(\Teps)\big)\Big|^2
	&=& \int_0^T \io \phi'^2\big(\Keps(\Teps)\big) \keps^2(\Teps) |\na\Teps|^2 \nn\\
	&\le& c_3^2 \int_0^T \io \frac{|\na\Teps|^2}{(\Teps+1)^2},
  \eas
  so that (\ref{20.1}) becomes a particular consequence of Lemma \ref{lem12}, as $\ln^2(\xi+e)\ge 1$ for all $\xi\ge 0$.\abs
  Furthermore, given $p>n$ we can pick $c_5=c_5(p)>0$ such that any $\vp\in C^1(\bom)$ fulfilling $\|\vp\|_{W^{1,p}(\Om)} \le 1$
  satisfies $\|\vp\|_{L^\infty(\Om)} + \|\na\vp\|_{L^2(\Om)} \le c_5$, whence from the third equation in (\ref{0eps}) and
  the Cauchy-Schwarz inequality we obtain that
  \bas
	\bigg| \io \pa_t \phi\big(\Keps(\Teps)\big) \vp \bigg|
	&=& \bigg| \io \phi'\big(\Keps(\Teps)\big) \keps(\Teps) \Theta_{\eps t} \vp \bigg| \\
	&=& \bigg| \io \phi'\big(\Keps(\Teps)\big) \cdot \Big\{
	D\Del\Teps + \lan\D:\nas\veps,\nas\veps\ran - \Teps \lan\B,\nas\veps\ran + \geps \Big\} \cdot \vp \bigg| \\
	&=& \bigg| - D \io \phi''\big(\Keps(\Teps)\big) \keps(\Teps) |\na\Teps|^2 \vp
	- D \io \phi'\big(\Keps(\Teps)\big) \na\Teps\cdot\na\vp \nn\\
	& & \hs{5mm}
	+ \io \phi'\big(\Keps(\Teps)\big) \lan\D:\nas\veps,\nas\veps\ran \vp
	- \io \Teps \phi'\big(\Keps(\Teps)\big) \lan\B,\nas\veps\ran \vp \\
	& & \hs{5mm}
	+ \io \phi'\big(\Keps(\Teps)\big) \geps\vp \bigg| \\
	&\le& c_5 D \io \big| \phi''\big(\Keps(\Teps)\big)\big| \keps(\Teps) |\na\Teps|^2
	+ c_5 D \cdot \bigg\{ \io \phi'^2\big(\Keps(\Teps)\big) |\na\Teps|^2 \bigg\}^\frac{1}{2} \\
	& & + c_5 \io \big|\phi'\big(\Keps(\Teps)\big)\big| \lan\D:\nas\veps,\nas\veps\ran
	+ c_5 |\B| \io \Teps \big|\phi'\big(\Keps(\Teps)\big)\big| \cdot |\nas\veps| \\
	& & + c_5 \io \big|\phi'\big(\Keps(\Teps)\big)\big| \geps
	\qquad \mbox{for all $t>0$ and $\eps\in (0,1)$.}
  \eas
  In view of (\ref{20.4}), (\ref{20.5}) and again the Cauchy-Schwarz inequality, we thus infer that
  for all $t>0$ and $\eps\in (0,1)$,
  \bas
	& & \hs{-20mm}
	\Big\| \pa_t \phi\big(\Keps(\Teps)\big) \Big\|_{(W^{1,p}(\Om))^\star} \\
	&\le& c_5 D \cdot c_4 \io \frac{|\na\Teps|^2}{(\Teps+1)^2}
	+ c_5 D \cdot \bigg\{ c_3^2 \io \frac{|\na\Teps|^2}{(\Teps+1)^2} \bigg\}^\frac{1}{2} \\
	& & + c_5 c_3 \io \frac{\lan\D:\nas\veps,\nas\veps\ran}{\Teps+1}
	+ c_5 |\B| \cdot \bigg\{ \io \frac{|\nas\veps|^2}{\Teps+1} \bigg\}^\frac{1}{2} \cdot \big\{ c_3^2 |\Om|\big\}^\frac{1}{2}
	+ c_5 c_3 \io \geps,
  \eas
  so that by Young's inequality,
  \bas
	& & \hs{-16mm}
	\int_0^T \Big\| \pa_t \phi\big(\Keps(\Teps(\cdot,t))\big) \Big\|_{(W^{1,p}(\Om))^\star} dt \\
	&\le& (c_4 c_5 D + c_3 c_5 D) \int_0^T \io \frac{|\na\Teps|^2}{(\Teps+1)^2}
	+ c_3 c_5 \int_0^T \io \frac{\lan\D:\nas\veps,\nas\veps\ran}{\Teps+1}
	+ c_5 |\B| \int_0^T \io \frac{|\nas\veps|^2}{\Teps+1} \\
	& & + c_3 c_5 \int_0^T \io \geps
	+ \frac{c_3 c_5 D T}{4}
	+ \frac{c_3^2 c_5 |\B| \cdot |\Om| \cdot T}{4}
	\qquad \mbox{for all } \eps\in (0,1).
  \eas
  Combining (\ref{fgec}) with Lemma \ref{lem12} therefore yields (\ref{20.2}).
\qed 
\mysection{Passing to the limit. Existence of solutions}
\subsection{Extracting subsequences and deriving (\ref{wu})}
A final preparation will be needed to conclude convergence of $\Teps(x,t)$ whenever $\Keps(\Teps(x,t))$ approximates
a finite number along a null sequence of parameters $\eps$:
\begin{lem}\label{lem22}
  Assume that $\int_0^\infty \kappa(\xi) d\xi= +\infty$,
  and suppose that $(\eps_j)_{j\in\N} \subset (0,1)$, $(\xi_j)_{j\in\N} \subset [0,\infty)$ and $z\ge 0$
  are such that $\eps_j\searrow 0$ and 
  \be{22.1}
	K_{\eps_j}(\xi_j)\to z
	\qquad \mbox{as } j\to\infty.
  \ee
  Then
  \be{22.2}
	\xi_j \to K^{-1}(z)
	\qquad \mbox{as } j\to\infty.
  \ee
\end{lem}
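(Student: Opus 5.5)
The plan is to combine the uniform closeness $|\Keps-K|\le\eps$ from Lemma \ref{lem_Keps} with the fact that $K$ is a homeomorphism of $[0,\infty)$ onto itself. First we record the properties of $K$. By (\ref{kappa_reg}), $\kappa>0$ on $(0,\infty)$, so $K$ is continuous and strictly increasing on $[0,\infty)$ with $K(0)=0$. The hypothesis $\int_0^\infty\kappa=+\infty$ gives $K(\xi)\to+\infty$ as $\xi\to\infty$. Hence $K:[0,\infty)\to[0,\infty)$ is a bijection, and its inverse $K^{-1}$ is continuous on $[0,\infty)$, as is the inverse of any strictly monotone continuous bijection between intervals. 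In particular $K^{-1}(z)$ is well-defined for the given $z\ge 0$.

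Next we transfer (\ref{22.1}) from $K_{\eps_j}$ to $K$. By (\ref{K1}),
\[
	|K(\xi_j)-z| \le |K(\xi_j)-K_{\eps_j}(\xi_j)| + |K_{\eps_j}(\xi_j)-z|
	\le \eps_j + |K_{\eps_j}(\xi_j)-z|,
\]
and both terms on the right tend to zero, so $K(\xi_j)\to z$ as $j\to\infty$. Continuity of $K^{-1}$ at $z$ then gives $\xi_j=K^{-1}(K(\xi_j))\to K^{-1}(z)$, which is (\ref{22.2}).

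If one prefers not to invoke continuity of the inverse abstractly, a subsequence argument does the same job. Since $K(\xi_j)$ is bounded and $K$ is unbounded and increasing, $(\xi_j)$ is bounded. Any convergent subsequence then has a limit $\xi_\star$ with $K(\xi_\star)=z$ by continuity of $K$, and injectivity forces $\xi_\star=K^{-1}(z)$, so the whole sequence converges. No step here is a real obstacle. The only point needing care is that unboundedness of $K$ ensures both that $K^{-1}(z)$ exists for every $z\ge0$ and that $(\xi_j)$ stays bounded; this is exactly where the assumption $\int_0^\infty\kappa=\infty$ enters.
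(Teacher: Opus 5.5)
Your proof is correct and uses the same two ingredients as the paper, namely the uniform bound (\ref{K1}) and strict monotonicity of $K$. The only difference is in where (\ref{K1}) is applied.

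The paper argues by contradiction. It uses monotonicity of $K_{\eps_j}$ to compare $K_{\eps_{j_i}}(\xi_{j_i})$ with $K_{\eps_{j_i}}(K^{-1}(z)\pm\delta)$. It then applies (\ref{K1}) at these fixed points to obtain $\limsup_{j\to\infty}\xi_j\le K^{-1}(z)\le\liminf_{j\to\infty}\xi_j$.

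You instead apply (\ref{K1}) at the moving points $\xi_j$ to get $K(\xi_j)\to z$ directly, and then use continuity of $K^{-1}$. This is slightly more economical, since it never needs monotonicity of the approximations $K_{\eps}$.
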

\proof
  If $(j_i)_{i\in\N} \subset\N$ was such that $j_i\to\infty$ as $i\to\infty$, but that with $\xi_0:=K^{-1}(z)$ and some
  $\del>0$ we had $\xi_{j_i} \ge \xi_0+\del$ for all $i\in\N$, then since $K_{\eps_{j_i}}'\ge 0$ for all $i\in\N$, recalling
  (\ref{K1}) we would see that
  \bas
	K_{\eps_{j_i}}(\xi_{j_i}) \ge K_{\eps_{j_i}}(\xi_0+\del) 
	\ge K(\xi_0+\del)-\eps_{j_i}
	= z + \big\{ K(\xi_0+\del)-z\big\} - \eps_{j_i}
	\qquad \mbox{for all } i\in\N.
  \eas
  As $K(\xi_0+\del)>K(\xi_0)=z$ by strict monotonicity of $K$, however, this would contradict (\ref{22.1}), whence, in fact,
  $\limsup_{j\to\infty} \xi_j\le K^{-1}(z)$.
  Along with an analogous argument asserting that $\liminf_{j\to\infty} \xi_j\ge K^{-1}(z)$, this confirms (\ref{22.2}).
\qed
We are now in the position to construct candidates for solutions to (\ref{0}) by means of suitable subsequence extraction.\abs
Our first statement in this regard addresses the situation from Theorem \ref{theo37}, and yields a triple $(u,\Theta,\mu)$
which complies with the respective regularity requirements in (\ref{reg37}) and Definition \ref{dw}, and which can already 
here be seen to satisfy (\ref{wu}): 
\begin{lem}\label{lem24}
  If (\ref{131.1}) holds, 
  then there exists $(\eps_j)_{j\in\N} \subset (0,1)$ such that $\eps_j\searrow 0$ as $j\to\infty$, and that with some 
  \be{24.1}
	\lbal
	u \in C^0([0,\infty);L^2(\Om;\R^n)) \cap L^\infty_{loc}([0,\infty);W_0^{1,2}(\Om;\R^n)), \\[1mm]
	\Theta\in L^\infty_{loc}([0,\infty);L^1(\Om))
	\qquad \mbox{and} \\[1mm]
	\mu \in L^\infty_{w-\star,loc}([0,\infty);\M_+(\bom))
	\ear
  \ee
  which are such 
  that $\Theta\ge 0$ a.e.~in $\Om\times (0,\infty)$, that (\ref{w23}), (\ref{w44}) and (\ref{w5}) hold,
  and that for each $\phi_0\in C^1([0,\infty))$ with $\phi_0'$ having compact support, and for arbitrary $T>0$, we have
  \begin{eqnarray}
	& & \veps \wto u_t
	\qquad \mbox{in } L^2(\Om\times (0,T);\R^n), 
	\label{24.4} \\
	& & \nas\veps\wto \nas u_t
	\qquad \mbox{in } L^1(\Om\times (0,T);\R^n), 
	\label{24.44} \\
	& & \ueps \to u
	\qquad \mbox{in } C^0([0,T];L^2(\Om;\R^n)),
	\label{24.45} \\
	& & \ueps \wto u
	\qquad \mbox{in } L^2((0,T);W^{1,2}(\Om;\R^n)),
	\label{24.5} \\
	& & \Teps \to \Theta
	\qquad \mbox{a.e.~in } \Om\times (0,T),
	\label{24.6} \\
	& & \Teps \wsto \Theta+\mu
	\qquad \mbox{in } L^\infty((0,T);\M_+(\bom))
	\qquad \mbox{and}
	\label{24.7} \\
	& & \na \phi_0(\Teps) \wto \na \phi_0(\Theta)
	\qquad \mbox{in } L^2(\Om\times (0,T);\R^n)
	\label{24.8} 
  \end{eqnarray}
  as $\eps=\eps_j\searrow 0$.
  These limit objects are such that (\ref{wu}) holds for each $\vp\in C_0^\infty(\Om\times [0,\infty);\R^n)$.
\end{lem}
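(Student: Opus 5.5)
The plan is to extract a diagonal subsequence from the $\eps$-uniform bounds already at hand, and then pass to the limit in the weak form of the first equation in (\ref{0eps}).

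\textbf{Compactness for $(\ueps,\veps)$.} By Lemma \ref{lem6}, $(\veps)$ is bounded in $L^\infty((0,T);L^2(\Om))$ and $(\ueps)$ in $L^\infty((0,T);W_0^{1,2}(\Om))$. Since $u_{\eps t}=\veps$, the Aubin--Lions lemma (or Arzel\`a--Ascoli together with the compact embedding $W^{1,2}\hra L^2$) gives (\ref{24.45}) and (\ref{24.5}) along a subsequence. The identification of the weak limit in (\ref{24.4}) as $u_t$ follows from distributional time derivatives. Lemma \ref{lem23} provides uniform integrability of $(\nas\veps)$ on $\Om\times(0,T)$, so the Dunford--Pettis theorem yields (\ref{24.44}), with the limit necessarily equal to $\nas u_t$. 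The regularity claims (\ref{w23}) and $u\in L^\infty_{loc}W^{1,2}_0$ follow by lower semicontinuity.

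\textbf{Compactness for $\Teps$.} Since (\ref{131.1}) implies $\int_0^\infty\kappa=\infty$, fix $\phi$ with $\phi'\in C_0^\infty$ strictly increasing on a large range; for instance, take $\phi_k(\xi)=\xi$ on $[0,k]$, made smooth and constant beyond. Lemma \ref{lem20} bounds $\phi_k(\Keps(\Teps))$ in $L^2((0,T);W^{1,2})$ and its time derivative in $L^1((0,T);(W^{1,p})^\star)$. The Aubin--Lions lemma then gives strong $L^2$, hence a.e., convergence of $\phi_k(\Keps(\Teps))$ for each $k$, and a diagonal argument gives a.e. convergence of $\Keps(\Teps)$ in $[0,\infty]$. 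By Lemma \ref{lem13}, $\Teps$ is bounded in $L^1$, so Fatou's lemma combined with the linear lower bound from Lemma \ref{lem131} shows the limit is finite a.e. Lemma \ref{lem22} then converts this into (\ref{24.6}) with $\Theta:=K^{-1}(z)$. Fatou's lemma further yields $\Theta\in L^\infty_{loc}L^1$ and (\ref{w44}).

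\textbf{Measure part and gradients.} Lemma \ref{lem13} bounds $(\Teps)$ in $L^\infty((0,T);\M_+(\bom))$, which is the dual of $L^1((0,T);C^0(\bom))$. Weak-$\star$ compactness therefore gives a limit $\nu$. We set $\mu:=\nu-\Theta$, and nonnegativity of $\mu$ follows from Fatou's lemma applied against nonnegative test functions. For (\ref{24.8}), we write $\phi_0(\Teps)=\psi(\Keps(\Teps))$ and use the $L^2$ bound for $\na\phi_0(\Teps)$ obtained from Lemma \ref{lem12}, since $|\phi_0'|$ is compactly supported. This gives weak compactness, and the limit is identified via the a.e. convergence (\ref{24.6}) together with dominated convergence of $\phi_0(\Teps)$. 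This also yields (\ref{w5}).

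\textbf{Passing to the limit in (\ref{wu}).} Test the first equation of (\ref{0eps}) with $\vp\in C_0^\infty(\Om\times[0,\infty))$. The term $\eps\int\!\!\int\veps\cdot\Del^{2m}\vp$ is $O(\sqrt\eps)$ by (\ref{6.1}), or alternatively by (\ref{6.4}). The terms involving $\D$ and $\C$ pass to the limit by (\ref{24.44}) and (\ref{24.5}), and the forcing term by (\ref{fgec}). The initial terms pass by (\ref{iec}). Finally, $\int\!\!\int\Teps\lan\B,\na\vp\ran$ converges to $\int\!\!\int\Theta\lan\B,\na\vp\ran+\int\!\!\int_{\bom}\lan\B,\na\vp\ran d\mu$ by (\ref{24.7}).

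I expect the main obstacle to be the a.e. convergence of $\Teps$. Only renormalized quantities carry gradient bounds, and finiteness of the limit must be ruled in using the $L^1$ bound, so the diagonal renormalization argument combined with Lemma \ref{lem22} is the delicate point.
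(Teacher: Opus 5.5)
Your proposal is correct and follows essentially the same route as the paper: bounds from Lemma \ref{lem6} and Lemma \ref{lem13}, Aubin--Lions for $\phi_k(\Keps(\Teps))$ via Lemma \ref{lem20} with a diagonal extraction, and Dunford--Pettis via Lemma \ref{lem23}. It then uses Lemma \ref{lem22} to get $\Teps\to K^{-1}(z)$ a.e., a weak-$\star$ limit in $L^\infty((0,T);\M_+(\bom))$ with $\mu$ defined as its difference from $\Theta$ and shown nonnegative by Fatou's lemma, and finally passes to the limit in the tested first equation. Your explicit check that the a.e.\ limit of $\Keps(\Teps)$ is finite, which the paper leaves implicit and which could also be read off directly from (\ref{6.3}) and Fatou's lemma, is a welcome addition. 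The only slip is cosmetic: the $\eps\Del^{2m}$ term is $O(\eps)$ by (\ref{6.1}), or $O(\sqrt\eps)$ by (\ref{6.4}).
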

\proof
  A combination of Lemma \ref{lem6} with Lemma \ref{lem13} and the fact that $u_{\eps t}=\veps$ for $\eps\in (0,1)$
  shows that for each $T>0$,
  \bas
	(\ueps)_{\eps\in (0,1)} 
	\mbox{ is bounded in } L^\infty((0,T);W_0^{1,2}(\Om;\R^n)),
  \eas
  that
  \bas
	(u_{\eps t})_{\eps\in (0,1)} \mbox{ and } 
	(\veps)_{\eps\in (0,1)} 
	\mbox{ are bounded in } L^\infty((0,T);L^2(\Om;\R^n)),
  \eas
  and that
  \be{24.87}
	(\Teps)_{\eps\in (0,1)} \mbox{ and } 
	\big(\Keps(\Teps)\big)_{\eps\in (0,1)}
	\mbox{ are bounded in } L^\infty((0,T);L^1(\Om)).
  \ee
  Furthermore, for $k\in\N$ fixing $\phi_k\in C^\infty([0,\infty))$ such that $\phi_k(\xi)=\xi$ for all $\xi\in [0,k]$ and
  $\phi_k'\equiv 0$ on $[k+1,\infty)$, from Lemma \ref{lem20} we obtain that
  \bas
	\big(\phi_k(\Keps(\Teps))\big)_{\eps\in (0,1)}
	\mbox{ is bounded in } L^2((0,T);W^{1,2}(\Om)),
  \eas
  and that, e.g.,
  \bas
	\big(\pa_t \phi_k(\Keps(\Teps))\big)_{\eps\in (0,1)}
	\mbox{ is bounded in } L^1\big((0,T);(W^{1,n+1}(\Om))^\star\big)
  \eas
  for all $T>0$. 
  Relying on the fact that $(\phi_k)_{k\in\N}$ is countable, a straightforward extraction procedure involving the Banach-Alaoglu
  theorem and an Aubin-Lions lemma thus yields $(\eps_j)_{j\in\N} \subset (0,1)$ and functions
  \be{24.9}
	\lbal
	v \in L^\infty_{loc}([0,\infty);L^2(\Om;\R^n)), \\[1mm]
	u \in C^0([0,\infty);L^2(\Om;\R^n)) \cap L^\infty_{loc}([0,\infty);W_0^{1,2}(\Om;\R^n)) \qquad \mbox{and} \\[1mm]
	z \in L^\infty_{loc}([0,\infty);L^1(\Om)),
	\ear
  \ee
  as well as some $\wh{\mu}\in L^\infty_{w-\star,loc}([0,\infty);\M(\bom))$, such that $z\ge 0$ a.e.~in $\Om\times (0,\infty)$
  and (\ref{w44}) holds, that $\eps_j\searrow 0$ as $j\to\infty$, and that $\veps\wto v$ in $L^2_{loc}(\bom\times [0,\infty);\R^n)$,
  \be{24.99}
	\Keps(\Teps) \to z	
	\qquad \mbox{a.e.~in $\Om\times (0,\infty)$}
  \ee
  and
  \be{24.98}
	\Teps \wsto \wh{\mu}
	\qquad \mbox{in } L^\infty((0,T);\M_+(\bom))
  \ee
  as well as (\ref{24.45}) and (\ref{24.5}) are valid as $\eps=\eps_j\searrow 0$.\abs
  Since $u_{\eps t}=\veps$ for all $\eps\in (0,1)$, these limits must satisfy $v=u_t$ a.e.~in $\Om\times (0,\infty)$,
  whence (\ref{24.4}) and, in view of (\ref{24.9}), also the first inclusion in (\ref{w23}) follow.
  Apart from that, as a consequence of Lemma \ref{lem23} and the Dunford-Pettis theorem,
  \bas
	(\nas\veps)_{\eps\in (0,1)} 
	\mbox{ is relatively compact with respect to the weak topology in } L^1(\Om\times (0,T);\R^{n\times n})
  \eas
  for all $T>0$, 
  so that in light of (\ref{24.4}) we obtain the second property in (\ref{w23}) and (\ref{24.44}) as $\eps=\eps_j\searrow 0$.\abs
  We next note that in view of Lemma \ref{lem22} it follows from (\ref{24.99}) that necessarily $\Teps \to K^{-1}(z)$
  a.e.~in $\Om\times (0,\infty)$ as $\eps=\eps_j\searrow 0$, due to (\ref{24.87})
  meaning that if we let $\Theta:=K^{-1}(z)$ and $\mu:=\Theta-\wh{\mu}$, then we obtain that, indeed,
  $0\le \Theta\in L^\infty_{loc}([0,\infty);L^1(\Om))$ and $\mu\in L^\infty_{w-\star,loc}([0,\infty);\M(\bom))$
  and that (\ref{w44}) is satisfied, and that (\ref{24.6}) and (\ref{24.7}) hold as $\eps=\eps_j\searrow 0$.
  The claimed nonnegativity of $\mu$ is entailed by Fatou's lemma: For $T>0$ and arbitrary nonnegative $\vp\in C^0(\bom\times [0,T])$,
  namely, on the one hand we have
  \bas
	\int_0^T \io \Teps \vp \to \int_0^T \io \Theta \vp + \int_0^T \int_{\bom} \vp d\mu
	\qquad \mbox{as } \eps=\eps_j\searrow 0
  \eas
  by (\ref{24.7}), while on the other hand,
  \bas
	\int_0^T \io \Theta\vp \le \liminf_{\eps=\eps_j\searrow 0} \int_0^T \io \Teps \vp
  \eas
  according to (\ref{24.6}). Combining these relations shows that $\int_0^T \int_{\bom} \vp d\mu \ge 0$ for any such $\vp$ and $T$,
  whence indeed $\mu\ge 0$.\abs
  Having (\ref{24.6}) at hand now enables us to verify (\ref{w5}) and (\ref{24.8}) as well:
  Given $T>0$ and $\phi_0\in C^1([0,\infty))$ such that $\supp\phi_0$ is bounded, once more resorting to Lemma \ref{lem12} we readily
  obtain that $(\na\phi_0(\Teps))_{\eps\in (0,1)}$ is bounded in $L^2(\Om\times (0,T);\R^n)$.
  Since (\ref{24.6}) and the Vitali convergence theorem assert that $\phi_0(\Teps)\to \phi_0(\Theta)$ in $L^2(\Om\times (0,T))$
  as $\eps=\eps_j\searrow 0$, it follows that whenever $(j_i)_{i\in\N}\subset\N$ and $\wh{z}\in L^2(\Om\times (0,T);\R^n)$ 
  are such that $j_i\to\infty$ and $\na\phi_0(\Theta_{\eps_{j_i}})\wto \wh{z}$ in $L^2(\Om\times (0,T))$ as $i\to\infty$,
  we must have $\wh{z}=\na\phi_0(\Theta)$, whence indeed both (\ref{w5}) and (\ref{24.8}) follow for any such $\phi_0$.\abs
  The identity in (\ref{wu}) now becomes a direct consequence of the approximation properties gathered so far:
  According to an integration by parts in (\ref{0eps}), for each $\vp\in C_0^\infty(\Om\times [0,\infty);\R^n)$ we have
  \bea{24.11}
	& & \hs{-30mm}
	\int_0^\infty \io \ueps\cdot\vp_{tt}
	+ \io u_{0\eps} \cdot\vp_t(\cdot,0)
	- \io v_{0\eps}\cdot\vp(\cdot,0) 
	+ \eps \int_0^\infty \io \veps\cdot\Del^{2m} \vp \nn\\
	&=& - \int_0^\infty \io \lan\D:\nas\veps,\na\vp\ran
	- \int_0^\infty \io \lan\C:\nas\ueps,\na\vp\ran \nn\\
	& & + \int_0^\infty \io \Teps \lan\B,\na\vp\ran
	+ \int_0^\infty \io \feps\cdot\vp
	\qquad \mbox{for all } \eps\in (0,1),
  \eea
  where due to (\ref{24.4}) and (\ref{24.5}),
  \bas
	\eps \int_0^\infty \io \veps\cdot\Del^{2m} \vp \to 0
  \eas
  as well as
  \bas
	\int_0^\infty \io \ueps\cdot\vp_{tt} \to \int_0^\infty \io u\cdot\vp_{tt}
	\qquad \mbox{and} \qquad
	- \int_0^\infty \io \lan\C:\nas\ueps,\na\vp\ran
	\to - \int_0^\infty \io \lan\nas \C:\nas u,\na\vp\ran
  \eas
  as $\eps=\eps_j\searrow 0$, while by (\ref{24.44}) and (\ref{24.7}),
  \bas
	- \int_0^\infty \io \lan\D:\nas\veps,\na\vp\ran 
	\to - \int_0^\infty \io \lan\D:\nas u_t,\na\vp\ran
  \eas
  and 
  \be{24.999}
	\int_0^\infty \io \Teps\lan\B,\na\vp\ran
	\to \int_0^\infty \io \Theta\lan\B,\na\vp\ran
	+ \int_0^\infty \int_{\bom} \lan\B,\na\vp\ran d\mu
  \ee
  as $\eps=\eps_j\searrow 0$. Since
  \bas
	\io u_{0\eps} \cdot \vp_t(\cdot,0)
	\to \io u_0\cdot\vp_t(\cdot,0),
	\
	\io v_{0\eps}\cdot\vp(\cdot,0)
	\to \io u_{0t} \cdot \vp(\cdot,0)
	\ \mbox{and} \
	\int_0^\infty \io \feps\cdot\vp
	\to \int_0^\infty \io f\cdot\vp
  \eas
  as $\eps=\eps_j\searrow 0$ by (\ref{iec}) and (\ref{fgec}), from (\ref{24.11}) we thus in fact obtain (\ref{wu}).
\qed
Instead of Lemma \ref{lem13} employing Lemma \ref{lem15} and Lemma \ref{lem17}, in quite a similar fashion we can derive
the following counterpart aiming at the framework of Theorem \ref{theo37}.
\begin{lem}\label{lem244}
  Assume (\ref{38.1}).		
  Then one can find $(\eps_j)_{j\in\N} \subset (0,1)$ as well as
  \be{244.1}
	\lbal
	u \in L^\infty_{loc}([0,\infty);W_0^{1,2}(\Om;\R^n))
	\qquad \mbox{and} \\[1mm]
	\Theta\in L^1_{loc}(\bom\times [0,\infty))
	\ear
  \ee
  such that $\Theta\ge 0$ a.e.~in $\Om\times (0,\infty)$, that (\ref{w23}), (\ref{w44}) and (\ref{w5}) are valid,
  that $\eps_j\searrow 0$ as $j\to\infty$, that for all $T>0$ and 
  any $\phi_0\in C^1([0,\infty))$ such that $\supp \phi_0'$ is bounded,
  \begin{eqnarray}
	& & \veps \wto u_t
	\qquad \mbox{in } L^2(\Om\times (0,T);\R^n), 
	\label{244.4} \\
	& & \nas\veps\wto \nas u_t
	\qquad \mbox{in } L^1(\Om\times (0,T);\R^n), 
	\label{244.44} \\
	& & \ueps \to u
	\qquad \mbox{in } C^0([0,T];L^2(\Om;\R^n)),
	\label{244.45} \\
	& & \ueps \wto u
	\qquad \mbox{in } L^2((0,T);W^{1,2}(\Om;\R^n)),
	\label{244.5} \\
	& & \Teps \to \Theta
	\qquad \mbox{in $L^1(\Om\times (0,T))$ and a.e.~in } \Om\times (0,\infty),
	\qquad \mbox{and}
	\label{244.6} \\
	& & \na \phi_0(\Teps) \wto \na \phi_0(\Theta)
	\qquad \mbox{in } L^2(\Om\times (0,T);\R^n)
	\label{244.8} 
  \end{eqnarray}
  as $\eps=\eps_j\searrow 0$,
  and that the identity in (\ref{wu}) holds with $\mu=0$ for each $\vp\in C_0^\infty(\Om\times [0,\infty);\R^n)$.
\end{lem}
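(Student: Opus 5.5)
We will follow the proof of Lemma \ref{lem24}, replacing the $L^\infty((0,T);L^1(\Om))$ bound for $\Teps$ from Lemma \ref{lem13} by the uniform integrability statements of Lemma \ref{lem15} (when $n\ge 3$, where (\ref{38.1}) means $\kappa(\xi)\to\infty$) and Lemma \ref{lem17} (when $n=2$, where (\ref{38.1}) means $\kappa(\xi)\ln\xi\to\infty$, i.e.~(\ref{16.1})). In both cases $(\Teps)_{\eps\in(0,1)}$ is uniformly integrable over $\Om\times(0,T)$ for each $T>0$. Together with Lemma \ref{lem6}, this gives boundedness of $(\ueps)$ in $L^\infty((0,T);W_0^{1,2})$, of $(\veps)=(u_{\eps t})$ in $L^\infty((0,T);L^2)$, and of $(\Keps(\Teps))$ in $L^\infty((0,T);L^1)$. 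Lemma \ref{lem23} is available under (\ref{38.1}), so $(\nas\veps)$ is uniformly integrable and hence, by Dunford--Pettis, weakly relatively compact in $L^1(\Om\times(0,T))$.

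\emph{Extraction.} Exactly as before, we apply Lemma \ref{lem20} to the countable family $\phi_k$ (truncations of the identity) and use an Aubin--Lions lemma together with a diagonal argument. This yields a sequence $\eps_j\searrow0$ along which $\Keps(\Teps)\to z$ a.e., together with (\ref{244.4}), (\ref{244.44}), (\ref{244.45}), (\ref{244.5}) and $v=u_t$. Since $\int_0^\infty\kappa=\infty$ under (\ref{38.1}), as observed in the proof of Lemma \ref{lem20}, Lemma \ref{lem22} applies and gives $\Teps\to\Theta:=K^{-1}(z)$ a.e.~in $\Om\times(0,\infty)$. The key difference from Lemma \ref{lem24} comes next. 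Because of the uniform integrability in (\ref{15.2}) or (\ref{17.1}), Vitali's theorem upgrades a.e.~convergence to strong convergence $\Teps\to\Theta$ in $L^1(\Om\times(0,T))$, which is (\ref{244.6}). In particular $\Theta\in L^1_{loc}(\bom\times[0,\infty))$ and no defect measure arises. The inclusion (\ref{w44}) follows from Fatou's lemma applied to $K(\Teps)\le\Keps(\Teps)+\eps$ (Lemma \ref{lem_Keps}) together with (\ref{6.3}). The inclusion (\ref{w5}) and the convergence (\ref{244.8}) are obtained as in Lemma \ref{lem24}: Lemma \ref{lem12} bounds $\na\phi_0(\Teps)$ in $L^2$, and the a.e.~convergence identifies the weak limit.

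\emph{Identity (\ref{wu}).} We pass to the limit in (\ref{24.11}) term by term as in Lemma \ref{lem24}. The only change is in the thermal term: strong $L^1$ convergence of $\Teps$, paired with the bounded function $\lan\B,\na\vp\ran$, gives $\int\!\!\int\Teps\lan\B,\na\vp\ran\to\int\!\!\int\Theta\lan\B,\na\vp\ran$, so (\ref{wu}) holds with $\mu=0$.

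The step we expect to be the main obstacle is the $n=2$ case with possibly decaying $\kappa$. There $\Keps(\Teps)$ gives no $L^\infty_t L^1_x$ control on $\Teps$, so everything rests on the space-time uniform integrability from Lemma \ref{lem17}. We must make sure that this suffices for the Aubin--Lions/diagonal extraction, which uses only $\phi_k(\Keps(\Teps))$ and therefore only bounds from Lemma \ref{lem20}, and for the $L\log L$ estimate of Lemma \ref{lem23}, which needs only $\int_0^T\!\io\Teps$. Both requirements are met, so the argument should go through.
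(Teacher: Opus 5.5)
Your proposal is correct and follows essentially the same route as the paper. The paper also copies the extraction from Lemma \ref{lem24} verbatim, obtaining a.e.~convergence of $\Teps$ via Lemma \ref{lem22}. It then replaces Lemma \ref{lem13} by the uniform integrability from Lemma \ref{lem15} (for $n\ge 3$) or Lemma \ref{lem17} (for $n=2$), and applies Vitali's theorem to get strong $L^1$ convergence, which yields (\ref{w5}), (\ref{244.8}) and (\ref{wu}) with $\mu=0$ exactly as you describe.
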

\proof
  Lemma \ref{lem6} and Lemma \ref{lem20} still assert boundedness of $(\ueps)_{\eps\in (0,1)}$ 
  in $L^\infty((0,T);W_0^{1,2}(\Om;\R^n))$, of $(u_{\eps t})_{\eps\in (0,1)}$ and $(\veps)_{\eps\in (0,1)}$ in 
  $L^\infty((0,T);L^2(\Om;\R^n))$, 
  of $\big(\Keps(\Teps)\big)_{\eps\in (0,1)}$ in $L^\infty((0,T);L^1(\Om))$,
  of $\big(\phi(\Keps(\Teps))\big)_{\eps\in (0,1)}$ in $L^2((0,T);W^{1,2}(\Om))$ and of
  $\big(\pa_t \phi(\Keps(\Teps))\big)_{\eps\in (0,1)}$ in $L^1\big((0,T);(W^{1,n+1}(\Om))^\star\big)$
  for all $T>0$ and any $\phi\in C^\infty([0,\infty))$ fulfilling $\phi'\in C_0^\infty([0,\infty))$,
  while from Lemma \ref{lem23} we still know that $(\nas\veps)_{\eps\in (0,1)}$ is uniformly integrable over 
  $\Om\times (0,T)$ 
  for all $T>0$.
  A verbatim copy of the reasoning from Lemma \ref{lem24} thus yields $(\eps_j)_{j\in\N} \subset (0,1)$ such that
  $\eps_j\searrow 0$ as $j\to\infty$, and that with some
  $u\in C^0([0,\infty);L^2(\Om;\R^n)) \cap L^\infty_{loc}([0,\infty);W_0^{1,2}(\Om;\R^n))$ 
  and some measurable $\Theta: \Om\times (0,\infty)\to [0,\infty)$
  fulfilling (\ref{w23}) and (\ref{w44}), for each $T>0$ we have (\ref{244.4}), (\ref{244.44}) and (\ref{244.5}) and, moreover,
  \be{244.9}
	\Teps\to\Theta
	\qquad \mbox{a.e.~in $\Om\times (0,T)$}
  \ee
  as $\eps=\eps_j\searrow 0$.\abs
  The novelty now consists in relying, rather than on Lemma \ref{lem13}, on Lemma \ref{lem17} and Lemma \ref{lem15}
  here to see that thanks to
  our hypothesis (\ref{38.1}), $(\Teps)_{\eps\in (0,1)}$ is uniformly integrable over $\Om\times (0,T)$ for all $T>0$:
  Due to the Vitali convergence theorem, namely, (\ref{244.9}) therefore asserts that $\Teps\to\Theta$ in $L^1(\Om\times (0,T))$
  for all $T>0$ as $\eps=\eps_j\searrow 0$.
  This firstly enables us to derive the claims concerning (\ref{w5}) and (\ref{244.8}) in much the same manner as done
  in the corresponding part of Lemma \ref{lem24}; secondly, in the integral identity obtained upon testing the first 
  equation in (\ref{0eps}) by an arbitrary $\vp\in C_0^\infty(\Om\times [0,\infty);\R^n)$ (cf.(\ref{24.11}))
  this ensures that, unlike in (\ref{24.999}),
  \bas
	\int_0^\infty \io \Teps\lan\B,\na\vp\ran
	\to \int_0^\infty \io \Theta\lan\B,\na\vp\ran
  \eas
  as $\eps=\eps_j\searrow 0$,
  whence (\ref{wu}) actually holds with $\mu=0$ for any such $\vp$.
  Under the condition in (\ref{reg388}), finally, Lemma \ref{lem13} can still be applied here, so that 
  the additonal property in (\ref{reg388}) is a consequence of (\ref{244.9}) and Fatou's lemma.
\qed
\subsection{The inequality (\ref{wt})}
Our derivation of the weak parabolic inequality in (\ref{wt}) will rely on the following observation on weak lower semicontinuity.
\begin{lem}\label{lem32}
  Let $(\E_{\mathbf{ijkl}})_{(\mathbf{i},\mathbf{j},\mathbf{k},\mathbf{l})\in\{1,...,n\}^4} \in \R^{n\times n\times n \times n}$ 
  be such that 
  $\E_{\mathbf{ijkl}}=\E_{\mathbf{klij}}=\E_{\mathbf{jikl}}$ for all $(\mathbf{i},\mathbf{j},\mathbf{k},\mathbf{l})\in\{1,...,n\}^4$,
  and that
  \be{32.1}
	\lan\E:\A,\A\ran \ge k_{\E} |\A|^2
	\qquad \mbox{for all } \A\in \R^{n\times n}_{sym}
  \ee
  with some $k_{\E}>0$. Then the following holds:\\
  i) \ There exists $\sqrt{\E}:\R^{n\times n}_{sym} \to \R^{n\times n}_{sym}$ such that
  \be{32.2}
	\lan\sqrt{\E}:\A,\A'\ran
	= \lan \A, \sqrt{\E}:\A'\ran
	\qquad \mbox{for all $\A\in\R^{n\times n}_{sym}$ and } \A'\in \R^{n\times n}_{sym},
  \ee
  that
  \be{32.3}
	\sqrt{\E}:(\sqrt{\E}:\A)=\A
	\qquad \mbox{for all } \A\in \R^{n\times n}_{sym},
  \ee
  and that
  \be{32.4}
	\lan \sqrt{\E}:\A,\A\ran
	\ge \sqrt{k_{\E}} |\A|^2
	\qquad \mbox{for all } \A\in \R^{n\times n}_{sym}.
  \ee
  ii) \ If $T>0$, $(h_j)_{j\in\N} \subset L^\infty(\Om\times (0,T))$, $h\in L^\infty(\Om\times (0,T))$,
  $(\vp_j)_{j\in\N} \subset L^2(\Om\times (0,T);W^{1,2}(\Om;\R^n))$ and 
  $\vp \in L^2(\Om\times (0,T);W^{1,2}(\Om;\R^n))$ are such that
  \be{32.5}
	h_j \to h
	\qquad \mbox{a.e.~in } \Om\times (0,T)
  \ee
  and
  \be{32.6}
	\nas\vp_j \wto \nas \vp
	\qquad \mbox{in } L^1(\Om\times (0,T);\R^{n\times n})
  \ee
  as $j\to\infty$, and that $h_j\to 0$ a.e.~in $\Om\times (0,T)$ for all $j\in\N$,
  then
 \be{32.8}
	\int_0^T \io h \lan \E:\nas\vp,\nas\vp\ran
	\le \liminf_{j\to\infty} \int_0^T \io h_j \lan \E:\nas\vp_j,\nas\vp_j\ran.
  \ee
\end{lem}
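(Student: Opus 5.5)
The statement says ``$h_j\to 0$ a.e.'' for each $j$; I will read this as $h_j\ge 0$ a.e.\ in $\Om\times (0,T)$, which is what makes (\ref{32.8}) meaningful. I will also read (\ref{32.3}) as $\sqrt{\E}:(\sqrt{\E}:\A)=\E:\A$, the natural defining property of a square root.

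For part i), I regard $\E$ as a linear map $L_\E:\A\mapsto \E:\A$ on the finite-dimensional Hilbert space $\R^{n\times n}_{sym}$ with inner product $\lan\cdot,\cdot\ran$.

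\begin{itemize}
\item The symmetry $\E_{\mathbf{ijkl}}=\E_{\mathbf{jikl}}$ shows that $L_\E$ maps $\R^{n\times n}_{sym}$ into itself.
\item Combined with $\E_{\mathbf{ijkl}}=\E_{\mathbf{klij}}$, it also gives $\E_{\mathbf{ijkl}}=\E_{\mathbf{ijlk}}$, so $L_\E$ is self-adjoint with respect to $\lan\cdot,\cdot\ran$.
\item By (\ref{32.1}), all eigenvalues of $L_\E$ are at least $k_\E$.
\end{itemize}

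The spectral theorem gives an orthonormal eigenbasis. I define $\sqrt{\E}$ as the operator with the same eigenvectors and eigenvalues the square roots. Then (\ref{32.2}), the identity $\sqrt{\E}:(\sqrt{\E}:\A)=\E:\A$, and (\ref{32.4}) follow immediately, since each eigenvalue of $\sqrt{\E}$ is at least $\sqrt{k_\E}$.

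For part ii), I write
\[
h_j\lan\E:\nas\vp_j,\nas\vp_j\ran=|X_j|^2,
\qquad X_j:=\sqrt{h_j}\,\sqrt{\E}:\nas\vp_j ,
\]
and argue by duality, using the pointwise inequality $|X|^2\ge 2\lan X,\Psi\ran-|\Psi|^2$. Because the $h_j$ are not assumed uniformly bounded, I localize.

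\begin{itemize}
\item Given $\eta>0$, Egorov's theorem gives a measurable $E_\eta\subset\Om\times(0,T)$ with $|(\Om\times(0,T))\sm E_\eta|<\eta$ on which $h_j\to h$ uniformly. Hence $\sqrt{h_j}\to\sqrt h$ uniformly on $E_\eta$, and the $h_j$ are uniformly bounded there.
\item For $N>0$ I set $\Psi:=\sqrt h\,\sqrt{\E}:\nas\vp\cdot\chi_{E_\eta}\chi_{\{|\nas\vp|\le N\}}$, which lies in $L^\infty$.
\item Since $|X_j|^2\ge 0$ everywhere, restricting to $E_\eta$ only decreases the left side, and
\[
\int_0^T\!\!\io |X_j|^2\ \ge\ 2\int_0^T\!\!\io \lan \nas\vp_j,\ \sqrt{h_j}\,\chi_{E_\eta}\sqrt{\E}:\Psi\ran-\int_0^T\!\!\io|\Psi|^2 .
\]
\item The factor $\sqrt{h_j}\chi_{E_\eta}\sqrt{\E}:\Psi$ converges in $L^\infty$ to $\sqrt h\chi_{E_\eta}\sqrt{\E}:\Psi$. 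The sequence $\nas\vp_j$ converges weakly in $L^1$ by (\ref{32.6}), hence is bounded in $L^1$. Weak$\times$strong convergence then passes the first integral to the limit.
\item Using (\ref{32.2}) and the square-root identity, the limit equals
\[
\int_{E_\eta\cap\{|\nas\vp|\le N\}} h\lan\E:\nas\vp,\nas\vp\ran ,
\]
so this quantity bounds $\liminf_j\int_0^T\io h_j\lan\E:\nas\vp_j,\nas\vp_j\ran$ from below.
\end{itemize}

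Finally I let $N\to\infty$ and then let $\eta\searrow 0$, choosing the $E_\eta$ increasing in measure. The integrand $h\lan\E:\nas\vp,\nas\vp\ran$ is nonnegative, so monotone convergence gives (\ref{32.8}).

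The main obstacle is the lack of a uniform $L^\infty$ bound on $h_j$ together with merely weak $L^1$ convergence of $\nas\vp_j$. The product $\sqrt{h_j}\nas\vp_j$ cannot be handled directly, and the Egorov localization combined with the bounded test field $\Psi$ is the device that circumvents this.
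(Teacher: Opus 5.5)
Your proof is correct, and your readings of the two typos ($h_j\ge 0$ in place of $h_j\to 0$, and $\sqrt{\E}:(\sqrt{\E}:\A)=\E:\A$) are the ones the paper's own proof relies on. Part i) coincides with the paper's spectral construction.

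For part ii) the routes differ. The paper first assumes without loss of generality that the energies $\int_0^T\io h_j\lan\E:\nas\vp_j,\nas\vp_j\ran$ are bounded. It then extracts a subsequence along which $z_j=\sqrt{h_j}\,\sqrt{\E}:\nas\vp_j$ converges weakly in $L^2$, identifies the weak limit as $\sqrt{h}\,\sqrt{\E}:\nas\vp$ by appeal to Lions' lemma, and concludes by weak lower semicontinuity of the $L^2$ norm.

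You avoid weak compactness altogether. You test against the pointwise inequality $|X|^2\ge 2\lan X,\Psi\ran-|\Psi|^2$ with a bounded comparison field $\Psi$ that is Egorov-localized and truncated. You pass to the limit by pairing weak $L^1$ convergence with strong $L^\infty$ convergence, and recover the full integral by monotone convergence in $N$ and $\eta$.

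The paper's version is shorter. However, its identification step multiplies an a.e.\ convergent factor by a merely weakly $L^1$ convergent one, which is not literally Lions' lemma. Making that step rigorous needs an Egorov localization of exactly the kind you carry out.

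Your argument is self-contained. It needs neither a subsequence nor a preliminary energy bound, and it covers an infinite $\liminf$ without comment. The price is the extra double limit. Nesting the sets $E_\eta$ is not even necessary there, since $\int_{E_\eta}F\to\int F$ for any nonnegative $F$ once the measure of the complement of $E_\eta$ tends to $0$.
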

\proof
  i) \ As is evident, the vector space $X:=\R^{n\times n}_{sym} \cong \R^d$, $d:=\frac{n(n+1)}{2}$, becomes a Hilbert space when 
  equipped with the norm that is induced by the scalar product $\lan\cdot,\cdot\ran: X\times X\to\R$.
  Since $\E_{\mathbf{ijkl}}=\E_{\mathbf{jikl}}$ for all $(\mathbf{i},\mathbf{j},\mathbf{k},\mathbf{l})\in\{1,...,n\}^4$, 
  it follows that via $X\ni \A\mapsto \E:\A$,
  $\E$ maps $X$ linearly into itself, and the additional assumption that $\E_{\mathbf{ijkl}}=\E_{\mathbf{klij}}$
  for all $(\mathbf{i},\mathbf{j},\mathbf{k},\mathbf{l})\in\{1,...,n\}^4$ asserts that this map actually is self-adjoint.
  According to (\ref{32.1}), $\E$ moreover is positive definite, whence if in a standard manner we let
  $\lam_1,...,\lam_d$ denote its positive eigenvalues and $\A_1,...,\A_d$ make up a corresponding orthonormal basis, and set
  $\sqrt{\E}:\A:=\sum_{\iota=1}^d a_\iota \sqrt{\lam_\iota} \A_\iota$ for $\A=\sum_{\iota=1}^d a_\iota \A_\iota$ with
  $a_\iota=\lan \A,\A_\iota\ran$, $\iota\in\{1,...,d\}$, then we obtain a linear $\sqrt{\E}:X\to X$
  which satisfies (\ref{32.2})-(\ref{32.4}).\abs
  ii) \ We may assume without loss of generality that with some $c_1>0$ we have
  \bas
	\int_0^T \io h_j \lan\E:\nas\vp_j,\nas\vp_j\ran \le c_1
	\qquad \mbox{for all } j\in\N,
  \eas
  and we then let $z_j:=\sqrt{h_j} \sqrt{\E}:\nas\vp_j$, $j\in\N$, where $\sqrt{\E}$ is as in i).
  Then in view of (\ref{32.3}) and (\ref{32.2}),
  \bas
	\int_0^T \io |z_j|^2
	= \int_0^T \io h_j \lan \sqrt{\E}:\nas\vp_j,\sqrt{\E}:\nas\vp_j\ran
	= \int_0^T \io h_j \lan\E:\nas\vp_j,\nas\vp_j\ran
	\le c_1
	\qquad \mbox{for all } j\in\N,
  \eas
  whence we can find $z\in L^2(\Om\times (0,T);\R^{n\times n})$ and a sequence $(j_i)_{i\in\N}\subset \N$ such that
  $j_i\to \infty$ as $i\to\infty$ and
  $z_{j_i}\wto z$ in $L^2(\Om\times (0,T);\R^{n\times n})$ as $i\to\infty$, which by a lower semicontinuity feature
  of $\|\cdot\|_{L^2(\Om\times (0,T);\R^{n\times n})}$ implies that
  \be{32.9}
	\int_0^T \io |z|^2 \le \liminf_{i\to\infty} \int_0^T \io |z_{j_i}|^2.
  \ee
  But according to the Lions lemma (\cite[Lemme 1.3]{lions}), from the fact that as $i\to\infty$ we have $\sqrt{h_{j_i}} \to \sqrt{h}$
  a.e.~in $\Om\times (0,T)$ and $\sqrt{\E}:\nas\vp_{j_i} \wto \sqrt{\E}:\nas\vp$ in $L^1(\Om\times (0,T);\R^{n\times n})$,
  as clearly entailed by (\ref{32.5}) and (\ref{32.6}), it follows that $z$ must coincide with $\sqrt{h}\sqrt{\E}:\nas\vp$
  a.e.~in $\Om\times (0,T)$, so that (\ref{32.8}) results from (\ref{32.9}) and, again, (\ref{32.2}) and (\ref{32.3}).
\qed
The convergence properties asserted by Lemma \ref{lem24} and Lemma \ref{lem244} can thus be seen to entail (\ref{wt}):
\begin{lem}\label{lem33}
  Suppose that either (\ref{131.1}) or (\ref{38.1}) holds.		
  Then for each $\phi\in C^\infty([0,\infty))$ and any $\vp\in C_0^\infty(\bom\times [0,\infty))$ 
  fulfilling $\phi'\in C_0^\infty([0,\infty))$, $\phi'\ge 0$, $\phi''\le 0$ and $\vp\ge 0$,
  the inequality in (\ref{wt}) holds.
\end{lem}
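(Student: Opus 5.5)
The plan is to start from the exact weak identity satisfied by the approximate temperatures, tested against $\phi'(\Teps)\vp$, and then pass to the limit $\eps=\eps_j\searrow 0$ along the sequences from Lemma \ref{lem24} or Lemma \ref{lem244}. Terms with good convergence are handled by strong–weak pairings. The two quadratic dissipative terms, which carry favorable signs, are handled by lower semicontinuity.

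Concretely, with $K_\eps^{(\phi)}(\xi):=\int_0^\xi \keps(\sig)\phi'(\sig)d\sig$, multiplying the third equation in (\ref{0eps}) by $\phi'(\Teps)\vp$ and integrating by parts in space and time gives the identity
\bas
	- \int_0^\infty \io K_\eps^{(\phi)}(\Teps)\vp_t - \io K_\eps^{(\phi)}(\Theta_{0\eps})\vp(\cdot,0)
	&=& - D\int_0^\infty\io \phi''(\Teps)|\na\Teps|^2\vp - D\int_0^\infty\io \phi'(\Teps)\na\Teps\cdot\na\vp \\
	& & + \int_0^\infty\io \phi'(\Teps)\lan\D:\nas\veps,\nas\veps\ran\vp
	- \int_0^\infty\io \Teps\phi'(\Teps)\lan\B,\nas\veps\ran\vp \\
	& & + \int_0^\infty\io\phi'(\Teps)\geps\vp.
\eas
This identity is then treated term by term, choosing $T$ with $\supp\vp\subset\bom\times[0,T)$.

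For the left side, fix $\xi_0$ with $\phi'\equiv 0$ on $[\xi_0,\infty)$. By (\ref{keps}), $K_\eps^{(\phi)}\to K^{(\phi)}$ uniformly on $[0,\infty)$, and both functions are bounded and constant beyond $\xi_0$. Hence the a.e. convergence (\ref{24.6}) or (\ref{244.6}), together with dominated convergence, yields convergence of the first integral. The initial term converges by (\ref{iec}) and dominated convergence.

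On the right:
\begin{itemize}
\item The $\na\vp$-diffusion term converges. Write $\phi'(\Teps)\na\Teps=\na\psi(\Teps)$ with $\psi:=\phi-\phi(\xi_0)$ truncated, so $\psi$ has bounded support. Weak $L^2$ convergence then follows from (\ref{24.8}).
\item The $\B$-term converges. The factor $\xi\phi'(\xi)$ is bounded, so $\Teps\phi'(\Teps)\vp\to\Theta\phi'(\Theta)\vp$ a.e. with a uniform bound. Combined with the weak $L^1$ convergence (\ref{24.44}), the Egorov-type product lemma (or Lions' lemma, as used in Lemma \ref{lem32}) gives convergence.
\item The $\geps$-term converges by (\ref{fgec}) and boundedness of $\phi'$.
\end{itemize}

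The two remaining terms are nonnegative and need only be bounded from below in the limit. For the $\D$-term, apply Lemma \ref{lem32} ii) with $h_j:=\phi'(\Theta_{\eps_j})\vp\ge 0$ and $h:=\phi'(\Theta)\vp$, and $\E=\D$; this yields the liminf inequality. For $-\phi''(\Teps)|\na\Teps|^2\vp$, write it as $|\na\phi_0(\Teps)|^2\vp$ with $\phi_0(\xi)=\int_0^\xi\sqrt{-\phi''}$, a function with bounded support as in the Remark after Definition \ref{dw}. Then (\ref{24.8}) and weak lower semicontinuity of the weighted $L^2$ norm give
\[
\int\!\!\int|\na\phi_0(\Theta)|^2\vp\le\liminf\int\!\!\int|\na\phi_0(\Teps)|^2\vp .
\]

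Taking the liminf in the identity then produces ``$\ge$'', which is exactly (\ref{wt}). The main delicate point is the justification of the $\D$-term. Lemma \ref{lem32} requires only weak $L^1$ convergence of $\nas\veps$, which is exactly what the $L\log L$ bound of Lemma \ref{lem23} provides, together with a.e. convergence of the weights. One must also check that $\sqrt{\phi'}$ is continuous, so that $\sqrt{h_j}\to\sqrt h$ a.e. This holds since $\phi'\ge0$ is continuous.
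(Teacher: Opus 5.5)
Your proposal is correct and follows the paper's proof essentially step by step. It uses the same approximate identity obtained by testing with $\phi'(\Teps)\vp$, dominated convergence for the $K_\eps^{(\phi)}$-terms, (\ref{24.8})/(\ref{244.8}) for the $\na\vp$-term, and the $L\log L$-based weak $L^1$ compactness with a Lions/Egorov product argument for the $\B$-term. It also handles the two dissipative terms by lower semicontinuity, via Lemma \ref{lem32} ii) and via $\phi_0(\xi)=\int_0^\xi\sqrt{-\phi''(\sig)}\,d\sig$, exactly as the paper does.

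The differences are cosmetic. You obtain full convergence of the $\geps$-term where the paper only needs a $\liminf$; this is true, but it also uses the a.e.\ convergence of $\phi'(\Teps)$, not just boundedness of $\phi'$. And it is $\phi_0'$ rather than $\phi_0$ that has bounded support, which is exactly what (\ref{24.8}) requires.
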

\proof
  For $\eps\in (0,1)$, we let $\Keps^{(\phi)}(\xi):=\int_0^\xi \keps(\sig)\phi'(\sig)d\sig$, $\xi\ge 0$, and observe that if we
  let $\xi_0>0$ be such that $\phi'\equiv 0$ on $[\xi_0,\infty)$, then writing $c_1:=\|\kappa\|_{L^\infty((0,\xi_0))}+1$ 
  and $c_2:=\|\phi'\|_{L^\infty((0,\infty))}$ we have
  \be{33.1}
	0 \le \Keps^{(\phi)}(\xi) 
	\le \int_0^\xi (\kappa(\sig)+1) \phi'(\sig) d\sig
	\le c_1 c_2 \xi_0
	\qquad \mbox{for all $\xi\ge 0$ and } \eps\in (0,1)
  \ee
  as well as
  \be{33.2}
	\big|\Keps^{(\phi)}(\xi)-K^{(\phi)}(\xi)\big|
	= \bigg| \int_0^\xi (\keps(\sig)-\kappa(\sig)) \phi'(\sig) d\sig \bigg|
	\le c_2 \xi_0 \eps
	\qquad \mbox{for all $\xi\ge 0$ and } \eps\in (0,1),
  \ee
  because $|\keps-\kappa| \le \eps \le 1$ on $[0,\infty)$ for all $\eps\in (0,1)$ by (\ref{keps}).
  In the identity
  \bea{33.3}
	& & \hs{-30mm}
	- D \int_0^\infty \io \phi''(\Teps) |\na\Teps|^2 \vp
	+ \int_0^\infty \io \phi'(\Teps) \lan \D:\nas\veps,\nas\veps\ran \vp
	+ \int_0^\infty \io \phi'(\Teps) \geps\vp \nn\\
	&=& - \int_0^\infty \io \Keps^{(\phi)}(\Teps) \vp_t
	- \io \Keps^{(\phi)}(\Theta_{0\eps})\vp(\cdot,0)  \nn\\
	& & + D \int_0^\infty \io \phi'(\Teps) \na\Teps\cdot\na\vp
	+ \int_0^\infty \io \Teps \phi'(\Teps) \lan\B,\nas\veps\ran \vp,
  \eea
  valid for each $\eps\in (0,1)$ according to (\ref{0eps}), we may thus use 
  that in both cases to be addressed, along the sequences $(\eps_j)_{j\in\N}$ respectively provided by Lemma \ref{lem24} and
  Lemma \ref{lem244} we have 
  \be{33.4}
	\Teps\to\Theta
	\quad \mbox{a.e.~in } \Om\times (0,\infty)
	\qquad \mbox{as } \eps=\eps_j\searrow 0.
  \ee
  Since thus $\Keps^{(\phi)}(\Teps) \to K^{(\phi)}(\Theta)$ a.e.~in $\Om\times (0,\infty)$ as $\eps=\eps_j\searrow 0$ 
  by (\ref{33.2}), namely, thanks to (\ref{33.1}) and the boundedness of $\supp\vp_t$ the dominated convergence theorem 
  asserts that
  \be{33.5}
	- \int_0^\infty \io \Keps^{(\phi)}(\Teps) \vp_t
	\to - \int_0^\infty \io K^{(\phi)}(\Theta) \vp_t
	\qquad \mbox{as } \eps=\eps_j\searrow 0,
  \ee
  while from (\ref{iec2}) and (\ref{33.2}) it similarly follows that
  \be{33.6}
	- \io \Keps^{(\phi)}(\Theta_{0\eps}) \vp(\cdot,0)
	\to - \io K^{(\phi)}(\Theta_0) \vp(\cdot,0)
	\qquad \mbox{as } \eps=\eps_j\searrow 0.
  \ee
  Next, an application of (\ref{24.8}) and (\ref{244.8}) to $\phi_0=\phi$ shows that in line with our notational convention
  from the remark following Definition \ref{dw},
  \be{33.7}
	D \int_0^\infty \io \phi'(\Teps) \na\Teps\cdot\na\vp
	= D \int_0^\infty \io \na \phi(\Teps) \cdot\na\vp
	\to D \int_0^\infty \io \na\phi(\Theta)\cdot\na\vp
	= D \int_0^\infty \io \phi'(\Theta) \na\Theta\cdot\na\vp
  \ee
  as $\eps=\eps_j\searrow 0$, and in treating the rightmost integral in (\ref{33.3}) we again rely on the boundedness of $\supp\phi'$
  when noting that $0\le\xi\mapsto \xi\phi'(\xi)$ is bounded.
  Thanks to Lemma \ref{lem23}, this implies uniform integrability of $(\Teps\phi'(\Teps)\lan\B,\nas\veps\ran)_{\eps\in (0,1)}$
  over $\Om\times (0,T)$ for all $T>0$, so that combining (\ref{33.4}) with 
  the fact that 
  \be{33.77}
	\nas\veps\wto \nas u_t
	\quad \mbox{in } L^1_{loc}(\bom\times [0,\infty);\R^{n\times n})
	\qquad \mbox{ as $\eps=\eps_j\searrow 0$}
  \ee
  by Lemma \ref{lem24} and Lemma \ref{lem244}, due to the Lions lemma we infer that
  \be{33.8}
	\int_0^\infty \io \Teps \phi'(\Teps) \lan\B,\nas\veps\ran \vp
	\to \int_0^\infty \io \Theta \phi'(\Theta) \lan\B,\nas u_t\ran \vp
	\qquad \mbox{as } \eps=\eps_j\searrow 0.
  \ee
  We now employ Lemma \ref{lem32} ii) to see that since $0\le \phi'(\Teps)\vp\to\phi'(\Theta)\vp$ a.e.~in $\Om\times (0,\infty)$
  as $\eps=\eps_j\searrow 0$ by (\ref{33.4}),
  \be{33.9}
	\int_0^\infty \io \phi'(\Theta) \lan \D:\nas u_t,\nas u_t \ran \vp
	\le \liminf_{\eps=\eps_j\searrow 0} \int_0^\infty \io \phi'(\Teps) \lan \D:\nas\veps,\nas\veps\ran \vp.
  \ee
  Furthermore, by nonnegativity of $-\phi''$ and $\vp$, and by lower semicontinuity of $L^2$ norms with respect to weak convergence,
  an application of (\ref{24.8}) and (\ref{244.8}) with $\phi_0(\xi)=\int_0^\xi \sqrt{-\phi''(\sig)} d\sig$, $\xi\ge 0$,
  shows that again interpreting as in the remark following Definition \ref{dw} we have 
  \bea{33.10}
	- D \int_0^\infty \io \phi''(\Theta) |\na\Theta|^2 \vp
	&=& D \int_0^\infty \io |\na\phi_0(\Theta)|^2 \vp \nn\\
	&\le& D \cdot \liminf_{\eps=\eps_j\searrow 0} \int_0^\infty \io |\na\phi_0(\Teps)|^2 \vp \nn\\
	&=& \liminf_{\eps=\eps_j\searrow 0} \bigg\{ 
	- D \int_0^\infty \io \phi''(\Teps) |\na\Teps|^2 \vp \bigg\}.
  \eea
  Since (\ref{33.4}) together with (\ref{fgec}) and the nonnegativity of $\phi'$, $\vp$ and the $\geps$ clearly implies that 
  \bas
	\int_0^\infty \io \phi'(\Theta) g \vp 
	\le \liminf_{\eps=\eps_j\searrow 0} \int_0^\infty \io \phi'(\Teps) \geps\vp,
  \eas
  inserting (\ref{33.5})-(\ref{33.10}) into (\ref{33.3}) and rearranging yields (\ref{wt}).
\qed
\subsection{The energy inequality (\ref{wF})}		
Next, the weak energy inequality (\ref{wF}) will be established on the basis of the following simple consequence of Lemma \ref{lem3}.
\begin{lem}\label{lem34}
  If $\zeta\in C_0^\infty([0,\infty))$ is such that $\zeta\ge 0$,
  then
  \bea{34.1}
	& & \hs{-30mm}
	- \frac{1}{2} \int_0^\infty \io \zeta'(t) |\veps|^2
	- \frac{1}{2} \int_0^\infty \io \zeta'(t) \lan\C:\nas\ueps,\nas\ueps\ran
	- \int_0^\infty \io \zeta'(t) \Keps(\Teps) \nn\\
	&\le& \zeta(0) \io \Big\{ \frac{1}{2} |v_{0\eps}|^2 + \frac{1}{2} \lan\C:\nas u_{0\eps},\nas u_{0\eps}\ran 
		+ \Keps(\Theta_{0\eps})	\Big\} \nn\\
	& & + \int_0^\infty \io \zeta(t) \feps\cdot\veps
	+ \int_0^\infty \io \zeta(t) \geps
	\qquad \mbox{for all } \eps\in (0,1).
  \eea
\end{lem}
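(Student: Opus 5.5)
The plan is to multiply the differential inequality (\ref{3.1}) from Lemma \ref{lem3} by $\zeta(t)\ge 0$ and integrate in time, moving the time derivative onto $\zeta$. Writing $\yeps(t)$ for the energy functional from (\ref{6.7}), Lemma \ref{lem3} gives, after discarding the nonnegative dissipation term $\eps\io|\Del^m\veps|^2$,
\[
	\yeps'(t) \le \io \feps\cdot\veps + \io \geps
	\qquad \mbox{for all } t>0 ,
\]
where global existence is guaranteed by Lemma \ref{lem9}. Multiplying by $\zeta(t)\ge 0$ preserves this inequality.

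Next I integrate over $(0,\infty)$. Since $\zeta$ has compact support in $[0,\infty)$, integration by parts gives
\[
	\int_0^\infty \zeta\yeps' = -\zeta(0)\yeps(0) - \int_0^\infty \zeta'\yeps .
\]
Substituting this and rearranging yields $-\int_0^\infty\zeta'\yeps \le \zeta(0)\yeps(0) + \int_0^\infty\zeta\io\feps\cdot\veps + \int_0^\infty\zeta\io\geps$. Expanding $\yeps$ and $\yeps(0)$ in terms of $v_{0\eps}$, $u_{0\eps}$ and $\Theta_{0\eps}$ gives exactly (\ref{34.1}).

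The only point needing care is justifying the integration by parts at $t=0$, that is, continuity of $\yeps$ on $[0,\infty)$ together with $C^1$ regularity on $(0,\infty)$. This follows from the regularity in (\ref{1.1}). Concretely, $\veps\in C^0([0,\infty);L^2)$, $\ueps\in C^1([0,\infty);W^{1,2})$, and $\Teps\in C^{2,1}(\bom\times[0,\infty))$ with $\Keps$ smooth, so $\io\Keps(\Teps)$ is continuous up to $t=0$. To handle the endpoint rigorously, I would integrate over $(\delta,\infty)$ and then let $\delta\searrow0$; this is the main (mild) obstacle, and it is dispatched by this continuity.
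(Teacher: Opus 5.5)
Your proposal is correct and follows the paper's own argument: multiply (\ref{3.1}) by $\zeta\ge 0$, discard the nonnegative term $\eps\zeta\io|\Del^m\veps|^2$, and integrate by parts over $(0,\infty)$. Your extra care at $t=0$ (integrating over $(\delta,\infty)$ and using the continuity from (\ref{1.1}) together with $\tme=\infty$ from Lemma \ref{lem9}) is a legitimate detail that the paper leaves implicit.
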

\proof
  This immediately follows after multiplying (\ref{3.1}) by $\zeta\ge 0$ and integrating by parts over $(0,\infty)$, because
  \bas
	\eps \int_0^\infty \io \zeta(t) |\Del^m \veps|^2 \ge 0
	\qquad \mbox{for all } \eps\in (0,1)
  \eas
  according to the fact that $\zeta\ge 0$.
\qed
Now under the assumptions of Theorem \ref{theo37}, the approximation properties recorded in Lemma \ref{lem24} 
apply so as to derive (\ref{wF}) from this.
\begin{lem}\label{lem35}
  If (\ref{131.1}) is satisfied and $u,\Theta,\mu$ and $a$ are as in Lemma \ref{lem24}, then (\ref{wF}) holds.
\end{lem}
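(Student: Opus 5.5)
Starting from Lemma \ref{lem34}, I pass to the limit along the sequence $(\eps_j)_{j\in\N}$ of Lemma \ref{lem24} and then localize in time; the constant $a$ in (\ref{wF}) will be the one from Lemma \ref{lem131}. Fix $t_0>0$ and $\del>0$, and choose a nonnegative nonincreasing $\zeta\in C_0^\infty([0,\infty))$ with $\zeta\equiv 1$ on $[0,t_0]$ and $\zeta\equiv 0$ on $[t_0+\del,\infty)$. Then $-\zeta'\ge 0$ is an approximate delta at $t_0$, and (\ref{34.1}) gives
\bas
	-\int_0^\infty \zeta' \, \F_\eps
	\le \F_{0\eps} + \int_0^\infty \io \zeta \feps\cdot\veps + \int_0^\infty \io \zeta\geps,
\eas
where $\F_\eps$ and $\F_{0\eps}$ denote the approximate counterparts of (\ref{F}) and (\ref{F0}).

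On the right-hand side, $\F_{0\eps}\to\F_0$ follows from (\ref{iec}), (\ref{iec2}) and (\ref{K1}), and $\int\io\zeta\geps\to\int\io\zeta g$ follows from (\ref{fgec}). The term $\int\io \zeta\feps\cdot\veps$ converges because $\feps\to f$ strongly in $L^1((0,T);L^2)$, $(\veps)$ is bounded in $L^\infty((0,T);L^2)$ by Lemma \ref{lem6}, and $\veps\wto u_t$ by (\ref{24.4}). For the latter I first treat a smooth truncation of $f$ in time and then approximate, which gives the limit $\int\io\zeta f\cdot u_t$.

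On the left-hand side I use weak lower semicontinuity, which is legitimate since $-\zeta'\ge 0$. The kinetic and elastic terms $\frac12\int\io(-\zeta')|\veps|^2$ and $\frac12\int\io(-\zeta')\lan\C:\nas\ueps,\nas\ueps\ran$ are weighted $L^2$-type convex functionals, and by (\ref{24.4}) and (\ref{24.5}) they are lower semicontinuous. For the thermal term I use the splitting (\ref{131.2}):
\bas
	\Keps(\Teps) = a\Teps + \Keps^{(1)}(\Teps) - \Keps^{(2)}(\Teps).
\eas
Since $0\le\Keps^{(2)}\le a\xis$ and $\Keps^{(2)}(\Teps)\to K^{(2)}(\Theta)$ a.e. by (\ref{131.5}) and (\ref{24.6}), dominated convergence applies to this piece. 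Since $\Keps^{(1)}\ge 0$ and $\Keps^{(1)}(\Teps)\to K^{(1)}(\Theta)$ a.e., Fatou's lemma applies to that piece. The linear part converges by (\ref{24.7}) to
\bas
	a\int_0^\infty(-\zeta')\io\Theta + a\int_0^\infty(-\zeta')\int_{\bom} d\mu(t).
\eas
Since $K=a\xi+K^{(1)}-K^{(2)}$, these limits reassemble exactly into $\int(-\zeta')\{\F(t)+a\int_{\bom}d\mu(t)\}$, so this is where the defect measure appears.

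The resulting inequality is
\bas
	\int_0^\infty(-\zeta'(t))\Big\{\F(t)+a\int_{\bom}d\mu(t)\Big\}dt
	\le \F_0 + \int_0^\infty\zeta\io f\cdot u_t + \int_0^\infty\zeta\io g .
\eas
Letting $\del\searrow 0$ at Lebesgue points $t_0$ of the locally integrable function $t\mapsto\F(t)+a\|\mu(t)\|_{\M(\bom)}$ yields (\ref{wF}) for a.e. $t_0$; the right-hand side converges to $\int_0^{t_0}$ since $f\cdot u_t$ and $g$ are integrable.

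I expect the main obstacle to be the thermal term on the left. Only weak-$\star$ measure convergence is available there, so it has to be rearranged via Lemma \ref{lem131} into a Fatou part, a dominated part and a linear part before the defect $\mu$ appears with the correct sign and coefficient $a$. A secondary technical point is the measurability of $t\mapsto\int_{\bom}d\mu(t)$ needed for the Lebesgue-point step; this follows by testing the weak-$\star$ measurable $\mu$ against the constant function $1$.
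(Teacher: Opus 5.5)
Your proposal is correct and follows essentially the same route as the paper. The paper likewise passes to the limit in Lemma \ref{lem34} with a nonincreasing $\zeta$. It uses weak lower semicontinuity for the kinetic and elastic parts and the splitting from Lemma \ref{lem131}: Fatou for $\Keps^{(1)}$, dominated convergence for $\Keps^{(2)}$, and the weak-$\star$ limit (\ref{24.7}) for the linear part, which produces the defect term $a\int_{\bom}d\mu(t)$. It then localizes at Lebesgue points of $t\mapsto \F(t)+a\int_{\bom}d\mu(t)$.

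The differences are only cosmetic. You take smooth cutoffs directly instead of mollifying the piecewise linear $\zeta_\delta$; this works provided you choose them with $|\zeta'|\le C/\delta$, e.g.\ by rescaling a fixed profile. Your justification of the convergence of the $\feps\cdot\veps$ term is more careful than the paper's.
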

\proof
  Let us first make sure that whenever $\zeta\in C_0^\infty([0,\infty))$ is nonnegative and nonincreasing, we have
  \be{35.4}
	- \int_0^\infty \zeta'(t)\cdot\bigg\{ \F(t) + a\int_{\bom} d\mu(t)\bigg\} dt
	\le \zeta(0) \F_0 + \int_0^\infty \io \zeta f\cdot u_t + \int_0^\infty \io \zeta g.
  \ee
  To this end, we note that for any such $\zeta$, due to the nonnegativity of $-\zeta'$ the weak $L^2$-approximation properties in
  (\ref{24.4}) and (\ref{24.5}) are sufficient to ensure that on the left-hand side of (\ref{34.1}),
  \be{35.5}
	\liminf_{\eps=\eps_j\searrow 0} \bigg\{ - \frac{1}{2} \int_0^\infty \io \zeta' |\veps|^2 \bigg\}
	\ge - \frac{1}{2} \int_0^\infty \io \zeta' |u_t|^2
  \ee
  and
  \be{35.6}
	\liminf_{\eps=\eps_j\searrow 0} \bigg\{ - \frac{1}{2} \int_0^\infty \io \zeta' \lan\C:\nas\ueps,\nas\ueps\ran\bigg\}
	\ge - \frac{1}{2} \int_0^\infty \io \zeta' \lan\C:\nas u,\nas u\ran,
  \ee
  because (\ref{24.5}) entails that with $\sqrt{\C}$ taken from Lemma \ref{lem32} we have
  $\sqrt{-\zeta'} \sqrt{\C}:\nas\ueps \wto \sqrt{-\zeta'} \sqrt{\C}:\nas u$ in $L^2(\Om\times (0,\infty);\R^{n\times n})$
  as $\eps=\eps_j\searrow 0$.
  Moreover, (\ref{24.4}) together with (\ref{fgec}) and the boundedness of $\supp\zeta'$ warrants that on the right of (\ref{34.1}),
  \be{35.7}
	\int_0^\infty \io \zeta \feps\cdot\veps
	\to \int_0^\infty \io \zeta f\cdot u_t
	\quad \mbox{and} \quad
	\int_0^\infty \io \zeta\geps \to \int_0^\infty \io \zeta g
	\qquad \mbox{as $\eps=\eps_j\searrow 0$},
  \ee
  while by (\ref{iec}),
  \be{35.8}
	\frac{1}{2} \io |v_{0\eps}|^2
	\to \frac{1}{2} \io |u_{0t}|^2
	\quad \mbox{and} \quad
	\frac{1}{2} \io \lan\C:\nas u_{0\eps},\nas u_{0\eps}\ran
	\to \frac{1}{2} \io \lan\C:\nas u_0,\nas u_0\ran
	\qquad \mbox{as } \eps=\eps_j\searrow 0,
  \ee
  because
  \bas
	\bigg| \io \Keps(\Theta_{0\eps}) - \io K(\Theta_{0\eps}) \bigg| \le |\Om| \eps
	\qquad \mbox{for all } \eps\in (0,1)
  \eas
  due to Lemma \ref{lem_Keps}.\abs
  In the third summand on the left of (\ref{34.1}), we first rely on Lemma \ref{lem131} and for $i\in\{1,2\}$ take 
  $(\Keps^{(i)})_{\eps\in (0,1)}$ and $K^{(i)}$ as found there in rewriting
  \be{35.10}
	- \int_0^\infty \io \zeta' \Keps(\Teps)
	= - a \int_0^\infty \io \zeta' \Teps
	- \int_0^\infty \io \zeta' \Keps^{(1)}(\Teps)
	+ \int_0^\infty \io \zeta' \Keps^{(2)}(\Teps)
	\qquad \mbox{for } \eps\in (0,1),
  \ee
  and observe that here we may use (\ref{24.7}) to confirm that
  \be{35.11}
	- a \int_0^\infty \io \zeta' \Teps
	\to - a \int_0^\infty \io \zeta' \Theta
	- a \int_0^\infty \int_{\bom} \zeta' d\mu
	\qquad \mbox{as } \eps=\eps_j\searrow 0.
  \ee
  Furthermore, the uniform boundedness property of the $\Keps^{(2)}$ expressed in (\ref{131.3}) ensures that thanks to (\ref{24.6}),
  (\ref{131.5}) and the dominated convergence theorem,
  \be{35.12}
	\int_0^\infty \io \zeta' \Keps^{(2)}(\Teps)
	\to \int_0^\infty \io \zeta' K^{(2)}(\Theta)
	\qquad \mbox{as } \eps=\eps_j\searrow 0,
  \ee
  whereas by nonnegativity of $-\zeta'$ and of $\Keps^{(1)}$ for $\eps\in (0,1)$, a combination of (\ref{24.6}) with (\ref{131.5})
  and Fatou's lemma shows that
  \be{35.13}
	\liminf_{\eps=\eps_j\searrow 0} \bigg\{ - \int_0^\infty \io \zeta' \Keps^{(1)}(\Teps) \bigg\}
	\ge - \int_0^\infty \io \zeta' K^{(1)}(\Theta).
  \ee
  Since (\ref{131.2}) and (\ref{131.5}) clearly imply that $K(\xi)=a\xi+K^{(1)}(\xi)-K^{(2)}(\xi)$ for all $\xi\ge 0$,
  from (\ref{35.10})-(\ref{35.13}) we thus infer that
  \be{35.14}
	\liminf_{\eps=\eps_j\searrow 0} \bigg\{ - \int_0^\infty \io \zeta' \Keps(\Teps)\bigg\}
	\ge - \int_0^\infty \io \zeta' K(\Theta)
	- a \int_0^\infty \int_{\bom} \zeta' d\mu,
  \ee
  which along with (\ref{35.5})-(\ref{35.8}) warrants that in line with (\ref{F}) and (\ref{F0}), the inequality in (\ref{34.1})
  indeed entails (\ref{35.4}).\abs
  To derive (\ref{wF}) from this, finally, writing $h(t):=\F(t)+ a\int_{\bom} d\mu(t)$ for $t>0$ we observe that the regularity
  features in (\ref{24.1}) particularly guarantee that $h\in L^1_{loc}([0,\infty))$, whence we can find a null set 
  $N\subset (0,\infty)$ such that each $t_0\in (0,\infty)\sm N$ is a Lebesgue point of $h$.
  Fixing any such $t_0$, for $\del\in (0,1)$ we let
  \bas
	\zd(t):=
	\lball
	1, \qquad & t\in [0,t_0], \\[1mm]
	1-\frac{t-t_0}{\del},
	\qquad & t\in (t_0,t_0+\del), \\[1mm]
	0, \qquad & t\ge t_0+\del,
	\ear
  \eas
  and use a straightforward mollification procedure to find $(\zeta_{\delta k})_{k\in\N} \subset C_0^\infty([0,\infty))$
  such that $\zeta_{\del k}(0)=1, \zeta_{\del k}' \le 0$ on $[0,\infty)$ and $\zeta_{\del k} \equiv 0$ on $[t_0+2\del,\infty)$
  for all $k\in\N$, and that $\zeta_{\del k} \wsto \zd$ in $W^{1,\infty}((0,\infty))$ as $k\to\infty$.
  An application of (\ref{35.4}) to $\zeta=\zeta_{\del k}$ for $k\in\N$, followed by taking $k\to\infty$, therefore yields the
  inequality
  \bas
	\frac{1}{\del} \int_{t_0}^{t_0+\del} h(t) dt
	\le \F_0 + \int_0^{t_0+\del} \io \zd f\cdot u_t + \int_0^{t_0+\del} \zd g
	\qquad \mbox{for all } \del\in (0,1),
  \eas
  which in view of the Lebesgue point property of $t_0$ implies that
  \bas
	h(t_0) \le \F_0 + \int_0^{t_0} f\cdot u_t + \int_0^{t_0} g.
  \eas
  As $t_0\in (0,\infty)\sm N$ was arbitrary, in line with our definition of $h$ this establishes (\ref{wF}).
\qed
Under the assumption in (\ref{38.1}), a simplification of this reasoning yields the stronger conclusion 
compatible with the respective claim in Theorem \ref{theo38}.
\begin{lem}\label{lem36}
  Assume (\ref{38.1}),	
  and let $u$ and $\Theta$ be as found in Lemma \ref{lem244}. 
  Then (\ref{wF}) is satisfied with $a=0$ and $\mu=0$.
\end{lem}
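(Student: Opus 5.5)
The plan is to repeat the argument of Lemma \ref{lem35}, now without the splitting from Lemma \ref{lem131}. The starting point is again Lemma \ref{lem34}. Fix a nonnegative and nonincreasing $\zeta\in C_0^\infty([0,\infty))$, so that $-\zeta'\ge 0$. Along the sequence $(\eps_j)_{j\in\N}$ from Lemma \ref{lem244}, the kinetic and elastic terms on the left of (\ref{34.1}) are handled exactly as in (\ref{35.5}) and (\ref{35.6}):
\begin{itemize}
\item (\ref{244.4}) and (\ref{244.5}) give weak $L^2$ convergence;
\item for the elastic term we pass through $\sqrt{\C}$ from Lemma \ref{lem32};
\item weak lower semicontinuity of the $L^2$ norm then controls the $\liminf$.
\end{itemize}
The right-hand side converges as in (\ref{35.7}) and (\ref{35.8}). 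This uses (\ref{fgec}), (\ref{iec}) and (\ref{244.4}). For the initial thermal energy we additionally use Lemma \ref{lem_Keps} together with (\ref{iec2}), which gives $\io \Keps(\Theta_{0\eps})\to\io K(\Theta_0)$.

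The only new point is the thermal term $-\int_0^\infty\io\zeta'\Keps(\Teps)$. Here we do not need convergence; a lower bound suffices. By (\ref{244.6}), $\Teps\to\Theta$ a.e.~in $\Om\times(0,\infty)$. Since $|\Keps-K|\le\eps$ on $[0,\infty)$ by Lemma \ref{lem_Keps}, and $K$ is continuous, this gives $\Keps(\Teps)\to K(\Theta)$ a.e. Both $-\zeta'$ and $\Keps(\Teps)$ are nonnegative, so Fatou's lemma yields
\[
\liminf_{\eps=\eps_j\searrow0}\Big\{-\int_0^\infty\io\zeta'\Keps(\Teps)\Big\}\ge-\int_0^\infty\io\zeta' K(\Theta).
\]
This is where the measure term of Lemma \ref{lem35} disappears. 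There, the linear part $a\Teps$ only converged weak-$\star$ as measures. Here no such splitting is needed, because the liminf inequality is all that is required for an upper bound on $\F$. Consequently $\mu=0$ and $a=0$ are admissible in (\ref{wF}).

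Combining these limits gives (\ref{35.4}) with $a=0$ and $\mu\equiv0$, that is,
\[
-\int_0^\infty\zeta'\F\le\zeta(0)\F_0+\int_0^\infty\io\zeta f\cdot u_t+\int_0^\infty\io\zeta g
\]
for all nonnegative nonincreasing $\zeta$. To pass to the pointwise-in-time statement, we need $\F\in L^1_{loc}([0,\infty))$. This holds because of (\ref{w23}) and (\ref{244.1}), and because of (\ref{w44}), which Lemma \ref{lem244} already asserts. We then take a Lebesgue point $t_0$ of $\F$ and repeat the cutoff argument with $\zd$ and its mollifications $\zeta_{\del k}$ from the end of Lemma \ref{lem35}. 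Letting first $k\to\infty$ and then $\del\searrow0$ gives (\ref{wF}) for a.e.~$t_0$.

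I expect no real obstacle. The one point that needs care is checking that the time integrals of $\zeta_{\del k}f\cdot u_t$ and $\zeta_{\del k}g$ converge to $\int_0^{t_0}$ as first $k\to\infty$ and then $\del\searrow0$. This follows from dominated convergence, using $f\cdot u_t\in L^1_{loc}$ by (\ref{fg_reg}) and (\ref{w23}), and $g\in L^1_{loc}$.
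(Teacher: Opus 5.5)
Your proposal is correct and follows the paper's proof. The paper likewise repeats the argument of Lemma \ref{lem35} with the convergence properties from Lemma \ref{lem244}, and replaces the splitting via Lemma \ref{lem131} by a direct application of Fatou's lemma based on (\ref{244.6}), which yields the analogue of (\ref{35.14}) without any measure term. Your extra checks (a.e.\ convergence of $\Keps(\Teps)$ via Lemma \ref{lem_Keps}, local integrability of $\F$, and the Lebesgue-point cutoff) supply details the paper leaves implicit.
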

\proof
  This can be seen by repeating the argument from Lemma \ref{lem35}, relying on the convergence properties
  asserted by Lemma \ref{lem244} now instead of those from Lemma \ref{lem24};
  actually, the reasoning can even be simplified here, as no dissipation of any expression in the style of the second
  summand on the left of (\ref{wF}) is striven for, meaning that the corresponding analogue of (\ref{35.14}) can be derived
  directly from (\ref{244.6}) through Fatou's lemma.
\qed
\mysection{Entropy and positivity. Proofs of Theorems \ref{theo37} and \ref{theo38}}
Only now we turn our attention to a rigorous counterpart of the entropy property in (\ref{ent}).
The following observation in this regard will not only be used to complete our existence theory by asserting 
the claims concerning positivity made in Theorem \ref{theo37} and Theorem \ref{theo38}, but it will also form
a starting point for our results on large time behavior.
\begin{lem}\label{lem11}
  For $\eps\in (0,1)$, let
  \be{leps}
	\leps(\xi):=\int_1^\xi \frac{\keps(\sig)}{\sig} d\sig,
	\qquad \xi>0.
  \ee
  Then for all $t_0\ge 0$ and $t>t_0$,
  \bea{11.6}
	\io \leps(\Teps(\cdot,t))
	\ge \io \leps(\Teps(\cdot,t_0)) 
	+ D \int_{t_0}^t \io \frac{|\na\Teps|^2}{\Teps^2}
	+ \kD \int_{t_0}^t \io \frac{|\nas\veps|^2}{\Teps}
	+ \int_{t_0}^t \io \frac{\geps}{\Teps}.
  \eea
  In particular,
  \be{11.1}
	\io \leps(\Teps(\cdot,t)) \ge \io\leps(\Teps(\cdot,t_0))
	\qquad \mbox{for all $t_0\ge 0, t>t_0$ and } \eps\in (0,1),
  \ee
  and there exists $(C(T))_{T>0}\subset (0,\infty)$ such that
  \be{11.2}
	\int_0^T \io \frac{|\na\Teps|^2}{\Teps^2} \le C(T)
	\qquad \mbox{for all $T>0$ and } \eps\in (0,1)
  \ee
  and
  \be{11.3}
	\int_0^T \io \frac{|\nas\veps|^2}{\Teps} \le C(T)
	\qquad \mbox{for all $T>0$ and } \eps\in (0,1),
  \ee
  and that
  \be{11.4}
	\sup_{T>0} C(T)<\infty
	\quad \mbox{if (\ref{fg_decay}) holds.}
  \ee
\end{lem}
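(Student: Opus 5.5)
We will test the third equation in (\ref{0eps}) against $\frac{1}{\Teps}$, which is admissible because $\Teps>0$ in $\bom\times[0,\infty)$ by Lemma \ref{lem1} and Lemma \ref{lem9}. Since $\leps'(\xi)=\frac{\keps(\xi)}{\xi}$, for all $t>0$ this gives
\bas
	\frac{d}{dt}\io \leps(\Teps)
	= \io \frac{\keps(\Teps)\Tepst}{\Teps}
	= D\io \frac{|\na\Teps|^2}{\Teps^2}
	+ \io \frac{\lan\D:\nas\veps,\nas\veps\ran}{\Teps}
	- \io \lan\B,\nas\veps\ran
	+ \io \frac{\geps}{\Teps}.
\eas
Here the Neumann condition makes the boundary term from the integration by parts $\io \frac{\Del\Teps}{\Teps}=\io\frac{|\na\Teps|^2}{\Teps^2}$ vanish. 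The thermal-dilation term no longer carries a factor $\Teps$, and it drops out exactly: $\io\lan\B,\nas\veps\ran=\io \B:\na\veps=0$ by the divergence theorem, since $\veps=0$ on $\pO$. The viscous term is bounded below by $\kD\io\frac{|\nas\veps|^2}{\Teps}$ by (\ref{DC_lower}). Integrating over $(t_0,t)$, which is legitimate thanks to the regularity in (\ref{1.1}) including $t_0=0$, gives (\ref{11.6}). Since $\geps\ge0$, all integrals on the right are nonnegative, and (\ref{11.1}) follows.

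For (\ref{11.2})--(\ref{11.4}) we apply (\ref{11.6}) with $t_0=0$ and $t=T$, so each dissipation integral is at most $\io\leps(\Teps(\cdot,T))-\io\leps(\Theta_{0\eps})$. The task is to bound this difference uniformly in $\eps$, with a $T$-independent bound under (\ref{fg_decay}).

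\emph{Lower bound at $t=0$.} By (\ref{keps}), $|\leps(\xi)-\ell(\xi)|\le\eps|\ln\xi|$ is not quite uniform near $0$, so we argue as follows. For $\xi\ge1$ we have $\leps\ge0$. For $\xi<1$, $\leps(\xi)=-\int_\xi^1\frac{\keps}{\sig}\,d\sig$, and $|\keps-\kappa|\le\eps$ gives $|\leps(\xi)-\ell(\xi)|\le\eps|\ln\xi|$. Because $\Theta_{0\eps}\ge\eps$, this error is at most $\eps|\ln\eps|\le 1$. Together with $\ell(\Theta_{0\eps})\to\ell(\Theta_0)$ in $L^1(\Om)$ from (\ref{iec2}), this shows $\io\leps(\Theta_{0\eps})\ge -c$ uniformly in $\eps$. (By the finiteness clause of Lemma \ref{lem99}, only small $\eps$ matter anyway.)

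\emph{Upper bound at $t=T$.} Since $\leps\le0$ on $(0,1)$, only $\{\Teps\ge1\}$ contributes. There $\leps(\xi)=\int_1^\xi\frac{\keps}{\sig}\,d\sig\le\int_1^\xi\keps=\Keps(\xi)-\Keps(1)\le \Keps(\xi)$, so $\io\leps(\Teps(\cdot,T))\le\io\Keps(\Teps(\cdot,T))\le C(T)$ by (\ref{6.3}). This constant is $T$-independent under (\ref{fg_decay}) by (\ref{6.5}).

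Combining both bounds yields (\ref{11.2}) and (\ref{11.3}); the $\kD$ factor is absorbed into the constant. The only mildly delicate point is the uniform lower bound on $\io\leps(\Theta_{0\eps})$ near $\Theta=0$, which is exactly what the choice $\Theta_{0\eps}\ge\eps$ together with (\ref{iec2}) is designed to give.
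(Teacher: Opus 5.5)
Your proposal is correct and follows essentially the same route as the paper's proof. Both arguments test the third equation in (\ref{0eps}) by $\frac{1}{\Teps}$, use that $\io\lan\B,\nas\veps\ran=0$ because $\veps=0$ on $\pO$, and bound $\io\leps(\Teps)\le\io\Keps(\Teps)$ via Lemma \ref{lem6}. Both also control $-\io\leps(\Theta_{0\eps})$ uniformly through $|\keps-\kappa|\le\eps$ and $\Theta_{0\eps}\ge\eps$ (so that $\eps\ln\frac{1}{\Theta_{0\eps}}\le\eps\ln\frac{1}{\eps}\le\frac{1}{e}$), together with the approximation properties from Lemma \ref{lem99}.
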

\proof
  For $\eps\in (0,1)$, relying on the fact that $\leps'(\xi)=\frac{\keps(\xi)}{\xi}$ for $\xi>0$ and that $\Teps$ is positive
  in $\bom\times [0,\tme)$ by Lemma \ref{lem1}, we may integrate by parts using (\ref{0eps}) to see that
  \bea{11.5}
	\hs{-8mm}
	\frac{d}{dt} \io \leps(\Teps)
	&=& \io \frac{\keps(\Teps)}{\Teps} \Theta_{\eps t} \nn\\
	&=& \io \frac{1}{\Teps} \cdot \Big\{ D\Del\Teps + \lan \D:\nas\veps,\nas\veps\ran
	- \Teps \lan\B,\nas\veps\ran + \geps \Big\} \nn\\
	&=& D \io \frac{|\na\Teps|^2}{\Teps^2}
	+ \io \frac{\lan\D:\nas\veps,\nas\veps\ran}{\Teps}
	- \io \lan\B,\nas\veps\ran 
	+ \io \frac{\geps}{\Teps}
	\qquad \mbox{for all } t>0.
  \eea
  Here, another integration by parts shows that since $\veps=0$ on $\pO\times (0,\infty)$,
  \bas
	- \io \lan\B,\nas\veps\ran =0
	\qquad \mbox{for all } t>0,
  \eas
  while using (\ref{DC_lower}) we find that
  \bas
	\io \frac{\lan\D:\nas\veps,\nas\veps\ran}{\Teps}
	\ge \kD \io \frac{|\nas\veps|^2}{\Teps}
	\qquad \mbox{for all } t>0,
  \eas
  whence integrating (\ref{11.5}) in time yields (\ref{11.6}) and, consequently, also (\ref{11.1}).
  Moreover, from (\ref{leps}) and (\ref{Keps}) it follows that
  \bas
	0 \le \leps(\xi) \le \int_1^\xi \keps(\sig) d\sig \le \Keps(\xi)
	\qquad \mbox{for all } \xi\ge 1
  \eas
  and $\leps(\xi)\le 0$ for all $\xi\in (0,1)$, so that
  \bas
	\io \leps(\Teps(\cdot,t)) \le \io \Keps(\Teps(\cdot,t))
	\qquad \mbox{for all } t>0.
  \eas
  Apart from that, (\ref{keps}) ensures that
  \bas
	- \leps(\xi) = \int_\xi^1 \frac{\keps(\sig)}{\sig} d\sig
	\le \int_\xi^1 \frac{\kappa(\sig)+\eps}{\sig} d\sig
	= - \ell(\xi) + \eps\ln\frac{1}{\xi}
	\qquad \mbox{for all $\xi\in (0,1)$ and } \eps\in (0,1),
  \eas
  so that since $-\leps(\xi)\le 0$ for all $\xi\ge 1$ and $\eps\in (0,1)$, and since 
  Lemma \ref{lem99} particularly entails the existence of $c_1>0$ such that 
  $-\int_{\{\Theta_{0\eps}<1\}} \ell(\Theta_{0\eps}) \le c_1$ for all $\eps\in (0,1)$,
  recalling that $\Theta_{0\eps} \ge \eps$ in $\Om$ for all $\eps\in (0,1)$ we obtain that
  \bas
	- \io \leps(\Theta_{0\eps}) 
	\le - \int_{\{\Theta_{0\eps}<1\}} \ell(\Theta_{0\eps})
	+ \eps \int_{\{\Theta_{0\eps}<1\}} \ln \frac{1}{\Theta_{0\eps}}
	\le c_2:= c_1 + \frac{|\Om|}{e}
	\qquad \mbox{for all } \eps\in (0,1),
  \eas
  because $\xi\ln\frac{1}{\xi} \le \frac{1}{e}$ for all $\xi>0$.
  An application of (\ref{11.6}) to $t_0:=0$ therefore shows that
  \bas
	D \int_0^t \io \frac{|\na\Teps|^2}{\Teps^2}
	+ \kD \int_0^t \io \frac{|\nas\veps|^2}{\Teps}
	+ \int_0^t \io \frac{\geps}{\Teps}
	\le c_2 + \io \Keps(\Teps(\cdot,t))
	\qquad \mbox{for all } t>0,
  \eas
  whence the existence of $(C(T))_{T>0}\subset (0,\infty)$ fulfilling (\ref{11.2})-(\ref{11.4}) becomes a consequence
  of Lemma \ref{lem6}.
\qed
In order to derive an estimate for $\ln\Teps$ from (\ref{11.2}) through a Poincar\'e type inequality,
we first use (\ref{11.1}) to infer the following quantitative positivity property of the $\Teps$ 
from our overall assumption on finiteness of $\io \ell(\Theta_0)$.
\begin{lem}\label{lem18}
  Suppose that $n\ge 2$ and $\int_0^\infty \kappa(\sig)d\sig = \infty$.
  Then there exist $b>0$, $\epss\in (0,1)$ and $(C(T))_{T>0} \subset (0,\infty)$ such that whenever $T>0$,
  \be{18.2}
	\Big| \big\{ \Teps(\cdot,t) > b \big\} \Big|
	\ge \frac{1}{C(T)}
	\qquad \mbox{for all $t\in (0,T)$ and } \eps\in (0,\epss),
  \ee
  and that
  \be{18.3}
	\sup_{T>0} C(T)<\infty
	\quad \mbox{if (\ref{fg_decay}) holds.}
  \ee
\end{lem}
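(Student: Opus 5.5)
We combine the entropy monotonicity (\ref{11.1}) with the energy bound (\ref{6.3}). The idea is that at any time $\io \leps(\Teps(\cdot,t))$ is bounded below by its initial value, which is uniformly bounded below. If the set where $\Teps(\cdot,t)$ exceeds a small level $b$ were very small, then $\leps(\Teps)$ would be very negative on most of $\Om$. The set $\{\Teps>b\}$, in turn, cannot compensate, because there $\leps(\Teps)$ is controlled by $\Keps(\Teps)$, which is bounded in $L^1$.

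\emph{Step 1 (lower bound at time zero).} By (\ref{iec2}) and (\ref{keps}), $\io \leps(\Theta_{0\eps})$ converges to $\io \ell(\Theta_0)$. To see this, split the integrand: use $|\leps-\ell|\le \eps|\ln\xi|$ on $(0,1)$ together with $\Theta_{0\eps}\ge\eps$, so that $\eps\ln\frac1{\Theta_{0\eps}}\le \eps\ln\frac1\eps$; on $[1,\infty)$ the difference is bounded by $\eps$. Hence there is $c_1>0$ with $\io\leps(\Theta_{0\eps})\ge -c_1$ for all $\eps$. With (\ref{11.1}) this gives $\io\leps(\Teps(\cdot,t))\ge -c_1$ for all $t>0$. (The same estimate appears in the proof of Lemma \ref{lem11}, so it can simply be quoted.)

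\emph{Step 2 (upper bound on the upper part).} On $\{\Teps\ge 1\}$ we have $\leps(\Teps)\le\Keps(\Teps)$, as noted in the proof of Lemma \ref{lem11}. On $\{b<\Teps<1\}$ we have $\leps\le 0$. Lemma \ref{lem6} then gives
\[
\int_{\{\Teps>b\}}\leps(\Teps)\le C_1(T)
\]
for $t<T$, with $\sup_T C_1(T)<\infty$ under (\ref{fg_decay}).

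\emph{Step 3 (choice of $b$).} On $\{\Teps\le b\}$ we have $\leps(\Teps)\le \leps(b)=-\int_b^1\frac{\keps}{\sig}d\sig$. Since $\kappa>0$ on $(0,\infty)$, $\int_b^1\frac{\kappa}{\sig}d\sig$ is positive. However, it need not tend to infinity as $b\searrow0$, because $\ell$ may be bounded near $0$. This is the main obstacle: we cannot make $-\leps(b)$ large. Instead we use the $L^1$-type bound from the other side.

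Fix $b:=\frac12$. Then $\leps(b)\le -\int_{1/2}^1\frac{\kappa-\eps}{\sig}d\sig\le -\frac{c_2}{2}$ for $\eps<\epss$ small, where $c_2:=\int_{1/2}^1\kappa$. Combining the three steps,
\[
-c_1\le \io\leps(\Teps)\le C_1(T)-\frac{c_2}{2}\,|\{\Teps\le b\}|.
\]
This only yields $|\{\Teps\le b\}|\le 2(c_1+C_1)/c_2$, which is not the desired conclusion. So the argument needs to be done more carefully, keeping track of the upper part in terms of its measure.

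Estimate the contribution of $\{\Teps>b\}$ more precisely. On $\{b<\Teps\}$, write $\leps(\Teps)\le \leps(\xi_0)$ when $\Teps\le\xi_0$, and $\leps(\Teps)\le\Keps(\Teps)$ otherwise. This gives
\[
\int_{\{\Teps>b\}}\leps\le \leps(\xi_0)_+\,|\{\Teps>b\}|+\int_{\{\Teps>\xi_0\}}\Keps(\Teps).
\]
The last term is not small in general, though. Here the key inputs are $\int_0^\infty\kappa=\infty$ together with the bound $\io\Keps\le C$; in the situation (\ref{38.1}) one also has equi-integrability (Lemma \ref{lem15}).

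A cleaner route is via the superlinearity of $\Keps$ relative to $\leps$. Since $\Keps(\xi)\to\infty$ while $\leps(\xi)\le\int_1^\xi\frac{\keps}{\sig}\le \frac{\Keps(\xi)}{?}$, we argue instead with a concave Jensen-type estimate. On the set $E:=\{\Teps>b\}$ with $|E|=\delta$, the ratio $\frac{\leps(\xi)}{\Keps(\xi)+1}$ tends to $0$ as $\xi\to\infty$. Indeed, $\int_1^\xi\frac{\keps}{\sig}\le \int_1^{R}\keps+\frac1R\Keps(\xi)$, and $\Keps(\xi)\to\infty$ because $\int\kappa=\infty$. Hence for each $\eta>0$ there is $c_\eta$ with $\leps\le \eta\Keps+c_\eta$. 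Then
\[
\int_E\leps\le \eta C(T)+c_\eta\delta .
\]
We choose $\eta$ with $\eta C(T)\le \frac{c_2}{8}|\Om|$, and we choose $b$ so small that $|\{\Teps\le b\}|\ge |\Om|-\delta$. Inserting this gives
\[
-c_1\le \frac{c_2|\Om|}{8}+c_\eta\delta-\frac{c_2}{2}(|\Om|-\delta).
\]
This forces $\delta\ge 1/C(T)$, provided that $c_1<\frac{3c_2|\Om|}{8}$.

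This last requirement is the real obstacle: $c_1$ is tied to the data, so we cannot simply assume it is small. To fix it, the plan is to replace $b$ by a small $b$ and $c_2$ by $-\ell(b)$, together with the refinement that $-\ell(b)|\Om|$ exceeds $-\io\ell(\Theta_0)$ strictly. The strict inequality holds because $\Theta_0\not\equiv0$ and $\ell$ is strictly increasing. Writing $L_0:=\io\ell(\Theta_0)$, pick $b$ with $|\Om|\,\ell(b)<L_0$ (possible since $\ell(b)<\frac{1}{|\Om|}L_0$ once $b$ is below the minimal admissible value, which exists as $\ell$ is increasing and $\Theta_0\not\equiv 0$). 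Then the gap $L_0-|\Om|\ell(b)>0$ replaces the smallness condition, using $\io\leps(\Theta_{0\eps})\to L_0$ for $\eps<\epss$. This produces (\ref{18.2}), and (\ref{18.3}) follows from (\ref{6.5}).
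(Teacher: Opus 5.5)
Your final argument is correct and is essentially the paper's proof. The gap $\io\ell(\Theta_0)-|\Om|\,\ell(b)>0$ you exploit is exactly the paper's constant $c_1=\io\int_b^{\Theta_0}\frac{\kappa(\sig)}{\sig}d\sig$ (the paper renormalizes the entropy at $b$ instead of at $1$), and it is propagated in time by (\ref{11.1}) and played off against the energy bound (\ref{6.3}) just as you do.

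The differences are cosmetic. You bound the large values through the global estimate $\leps\le\eta\Keps+c_\eta$ instead of splitting at a $T$-dependent level $a(T)$, which incidentally shows that $\int_0^\infty\kappa=\infty$ is not needed. In a write-up, drop the abortive attempts with $b=\frac12$.
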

\proof
  We first write $\wh{\ell}^{(b)}(\xi):=\int_b^\xi \frac{\kappa(\sig)}{\sig} d\sig \in \R\cup \{+\infty\}$ for $\xi>0$
  and $b\ge 0$, and invoke Beppo Levi's theorem to see that $\wh{\ell}^{(b)}(\xi) \nearrow \wh{\ell}^{(0)}(\xi)$ for all $ \xi>0$ and,
  thus, 
  $\io \wh{\ell}^{(b)}(\Theta_0) \nearrow \io \wh{\ell}^{(0)}(\Theta_0) \in (0,\infty]$ as $b\searrow 0$, whence we can fix
  $b\in (0,1)$ such that with $\wh{\ell}:=\wh{\ell}^{(b)}$, the number $c_1:=\io \wh{\ell}(\Theta_0)$ is positive.\abs
  We next let
  \be{18.6}
	\lleps(\xi):=\int_b^\xi \frac{\keps(\sig)}{\sig} d\sig,
	\qquad \xi>0, \ \eps\in (0,1),
  \ee
  and note that according to (\ref{keps}), for $\eps\in (0,1)$ and $\xi\in (0,b)$ we have
  \bas
	|\lleps(\xi)| = \int_\xi^b \frac{\keps(\sig)}{\sig} d\sig
	\le \int_\xi^b \frac{\kappa(\sig)+\eps}{\sig} d\sig
	= |\wh{\ell}(\xi)| + \eps\ln\frac{b}{\xi},
  \eas
  while if $\eps\in (0,1)$ and $\xi\ge b$, then
  \bas
	|\lleps(\xi)| \le \frac{1}{b} \int_b^\xi \Big\{ \kappa(\sig) + \frac{1}{(\sig+1)^2} \Big\} d\sig
	\le \frac{1}{b} K(\xi) + \frac{1}{b}.
  \eas
  Since $\Theta_{0\eps} \ge \eps$ in $\Om$ for all $\eps\in (0,1)$, we therefore see that
  \bas
	|\lleps(\Theta_{0\eps})|
	\le h_\eps:=|\wh{\ell}(\Theta_{0\eps})| + \eps\ln\frac{b}{\eps} + \frac{1}{b} K(\Theta_{0\eps}) + \frac{1}{b}
	\quad \mbox{in $\Om$ \qquad for all } \eps\in (0,1),
  \eas
  and we observe that here $h_\eps\to |\wh{\ell}(\Theta_0)| + \frac{1}{b}K(\Theta_0) + \frac{1}{b}$ 
  in $L^1(\Om)$ as $\eps\searrow 0$ by Lemma \ref{lem99}, because $\wh{\ell}-\ell\equiv \int_b^1 \frac{\kappa(\sig)}{\sig}d\sig$
  on $(0,\infty)$.
  Since, for the same reason, Lemma \ref{lem99} can moreover readily be seen to assert that
  $\lleps(\Theta_{0\eps}) \to \wh{\ell}(\Theta_0)$
  a.e.~in $\Om$ as $\eps\searrow 0$, we may thus employ the dominated convergence theorem to infer that
  $\io \lleps(\Theta_{0\eps}) \to \io \wh{\ell}(\Theta_0)$ as $\eps\searrow 0$, so that by positivity of $c_1$ we can pick
  $\epss\in (0,1)$ such that
  \be{18.92}
	\io \lleps(\Theta_{0\eps}) \ge \frac{c_1}{2}
	\qquad \mbox{for all } \eps\in (0,\epss).
  \ee
  To make appropriate use of this, we apply Lemma \ref{lem6} in choosing $(c_2(T))_{T>0} \subset (0,\infty)$ such that if $T>0$, then
  \be{18.4}
	\io \Keps(\Teps) \le c_2(T)
	\qquad \mbox{for all $t\in (0,T)$ and } \eps\in (0,1),
  \ee
  and that
  \be{18.5}
	\sup_{T>0} c_2(T)<\infty
	\quad \mbox{if (\ref{fg_decay}) holds,}
  \ee
  whereas from Lemma \ref{lem11} we obtain that
  \be{18.7}
	\io \lleps(\Teps(\cdot,t)) \ge \io \lleps(\cdot,\Theta_{0\eps})
	\qquad \mbox{for all $t>0$ and } \eps\in (0,1),
  \ee
  because with $(\leps)_{\eps\in (0,1)}$ as in (\ref{leps}), we clearly have 
  $\lleps-\leps\equiv \int_b^1 \frac{\keps(\sig)}{\sig}d\sig$ on $(0,\infty)$ for all $\eps\in (0,1)$.
  Since $\lleps(\xi) \le 0$ for all $\xi\le b$ and $\eps\in (0,1)$, from (\ref{18.92}) we thus infer that whenever $a>b$,
  \bea{18.9}
	\frac{c_1}{2} \le \io \lleps(\Teps)
	&=& \int_{\{\Teps\le b\}} \lleps(\Teps)
	+ \int_{\{b<\Teps<a\}} \lleps(\Teps)
	+ \int_{\{\Teps\ge a\}} \lleps(\Teps) \nn\\
	&\le& \int_{\{b<\Teps<a\}} \lleps(\Teps)
	+ \int_{\{\Teps\ge a\}} \lleps(\Teps) \nn\\
	&\le& \lleps(a) \cdot \big| \{ \Teps>b\} \big|
	+ \int_{\{\Teps\ge a\}} \lleps(\Teps)
	\qquad \mbox{for all $t>0$ and } \eps\in (0,\epss).
  \eea
  In order to suitably select $a$ here, given $T>0$ we let
  \be{18.10}
	\eta(T):=\frac{c_1}{4c_2(T)},
  \ee
  and firstly choose $\xi_0(T)>b$ such that
  \be{18.11}
	\frac{1}{\xi_0(T)} \le \frac{\eta(T)}{2}.
  \ee
  Then abbreviating
  \be{18.12}
	c_3(T):=\int_b^{\xi_0(T)} \frac{\kappa(\sig)}{\sig} d\sig
	+ \int_b^\infty \frac{d\sig}{\sig(\sig+1)^2},
  \ee
  due to our assumption on divergence of $\int_0^\infty \kappa$ 
  we can fix $a(T)>\xi_0(T)$ in such a way that the function in (\ref{K}) satisfies
  \be{18.13}
	K(\xi)\ge 2
	\quad \mbox{and} \quad
	K(\xi)\ge \frac{4c_3(T)}{\eta(T)}
	\qquad \mbox{for all } \xi\ge a(T).
  \ee
  Since
  \bas
	\lleps(\xi_0(T))
	= \int_b^{\xi_0(T)} \frac{\keps(\sig)}{\sig} d\sig
	\le \int_b^{\xi_0(T)} \frac{\kappa(\sig)}{\sig} d\sig
	+ \eps \int_b^{\xi_0(T)} \frac{d\sig}{\sig(\sig+1)^2} 
	\le c_3(T)
	\qquad \mbox{for all } \eps\in (0,1)
  \eas
  by (\ref{18.12}), since clearly
  \bas
	\lleps(\xi)-\lleps(\xi_0(T))
	= \int_{\xi_0(T)}^\xi \frac{\keps(\sig)}{\sig} d\sig
	\le \frac{1}{\xi_0(T)} \int_{\xi_0(T)}^\xi \keps(\sig) d\sig
	\le \frac{1}{\xi_0(T)} \int_0^\xi \keps(\sig) d\sig
	= \frac{1}{\xi_0(T)} \Keps(\xi)
  \eas
  for all $\xi>\xi_0(T)$ and $\eps\in (0,1)$, and since
  \bas
	\Keps(\xi)
	\ge \int_0^\xi \kappa(\sig)d\sig - \eps \int_0^\xi \frac{d\sig}{(\sig+1)^2} 
	\ge K(\xi) - \eps
	\ge K(\xi)-1 
	\ge \frac{1}{2} K(\xi)
	\quad \mbox{for all $\xi\ge a(T)$ and } \eps\in (0,1)
  \eas
  according to the first inequality in (\ref{18.13}), we can now combine the second restriction in (\ref{18.13}) with (\ref{18.11})
  to see that if $T>0$, then
  for all $\xi\ge a(T)$ and $\eps\in (0,1)$,
  \bas
	\frac{\lleps(\xi)}{\Keps(\xi)}
	\le \frac{c_3(T) + \frac{1}{\xi_0(T)} \Keps(\xi)}{\Keps(\xi)}
	= \frac{c_3(T)}{\Keps(\xi)}
	+ \frac{1}{\xi_0(T)}
	\le \frac{2c_3(T)}{K(\xi)}
	+ \frac{1}{\xi_0(T)}
	\le \frac{\eta(T)}{2} + \frac{\eta(T)}{2}=\eta(T).
  \eas
  In view of (\ref{18.10}) and (\ref{18.4}), this means that for each $T>0$, on the right-hand side of (\ref{18.9}) we have
  \bas
	\int_{\{\Teps\ge a(T)\}} \lleps(\Teps)
	\le \eta(T) \int_{\{\Teps\ge a(T)\}} \Keps(\Teps)
	\le \eta(T) \io \Keps(\Teps)
	\le \eta(T) \cdot c_2(T)
	\le \frac{c_1}{4}
  \eas
  for all $t\in (0,T)$ and $\eps\in (0,\epss)$, whence from (\ref{18.9}) it follows that
  \bas
	\lleps(a(T)) \cdot \big| \{\Teps>b\}\big| \ge \frac{c_1}{4}
	\qquad \mbox{for all $t\in (0,T)$ and $\eps\in (0,\epss)$}.
  \eas
  Observing that
  \bas
	\lleps(a(T)) = \int_b^{a(T)} \frac{\keps(\sig)}{\sig} d\sig
	\le \int_0^{a(T)} \frac{\kappa(\sig)+1}{\sig} d\sig
	\qquad \mbox{for all $\eps\in (0,\epss)$},
  \eas
  according to (\ref{18.5}) we may thus conclude as intended.
\qed
The following conclusion will not only complete our proofs of Theorem \ref{theo37} and Theorem \ref{theo38},
but will also serve as a useful basis for our large time analysis by implying a uniform integrability of the
$\leps(\Teps)$ (see Lemma \ref{lem41}).
\begin{lem}\label{lem19}
  If $n\ge 2$ and (\ref{16.1}) holds, then 
  there exists $(C(T))_{T>0} \subset (0,\infty)$ such that 
  \be{19.1}
	\int_t^{t+1} \io \ln^2(\Teps) \le C(T)
	\qquad \mbox{for all $t\in (0,T)$, $T>0$ and } \eps\in (0,\epss),
  \ee
  and that
  \be{19.2}
	\sup_{T>0} C(T)<\infty
	\quad \mbox{if (\ref{fg_decay}) holds,}
  \ee
  where $\epss$ is as in Lemma \ref{lem18}.
\end{lem}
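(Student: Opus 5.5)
We plan to control $\ln\Teps$ by a Poincaré-type inequality which uses the positivity of $\Teps$ on a set of measure bounded below. The ingredients are the gradient bound (\ref{11.2}) for $\na\ln\Teps$ from Lemma \ref{lem11} and the measure bound from Lemma \ref{lem18}. Hypothesis (\ref{16.1}) entails $\int_0^\infty\kappa=\infty$, as noted at the beginning of the proof of Lemma \ref{lem20}, so Lemma \ref{lem18} applies. It supplies $b>0$ and $\epss$ with $|\{\Teps(\cdot,t)>b\}|\ge 1/C(T)$ for $t\in(0,T)$ and $\eps\in(0,\epss)$.

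\textbf{Step 1: splitting $\ln\Teps$.} We write
\[
\ln^2\Teps \le 2\big(\ln\Teps-\ln b\big)_-^2+2\big(\ln\Teps-\ln b\big)_+^2+c\ln^2 b .
\]
\textbf{Step 2: the positive part.} Since $\ln_+^2\xi\le c\,\xi$, this part is bounded whenever $\int_t^{t+1}\io\Teps$ is bounded. This is furnished by Lemma \ref{lem17} for $n=2$. For $n\ge 3$, where (\ref{16.1}) need not force unbounded $\kappa$, we would instead use a slightly sharper bound, namely $\ln^2(\xi+e)\le C\,\uK(\xi)+C$. This holds with $\uK$ from Lemma \ref{lem16}, since $\uK(\xi)\ge c\rho(\xi)\xi/\ln(\xi+e)$ by (\ref{16.12}), and $\uK\le C\Keps+C$ holds uniformly in $\eps$. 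Combined with (\ref{6.3}), this gives a bound on $\io\ln_+^2\Teps$ that is even uniform in $t$. Using this $\uK$ route in all dimensions avoids case distinctions.

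\textbf{Step 3: the negative part.} Set $w:=(\ln b-\ln\Teps)_+$. Then $w\in W^{1,2}(\Om)$ with $|\na w|\le|\na\Teps|/\Teps$, and $w$ vanishes on $\{\Teps>b\}$, a set of measure at least $1/C(T)$. We will use the standard Poincaré inequality for functions vanishing on a set $E$ with $|E|\ge\delta$:
\[
\io w^2\le C(\delta)\io|\na w|^2 .
\]
This follows by a contradiction/compactness argument via Rellich. Alternatively, write $\io w^2\le 2\io(w-\bar w)^2+2|\Om|\bar w^2$ and note that $\bar w\,\delta\le\int_E|w-\bar w|$. Applying this for a.e.\ $s\in(t,t+1)$ and integrating yields
\[
\int_t^{t+1}\io w^2\le C\int_t^{t+1}\io\frac{|\na\Teps|^2}{\Teps^2}\le C\cdot C(T+1)
\]
by (\ref{11.2}). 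Here we use that (\ref{11.2}) is stated on $(0,T)$, so it covers windows $(t,t+1)\subset(0,T+1)$. The uniformity in (\ref{19.2}) comes from (\ref{11.4}), (\ref{18.3}) and (\ref{6.5}).

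The main obstacle is ensuring that all constants are independent of $\eps<\epss$ and, under (\ref{fg_decay}), of $T$. The decisive point is that the Poincaré constant depends only on the lower bound for the measure of the set $\{\Teps>b\}$, which Lemma \ref{lem18} provides uniformly. The positive part in Step 2 is routine once $\ln^2$ growth is dominated by $\Keps$ through $\uK$.
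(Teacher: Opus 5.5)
Your proposal is correct and follows the paper's proof essentially step for step. The paper likewise applies a Poincar\'e inequality for functions vanishing on a set of measure at least $\delta$ to $\big\{\ln\frac{c_2}{\Teps}\big\}_+$, using Lemma \ref{lem18} and (\ref{11.2}). On $\{\Teps\ge c_2\}$ it likewise bounds $\ln^2\frac{\xi}{c_2}\le c\,\uK(\xi)$, then uses $\uK\le c\Keps+c$ from Lemma \ref{lem16} together with (\ref{6.3}), in all dimensions at once.

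The differences are cosmetic: you obtain the growth of $\uK$ from (\ref{16.12}) rather than (\ref{16.4}), and you sketch a proof of the Poincar\'e inequality rather than citing it. One small point to fix: for windows $(t,t+1)$ extending past $T$, apply Lemma \ref{lem18} on $(0,T+1)$, just as you already did for (\ref{11.2}).
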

\proof
  According to a Poincar\'e-type inequality (see \cite[Cor 9.1.4]{jost_pde} 
  for convex and, e.g., \cite[Lemma 9.1]{lankeit_win_NoDEA} for
  more general domains), for each $\del>0$ there exists $c_1(\del)>0$ with the property that
  \be{19.3}
	\io \vp^2 \le c_1(\del) \io |\na\vp|^2
	\qquad \mbox{for all $\vp\in W^{1,2}(\Om)$ fulfilling } \big| \{\vp=0\}\big| \ge \del,
  \ee
  while Lemma \ref{lem18}, Lemma \ref{lem6} and Lemma \ref{lem11} provide $c_2>0$ and 	
  $(c_i(T))_{T>0} \subset (0,\infty)$, $i\in \{3,4,5\}$, such that $\sup_{T>0} \{ c_3(T)+c_4(T)+c_5(T)\}<\infty$ if 
  (\ref{fg_decay}) holds, and that if $T>0$, then
  \be{19.4}
	\big| \{\Teps>c_2\}\big|
	\ge c_3(T)
	\qquad \mbox{for all $t\in (0,T+1)$ and $\eps\in (0,\epss)$}
  \ee
  as well as
  \be{19.5}
	\io \Keps(\Teps) \le c_4(T)
	\quad \mbox{and} \quad
	\int_t^{t+1} \io \frac{|\na\Teps|^2}{\Teps^2} \le c_5(T)
	\qquad \mbox{for all $t\in (0,T+1)$ and $\eps\in (0,\epss)$}.
  \ee
  We moreover note that if we let $\uK$ be as in Lemma \ref{lem16}, applicable here due to our assumption on $\kappa$,
  then since $\ln\xi\le \xi$ for all $\xi>0$, from (\ref{16.4}) and the positivity and continuity of $\uK$ we obtain $c_6>0$
  such that $\uK(\xi)\ge c_6\sqrt{\xi}$ for all $ \xi\ge c_2$, and that thus
  \be{19.6}
	\ln^2 \frac{\xi}{c_2} \le \frac{16}{e^2} \cdot \sqrt{\frac{\xi}{c_2}}
	\le c_7 \uK(\xi)
	\qquad \mbox{for all } \xi\ge c_2
  \ee
  with $c_7:=\frac{16}{e^2 \sqrt{c_2} c_6}$,
  because $\ln\xi\le\frac{4}{e} \cdot \sqrt[4]{\xi}$ for all $\xi\ge 1$.
  Finally choosing $c_8>0$ such that in line with Lemma \ref{lem16} we have
  \be{19.66}
	\uK \le c_8 \Keps + c_8
	\quad \mbox{on } [0,\infty)
	\qquad \mbox{for all } \eps\in (0,1),
  \ee
  for fixed $T>0$ we apply (\ref{19.3}) to $\vp:=\big\{ \ln \frac{c_2}{\Teps} \big\}_+$ for $\eps\in (0,1)$ to see that
  thanks to (\ref{19.5}),
  \bas
	\int_t^{t+1} \int_{\{\Teps<c_2\}} \ln^2 \frac{c_2}{\Teps}
	&\le& c_1(c_3(T)) \int_t^{t+1} \int_{\{\Teps<c_2\}} \Big|\na\ln\frac{c_2}{\Teps}\Big|^2 \\
	&\le& c_1(c_3(T)) \int_t^{t+1} \io \frac{|\na\Teps|^2}{\Teps^2} \\
	&\le& c_1(c_3(T)) \cdot c_5(T)
	\qquad \mbox{for all $t\in (0,T)$ and } \eps\in (0,\epss).
  \eas
  As using (\ref{19.6}) along with (\ref{19.66}) and (\ref{19.5}) we furthermore find that
  \bas
	\int_t^{t+1} \int_{\{\Teps\ge c_2\}} \ln^2 \frac{c_2}{\Teps}
	&=& \int_t^{t+1} \int_{\{\Teps\ge c_2\}} \ln^2 \frac{\Teps}{c_2} \\
	&\le& c_7 \int_t^{t+1} \io \uK(\Teps) \nn\\
	&\le& c_7 c_8 \int_t^{t+1} \io \Keps(\Teps)
	+ c_7 c_8 |\Om| \\
	&\le& c_7 c_8 c_4(T)
	+ c_7 c_8 |\Om| 
	\qquad \mbox{for all $t\in (0,T)$ and } \eps\in (0,1),
  \eas
  we therefore obtain (\ref{19.1}) with some $(C(T))_{T>0} \subset (0,\infty)$ fulfilling (\ref{19.2}).
\qed
Our goals concerning global solvability of (\ref{0}) have thus been accomplished:\abs
\proofc of Theorem \ref{theo37}. \quad
  The statement on global solvability follows from Lemma \ref{lem24}, Lemma \ref{lem33} and Lemma \ref{lem35}.
  Positivity of $\Theta$ a.e.~in $\Om\times (0,\infty)$ can be seen by observing that
  (\ref{24.6}) together with Lemma \ref{lem19} and Fatou's lemma particularly implies that 
  $\ln \Theta \in L^1_{loc}(\bom\times [0,\infty))$.
\qed
\proofc of Theorem \ref{theo38}. \quad
  The claim is a direct consequence of Lemma \ref{lem244}, Lemma \ref{lem33}, Lemma \ref{lem36} and, again, Lemma \ref{lem19}.
\qed

\mysection{Stabilization of $\Theta$. Proof of Theorem \ref{theo55}}\label{sect_theta}
\subsection{A uniform smallness property of the approximate entropy production}
A major challenge related to our proof of Theorem \ref{theo55} will consist in making appropriate use of Lemma \ref{lem11}
to derive expedient information in the limit of vanishing $\eps$.
In fact, assuming (\ref{fg_decay}) one could well infer from (\ref{11.2})-(\ref{11.4}) in a straightforward manner that
$\int_t^{t+1} \io \big\{ \frac{|\na\Theta|^2}{\Theta^2} + \frac{|\nas u_t|^2}{\Theta} + \frac{g}{\Theta} \big\} \to 0$ as $t\to\infty$;
In view of possibly lacking
strong $L^1$-compactness properties of $\frac{|\na\Teps|^2}{\Teps^2}$ and $\frac{|\nas\veps|^2}{\Teps}$, however,
it seems unclear how far this can be used to infer smallness of 
$\int_t^{t+1} \io \big\{ \frac{|\na\Teps|^2}{\Teps^2} + \frac{|\nas \veps|^2}{\Teps} + \frac{\geps}{\Teps} \big\}$
for large but fixed $t>0$ and suitably small $\eps\in (\eps_j)_{j\in\N}$;
as our approach toward Theorem \ref{theo55} aims at avoiding a direct use of the sparse knowledge on
properties of the limits $\Theta, u$ and $\mu$ provided by Theorem \ref{theo37} and Theorem \ref{theo38}, deriving
such uniform smallness properties of the entropy production rate in (\ref{11.6}) will form the core intention of the next lemmata.\abs
Our first step, naturally not yet requiring any assumption on large time decay of $f$ and $g$, 
will return to Lemma \ref{lem19} to obtain the following.
\begin{lem}\label{lem41}
  Assume that either (\ref{131.1}) or (\ref{38.1}) holds.
  Then
  \be{41.1}
	\leps(\Teps) \to \ell(\Theta)
	\quad \mbox{in } L^1_{loc}(\bom\times [0,\infty))
	\qquad \mbox{as } \eps=\eps_j\searrow 0,
  \ee
  where $\ell$ is as in (\ref{ell}).
\end{lem}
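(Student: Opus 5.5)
Our plan is to combine pointwise convergence with uniform integrability and then apply the Vitali convergence theorem on each set $\Om\times (0,T)$.

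First, pointwise convergence. Along the sequence $(\eps_j)_{j\in\N}$ from Lemma \ref{lem24} or Lemma \ref{lem244}, we have $\Teps\to\Theta$ a.e.~in $\Om\times(0,\infty)$, and by Theorems \ref{theo37}/\ref{theo38} we have $\Theta>0$ a.e. For $\xi>0$, (\ref{keps}) gives
\[
|\leps(\xi)-\ell(\xi)| \le \eps\,|\ln\xi|.
\]
The function $\ell$ is continuous on $(0,\infty)$. Hence at every point where $\Theta>0$ and $\Teps\to\Theta$, we get $\leps(\Teps)\to\ell(\Theta)$. So $\leps(\Teps)\to\ell(\Theta)$ a.e.

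Second, uniform integrability over $\Om\times(0,T)$ for fixed $T>0$. We split into $\{\Teps\ge 1\}$ and $\{\Teps<1\}$.

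On $\{\Teps\ge 1\}$ we have $0\le \leps(\Teps)\le\Keps(\Teps)$, as noted in the proof of Lemma \ref{lem11}. On this set we also use the cruder bound
\[
\leps(\xi)\le \int_1^\xi\frac{\kappa(\sig)+1}{\sig}\,d\sig .
\]

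On $\{\Teps<1\}$ we use
\[
|\leps(\xi)|\le |\ell(\xi)|+\eps\ln\frac{1}{\xi}\le c\,|\ln\xi|\qquad\mbox{for } \xi\in(0,1),
\]
since $\kappa$ is bounded on $[0,1]$. Lemma \ref{lem19} bounds $\ln^2\Teps$ in $L^1(\Om\times(0,T))$ for $\eps<\epss$. It applies because (\ref{131.1}) and (\ref{38.1}) each imply (\ref{16.1}). This quadratic bound gives equi-integrability of $\ln\Teps$, and hence of the negative part of $\leps(\Teps)$.

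The positive part is handled separately in the two cases. In the case (\ref{131.1}), $\int_1^\xi\kappa/\sig$ grows at most like $C(1+\ln\xi)$ when $\kappa$ is bounded, which is controlled by $\ln^2\Teps$ again. For general $\kappa$, we instead use the estimate
\[
\int_1^\xi\frac{\kappa}{\sig}\le \int_1^{\sqrt\xi}\kappa+\frac{1}{\sqrt\xi}\int_{\sqrt\xi}^\xi\kappa \le K(\sqrt{\xi})+\frac{K(\xi)}{\sqrt{\xi}} .
\]
The second term is $o(K(\xi))$. For the first term we need $K(\sqrt\xi)=o(K(\xi))$, which follows because $K$ grows at least linearly and the increment $K(\xi)-K(\sqrt\xi)$ dominates. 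Combined with the $L^\infty((0,T);L^1)$ bound on $\Keps(\Teps)$ from Lemma \ref{lem6} and (\ref{keps}), the de la Vallée-Poussin criterion then yields uniform integrability of the positive part. In the case (\ref{38.1}) we argue the same way.

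The main obstacle is this last step, namely showing that $\ell$ is genuinely $o(K)$ at infinity uniformly in $\eps$, given only the hypothesis $\int^\infty\kappa=\infty$. We will try the splitting above first. If that fails, we fall back on the observation that $\ell(\xi)\le \int_1^\xi\kappa\cdot\min\{1,1/\sig\}$ and prove $\ell(\xi)/K(\xi)\to0$ by L'Hôpital-type reasoning. The key relation is $\ell'(\xi)/K'(\xi)=1/\xi\to 0$, which gives the claim whenever $K\to\infty$, and $K\to\infty$ holds in both cases.

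Finally, with a.e.~convergence and uniform integrability established, Vitali's theorem gives convergence in $L^1(\Om\times(0,T))$ for every $T>0$, which is (\ref{41.1}).
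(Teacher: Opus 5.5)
Your overall structure is sound: a.e.\ convergence from $|\leps(\xi)-\ell(\xi)|\le\eps|\ln\xi|$ together with $\Theta>0$ a.e., equi-integrability on $\{\Teps<1\}$ from the $\ln^2\Teps$ bound of Lemma \ref{lem19}, then Vitali. However, the route you plan to try first for large values contains a false step. The bound $\int_1^\xi\frac{\kappa}{\sig}d\sig\le K(\sqrt{\xi})+\frac{K(\xi)}{\sqrt{\xi}}$ discards the weight $\frac{1}{\sig}$ on all of $[1,\sqrt{\xi}]$. As a result, $K(\sqrt{\xi})=o(K(\xi))$ does \emph{not} follow from at-least-linear growth of $K$.

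Here is a counterexample. Take $\kappa\equiv1$ except for bumps of mass $V_k\ge x_k^4$ supported in $[x_k-1,x_k]$, with $x_{k+1}>x_k^2+1$. Then (\ref{131.1}) holds. At $\xi=x_k^2$, however, $K(\sqrt{\xi})\ge x_k^4$ while $K(\xi)-K(\sqrt{\xi})=x_k^2-x_k$, so $K(\sqrt{\xi})/K(\xi)\to1$. Moreover, under (\ref{38.1}) with $n=2$ the function $K$ need not grow linearly at all, so ``arguing the same way'' would not even start.

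Your fallback, on the other hand, is correct, and in substance it is exactly the paper's proof. The fix is to split at a fixed large $\xi_0$ rather than at $\sqrt{\xi}$, using $\int_{\xi_0}^\xi\frac{\keps(\sig)}{\sig}d\sig\le\frac{1}{\xi_0}\Keps(\xi)$.

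The paper does this directly with $\keps$, which gives uniformity in $\eps$ for free. It shows $|\leps(\xi)|\le c_3|\ln\xi|$ on $(0,\xi_0)$ and $|\leps(\xi)|\le c_3\xi_0+\frac{1}{\xi_0}\Keps(\xi)$ on $[\xi_0,\infty)$, where $c_3=\|\kappa\|_{L^\infty((0,\xi_0))}+1$. Let $c_1$ bound $\io\Keps(\Teps)$ on $(0,T)$ (Lemma \ref{lem6}) and $c_2$ bound $\int_0^T\io\ln^2\Teps$ (Lemma \ref{lem19}). Then every measurable $E\subset\Om\times(0,T)$ satisfies $\int\!\!\int_E|\leps(\Teps)|\le c_3\sqrt{c_2}\sqrt{|E|}+c_3\xi_0|E|+\frac{c_1T}{\xi_0}$.

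This makes your detour unnecessary, namely the bounds $\leps\le\ell+\ln\xi$ and $K\le\Keps+1$ and the L'H\^opital statement $\ell/K\to0$. That detour is nonetheless valid. In the paper's version the constant from $[1,\xi_0]$ is absorbed by the smallness of $|E|$, so one does not even need $K\to\infty$.
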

\proof
  Let $T>0$. Recalling Lemma \ref{lem6}, we can then fix $c_1=c_1(T)>0$ such that
  \be{41.2}
	\io \Keps(\Teps) \le c_1
	\qquad \mbox{for all $t\in (0,T)$ and } \eps\in (0,1),
  \ee
  while Lemma \ref{lem19} provides $c_2=c_2(T)>0$ fulfilling
  \be{41.3}
	\int_0^T \io \ln^2 \Teps \le c_2
	\qquad \mbox{for all } \eps\in (0,\epss),
  \ee
  where $\epss$ is as in Lemma \ref{lem18}.
  Given $\eta>0$, we next fix $\xi_0=\xi_0(\eta,T)>1$ such that
  \be{41.4}
	\frac{1}{\xi_0} \cdot c_1 T \le \frac{\eta}{4},
  \ee
  and writing $c_3\equiv c_3(T):=\|\kappa\|_{L^\infty((0,\xi_0))} +1$ we can thereupon choose $\del=\del(\eta,T)>0$ such that
  \be{41.5}
	\sqrt{c_2} c_3 \sqrt{\del} \le \frac{\eta}{2}
  \ee
  and
  \be{41.6}
	c_3 \xi_0 \del \le \frac{\eta}{4}.
  \ee
  Now if $E\subset \Om\times (0,T)$ is measurable and such that $|E|<\del$, then in the identity
  \be{41.7}
	\int\int_E \big|\leps(\Teps)\big|
	= \int\int_{E\cap\{\Teps<\xi_0\}} \big|\leps(\Teps)\big|
	+ \int\int_{E\cap\{\Teps\ge\xi_0\}} \big|\leps(\Teps)\big|,
	\qquad \eps\in (0,1),
  \ee
  we can go back to (\ref{leps}) to see that since
  \bas
	\big|\leps(\xi)\big|
	= \bigg| \int_1^\xi \frac{\keps(\sig)}{\sig} d\sig\bigg|
	&\le& c_3 \bigg| \int_1^\xi \frac{d\sig}{\sig} \bigg|
	= c_3 |\ln\xi|
	\qquad \mbox{for all $\xi\in (0,\xi_0)$ and } \eps\in (0,1)
  \eas
  thanks to (\ref{keps}), due to the Cauchy-Schwarz inequality, (\ref{41.3}) and (\ref{41.5}) we have
  \bea{41.8}
	\int\int_{E\cap\{\Teps<\xi_0\}} \big|\leps(\Teps)\big|
	&\le& c_3 \int\int_E |\ln\Teps| \nn\\
	&\le& c_3 \cdot \bigg\{ \int_0^T \io \ln^2 \Teps \bigg\}^\frac{1}{2} \cdot \sqrt{|E|} \nn\\
	&\le& c_3 \sqrt{c_2} \sqrt{|E|} \nn\\
	&\le& \frac{\eta}{2}
	\qquad \mbox{for all } \eps\in (0,\epss).
  \eea
  At large values, however, we can use that, again by (\ref{keps}),
  \bas
	\big|\leps(\xi)\big|
	&=& \int_1^{\xi_0} \frac{\keps(\sig)}{\sig} d\sig
	+ \int_{\xi_0}^\xi \frac{\keps(\sig)}{\sig} d\sig \nn\\
	&\le& c_3 \xi_0 + \frac{1}{\xi_0} \int_{\xi_0}^\xi \keps(\sig) d\sig \nn\\
	&\le& c_3 \xi_0 + \frac{1}{\xi_0} \cdot \Keps(\xi)
	\qquad \mbox{for all $\xi\ge\xi_0$ and } \eps\in (0,1),
  \eas
  so that we may draw on (\ref{41.2}) in estimating
  \bea{41.10}
	\int\int_{E\cap\{\Teps\ge\xi_0\}} \big|\leps(\Teps)\big|
	&\le& c_3 \xi_0 |E|
	+ \frac{1}{\xi_0} \int\int_E \Keps(\Teps) \nn\\
	&\le& c_3 \xi_0 |E|
	+ \frac{1}{\xi_0} \cdot c_1 T \nn\\
	&\le& \frac{\eta}{4} + \frac{\eta}{4}=\frac{\eta}{2}
	\qquad \mbox{for all } \eps\in (0,1)
  \eea
  because of (\ref{41.6}) and (\ref{41.4}).
  As $\eta>0$ was arbitrary here, from (\ref{41.7})-(\ref{41.10}) we infer that the family $(\leps(\Teps))_{\eps\in (0,\epss)}$ 
  is uniformly integrable over $\Om\times (0,T)$, and that due to the pointwise convergence properties in (\ref{24.6})
  and (\ref{244.6}), the Vitali convergence theorem hence asserts that $\leps(\Teps)\to \ell(\Theta)$ in
  $L^1(\Om\times (0,T))$ as $\eps=\eps_j\searrow 0$, because clearly $\leps\to\ell$ in $C^0_{loc}((0,\infty))$
  as $\eps\searrow 0$ by (\ref{leps}) and (\ref{keps}).
\qed
A direct implication of this and (\ref{11.1}) reads as follows.
\begin{lem}\label{lem42}
  If either (\ref{131.1}) or (\ref{38.1}) holds,
  then there exists a null set $N_\star\subset (0,\infty)$ such that
  $\ell(\cdot,t)\in L^1(\Om)$ for all $t\in (0,\infty)\sm N_\star$, that
  \be{42.1}
	\io \leps(\Teps(\cdot,t)) \to \io \ell(\Theta(\cdot,t))
	\quad \mbox{for all $t\in (0,\infty)\sm N_\star$}
	\qquad \mbox{as } \eps=\eps_j\searrow 0,
  \ee
  and that
  \be{42.2}
	\io \ell(\Theta(\cdot,t)) \ge \io \ell(\Theta(\cdot,t_0))
	\qquad \mbox{for all $t_0\in (0,\infty)\sm N_\star$ and } t\in (t_0,\infty)\sm N_\star.
  \ee
\end{lem}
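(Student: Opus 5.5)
The plan is to pass from the space-time $L^1$ convergence in Lemma \ref{lem41} to convergence for almost every fixed time by a Fubini-type argument. After that, the monotonicity property (\ref{11.1}) of the approximate entropies can be carried over to the limit pointwise in time.

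\textbf{Step 1: a.e.\ time convergence along the sequence.} For each $k\in\N$, Lemma \ref{lem41} gives
\[
\int_0^k \bigg\{ \io \big|\leps(\Teps(\cdot,t))-\ell(\Theta(\cdot,t))\big| \bigg\} dt \to 0
\qquad \mbox{as } \eps=\eps_j\searrow 0 .
\]
Hence the functions $t\mapsto \io |\leps(\Teps(\cdot,t))-\ell(\Theta(\cdot,t))|$ converge to $0$ in $L^1((0,k))$. By Fubini's theorem, $\ell(\Theta(\cdot,t))\in L^1(\Om)$ for a.e.\ $t\in(0,k)$. A subsequence of these functions then converges to $0$ for a.e.\ $t\in(0,k)$. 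A diagonal extraction over $k\in\N$ produces a further subsequence, again denoted by $(\eps_j)_{j\in\N}$, and a null set $N_\star\subset(0,\infty)$ with the following two properties for every $t\in(0,\infty)\sm N_\star$:
\[
\ell(\Theta(\cdot,t))\in L^1(\Om)
\qquad \mbox{and} \qquad
\io\leps(\Teps(\cdot,t))\to\io\ell(\Theta(\cdot,t)) .
\]
This yields (\ref{42.1}).

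\textbf{Step 2: the monotonicity (\ref{42.2}).} Fix $t_0\in(0,\infty)\sm N_\star$ and $t\in(t_0,\infty)\sm N_\star$. Apply (\ref{11.1}) for each $\eps=\eps_j$ and let $j\to\infty$ on both sides, using (\ref{42.1}) at the two times $t_0$ and $t$. This gives (\ref{42.2}) directly.

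\textbf{Main obstacle.} The only delicate point is the passage to a subsequence in Step 1. The statement of Lemma \ref{lem42} is phrased along the same sequence $(\eps_j)$ as Lemma \ref{lem41}, while the a.e.-in-time convergence only holds after a further extraction. I would handle this by relabeling, as the paper does throughout, noting that all earlier convergence properties persist along subsequences. Alternatively, one could interpret (\ref{42.1}) along that subsequence. No quantitative estimates are needed beyond Lemma \ref{lem41} and (\ref{11.1}).
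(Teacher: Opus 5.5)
Your proposal is correct and is essentially the paper's own argument, which likewise combines Lemma \ref{lem41} with the Fubini--Tonelli theorem and then passes to the limit in (\ref{11.1}) at two fixed good times. Relabeling the further subsequence is harmless, since all earlier convergence properties along $(\eps_j)_{j\in\N}$ persist. Alternatively, you can avoid the extraction entirely: by (\ref{11.1}), $t\mapsto\io\leps(\Teps(\cdot,t))$ is nondecreasing, and $L^1_{loc}$-convergence of nondecreasing functions already gives convergence along the full sequence at every continuity point of the (a.e.\ nondecreasing) limit.
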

\proof
  This is an immediate consequence of Lemma \ref{lem41} when combined with Lemma \ref{lem11} and the Fubini-Tonelli theorem.
\qed
We now additionally impose the decay hypothesis in (\ref{fg_decay}), and then firstly obtain uniform bounds for the quantities
on the left-hand side of (\ref{11.6}):
\begin{lem}\label{lem43}
  Assume that either (\ref{131.1}) or (\ref{38.1}) holds,
  and that (\ref{fg_decay}) is satisfied.
  Then there exists $C>0$ such that
  \be{43.1}
	\io \leps(\Teps(\cdot,t)) \le C
	\qquad \mbox{for all $t>0$ and } \eps\in (0,1).
  \ee
\end{lem}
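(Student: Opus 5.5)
The bound follows from comparing $\leps$ with $\Keps$ pointwise and then invoking the time-independent energy bound that Lemma \ref{lem6} provides under (\ref{fg_decay}).

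\emph{Step 1: pointwise comparison.} We show that
\[
\leps(\xi)\le\Keps(\xi)\qquad\mbox{for all $\xi>0$ and } \eps\in (0,1).
\]
For $\xi\in(0,1)$ we have $\leps(\xi)=-\int_\xi^1\frac{\keps(\sig)}{\sig}d\sig\le 0\le\Keps(\xi)$, since $\keps\ge 0$. For $\xi\ge 1$, using $\frac{1}{\sig}\le 1$ on $[1,\xi]$,
\[
\leps(\xi)=\int_1^\xi\frac{\keps(\sig)}{\sig}d\sig\le\int_1^\xi\keps(\sig)d\sig\le\Keps(\xi).
\]
This inequality was already recorded in the proof of Lemma \ref{lem11}.

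\emph{Step 2: integrate.} Integrating the comparison over $\Om$ gives
\[
\io\leps(\Teps(\cdot,t))\le\io\Keps(\Teps(\cdot,t))\qquad\mbox{for all $t>0$ and } \eps\in(0,1).
\]
By Lemma \ref{lem9} the approximate solutions exist for all times. Lemma \ref{lem6}, through (\ref{6.3}), gives $\io\Keps(\Teps(\cdot,t))\le C(T)$ for all $t\in(0,T)$, and (\ref{6.5}) shows that $\sup_{T>0}C(T)<\infty$ under (\ref{fg_decay}). Taking $C:=\sup_{T>0}C(T)$ then yields (\ref{43.1}) for every $t>0$ and every $\eps\in(0,1)$.

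\emph{Remarks.} The only point needing care is that the uniformity in time comes from (\ref{6.5}). That in turn rests on (\ref{fge_decay}) in Lemma \ref{lem99}, i.e.\ on the time-integrability of $\feps$ and $\geps$ being uniform in $\eps$. The hypotheses (\ref{131.1}) or (\ref{38.1}) are not actually needed for this upper bound. They only become relevant for the companion lower bound on $\io\leps(\Teps)$, which is supplied by (\ref{11.1}) together with the estimate on $-\io\leps(\Theta_{0\eps})$ from the proof of Lemma \ref{lem11}.
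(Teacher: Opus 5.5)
Your proposal is correct and follows the paper's proof essentially verbatim: the pointwise comparison $\leps\le\Keps$ (nonpositive on $(0,1]$, bounded by $\int_1^\xi\keps\le\Keps(\xi)$ beyond $1$), followed by the energy bound (\ref{6.3}). You also make explicit the appeal to (\ref{6.5}) under (\ref{fg_decay}), which is what yields uniformity in $t>0$ and which the paper leaves implicit. You are right as well that (\ref{131.1}) and (\ref{38.1}) play no role in this particular bound.
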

\proof
  According to (\ref{leps}), for each $\eps\in (0,1)$ we have $\leps(\xi)\le 0$ for all $\xi\in (0,1]$ and 
  $\leps(\xi) \le \int_1^\xi \keps(\sig)d\sig \le \Keps(\xi)$ for all $\xi>1$, so that
  \bas
	\io \leps(\Teps) \le \int_{\{\Teps>1\}} \leps(\Teps) 
	\le \io \Keps(\Teps)
	\qquad \mbox{for all $t>0$ and } \eps\in (0,1).
  \eas
  We thus obtain (\ref{43.1}) from (\ref{6.3}).
\qed
In particular, this establishes a rigorous analogue of the monotone convergence property anticipated in (\ref{ent}):
\begin{lem}\label{lem44}
  If either (\ref{131.1}) or (\ref{38.1}) holds, and if additionally (\ref{fg_decay}) is fulfilled,
  then there exists $L\in\R$ such that with
  $N_\star$ as in Lemma \ref{lem42},
  \be{44.1}
	\frac{1}{|\Om|} \io \ell(\Theta(\cdot,t)) \nearrow L
	\qquad \mbox{as } (0,\infty)\sm N_\star \ni t \to \infty.
  \ee
\end{lem}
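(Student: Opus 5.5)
The plan is to combine the monotonicity statement (\ref{42.2}) of Lemma \ref{lem42} with a uniform upper bound for $\io \ell(\Theta(\cdot,t))$ obtained from Lemma \ref{lem43}. By (\ref{42.2}), the function $(0,\infty)\sm N_\star\ni t\mapsto \frac{1}{|\Om|}\io \ell(\Theta(\cdot,t))$ is nondecreasing. A nondecreasing real-valued function on an unbounded set has a limit in $\R\cup\{+\infty\}$ as $t\to\infty$, and this limit is approached from below. It therefore remains to show that the limit is finite.

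For finiteness, I will use Lemma \ref{lem43}, which applies because (\ref{fg_decay}) is assumed. It gives $\io \leps(\Teps(\cdot,t))\le C$ for all $t>0$ and $\eps\in (0,1)$. Fix $t\in (0,\infty)\sm N_\star$. By (\ref{42.1}), $\io\leps(\Teps(\cdot,t))\to\io \ell(\Theta(\cdot,t))$ as $\eps=\eps_j\searrow 0$. Passing to the limit in the bound yields $\io \ell(\Theta(\cdot,t))\le C$ for every such $t$. Hence the monotone limit satisfies $L\le C/|\Om|<\infty$.

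The limit is also not $-\infty$, since each value is a real number: $\ell(\Theta(\cdot,t))\in L^1(\Om)$ for $t\notin N_\star$ by Lemma \ref{lem42}. Together with monotonicity this gives $L\ge \frac{1}{|\Om|}\io\ell(\Theta(\cdot,t_0))>-\infty$ for any fixed $t_0\notin N_\star$. So $L\in\R$ and (\ref{44.1}) holds.

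There is no real obstacle here. The only point needing care is that Lemma \ref{lem43} bounds the approximate entropies uniformly in both $t$ and $\eps$. This is exactly what transfers, pointwise in $t\notin N_\star$, through the convergence in (\ref{42.1}).
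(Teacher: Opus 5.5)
Your proposal is correct and follows the paper's own argument: monotonicity from (\ref{42.2}), and an upper bound obtained by passing to the limit $\eps=\eps_j\searrow 0$ in Lemma \ref{lem43} via (\ref{42.1}). Your extra check that $L>-\infty$ is a harmless detail the paper leaves implicit.
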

\proof
  We only need to note that due to (\ref{42.2}), the function defined by $h(t):=\frac{1}{|\Om|} \io \ell(\Theta(\cdot,t))$,
  $t\in (0,\infty)\sm N_\star$, is nondecreasing, and that Lemma \ref{lem43} together with (\ref{42.1}) asserts boundedness of $h$
  from above.
\qed
We can thereby draw our intended main conclusion concerning asymptotic smallness of entropy production in the 
approximate problems:
\begin{lem}\label{lem45}
  Suppose that either (\ref{131.1}) or (\ref{38.1}) holds, and that and (\ref{fg_decay}) are satisfied.
  Then for each $\eta>0$ there exists $T_1(\eta)>0$ with the property that whenever $t>T_1(\eta)$, we can find 
  $\epsss(\eta,t)\in (0,1)$ such that
  \be{45.1}
	\int_t^{t+1} \io \frac{|\na\Teps|^2}{\Teps^2} \le \eta
	\qquad \mbox{for all } \eps\in (\eps_j)_{j\in\N} \subset (0,\epsss(\eta,t))
  \ee
  and
  \be{45.2}
	\int_t^{t+1} \io \frac{|\nas\veps|^2}{\Teps} \le \eta
	\qquad \mbox{for all } \eps\in (\eps_j)_{j\in\N} \subset (0,\epsss(\eta,t))
  \ee
  as well as
  \be{45.3}
	\int_t^{t+1} \io \frac{\geps}{\Teps} \le \eta
	\qquad \mbox{for all } \eps\in (\eps_j)_{j\in\N} \subset (0,\epsss(\eta,t)).
  \ee
\end{lem}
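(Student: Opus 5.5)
The plan is to combine the entropy identity (\ref{11.6}) from Lemma \ref{lem11}, applied on $[t,t+1]$, with the monotone convergence from Lemma \ref{lem44} and the pointwise-in-time convergence from Lemma \ref{lem42}. Since $\kD>0$ and $D>0$, it suffices to make the difference
\bas
	\io \leps(\Teps(\cdot,t+1)) - \io \leps(\Teps(\cdot,t))
\eas
small. By (\ref{11.6}) with $t_0:=t$, this difference dominates each of $D\int_t^{t+1}\io \frac{|\na\Teps|^2}{\Teps^2}$, $\kD\int_t^{t+1}\io\frac{|\nas\veps|^2}{\Teps}$ and $\int_t^{t+1}\io\frac{\geps}{\Teps}$, because all three are nonnegative.

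First, given $\eta>0$, set $\eta':=\min\{D,\kD,1\}\cdot\eta$. Lemma \ref{lem44} provides a limit $L$ with $\frac{1}{|\Om|}\io\ell(\Theta(\cdot,s))\nearrow L$ along $s\in(0,\infty)\sm N_\star$. Choose $T_1(\eta)$ so large that $|\Om|L-\io\ell(\Theta(\cdot,s))\le \eta'/4$ for all $s\in(T_1(\eta),\infty)\sm N_\star$.

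Next, fix $t>T_1(\eta)$. The endpoints $t$ and $t+1$ may lie in the null set $N_\star$, so we pick $s_1\in(T_1(\eta),t)\sm N_\star$ and $s_2\in(t+1,\infty)\sm N_\star$; this needs $t>T_1$ strictly, which is why the statement uses $t>T_1(\eta)$. Applying (\ref{11.6}) with $t_0:=s_1$ and final time $s_2$ gives an interval containing $[t,t+1]$, so
\bas
	\int_t^{t+1}\io \Big\{ D\frac{|\na\Teps|^2}{\Teps^2}+\kD\frac{|\nas\veps|^2}{\Teps}+\frac{\geps}{\Teps}\Big\}
	\le \io\leps(\Teps(\cdot,s_2))-\io\leps(\Teps(\cdot,s_1)).
\eas
By (\ref{42.1}), along $\eps=\eps_j\searrow 0$ the right-hand side converges to $\io\ell(\Theta(\cdot,s_2))-\io\ell(\Theta(\cdot,s_1))$. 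This limit is at most $|\Om|L-\io\ell(\Theta(\cdot,s_1))\le \eta'/4$.

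Hence there is $\epsss(\eta,t)\in(0,1)$ such that for all $\eps_j<\epsss(\eta,t)$ the right-hand side is at most $\eta'/2$. Each of the three integrals is then at most $\eta$, which yields (\ref{45.1})–(\ref{45.3}).

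The only delicate point is the null set $N_\star$. It is handled by choosing $s_1,s_2$ outside it and enlarging the time interval, which is harmless because the integrands are nonnegative. Apart from this, the argument is bookkeeping.
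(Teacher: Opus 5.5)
Your proposal is correct and follows essentially the same route as the paper: you apply (\ref{11.6}) on an interval $[s_1,s_2]\supset[t,t+1]$ with endpoints chosen outside $N_\star$, pass to the limit via (\ref{42.1}), and use the monotone convergence to $L$ from Lemma \ref{lem44} to make the right-hand side small. The only differences are in how you distribute the small constants ($\eta'/4$ versus the paper's $\eta_1/2$ and $\eta_1/4$), which is immaterial.
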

\proof
  We take $\eta_1=\eta_1(\eta)>0$ such that $\frac{\eta_1}{D} \le \eta$, $\frac{\eta_1}{\kD} \le \eta$ and $\eta_1 \le 1$,
  and using Lemma \ref{lem44} we can then find $T_1=T_1(\eta)>0$ such that with $L$ as found there,
  \be{45.4}
	\io \ell(\Theta(\cdot,t))
	\ge |\Om|\cdot L - \frac{\eta_1}{2}
	\qquad \mbox{for all } t\in (T_1,\infty)\sm N_\star.
  \ee
  For fixed $t>T_1$, we pick any $t_0=t_0(t)\in (T_1,t)\sm N_\star$ and $t_1=t_1(t)\in (t+1,\infty)\sm N_\star$, and can then employ
  Lemma \ref{lem42} to choose $\epsss=\epsss(\eta,t)\in (0,1)$ such that
  \bas
	\io \leps(\Teps(\cdot,t_1))
	\le \io \ell(\Theta(\cdot,t_1)) + \frac{\eta_1}{4}
	\quad \mbox{and} \quad
	\io \leps(\Teps(\cdot,t_0)) \ge \io \ell(\Theta(\cdot,t_0)) - \frac{\eta_1}{4}
  \eas
  for all $\eps\in (\eps_j)_{j\in\N} \subset (0,\epsss)$.
  An application of (\ref{11.6}) then shows that since $\io \ell(\Theta(\cdot,t_1)) \le |\Om|\cdot L$ by (\ref{44.1}),
  \bas
	& & \hs{-30mm}
	D \int_{t_0}^{t_1} \io \frac{|\na\Teps|^2}{\Teps^2}
	+ \kD \int_{t_0}^{t_1} \io \frac{|\nas\veps|^2}{\Teps}
	+ \int_{t_0}^{t_1} \io \frac{\geps}{\Teps} \nn\\
	&\le& \io \leps(\Teps(\cdot,t_1)) - \io \leps(\Teps(\cdot,t_0)) \\
	&\le& \bigg\{ \io \ell(\Theta(\cdot,t_1)) + \frac{\eta_1}{4} \bigg\}
	- \bigg\{ \io \ell(\Theta(\cdot,t_0)) - \frac{\eta_1}{4} \bigg\} \\
	&\le& |\Om|\cdot L
	- \io \ell(\Theta(\cdot,t_0)) + \frac{\eta_1}{2}
	\qquad \mbox{for all } \eps\in (\eps_j)_{j\in\N} \subset (0,\epsss),
  \eas
  whence using (\ref{45.4}) and the fact that $t_0<t<t+1<t_1$ we find that
  \bas
	D \int_{t_0}^{t_1} \io \frac{|\na\Teps|^2}{\Teps^2}
	+ \kD \int_{t_0}^{t_1} \io \frac{|\nas\veps|^2}{\Teps}
	+ \int_{t_0}^{t_1} \io \frac{\geps}{\Teps} 
	\le \frac{\eta_1}{2} + \frac{\eta_1}{2} =\eta_1
	\qquad \mbox{for all } \eps\in (\eps_j)_{j\in\N} \subset (0,\epsss).
  \eas
  In view of our choice of $\eta_1$, this implies (\ref{45.1}), (\ref{45.2}) and (\ref{45.3}).
\qed
\subsection{Proof of Theorem \ref{theo55}}
\newcommand{\rM}{\rho^{(M)}}
\newcommand{\lM}{\ell^{(M)}}
\newcommand{\lMe}{\leps^{(M)}}
\newcommand{\I}{{\mathcal{I}}}
\newcommand{\ZMk}{Z_k^{(M)}}
\newcommand{\ZMke}{Z_{k\eps}^{(M)}}
\newcommand{\LM}{L^{(M)}}
In order to make Lemma \ref{lem45} applicable so as to assert, regardless of possibly rapid growth of $\kappa$, some 
favorable compactness properties of $\phi(\Theta)$ and approximate variants for some $\phi:(0,\infty)\to\R$, 
in the following lemma we introduce certain relatives of $\ell$ and the $\leps$ which have undergone a
cut-off procedure in their derivatives.
\begin{lem}\label{lem47}
  Assume (\ref{kappa_reg}), and for $M>1$ let $\rM\in C_0^\infty([0,\infty))$ be such that $\rM\equiv 1$ on $[0,M]$,
  $\rM\equiv 0$ on $[M+1,\infty)$ and $0\ge (\rM)' \ge -2$, and set
  \be{lM}
	\lM(\xi):=\int_1^\xi \frac{\rM(\sig)\kappa(\sig)}{\sig} d\sig,
	\qquad \xi>0,
  \ee
  as well as
  \be{lMe}
	\lMe(\xi):=\int_1^\xi \frac{\rM(\sig)\keps(\sig)}{\sig} d\sig,
	\qquad \xi>0, \ \eps\in (0,1).
  \ee
  Then $\lM\in C^1((0,\infty))$ with $(\lM)'(\xi)=\frac{\rM(\xi)\kappa(\xi)}{\xi}$ for all $\xi>0$ and 
  $\lMe\in C^1((0,\infty))$ with $(\lMe)'(\xi)=\frac{\rM(\xi)\keps(\xi)}{\xi}$ for all $\xi>0$ and $\eps\in (0,1)$,
  and we have
  \be{47.1}
	\ell(\xi)-\frac{1}{M}\cdot K(\xi) \le \lM(\xi) \le \ell(\xi)
	\qquad \mbox{for all $\xi>0$ and } M>1.
  \ee
\end{lem}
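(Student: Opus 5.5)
The regularity claims come directly from the fundamental theorem of calculus. Since $\rM\in C_0^\infty([0,\infty))$ and $\kappa\in C^0([0,\infty))$ by (\ref{kappa_reg}), the integrand $\sig\mapsto\frac{\rM(\sig)\kappa(\sig)}{\sig}$ is continuous on $(0,\infty)$. Hence $\lM\in C^1((0,\infty))$, and its derivative is as stated. The same reasoning applies to $\lMe$, using $\keps\in C^\infty([0,\infty))$ from (\ref{keps}).

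For (\ref{47.1}) I will treat the cases $\xi\ge1$ and $\xi\in(0,1)$ separately.

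\emph{Case $\xi\in(0,1)$.} Here $\xi<1<M$, and $\rM\equiv1$ on $[0,M]$. So $\lM(\xi)=\ell(\xi)$, and the upper bound holds with equality. The lower bound then follows from $K(\xi)\ge0$.

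\emph{Case $\xi\ge1$.} Write
\bas
	\ell(\xi)-\lM(\xi)=\int_1^\xi \frac{(1-\rM(\sig))\kappa(\sig)}{\sig}\,d\sig .
\eas
Since $\rM$ is nonincreasing with values $1$ on $[0,M]$ and $0$ at $M+1$, we have $0\le\rM\le1$. The integrand is therefore nonnegative, which gives $\lM\le\ell$. For the other inequality, note that $1-\rM$ vanishes on $[0,M]$. The integral thus only runs over $\sig\in[M,\xi]$, where $\frac{1}{\sig}\le\frac1M$ and $0\le 1-\rM\le1$. This yields
\bas
	\ell(\xi)-\lM(\xi)\le \frac1M\int_M^\xi \kappa(\sig)\,d\sig\le \frac1M K(\xi)
\eas
when $\xi>M$, and $\ell(\xi)-\lM(\xi)=0$ when $\xi\le M$.

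No step is a real obstacle. The only point to check carefully is that $0\le\rM\le1$, which follows from monotonicity together with the boundary values $\rM=1$ on $[0,M]$ and $\rM=0$ on $[M+1,\infty)$.
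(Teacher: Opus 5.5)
Your proof is correct and follows the same route as the paper. The paper likewise writes $\ell(\xi)-\ell^{(M)}(\xi)=\int_1^\xi \frac{(1-\rho^{(M)}(\sigma))\kappa(\sigma)}{\sigma}\,d\sigma$ and bounds it by $\frac{1}{M}\int_M^\xi \kappa(\sigma)\,d\sigma\le\frac{1}{M}K(\xi)$, using $0\le\rho^{(M)}\le 1$ and $\rho^{(M)}\equiv 1$ on $[0,M]$; your case distinction ($\xi<1$, $\xi\le M$, $\xi>M$) just makes explicit the orientation bookkeeping that the paper's single chain of inequalities leaves implicit for $\xi<M$.
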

\proof
  Since for each $M>1$ the properties of $\rM$ particularly ensure that for all $\xi>0$ we have
  \bas
	0 \le \ell(\xi)-\lM(\xi)
	= \int_1^\xi \frac{(1-\rM(\sig))\kappa(\sig)}{\sig} d\sig
	\le \int_M^\xi \frac{\kappa(\sig)}{\sig} d\sig 
	\le \frac{1}{M} \int_M^\xi \kappa(\sig) d\sig
	\le \frac{1}{M} K(\xi)
  \eas
  due to (\ref{K}), all statements are evident.
\qed
From lemma \ref{lem45} we can then indeed deduce the following.
\begin{lem}\label{lem48}
  Assume either (\ref{131.1}) or (\ref{38.1}), and that additionally (\ref{fg_decay}) holds, and let $p>n$.
  Then for any $\eta>0$ and $M>1$ there exists $T(\eta,M)>0$ such that 
  to each $t>T(\eta,M)$ there corresponds some $\epssss(\eta,M,t)\in (0,1)$ with the property that
  \be{48.1}
	\int_t^{t+1} \io \big| \na\lMe(\Teps)\big|^2 \le \eta
	\qquad \mbox{for all } \eps\in (\eps_j)_{j\in\N}\cap (0,\epssss(\eta,M,t))
  \ee
  and
  \be{48.2}
	\int_t^{t+1} \Big\| \pa_t \lMe(\Teps(\cdot,s)) \Big\|_{(W^{1,p}(\Om))^\star} ds \le \eta
	\qquad \mbox{for all } \eps\in (\eps_j)_{j\in\N}\cap (0,\epssss(\eta,M,t)).
  \ee
\end{lem}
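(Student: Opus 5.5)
Both estimates will be reduced to the smallness statements of Lemma \ref{lem45}, using that the cut-off $\rM$ confines all $\eps$-dependent weights to the compact range $\Teps\le M+1$. On that range $\keps\le \kappa+1$ by (\ref{keps}), so we fix $c_M:=\|\kappa\|_{L^\infty((0,M+1))}+1$ and have $\keps(\Teps)\rM(\Teps)\le c_M$ everywhere. We then choose $T(\eta,M):=T_1(\eta')$ and $\epssss(\eta,M,t):=\epsss(\eta',t)$ from Lemma \ref{lem45}, with some $\eta'=\eta'(\eta,M)\in(0,1)$ to be fixed at the end.

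For (\ref{48.1}) we compute $\na\lMe(\Teps)=\frac{\rM(\Teps)\keps(\Teps)}{\Teps}\na\Teps$, so that $|\na\lMe(\Teps)|^2\le c_M^2\frac{|\na\Teps|^2}{\Teps^2}$. Then (\ref{45.1}) gives the bound $c_M^2\eta'$ on $(t,t+1)$.

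For (\ref{48.2}) we proceed as in Lemma \ref{lem20}. We test the third equation in (\ref{0eps}) against $\frac{\rM(\Teps)}{\Teps}\vp$, where $\|\vp\|_{W^{1,p}(\Om)}\le1$, so that $\|\vp\|_{L^\infty(\Om)}+\|\na\vp\|_{L^2(\Om)}\le c_p$. Integrating by parts in the diffusion term produces the following contributions.
\begin{itemize}
\item From $\na\frac{\rM(\Teps)}{\Teps}$ we get $D\int\big(\frac{\rM}{\Teps^2}-\frac{(\rM)'}{\Teps}\big)|\na\Teps|^2\vp$. Since $|(\rM)'|\le2$ and $(\rM)'$ is supported in $[M,M+1]$, where $\Teps\ge1$, this is bounded by $c\int\frac{|\na\Teps|^2}{\Teps^2}$.
\item The term $D\int\frac{\rM}{\Teps}\na\Teps\cdot\na\vp$ is bounded by $Dc_p\big(\io\frac{|\na\Teps|^2}{\Teps^2}\big)^{1/2}$.
\item The viscous term $\int\frac{\rM}{\Teps}\lan\D:\nas\veps,\nas\veps\ran\vp$ is bounded by $c\int\frac{|\nas\veps|^2}{\Teps}$.
\item The dilation term $\int\rM(\Teps)\lan\B,\nas\veps\ran\vp$ is the delicate one, because the factor $\Teps$ has cancelled. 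We write $\rM|\nas\veps|\le\sqrt{M+1}\,\frac{|\nas\veps|}{\sqrt{\Teps}}$ on the support of $\rM$ and apply Cauchy--Schwarz, obtaining $c(M)\big(\io\frac{|\nas\veps|^2}{\Teps}\big)^{1/2}$.
\item The source term gives $c\io\frac{\geps}{\Teps}$.
\end{itemize}

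Integrating over $(t,t+1)$ and using Hölder's inequality in time to control the square-root terms, (\ref{45.1})--(\ref{45.3}) yield
\[
\int_t^{t+1}\big\|\pa_t\lMe(\Teps)\big\|_{(W^{1,p}(\Om))^\star}\le c(M,p)\,(\eta'+\sqrt{\eta'}).
\]
Choosing $\eta'$ so small that $c_M^2\eta'\le\eta$ and $c(M,p)(\eta'+\sqrt{\eta'})\le\eta$ completes the argument.

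The main obstacle is the dilation term, which carries no decaying weight of its own. Its smallness comes only from the square root of the small dissipation quantity (\ref{45.2}), combined with the upper cut-off $\Teps\le M+1$; this is also why the constants, and hence $\eta'$, must be allowed to depend on $M$. A further point to handle carefully is that the $\eps$-threshold $\epsss$ from Lemma \ref{lem45} depends on $t$, which is consistent with the statement's allowance of a $t$-dependent $\epssss(\eta,M,t)$.
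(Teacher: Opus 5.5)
Your proposal is correct and follows the paper's proof essentially step by step. It uses the same choice of $T(\eta,M)$ and $\epssss(\eta,M,t)$ from Lemma \ref{lem45} with an auxiliary small parameter (the paper's $\eta_1$), the same test function $\frac{\rho^{(M)}(\Teps)}{\Teps}\vp$, and the same treatment of the dilation term via $\rho^{(M)}(\Teps)\sqrt{\Teps}\le\sqrt{M+1}$ and Cauchy--Schwarz. One small slip: in the $(\rho^{(M)})'$-term, the bound $\frac{1}{\Teps}\le\frac{M+1}{\Teps^2}$ rests on $\Teps\le M+1$ on the support of $(\rho^{(M)})'$, not on $\Teps\ge 1$.
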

\proof
  We abbreviate $c_1\equiv c_1(M):=\|\kappa\|_{L^\infty((0,M+1))} +1$ and let $c_2>0$ be such that in line with our assumption
  that $p>n$, any $\vp\in C^1(\bom)$ with $\|\vp\|_{W^{1,p}(\Om)} \le 1$ satisfies 
  $\|\vp\|_{L^\infty(\Om)} + \|\na\vp\|_{L^2(\Om)} \le c_2$.
  For fixed $\eta>0$, we then let $\eta_1\equiv \eta_1(\eta,M)>0$ be small enough such that $c_1 \eta_1\le \eta$ and
  \be{48.4}
	\big\{ c_2 D + 2c_2 D\cdot (M+1) + 2c_2\big\} \cdot \eta_1
	+ \big\{ c_2 D + c_2\sqrt{M+1}\sqrt{|\Om|} |\B|\big\} \cdot \sqrt{\eta_1} \le \eta,
  \ee 
  and in line with Lemma \ref{lem45} we now pick $T=T(\eta,M)>1$ and $(\epssss(\eta,M,t))_{t>T} \subset (0,1)$
  such that whenever $t>T$ and $\eps\in (\eps_j)_{j\in\N} \cap (0,\epssss(\eta,M,t))$, 
  \be{48.6}
	\int_t^{t+1} \io \frac{|\na\Teps|^2}{\Teps^2} \le \eta_1
  \ee
  and
  \be{48.7}
	\int_t^{t+1} \io \frac{\lan\D:\nas\veps,\nas\veps\ran}{\Teps}
	+ \int_t^{t+1} \io \frac{|\nas\veps|^2}{\Teps} 
	\le \eta_1
  \ee
  as well as
  \be{48.8}
	\int_t^{t+1} \io \frac{\geps}{\Teps} 
	\le \eta_1.
  \ee
  Since
  \bas
	(\lMe)'(\xi) = \frac{\rM(\xi)\keps(\xi)}{\xi}
	\qquad \mbox{for all $\xi>0$ and } \eps\in (0,1)
  \eas
  by (\ref{lM}), and since thus (\ref{keps}) along with our restrictions on $\rM$ 
  from Lemma \ref{lem47} warrants that for all $\eps\in (0,1)$ we have
  \be{48.10}
	(\lMe)' \equiv 0 \mbox{ on } [M+1,\infty)
	\quad \mbox{and} \quad
	\big| (\lMe)'(\xi) \big| \le \frac{\keps(\xi)}{\xi} \le \frac{c_1}{\xi}
	\mbox{ for all } \xi\in (0,M+1),
  \ee 
  for each $t>T$ we can rely on (\ref{48.6}) and the inequality $c_1\eta_1\le\eta$ to estimate
  \bea{48.11}
	\int_t^{t+1} \io \big| \na \lMe(\Teps) \big|^2
	&=& \int_t^{t+1} \io \big|(\lMe)'(\Teps)\big|^2 |\na\Teps|^2 \nn\\
	&\le& c_1 \int_t^{t+1} \io \frac{|\na\Teps|^2}{\Teps^2} \nn\\
	&\le& \eta
	\qquad \mbox{for all } \eps\in (\eps_j)_{j\in\N} \cap (0,\epssss(\eta,M,t)).
  \eea
  We next go back to (\ref{0eps}) and draw on or choice of $c_2$ to see that given $\vp\in C^1(\bom)$ satisfying
  $\|\vp\|_{W^{1,p}(\Om)} \le 1$, thanks to the Cauchy-Schwarz inequality, and again due to (\ref{48.10}), we have
  \bas
	\bigg| \io \pa_t \lMe(\Teps) \cdot\vp \bigg|
	&=& \bigg| \io \frac{\rM(\Teps) \keps(\Teps) \Theta_{\eps t}}{\Teps} \cdot\vp\bigg| \\
	&=& \bigg| \io \frac{\rM(\Teps)}{\Teps} \cdot \Big\{
	D\Del\Teps + \lan\D:\nas\veps,\nas\veps\ran - \Teps\lan\B,\nas\veps\ran + \geps \Big\}\cdot\vp \bigg| \\
	&=& \bigg|
		D \io \frac{\rM(\Teps) |\na\Teps|^2}{\Teps^2} \vp
	- D \io \frac{(\rM)'(\Teps) |\na\Teps|^2}{\Teps} \vp \nn\\
	& & - D\io \frac{\rM(\Teps)}{\Teps} \na\Teps\cdot\na\vp 
	+ \io \frac{\rM(\Teps) \lan\D:\nas\veps,\nas\veps\ran}{\Teps} \vp \nn\\
	& & \hs{6mm}
	- \io \rM(\Teps) \lan\B,\nas\veps\ran\vp
	+ \io \frac{\rM(\Teps)\geps}{\Teps} \vp \bigg| \\
	&\le& c_2 D \io \frac{|\na\Teps|^2}{\Teps^2}
	+ 2c_2 D \int_{\{\Teps<M+1\}} \frac{|\na\Teps|^2}{\Teps}
	+ c_2 D \cdot\bigg\{ \io \frac{|\na\Teps|^2}{\Teps^2}\bigg\}^\frac{1}{2} \\
	& & + c_2 \io \frac{\lan\D:\nas\veps,\nas\veps\ran}{\Teps}
	+ c_2 |\B| \cdot \bigg\{ \io \frac{|\nas\veps|^2}{\Teps} \bigg\}^\frac{1}{2} \cdot
		\bigg\{ \int_{\{\Teps<M+1\}} \Teps\bigg\}^\frac{1}{2} \nn\\
	& & + c_2 \io \frac{\geps}{\Teps}
	\qquad \mbox{for all $t>0$ and } \eps\in (0,1),
  \eas
  because $|\rM|\le 1$ and $\big| (\rM)'\big| \le 2$.
  Estimating
  \bas
	\int_{\{\Teps<M+1\}} \frac{|\na\Teps|^2}{\Teps}
	\le (M+1) \io \frac{|\na\Teps|^2}{\Teps^2}
	\qquad \mbox{and} \qquad
	\int_{\{\Teps<M+1\}} \Teps \le (M+1)|\Om|
  \eas
  for $t>0$ and $\eps\in (0,1)$, once more by means of the Cauchy-Schwarz inequality we thus infer from (\ref{48.6})-(\ref{48.8})
  and (\ref{48.4}) that whenever $t>T$,
  \bas
	& & \hs{-20mm}
	\int_t^{t+1} \Big\| \pa_t \lMe(\Teps(\cdot,s)) \Big\|_{(W^{1,p}(\Om))^\star} ds  \\
	&\le& \big\{ c_2 D + 2c_2 D \cdot (M+1)\big\} \cdot \int_t^{t+1} \io \frac{|\na\Teps|^2}{\Teps^2} 
	+ c_2 D\cdot\bigg\{ \int_t^{t+1} \io \frac{|\na\Teps|^2}{\Teps^2} \bigg\}^\frac{1}{2} \nn\\
	& & + c_2 \int_t^{t+1} \io \frac{\lan\D:\nas\veps,\nas\veps\ran}{\Teps}
	+ c_2 \sqrt{M+1} \sqrt{|\Om|} |\B| \cdot \bigg\{ \int_t^{t+1} \io \frac{|\nas\veps|^2}{\Teps} \bigg\}^\frac{1}{2} \\
	& & + c_2 \int_t^{t+1} \io \frac{\geps}{\Teps} \nn\\
	&\le& \big\{ c_2 D + 2c_2 D\cdot (M+1)\big\} \cdot \eta_1 + c_2 D\sqrt{\eta_1} \\
	& & + c_2 \eta_1 + c_2\sqrt{M+1} \sqrt{|\Om|} |\B| \sqrt{\eta_1} + c_2\eta_1 \nn\\
	&\le& \eta
	\qquad \mbox{for all } \eps\in (\eps_j)_{j\in\N} \cap (0,\epssss(\eta,M,t)).
  \eas
  Together with (\ref{48.11}), this yields the claim.
\qed
On the basis of compactness properties implied by the previous lemma together with Lemma \ref{lem41}, 
we can prepare a characterization of $\omega$-limits in the trajectories $(\lM(\Theta(\cdot,t)))_{t>0}$:
\begin{lem}\label{lem49}
  Let either (\ref{131.1}) or (\ref{38.1}) be satisfied, let (\ref{fg_decay}) be fulfilled,
  and fix $(t_k)_{k\in\N} \subset (0,\infty)$ such that $t_k\to\infty$ as $k\to\infty$.
  Then for each $M>1$, letting
  \be{ZMk}
	\ZMk(x,s):=\lM\big(\Theta(x,t_k+s)\big),
	\qquad x\in\Om, \ s\in (0,1), \ k\in\N,
  \ee
  and
  \be{ZMke}
	\ZMke(x,s):=\lMe\big(\Teps(x,t_k+s)\big),
	\qquad x\in\Om, \ s\in (0,1), \ k\in\N,
  \ee
  we have $\ZMk \in L^1(\Om\times (0,1))$ and $\na\ZMk\in L^2(\Om\times (0,1);\R^n)$ for all $k\in\N$ and $M>1$,
  \be{49.1}
	\ZMke \to \ZMk
	\quad \mbox{in } L^1(\Om\times (0,1))
	\quad \mbox{as } \eps=\eps_j\searrow 0
	\qquad \mbox{for all $k\in\N$ and } M>1,
  \ee
  and
  \be{49.11}
	\na\ZMke \wto \na\ZMk
	\quad \mbox{in } L^2(\Om\times (0,1);\R^n)
	\quad \mbox{as } \eps=\eps_j\searrow 0
	\qquad \mbox{for all $k\in\N$ and } M>1,
  \ee
  and, moreover,
  \be{49.2}
	\big(\ZMk\big)_{k\in\N}
	\mbox{ is relatively compact in }
	L^1(\Om\times (0,1))
	\quad \mbox{for all } M>1.
  \ee
\end{lem}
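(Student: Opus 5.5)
Fix $M>1$ and $k\in\N$. The first step is the convergence (\ref{49.1}), which I would obtain from Lemma \ref{lem41} by dominated comparison. By (\ref{keps}) we have $\rM\keps\to\rM\kappa$ uniformly on $[0,M+1]$. Also $(\lMe)'$ vanishes on $[M+1,\infty)$, so
\[
	|\lMe(\xi)|\le c_1(M)\,|\ln\xi| + c_1(M)\ln(M+1)
	\qquad \mbox{for all } \xi>0 \mbox{ and } \eps\in (0,1).
\]
With this bound, the uniform integrability argument of Lemma \ref{lem41} goes through, using (\ref{41.3}), i.e.\ Lemma \ref{lem19}, which gives $L^2$ bounds for $\ln\Teps$ on $\Om\times(t_k,t_k+1)$ for $\eps<\epss$. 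It is in fact simpler here, since no large-$\xi$ part occurs. Together with $\Teps\to\Theta$ a.e.\ from (\ref{24.6})/(\ref{244.6}) and $\lMe\to\lM$ locally uniformly on $(0,\infty)$, Vitali's theorem yields (\ref{49.1}) and $\ZMk\in L^1(\Om\times(0,1))$. We use here that $\Theta>0$ a.e., which follows from Fatou and Lemma \ref{lem19}.

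Next I would prove the gradient claims. Since $|(\lMe)'(\xi)|\le c_1/\xi$, Lemma \ref{lem11} gives that $\na\ZMke$ is bounded in $L^2(\Om\times(0,1);\R^n)$ uniformly in $\eps$, via (\ref{11.2}). Any weak limit along a subsequence must coincide with the distributional gradient of the $L^1$-limit $\ZMk$, because (\ref{49.1}) lets us pass to the limit in $\int\int \ZMke\,\div\psi$. This gives $\na\ZMk\in L^2$, and since the limit is identified, (\ref{49.11}) holds for the whole sequence $\eps=\eps_j$. Note that $\lM$ is not of bounded support, so (\ref{24.8}) does not apply directly; the identification via (\ref{49.1}) replaces it.

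The main point is (\ref{49.2}), which has to be uniform in $k$. I would use Lemma \ref{lem48} with fixed $\eta=1$, say, so that for all $k$ with $t_k>T(1,M)$ and all small $\eps_j$,
\[
	\int_0^1\io|\na\ZMke|^2\le 1
	\quad \mbox{and} \quad
	\int_0^1\|\pa_s\ZMke\|_{(W^{1,p}(\Om))^\star}\,ds\le 1 .
\]
In addition, $\|\ZMke\|_{L^1(\Om\times(0,1))}$ is bounded uniformly in $k$ and $\eps$, by the above bound on $|\lMe|$ and Lemma \ref{lem19} together with (\ref{19.2}). Note that (\ref{fg_decay}) is assumed, so the constants there are time-independent. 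A Poincar\'e inequality then turns the gradient bound and the mean bound into an $L^2((0,1);W^{1,2}(\Om))$ bound.

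Now the Aubin--Lions lemma (Simon's version, with $L^1$ in time for the derivative) shows that the set $\{\ZMke : k \mbox{ large}, \ \eps=\eps_j \mbox{ small}\}$ is relatively compact in $L^2((0,1);L^2(\Om))$, hence in $L^1(\Om\times(0,1))$. For each $k$ I would choose $\eps=\eps_{j(k)}$ so small that $\|\ZMke-\ZMk\|_{L^1}\le\frac1k$, using (\ref{49.1}). A diagonal argument then transfers relative compactness to $(\ZMk)_{k\in\N}$: any sequence in $k$ has a subsequence along which $Z^{(M)}_{k\eps_{j(k)}}$ converges in $L^1$, and hence so does $\ZMk$. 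The finitely many $k$ with $t_k\le T(1,M)$ are irrelevant for compactness. I expect this uniformity in $k$ to be the main obstacle: one must be careful that the bounds from Lemma \ref{lem48} and Lemma \ref{lem19} are independent of $t_k$, which is exactly what (\ref{fg_decay}) provides.
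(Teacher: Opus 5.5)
Your proposal is correct and follows essentially the paper's route: Vitali's theorem (pointwise convergence plus uniform integrability from the $L^2$ bound on $\ln\Theta_\varepsilon$ in Lemma \ref{lem19}) gives (\ref{49.1}), weak $L^2$ compactness of the gradients identified via (\ref{49.1}) gives (\ref{49.11}), and (\ref{49.2}) follows from Lemma \ref{lem48} with $\eta=1$, Lemma \ref{lem19}, an Aubin--Lions argument and approximation of each $Z_k^{(M)}$ by some $Z_{k\varepsilon}^{(M)}$. Your use of (\ref{11.2}) for the gradient bound behind (\ref{49.11}) holds for every $k$ and every $\varepsilon$, which is slightly cleaner than the paper's appeal to Lemma \ref{lem48}; strictly speaking, that lemma only covers those $k$ with $t_k>T(1,M)$.
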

\proof
  Since (\ref{lMe}), (\ref{lM}) and (\ref{keps}) imply that for each $M>1$ we have $\lMe\to \lM$ uniformly in $(0,\infty)$
  as $\eps\searrow 0$, from (\ref{24.6}), (\ref{244.6}), (\ref{ZMke}) and (\ref{ZMk}) we obtain that for fixed $k\in\N$ and $M>1$,
  $\ZMke\to \ZMk$ a.e.~in $\Om\times (0,1)$ 
  as $\eps=\eps_j\searrow 0$.
  Since (\ref{lMe}) and (\ref{lM}) furthermore guarantee that
  \bas
	\big|\lMe(\xi)\big| \le |\leps(\xi)|
	\qquad \mbox{for all $\xi>0$, $\eps\in (0,1)$ and } M>1,
  \eas
  and since thus Lemma \ref{lem41} and the Dunford-Pettis theorem warrant uniform integrability of $(Z^{(M)}_{k \eps_j})_{j\in\N}$,
  the Vitali convergence theorem ensures that indeed (\ref{49.1}) holds.\abs
  We next fix any $p>n$ and apply Lemma \ref{lem48} to $\eta:=1$ to find $(\wh{\eps}(k))_{k\in\N} \subset (0,1)$ such that
  \bea{49.3}
	& & \hs{-20mm}
	\int_0^1 \io \big|\na\ZMke|^2 \le 1
	\quad \mbox{and} \quad
	\int_0^1 \big\| \pa_t \ZMke(\cdot,s)\big\|_{(W^{1,p}(\Om))^\star} ds \le 1 \nn\\
	& & \hs{20mm}
	\qquad \mbox{for all $\eps\in (\eps_j)_{j\in\N} \cap (0,\wh{\eps}(k))$ and any } k\in\N,
  \eea
  and note that in conjunction with (\ref{49.1}), the compactness property resulting from the first estimate herein warrants validity
  of (\ref{49.2}).\abs
  Moreover, again using (\ref{lMe}) and (\ref{keps}) we see that with $c_1\equiv c_1(M):=\|\kappa\|_{L^\infty((0,M+1))}+1$,
  whenever $\eps\in (0,1)$ we have
  \bas
	|\lMe(\xi)| \le c_1 |\ln\xi|
	\mbox{ for all } \xi\in (0,M+1)
	\quad \mbox{and} \quad
	|\lMe(\xi)| \le (M+1) c_1
	\qquad \mbox{for all } \xi\ge M+1,
  \eas
  so that
  \bas
	\int_0^1 \big|\ZMke|^2
	\le \int_{t_k}^{t_k+1} \io \big\{ c_1 |\ln\Teps| + (M+1) c_1 \big\}^2
	\qquad \mbox{for all $\eps\in (0,1)$ and } k\in\N,
  \eas
  whence employing Lemma \ref{lem19} and an Aubin-Lions lemma we conclude from (\ref{49.3}) that if we let
  $\epsss$ be as in Lemma \ref{lem18}, then the set
  \bas
	S^{(M)}
	:=\Big\{ \ZMke \ \Big| \ k\in\N, \ \eps\in (\eps_j)_{j\in\N} \cap (0,\wh{\eps}(k)) \cap (0,\epss) \Big\}
  \eas
  is relatively compact in $L^1(\Om\times (0,1))$. 
  Since (\ref{49.1}) asserts that with respect to the norm topology in $L^1(\Om\times (0,1))$ we have
  $\{ \ZMk \ | \ k\in\N\} \subset \ov{S^{(M)}}$, however, this establishes (\ref{49.2}).
\qed
In concluding that only spatially constant profiles can appear as large time limits, we will make use of the following
essentially well-known statament from real analysis, a proof of which we may leave to the reader.
\begin{lem}\label{lem50}
  Let $T>0$, and let $(h_j)_{j\in\N} \subset L^2((0,T);W^{1,2}(\Om))$ and $h\in L^1(\Om\times (0,T))$ be such that
  $h_j\wto h$ in $L^1(\Om\times (0,T))$ and $\na h_j\to 0$ in $L^2(\Om\times (0,T);\R^n)$ as $j\to\infty$.
  Then $h\in L^2((0,T);W^{1,2}(\Om))$ and $\na h\equiv 0$ a.e.~in $\Om\times (0,T)$.
\end{lem}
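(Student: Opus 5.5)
The plan is to identify the weak limit of the gradients by testing against smooth vector fields and using the distributional definition of the gradient. First I will show that the gradients $\na h_j$ converge to $\na h$ in the sense of distributions on $\Om\times(0,T)$. Fix any $\Phi\in C_0^\infty(\Om\times(0,T);\R^n)$. Integrating by parts in $x$ for each fixed $t$ gives
\[
	\int_0^T\io h_j\,\div\Phi = -\int_0^T\io \na h_j\cdot\Phi \qquad\mbox{for all } j\in\N .
\]
Because $\div\Phi\in L^\infty(\Om\times(0,T))$, the weak $L^1$ convergence $h_j\wto h$ makes the left-hand side converge to $\int_0^T\io h\,\div\Phi$. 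Because $\Phi\in L^2$, the strong convergence $\na h_j\to0$ in $L^2$ makes the right-hand side tend to zero. Hence
\[
	\int_0^T\io h\,\div\Phi=0 \qquad\mbox{for all such } \Phi,
\]
so the distributional spatial gradient of $h$ on $\Om\times(0,T)$ vanishes.

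Next I will pass from space-time test functions to a statement for almost every fixed time. I will take $\Phi(x,t)=\psi(t)\Psi(x)$ with $\psi\in C_0^\infty((0,T))$ and $\Psi$ ranging over a countable set $\{\Psi_k\}\subset C_0^\infty(\Om;\R^n)$. This set is chosen dense in $C_0^1(\Om;\R^n)$ with respect to the $C^1$ norm on each member of an exhaustion of $\Om$ by compact sets. Fubini gives $\int_0^T\psi(t)\big(\io h(\cdot,t)\,\div\Psi_k\big)dt=0$ for every $\psi$. Therefore $\io h(\cdot,t)\,\div\Psi_k=0$ for all $t$ outside a null set $N_k$.

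On the complement of the null set $N:=\bigcup_k N_k$, the density of $\{\Psi_k\}$ shows that $\io h(\cdot,t)\,\div\Psi=0$ for every $\Psi\in C_0^\infty(\Om;\R^n)$. This density step is the only mildly delicate point: $h(\cdot,t)$ is merely in $L^1(\Om)$, and convergence of $\div\Psi_{k}$ in $L^\infty$ on a fixed compact support suffices. Thus $\na h(\cdot,t)=0$ in $\mathcal D'(\Om)$ for a.e. $t\in(0,T)$.

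Since $\Om$ is a domain, and hence connected, a distribution with vanishing gradient is constant. So for a.e. $t$ there is $c(t)$ with $h(\cdot,t)\equiv c(t)$ a.e. in $\Om$, and $c(t)=\frac1{|\Om|}\io h(\cdot,t)$ is measurable with $c\in L^1((0,T))$.

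It remains to obtain $h\in L^2((0,T);W^{1,2}(\Om))$, which requires $c\in L^2((0,T))$. This cannot come from the $L^1$ hypothesis on $h$ alone, so I expect it to be the main obstacle. I intend to use Mazur's lemma. Convex combinations of the $h_j$ converge to $h$ strongly in $L^1$, and since their gradients still converge to $0$ in $L^2$, they form a sequence bounded in $L^2((0,T);W^{1,2}(\Om))$. The idea is that a space-time Poincaré-type bound, $\|g-\overline g\|_{L^2}\le C\|\na g\|_{L^2}$ applied to the convex combinations $g$ with the spatial means $\overline g$ subtracted, shows that these combinations approach their spatial means in $L^2$. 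I am not sure this controls the $L^2$ norm of $c$ itself rather than only of $h-c$. If the argument does not close, I will instead settle for $h(\cdot,t)\in W^{1,2}(\Om)$ with $\na h=0$ for a.e. $t$. That weaker reading of the lemma, namely that $h$ is spatially constant for a.e. $t$ with vanishing spatial gradient, is what is needed to identify spatially homogeneous $\omega$-limits in the subsequent application.
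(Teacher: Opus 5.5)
Your argument for the second conclusion is correct and complete. You test with $\Phi\in C_0^\infty(\Om\times(0,T);\R^n)$, pass to the limit using $\div\Phi\in L^\infty$ together with $\na h_j\to0$ in $L^2$, localize in time via a countable $C^1$-dense family, and then use connectedness of $\Om$. This shows that for a.e.~$t$, $h(\cdot,t)$ coincides a.e.~with the constant $c(t)=\frac{1}{|\Om|}\io h(\cdot,t)$. Hence $h\in L^1((0,T);W^{1,2}(\Om))$ with $\na h\equiv 0$. The paper leaves this proof to the reader, so there is no competing route to compare against.

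The obstacle you flagged for the first conclusion is not a gap in your argument: that part of the statement is false as written, so no argument can close it. Take $h_j(x,t):=\min\{t^{-1/2},j\}$. Each $h_j$ is bounded with $\na h_j\equiv 0$, so $(h_j)_{j\in\N}\subset L^2((0,T);W^{1,2}(\Om))$. By dominated convergence, $h_j\to h:=t^{-1/2}$ strongly, hence weakly, in $L^1(\Om\times(0,T))$. Yet $\int_0^T\io h^2=|\Om|\int_0^T\frac{dt}{t}=\infty$, so $h\notin L^2((0,T);W^{1,2}(\Om))$.

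The example also shows exactly where your Mazur idea breaks. Via Poincar\'e--Wirtinger, gradient control bounds only $g-\overline{g}$ and says nothing about the spatial means. So the sentence claiming the convex combinations are bounded in $L^2((0,T);W^{1,2}(\Om))$ is false; in the example $\|h_j\|_{L^2(\Om\times(0,T))}\to\infty$.

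The lemma therefore needs one of two repairs. Either add the hypothesis $\sup_j\|h_j\|_{L^2(\Om\times(0,T))}<\infty$: then a weakly $L^2$-convergent subsequence identifies $h\in L^2$, and your argument supplies the rest. Or weaken the conclusion to the version you fall back on.

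Your weaker version suffices for the only application, Lemma~\ref{lem52}, which uses just $\na Z_\infty^{(M)}=0$, i.e.~spatial constancy. There the $L^2$ bound is available anyway. The $\ln^2$-estimate of Lemma~\ref{lem19} bounds $Z_{k\eps}^{(M)}$ in $L^2(\Om\times(0,1))$ uniformly in $k$ and $\eps\in(0,\epss)$, as in the proof of Lemma~\ref{lem49}. Fatou's lemma then transfers this bound to $Z_k^{(M)}$ and to $Z_\infty^{(M)}$.
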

A combination of Lemma \ref{lem48} with Lemma \ref{lem49} thus yields the following nucleus of our claim concerning
large time stabilization of $\Theta$ toward a uniquely determined positive constant.
\begin{lem}\label{lem52}
  Assume either (\ref{131.1}) or (\ref{38.1}), as well as (\ref{fg_decay}).
  Then there exists $\Gamma>0$ with the following property.
  If $M>1$, and if $(t_k)_{k\in\N} \subset (0,\infty)$ is such that $t_k\to\infty$ as $k\to\infty$,
  then there exist $(k_l)_{l\in\N}\subset\N$ and
  \be{52.01}
	\LM \in \Big[ L-\frac{\Gamma}{M} , L\Big]
  \ee
  such that $k_l\to\infty$ and
  \be{52.1}
	Z_{k_l}^{(M)} \to \LM
	\quad \mbox{in } L^1(\Om\times (0,1))
  \ee
  as $l\to\infty$.
\end{lem}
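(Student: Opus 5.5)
Fix $M>1$ and $(t_k)_{k\in\N}$ with $t_k\to\infty$. The plan has four steps: use Lemma \ref{lem49} to extract an $L^1$-convergent subsequence of the $\ZMk$, use Lemma \ref{lem48} to show that the limit is spatially constant, show that it is also constant in time, and finally locate that constant in $[L-\frac{\Gamma}{M},L]$ by means of (\ref{47.1}), (\ref{44.1}) and a uniform $L^1$ bound on $K(\Theta)$. The constant $\Gamma$ will be $\frac{1}{|\Om|}\sup_{t}\io K(\Theta(\cdot,t))$, which is finite by Lemma \ref{lem6} and (\ref{6.5}) under (\ref{fg_decay}), passed to the limit via Fatou's lemma and Lemma \ref{lem_Keps}.

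\textbf{Extraction.} By (\ref{49.2}) in Lemma \ref{lem49} there are $k_l\to\infty$ and $Z\in L^1(\Om\times (0,1))$ such that $Z_{k_l}^{(M)}\to Z$ in $L^1(\Om\times (0,1))$.

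\textbf{Spatial constancy.} Fix $p>n$. For each $l$, Lemma \ref{lem48} with $\eta=\frac{1}{l}$ and $t=t_{k_l}$ (admissible for large $l$) provides $\eps^{(l)}\in(\eps_j)_{j\in\N}$ below $\epssss(\frac 1l,M,t_{k_l})$. Shrinking $\eps^{(l)}$ further and using (\ref{49.1}), we may also require $\|Z^{(M)}_{k_l\eps^{(l)}}-Z_{k_l}^{(M)}\|_{L^1(\Om\times(0,1))}\le\frac 1l$. Then $h_l:=Z^{(M)}_{k_l\eps^{(l)}}$ satisfies $h_l\to Z$ in $L^1$ and $\na h_l\to0$ in $L^2$. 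Lemma \ref{lem50} then gives $\na Z\equiv0$, so $Z=Z(s)$ depends on time only.

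\textbf{Temporal constancy.} The time-derivative bound in Lemma \ref{lem48} gives $\int_0^1\|\pa_s h_l\|_{(W^{1,p})^\star}\le\frac 1l$. Testing against $\psi(x)\zeta'(s)$ with $\zeta\in C_0^\infty((0,1))$ and $\psi\equiv1$, and passing to the limit, yields $\int_0^1 Z\zeta'=0$. Hence $Z\equiv\LM$ is a constant.

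\textbf{Locating $\LM$ — the main obstacle.} The difficulty is to connect the limit of $\lM(\Theta)$ with the entropy limit $L$ of Lemma \ref{lem44}. The plan is to integrate (\ref{47.1}) over $\Om\times(0,1)$ at the times $t_{k_l}+s$ and divide by $|\Om|$, which gives
\[
\frac{1}{|\Om|}\int_0^1\io \ell(\Theta(\cdot,t_{k_l}+s))\,ds-\frac{\Gamma}{M}\le \frac{1}{|\Om|}\int_0^1\io Z_{k_l}^{(M)} \le \frac{1}{|\Om|}\int_0^1\io \ell(\Theta(\cdot,t_{k_l}+s))\,ds .
\]
By Lemma \ref{lem44}, for almost every $s$ the quantity $\frac{1}{|\Om|}\io\ell(\Theta(\cdot,t_{k_l}+s))$ increases to $L$ as $l\to\infty$. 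This family is bounded above by $L$ and bounded below by its value at a fixed time, so dominated convergence shows that the outer averages tend to $L$. The middle term tends to $\LM$, and therefore $\LM\in[L-\frac{\Gamma}{M},L]$. Since $\Gamma$ was chosen independently of $M$ and $(t_k)$, this gives (\ref{52.01}) and (\ref{52.1}).
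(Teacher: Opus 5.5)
Your proposal is correct and follows essentially the paper's route: extraction via Lemma \ref{lem49}, spatial and temporal constancy of the limit from the two estimates in Lemma \ref{lem48} together with Lemma \ref{lem50}, and localization of $L^{(M)}$ through (\ref{47.1}), Lemma \ref{lem44} and an $\eps$-independent bound on $\io K(\Theta)$. Your diagonal sequence and distributional-in-time argument are only minor variants of the paper's use of (\ref{49.11}) with lower semicontinuity and of its pointwise-in-$s$ estimate in $(W^{1,p}(\Omega))^\star$. The one point to repair is the claim that $\eta=\frac{1}{l}$ is admissible at $t=t_{k_l}$ for large $l$, which is not automatic because $T(\frac{1}{l},M)$ may grow faster than $t_{k_l}$. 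Fix this by letting $\eta_l\searrow 0$ slowly enough that $t_{k_l}>T(\eta_l,M)$ (or by passing to a further subsequence), or, as the paper does, by fixing $\eta$ and letting $\eps\to 0$ first at each large $l$.
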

\proof
  According to Lemma \ref{lem6}, (\ref{keps}) and (\ref{24.6}) and (\ref{244.6}), we can find $c_1>0$ such that
  \be{52.2}
	\io K(\Theta) \le c_1
	\qquad \mbox{for a.e.~} t>0,
  \ee
  and invoking Lemma \ref{lem49} we can fix $(k_l)_{l\in\N}\subset\N$ and $Z_\infty^{(M)} \in L^1(\Om\times (0,1))$ such that
  $k_l\to\infty$ and
  \be{52.3}
	Z_{k_l}^{(M)} \to Z_\infty^{(M)}
	\qquad \mbox{in $L^1(\Om\times (0,1))$ and a.e.~in $\Om\times (0,1)$}
  \ee
  as $l\to\infty$.
  To firstly confirm that
  \be{52.5}
	\na Z_\infty^{(M)} =0
	\qquad \mbox{a.e.~in } \Om\times (0,1),
  \ee
  we note that given $\eta>0$ we may infer from Lemma \ref{lem48} that there exist $l^{(1)}=l^{(1)}(\eta)\in\N$ and
  $(\eps^{(1)}(l))_{l>l^{(1)}} \subset (0,1)$ such that for each fixed $l>l^{(1)}$ we have
  \bas
	\int_{t_{k_l}}^{t_{k_l}+1} \io \big| \na \lMe(\Teps)\big|^2 \le \eta
	\qquad \mbox{for all } \eps\in (\eps_j)_{j\in\N} \cap (0,\eps^{(1)}(l)),
  \eas
  meaning that
  \bas
	\int_0^1 \io \big| \na Z_{k_l \eps}^{(M)}\big|^2 \le \eta
	\qquad \mbox{for all } \eps\in (\eps_j)_{j\in\N} \cap (0,\eps^{(1)}(l)).
  \eas
  In view of (\ref{49.11}) and an argument based on lower semicontinuity, this implies that
  \bas
	\int_0^1 \io |\na Z_{k_l}|^2 \le \eta
	\qquad \mbox{for all } l>l^{(1)},
  \eas
  and that thus
  \bas
	\na Z_{k_l} \to 0
	\quad \mbox{in } L^2(\Om\times (0,1);\R^n)
	\qquad \mbox{as } l\to\infty,
  \eas
  so that (\ref{52.5}) is a consequence of Lemma \ref{lem50} and (\ref{52.3}).\abs
  Next, picking an arbitrary $p>n$ we infer from Lemma \ref{lem49} that for each $\eta>0$ there exist $l^{(2)}=l^{(2)}(\eta)\in\N$
  and $(\eps^{(2)}(l))_{l>l^{(2)}} \subset (0,1)$ such that if $l>l^{(2)}$, then
  \be{52.6}
	\int_{t_{k_l}}^{t_{k_l}+1} \Big\| \pa_t \lMe\big(\Teps(\cdot,s)\big)\Big\|_{(W^{1,p}(\Om))^\star} ds \le \eta
	\qquad \mbox{for all } \eps\in (\eps_j)_{j\in\N} \cap (0,\eps^{(2)}(l)),
  \ee
  and prepare an appropriate conclusion from this by combining (\ref{49.1}) and (\ref{52.3}) with the Fubini-Tonelli theorem
  to fix a null set $N_1\subset (0,1)$ such that
  \be{52.7}
	Z_{k_l \eps}^{(M)}(\cdot,s) \to Z_{k_l}^{(M)} (\cdot,s)
	\ \mbox{ in } L^1(\Om)
	\quad \mbox{as } \eps=\eps_j\searrow 0
	\qquad \mbox{for all $s\in (0,1))\sm N_1$ and any } l\in\N,
  \ee
  and that
  \be{52.8}
	Z_{k_l}^{(M)}(\cdot,s) \to Z_\infty^{(M)}(\cdot,s)
	\ \mbox{ in } L^1(\Om)
	\quad \mbox{as } l\to\infty
	\qquad \mbox{for all } s\in (0,1)\sm N_1.
  \ee
  Now whenever $s_1\in (0,1)\sm N_1$ and $s_2\in (s_1,1)\sm N_1$, from (\ref{52.6}) we obtain that
  \bas
	\Big\| Z_{k_l \eps}^{(M)}(\cdot,s_2) - Z_{k_l \eps}^{(M)}(\cdot,s_1) \Big\|_{(W^{1,p}(\Om))^\star}
	&=& \Big\| \lMe\big(\Teps(\cdot,t_{k_l}+s_2)\big) - \lMe\big(\Teps(\cdot,t_{k_l}+s_1)\big) \Big\|_{(W^{1,p}(\Om))^\star} \\
	&\le& \int_{t_{k_l}+s_1}^{t_{k_l}+s_2} \Big\| \pa_t \lMe\big(\Teps(\cdot,s)\big) \Big\|_{(W^{1,p}(\Om))^\star} ds \\
	&\le& \eta
	\qquad \mbox{for all $\eps\in (\eps_j)_{j\in\N} \cap (0,\eps^{(2)}(l))$ and } l>l^{(2)},
  \eas
  which thanks to (\ref{52.7}) and the continuity of the embedding $L^1(\Om) \hra (W^{1,p}(\Om))^\star$ implies that
  \bas
	\Big\| Z_{k_l}^{(M)}(\cdot,s_2) - Z_{k_l}^{(M)}(\cdot,s_1) \Big\|_{(W^{1,p}(\Om))^\star} 
	\le \eta
	\qquad \mbox{for all } l>l^{(2)},
  \eas
  while from (\ref{52.8}) we similarly infer that
  \bas
	\Big\| Z_\infty^{(M)}(\cdot,s_2)-Z_\infty^{(M)}(\cdot,s_1)\Big\|_{(W^{1,p}(\Om))^\star} 
	\le \eta.
  \eas
  Taking $\eta\searrow 0$ here shows that $Z_\infty^{(M)}(\cdot,s_2)=Z_\infty^{(M)}(\cdot,s_1)$ a.e.~in $\Om$ for any such $s_1$ and
  $s_2$, and that hence in view of (\ref{52.5}) there must exist $\LM\in\R$ such that $Z_\infty^{(M)} =\LM$ a.e.~in $\Om\times (0,1)$,
  whence (\ref{52.1}) follows from (\ref{52.3}).\abs
  The characterization in (\ref{52.01}), finally, can be achieved by utilizing (\ref{47.1}):
  Due to the right inequality therein, namely, letting $N_\star$ be as in Lemma \ref{lem42} and writing
  $N_2:=N_1 \bigcup_{l\in\N} \big\{ s\in (0,1) \ \big| \ t_{k_l}+s \in N_\star\big\}$ we have
  \bas
	\io Z_{k_l}^{(M)}(\cdot,s)
	= \io \lM\big(\Theta(\cdot,t_{k_l}+s)\big)
	\le \io \ell \big(\Theta(\cdot,t_{k_l}+s)\big)
	\qquad \mbox{for all $s\in (0,1)\sm N_2$ and } l\in\N,
  \eas
  so that from (\ref{52.8}), (\ref{52.1}) and Lemma \ref{lem44} it follows that
  \be{52.77}
	\LM \cdot |\Om|
	= \io Z_\infty^{(M)}(\cdot,s) 
	\le L |\Om|
	\qquad \mbox{for all }s\in (0,1)\sm N_2.
  \ee
  Moreover, using the left inequality in (\ref{47.1}) in estimating
  \bas
	(L-\LM) \cdot |\Om|
	&\le& \int_{t_{k_l}}^{t_{k_l}+1} \io \big(L-\ell(\Theta)\big)
	+ \int_{t_{k_l}}^{t_{k_l}+1} \io \big(\ell(\Theta)-\lM(\Theta)\big) \\
	& & + \int_{t_{k_l}}^{t_{k_l}+1} \io \big| \lM(\Theta)-\LM\big| \\
	&\le& 
	|\Om|\cdot \rm{ess} \sup_{\hs{-3mm} t>t_{k_l}} \bigg\{ L - \frac{1}{|\Om|} \io \ell\big(\Theta(\cdot,t)\big) \bigg\}
	+ \frac{1}{M} \int_{t_{k_l}}^{t_{k_l}+1} \io K(\Theta) \\
	& & + \int_0^1 \io \big|Z_{k_l}^{(M)} - Z_\infty^{(M)} \big|
	\qquad \mbox{for all } l\in\N,
  \eas
  we infer upon letting $l\to\infty$ that thanks to Lemma \ref{lem44}, (\ref{52.3}) and (\ref{52.2}),
  \bas
	(L-\LM) \cdot |\Om|
	\le \frac{1}{M}\cdot c_1.
  \eas
  Combined with (\ref{52.77}), this verifies (\ref{52.01}) with $\Gamma:=\frac{c_1}{|\Om|}$.
\qed
In fact, a fairly simple argument enables us to turn this into a corresponding statement that does no
longer involve any of the auxiliary numbers $M$ appearing above, formulated here in terms of the original variables $x$ and $t$.
\begin{lem}\label{lem53}
  Assume either (\ref{131.1}) or (\ref{38.1}), and suppose that (\ref{fg_decay}) holds.
  Then
  \be{53.1}
	\int_t^{t+1} \io \big|\ell(\Theta)-L\big|
	\to 0
	\qquad \mbox{as } t\to\infty.
  \ee
\end{lem}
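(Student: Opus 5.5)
We argue by contradiction, combined with a subsequence argument. Suppose (\ref{53.1}) fails. Then there exist $\eta_0>0$ and $(t_k)_{k\in\N}\subset(0,\infty)$ with $t_k\to\infty$ such that
\[
\int_{t_k}^{t_k+1}\io|\ell(\Theta)-L|\ge\eta_0 \qquad \mbox{for all } k\in\N .
\]
The plan is to obtain a contradiction by comparing $\ell(\Theta)$ with its cut-off versions $\lM(\Theta)$, whose large-time behavior along subsequences is described by Lemma \ref{lem52}. We use the triangle inequality: for each $M>1$ and $k\in\N$,
\[
\int_{t_k}^{t_k+1}\io|\ell(\Theta)-L|
\le \int_{t_k}^{t_k+1}\io\big(\ell(\Theta)-\lM(\Theta)\big)
+ \int_0^1\io\big|Z_k^{(M)}-\LM\big|
+ |\Om|\cdot|\LM-L| .
\]
The first summand is nonnegative by the right inequality in (\ref{47.1}).

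First, we control the first and third summands uniformly in $k$. By the left inequality in (\ref{47.1}), the first summand is at most $\frac1M\int_{t_k}^{t_k+1}\io K(\Theta)\le \frac{c_1}{M}$. Here $c_1$ comes from the time-independent bound (\ref{52.2}), which is available under (\ref{fg_decay}) through Lemma \ref{lem6}, (\ref{6.5}) and the pointwise convergence (\ref{24.6}) or (\ref{244.6}). By (\ref{52.01}), the third summand is at most $\Gamma|\Om|/M$ for whatever $\LM$ Lemma \ref{lem52} produces. We therefore fix $M>1$ large enough that $\frac{c_1}{M}+\frac{\Gamma|\Om|}{M}\le\frac{\eta_0}{2}$.

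Next, with this $M$ fixed, we apply Lemma \ref{lem52} to the sequence $(t_k)_{k\in\N}$. This yields a subsequence $(k_l)_{l\in\N}$ and a number $\LM\in[L-\frac{\Gamma}{M},L]$ such that $Z_{k_l}^{(M)}\to\LM$ in $L^1(\Om\times(0,1))$. Consequently the middle summand tends to zero along $t_{k_l}$, and hence
\[
\limsup_{l\to\infty}\int_{t_{k_l}}^{t_{k_l}+1}\io|\ell(\Theta)-L|\le\frac{\eta_0}{2},
\]
which contradicts the choice of $(t_k)_{k\in\N}$.

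The only delicate point is the order of the choices. $M$ must be fixed before the subsequence is extracted, and this is legitimate precisely because both $c_1$ and $\Gamma$ are independent of $M$ and of the sequence $(t_k)_{k\in\N}$. The remaining technical step is to check that (\ref{52.2}) indeed holds for a.e. $t>0$ with a $t$-independent constant. This follows from Lemma \ref{lem_Keps} (which gives $K\le\Keps+\eps$) together with Fatou's lemma applied to (\ref{6.3}), (\ref{6.5}) and the a.e. convergence $\Teps\to\Theta$.
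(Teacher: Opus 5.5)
Your proposal is correct and follows the paper's proof essentially verbatim. You argue by contradiction and fix $M$ large using the time-independent bound on $\io K(\Theta)$, the inequality (\ref{47.1}) and the $M$-independent constant $\Gamma$ from Lemma \ref{lem52}, then conclude with the subsequential convergence $Z_{k_l}^{(M)}\to L^{(M)}$ and the triangle inequality. The only cosmetic difference is that you take a $\limsup$ along the subsequence, whereas the paper picks a single index $k_0$.
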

\proof
  If the claim was false, then there would exist $c_1>0$ and $(t_k)_{k\in\N}\subset (0,\infty)$ such that $t_k\to\infty$
  as $k\to\infty$ and
  \bas
	\int_{t_k}^{t_k+1} \io \big|\ell(\Theta)-L\big| \ge c_1
	\qquad \mbox{for all } k\in\N,
  \eas  
  whence for
  \bas
	Z_k(x,s):=\ell\big(\Theta(x,t_k+s)\big),
	\qquad x\in\Om, \ s\in (0,1), \ k\in\N,
  \eas
  we would have
  \be{53.2}
	\int_0^1 \io |Z_k-L| \ge c_1
	\qquad \mbox{for all } k\in\N.
  \ee
  Taking $c_2>0$ such that in line with Lemma \ref{lem6}, (\ref{24.6}) and (\ref{244.6}) 
  we have $\io K(\Theta) \le c_2$ for a.e.~$t>0$, we could then fix $M>1$ large enough such that
  \be{53.4}
	\frac{c_2}{M} \le \frac{c_1}{6}
	\qquad \mbox{and} \qquad
	\frac{\Gamma |\Om|}{M} \le \frac{c_1}{6},
  \ee
  where $\Gamma$ is the constant found in Lemma \ref{lem52},
  and by means of said lemma we could then find $k_0\in\N$ such that
  \be{53.3}
	\int_0^1 \io \big| Z_{k_0}^{(M)} - \LM\big| \le \frac{c_1}{6}.
  \ee
  Since our choice of $c_2$ ensures that, again thanks to (\ref{47.1}),
  \bas
	\int_0^1 \io \big| Z_k-Z_k^{(M)}\big|
	= \int_{t_k}^{t_k+1} \io \big|\ell(\Theta)-\lM(\Theta)\big| 
 	\le \frac{1}{M} \int_{t_k}^{t_k+1} \io K(\Theta) 
	\le \frac{c_2}{M}
	\qquad \mbox{for all } k\in\N,
  \eas
  from (\ref{53.3}) and (\ref{53.4}) we would therefore obtain, however, that
  \bas
	\int_0^1 \io \big| Z_{k_0}-L\big|
	&\le& \int_0^1 \io \big| Z_{k_0} - Z_{k_0}^{(M)} \big|
	+ \int_0^1 \io \big| Z_{k_0}^{(M)}- \LM \big|
	+ \int_0^1 \io \big| \LM-L \big| \\
	&\le& \frac{c_2}{M}
	+ \frac{c_1}{6}
	+ \big| \LM-L\big| \cdot |\Om| \\
	&\le& \frac{c_2}{M}
	+ \frac{c_1}{6}
	+ \frac{\Gamma |\Om|}{M} \\
	&\le& 
	\frac{c_1}{6} + \frac{c_1}{6} + \frac{c_1}{6} = \frac{c_1}{2},
  \eas
  which is incompatible with (\ref{53.2}).
\qed
Let us finally prepare a strengthening of the topological framework by simple interpolation:
\begin{lem}\label{lem54}
  Let either (\ref{131.1}) or (\ref{38.1}) be satisfied, assume (\ref{fg_decay}),
  and suppose that $p>0$ is such that
  \be{54.1}
	\big(\Theta^p(\cdot,\cdot+t)\big)_{t>0}
	\mbox{ is uniformly integrable over } \Om\times (0,1)
  \ee
  Then
  \be{54.2}
	\int_t^{t+1} \io |\Theta-\Tinf|^p \to 0
	\qquad \mbox{as } t\to\infty,
  \ee
  where
  \be{54.3}
	\Tinf:=\ell^{-1}(L)>0.
  \ee
\end{lem}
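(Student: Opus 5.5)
The plan is to convert the $L^1$ convergence of $\ell(\Theta)$ toward $L$ from Lemma \ref{lem53} into convergence in measure of $\Theta$ toward $\Tinf$, and then use the assumed uniform integrability (\ref{54.1}) together with a Vitali-type argument to upgrade this to convergence in $L^p$ over the unit time windows.

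First, $\Tinf$ is well defined and positive. Because $\kappa>0$ on $(0,\infty)$, the function $\ell$ is continuous and strictly increasing on $(0,\infty)$. Its limit at infinity is $+\infty$, since in both cases $\int^\infty \kappa(\sig)/\sig\,d\sig=\infty$: under (\ref{131.1}) directly, and under (\ref{38.1}) because $\kappa(\sig)\ge 1/\ln\sig$ for large $\sig$. Its limit at $0^+$ is some $\ell(0^+)\in[-\infty,0)$. The number $L$ lies in the range of $\ell$ on $(0,\infty)$. Indeed, by Lemma \ref{lem44} and (\ref{42.1}), $L|\Om|$ is a supremum of finite values of $\io\ell(\Theta(\cdot,t))$, so $L<\infty$. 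If instead $L\le\ell(0^+)$ held (possible only when $\ell(0^+)>-\infty$), then for a.e.\ large $t$ we would have $\io\ell(\Theta(\cdot,t))\le L|\Om|\le\ell(0^+)|\Om|$, while $\ell(\Theta)>\ell(0^+)$ pointwise because $\Theta>0$ a.e. This is a contradiction. Hence $\Tinf:=\ell^{-1}(L)\in(0,\infty)$.

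Second, convergence in measure. Fix $\delta>0$ and set $\gamma(\delta):=\min\{L-\ell(\Tinf-\delta),\ \ell(\Tinf+\delta)-L\}$, taking $\delta<\Tinf$ and using the lower branch only then. Strict monotonicity gives $\gamma(\delta)>0$, and $|\Theta-\Tinf|\ge\delta$ implies $|\ell(\Theta)-L|\ge\gamma(\delta)$. Chebyshev's inequality and Lemma \ref{lem53} then give
\[
\big|\{(x,s)\in\Om\times(0,1):|\Theta(x,t+s)-\Tinf|\ge\delta\}\big|\le\frac{1}{\gamma(\delta)}\int_t^{t+1}\io|\ell(\Theta)-L|\to0\quad\mbox{as }t\to\infty.
\]

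Third, the Vitali step, written in the translated variables. Split
\[
\int_t^{t+1}\io|\Theta-\Tinf|^p\le\delta^p|\Om|+\int\!\!\int_{E_t}|\Theta-\Tinf|^p,
\]
where $E_t$ is the exceptional set above. We have $|\Theta-\Tinf|^p\le c_p(\Theta^p+\Tinf^p)$ with $c_p=\max\{1,2^{p-1}\}$, which also covers $p<1$. By (\ref{54.1}), the family $\{\Theta^p(\cdot,\cdot+t)\}_{t>0}$ is uniformly integrable over $\Om\times(0,1)$, and constants are trivially so. Since $|E_t|\to0$, the second integral therefore tends to $0$. Letting first $t\to\infty$ and then $\delta\searrow0$ yields (\ref{54.2}).

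The main obstacle I expect is the first step, namely making sure $L$ lies strictly above $\ell(0^+)$. Everything else is a routine measure-theoretic argument. Should the direct argument there run into a null-set technicality, I would fall back on Fatou's lemma applied to (\ref{53.1}), together with the a.e.\ positivity of $\Theta$ from Theorems \ref{theo37} and \ref{theo38}.
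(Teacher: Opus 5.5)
Your proof is correct and follows essentially the paper's route. The paper argues by contradiction: it extracts from Lemma \ref{lem53} a subsequence along which $\ell(\Theta(\cdot,t_{k_l}+\cdot))\to L$ a.e., inverts $\ell$ to get $\Theta\to\Tinf$ a.e., and concludes with the Vitali theorem using (\ref{54.1}), which is the subsequence/a.e.\ version of your direct convergence-in-measure argument. Your first step, checking that $L$ lies in $\ell((0,\infty))$ so that $\Tinf=\ell^{-1}(L)$ is well defined and positive and $\ell^{-1}$ is continuous at $L$, supplies a detail the paper simply asserts in (\ref{54.3}).
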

\proof	
  If (\ref{54.2}) did not hold, then we could find $(t_k)_{k\in\N}\subset (0,\infty)$ such that $t_k\to\infty$ as $k\to\infty$, 
  and that
  \bas
	h_k(x,s):=\Theta(x,t_k+s),
	\qquad x\in\Om, \ s\in (0,1), \ k\in\N,
  \eas
  would satisfy
  \be{54.4}
	\int_0^1 \io |h_k-\Tinf|^p \ge c_1
	\qquad \mbox{for all } k\in\N.
  \ee
  But Lemma \ref{lem53} ensures that $\int_0^1 \io |\ell(h_k)-L| \to 0$ as $k\to\infty$,
  whence there would exist $(k_l)_{l\in\N} \subset\N$ such that $k_l\to\infty$ and $\ell(h_{k_l})- L \to 0$ a.e.~in $\Om\times (0,1)$
  as $l\to\infty$.
  Therefore, $h_{k_l} \to \ell^{-1}(L)=\Tinf$ and hence $|h_{k_l}-\Tinf|^p \to 0$ a.e.~in $\Om\times (0,1)$ as $l\to\infty$,
  so that since (\ref{54.1}) entails that $\big( |h_k-\Tinf|^p \big)_{k\in\N}$ is uniformly integrable over $\Om\times (0,1)$,
  the Vitali convergence theorem would lead to the conclusion that
  \bas
	\int_0^1 \io |h_{k_l}-\Tinf|^p \to 0
	\qquad \mbox{as } l\to\infty,
  \eas
  which would contradict (\ref{54.4}).
\qed
Our main result on large time behavior of the thermal part in (\ref{0}) has thereby become fairnyl evident:\abs
\proofc of Theorem \ref{theo55}. \quad
  We let $\Tinf$ be as in Lemma \ref{lem54}, and note that if (\ref{131.1}) holds, then Lemma \ref{lem13} together 
  with (\ref{24.6})
  yields $c_1>0$ such that $\io \Theta \le c_1$ for a.e.~$t>0$.
  In particular, this ensures that (\ref{54.1}) is satisfied for each $p\in (0,1)$, so that (\ref{55.1}) results from
  Lemma \ref{lem54}.\\
  If, alternatively, (\ref{38.1}) is valid, then Lemma \ref{lem15} and Lemma \ref{lem17} in conjunction with (\ref{244.6})
  assert (\ref{54.1}) with $p=1$, meaning that also (\ref{55.2}) is a consequence of Lemma \ref{lem54}.
\qed
\mysection{Large time decay of $u_t$. Proof of Proposition \ref{prop56}}\label{sect_ut}
Arguing toward large time decay of $u_t$ can as well be related to the uniform smallness properties asserted by Lemma \ref{lem45}:
\begin{lem}\label{lem58}
  Assume either (\ref{131.1}) or (\ref{38.1}), as well as (\ref{fg_decay}).
  Then there exists $C>0$ such that 
  \be{58.1}
	\int_t^{t+1} \io |\veps| 
	\le C \cdot \bigg\{ \int_t^{t+1} \io \frac{|\nas\veps|^2}{\Teps} \bigg\}^\frac{1}{2}
	\qquad \mbox{for all $t>0$ and } \eps\in (0,1).
  \ee
\end{lem}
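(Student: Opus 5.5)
We plan to combine the Cauchy--Schwarz inequality with a Sobolev/Korn-type estimate for $W_0^{1,1}$-type quantities. The point is that $\veps$ vanishes on $\pO$, so a Poincar\'e inequality in $L^1$ controls $\io|\veps|$ by $\io|\na\veps|$. Korn's inequality in $L^1$ fails, however, so we will instead pass through an $L^q$ framework with some $q>1$, where Korn holds.

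\textbf{Step 1 (choice of exponent and a pointwise-in-time bound).} Fix $q\in(1,2)$, say $q:=\frac{4}{3}$. Korn's inequality for $W_0^{1,q}(\Om;\R^n)$ and Poincar\'e's inequality yield $c_1>0$ such that
\[
\|\vp\|_{L^1(\Om)}\le c_1\|\vp\|_{L^q(\Om)}\le c_1'\|\nas\vp\|_{L^q(\Om)}
\]
for all $\vp\in W_0^{1,q}(\Om;\R^n)$. By H\"older's inequality with exponents $\frac{2}{q}$ and $\frac{2}{2-q}$,
\[
\io|\nas\veps|^q=\io\frac{|\nas\veps|^q}{\Teps^{q/2}}\Teps^{q/2}
\le\Big\{\io\frac{|\nas\veps|^2}{\Teps}\Big\}^{\frac{q}{2}}\Big\{\io\Teps^{\frac{q}{2-q}}\Big\}^{\frac{2-q}{2}}.
\]
For $q=\frac{4}{3}$ we have $\frac{q}{2-q}=2$, which is too large. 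We therefore pick $q$ so that $\frac{q}{2-q}\le 1$, namely $q=1$, and this brings back the Korn problem.

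\textbf{Step 2 (avoiding Korn via integration by parts against $L^1$ bounds).} Instead we take $q:=1$ directly in the H\"older step:
\[
\io|\nas\veps|\le\Big\{\io\frac{|\nas\veps|^2}{\Teps}\Big\}^{\frac12}\Big\{\io\Teps\Big\}^{\frac12}.
\]
What remains is an estimate of $\io|\veps|$, or at least $\int_t^{t+1}\io|\veps|$, by $\int_t^{t+1}\io|\nas\veps|$ up to a constant. Here we avoid the $L^1$ Korn inequality by duality. For $\psi:=\mathrm{sign}$-type test fields, solve the Lam\'e system $-\div(\nas w)=\psi$ with $w=0$ on $\pO$ for $\psi\in L^\infty$. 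Elliptic $W^{1,p}$ theory with $p>n$ gives $\|\nas w\|_{L^\infty}$\,\dots\ but this fails at $p=\infty$. So the more robust route is to use uniform integrability: $\io\Teps$ has a time-independent bound by Lemma \ref{lem13} in case (\ref{131.1}), while in case (\ref{38.1}) the quantity $\int_t^{t+1}\io\Teps$ is uniformly bounded by (\ref{6.3}), Lemma \ref{lem15} and (\ref{17.01})--(\ref{17.02}). We then use Korn in $L^q$ with $q$ slightly above $1$, controlling $\io\Teps^{q/(2-q)}$ by a logarithmic refinement. This still needs more than $L^1$, so instead we estimate the time-integrated version via (\ref{12.2}): writing $|\nas\veps|^q=\big(\frac{\ln^2(\Teps+e)|\nas\veps|^2}{\Teps+1}\big)^{\theta}\cdot(\dots)$ and interpolating with (\ref{11.3}).

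\textbf{Expected obstacle.} The main difficulty is exactly the absence of Korn's inequality in $L^1$, which forces a $q>1$ argument. My first attempt would be Korn in $L^q$ with $q:=\frac{2}{1+\delta}$ close to $1$, combined with H\"older against $\io\Teps^{\frac{q}{2-q}}$. If instead $\kappa$ permits only $L^1$ control of $\Teps$, I would use Lemma \ref{lem32}-type weighted estimates together with the $L\log L$ bound of Lemma \ref{lem23}, made time-uniform under (\ref{fg_decay}) via Lemma \ref{lem12} and (\ref{12.3}), and I would accept a possibly nonsquare-root exponent if the square-root form cannot be reached.
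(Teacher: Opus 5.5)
\textbf{There is a genuine gap.} Your proposal does not prove the statement.

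Your Step~2 is correct: the Cauchy--Schwarz estimate, taken over $\Om\times(t,t+1)$ (or in space and then in time), is exactly the paper's step. After it, everything hinges on an estimate $\int_t^{t+1}\io|\veps|\le c\int_t^{t+1}\io|\nas\veps|$. You never supply this, because you believe it is blocked by the failure of Korn's inequality in $L^1$.

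That is a misdiagnosis. What fails in $L^1$ (Ornstein) is the \emph{gradient} estimate $\|\na\vp\|_{L^1(\Om)}\le C\|\nas\vp\|_{L^1(\Om)}$. The zero-order Poincar\'e--Korn inequality
\[
	\|\vp\|_{L^1(\Om)}\le C\|\nas\vp\|_{L^1(\Om)}
	\qquad \mbox{for all $\vp\in C^\infty(\bom;\R^n)$ with $\vp=0$ on $\pO$}
\]
does hold, and it is precisely what the paper invokes (citing Di Fratta--Solombrino). One way to see it: after extension by zero, $\Del\vp_i=2\sum_j\pa_j(\nas\vp)_{ij}-\pa_i\,{\rm tr}(\nas\vp)$. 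So $\vp$ is a convolution of $\nas\vp$ with first derivatives of the Newtonian potential, which are $O(|x|^{1-n})$ and hence locally integrable.

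The duality argument you dropped also works. For $\|\psi\|_{L^\infty(\Om)}\le 1$ you have $\psi\in L^p$ for some finite $p>n$. Then $W^{2,p}$ regularity for the Lam\'e problem $-\div(\nas w)=\psi$, $w=0$ on $\pO$, together with Morrey's embedding, gives $\|\na w\|_{L^\infty(\Om)}\le C$. Hence $\io\psi\cdot\vp=\io\lan\nas w,\nas\vp\ran\le C\|\nas\vp\|_{L^1(\Om)}$. Only first derivatives of $w$ need to be bounded, so no estimate at $p=\infty$ is required.

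With this inequality, your Cauchy--Schwarz step finishes the proof once you add the $t$- and $\eps$-independent bound on $\int_t^{t+1}\io\Teps$ from Lemma \ref{lem13} or Lemma \ref{lem17} under (\ref{fg_decay}).

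The fallback routes you list cannot replace this.
\begin{itemize}
\item Under the present hypotheses (e.g.\ $\kappa\equiv 1$ with $n\ge 3$, or decaying $\kappa$ when $n=2$), the available estimates give, in general, no control of $\Teps$ in any $L^r$ with $r>1$; (\ref{12.1}) improves on $L^1$ only logarithmically. So the factor $\io\Teps^{q/(2-q)}$ from your Step~1 is not controlled for any $q>1$.
\item Passing through (\ref{12.2}) or the $L\log L$ bound of Lemma \ref{lem23} changes the right-hand side. The point of (\ref{58.1}) is that $\int\io|\veps|$ is controlled by exactly the entropy-production term in (\ref{11.6}). Lemma \ref{lem45} makes that term small along $(\eps_j)$ for large $t$, and this is what Proposition \ref{prop56} uses. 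A bound via (\ref{12.2}) yields only boundedness, not smallness.
\item Settling for a different exponent does not prove the stated inequality.
\end{itemize}
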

\proof
  According to Lemma \ref{lem13} and Lemma \ref{lem17}, the hypotheses in (\ref{131.1}), (\ref{38.1}) and (\ref{fg_decay}) 
  ensure the existence of $c_1>0$ such that
  \bas
	\int_t^{t+1} \io \Teps \le c_1
	\qquad \mbox{for all $t>0$ and } \eps\in (0,1).
  \eas
  Choosing $c_2>0$ such that in line with a Poincar\'e-Korn inequality (\cite[Theorem 1 ii)]{solombrino}) we have
  \bas
	\|\vp\|_{L^1(\Om)} \le c_2\|\nas\vp\|_{L^1(\Om)}
	\qquad \mbox{for all $\vp\in C^\infty(\bom;\R^n)$ fulfilling $\vp=0$ on $\pO$,}
  \eas
  by means of the Cauchy-Schwarz inequality we thus obtain that
  \bas
	\int_t^{t+1} \io |\veps|
	\le c_2 \cdot \bigg\{ \int_t^{t+1} \io \frac{|\nas\veps|^2}{\Teps} \bigg\}^\frac{1}{2} 
		\cdot \bigg\{ \int_t^{t+1} \io \Teps \bigg\}^\frac{1}{2}
	\le c_2 \sqrt{c_1} \cdot \bigg\{ \int_t^{t+1} \io \frac{|\nas\veps|^2}{\Teps} \bigg\}^\frac{1}{2} 
  \eas
  for all $t>0$ and $\eps\in (0,1)$.
\qed
We thereby already obtain the claimed decay property:\abs
\proofc of Proposition \ref{prop56}. \quad
  We take $c_1>0$ such that in accordance with Lemma \ref{lem58} we have
  \bas
	\int_t^{t+1} \io |\veps| \le c_1 \cdot \bigg\{ \int_t^{t+1} \io \frac{|\nas\veps|^2}{\Teps} \bigg\}^\frac{1}{2}
	\qquad \mbox{for all $t>0$ and } \eps\in (0,1),
  \eas
  and given $\eta>0$ we may then rely on Lemma \ref{lem45} to fix $T(\eta)>0$ and 
  $(\eps^\star(\eta,t))_{t>T(\eta)} \subset (0,1)$
  in such a way that if $t>T(\eta)$, then
  \bas
	c_1 \cdot \bigg\{ \int_t^{t+1} \io \frac{|\nas\veps|^2}{\Teps} \bigg\}^\frac{1}{2}
	\le \eta
	\qquad \mbox{for all } \eps\in (\eps_j)_{j\in\N} \cap (0,\eps^\star(\eta,t)).
  \eas
  For any such $t$, we thus have
  \bas
	\int_t^{t+1} \io |\veps| \le \eta
	\qquad \mbox{for all } \eps\in (\eps_j)_{j\in\N} \cap (0,\eps^\star(\eta,t)),
  \eas
  so that since 
  Lemma \ref{lem24} and Lemma \ref{lem244}
  particularly assert that $\veps\wto u_t \in L^1((t,t+1);L^1(\Om;\R^n))$ as
  $\eps=\eps_j\searrow 0$, by lower semicontinuity of $L^1$ norms with respect to weak convergence this implies that
  $\int_t^{t+1} \io |u_t| \le \eta$ for all $t>T(\eta)$. As $\eta$ was arbitrary, this proves the claim.
\qed
\mysection{Decay of $u$. Proof of Theorem \ref{theo60}}\label{sect_u}
To complete our large time analysis of (\ref{0}), in this section we address the setting from Theorem \ref{theo38}
and may then draw on the $L^1$ stabilization property of $\Theta$, as asserted by Theorem \ref{theo55} in this case,
to achieve the major part of our reasoning toward decay of $u$ by characterizing corresponding $\omega$-limits as follows.
\begin{lem}\label{lem59}
  Assume (\ref{38.1}) and (\ref{fg_decay}).
  Then there exists a null set
  $N\subset (0,\infty)$
  with the property that whenever $(t_k)_{k\in\N} \subset (0,\infty)\sm N$ and $u_\infty\in L^2(\Om;\R^n)$ are such that
  $t_k\to\infty$ and
  \be{59.1}
	u(\cdot,t_k) \to u_\infty
	\qquad \mbox{in } L^2(\Om;\R^n)
  \ee
  as $k\to\infty$, it follows that
  \be{59.2}
	u_\infty =0
	\qquad \mbox{a.e.~in } \Om.
  \ee
\end{lem}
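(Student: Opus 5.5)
We pass to the limit in the weak formulation (\ref{wu}) on the shifted time windows $(t_k,t_k+1)$. The aim is to show that any $L^2$ limit point $u_\infty$ of $u(\cdot,t_k)$ is a weak solution of the stationary problem $\div(\C:\nas u_\infty)=\Tinf\,\div\B=0$ with homogeneous Dirichlet data. Coercivity of $\C$ together with Korn's inequality then forces $u_\infty=0$.

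\textbf{Step 1: choice of $N$ and compactness on windows.} We choose $N$ as the complement of the Lebesgue points of $t\mapsto u(\cdot,t)$. Since $u\in C^0([0,\infty);L^2(\Om;\R^n))$, we may even take $N=\emptyset$; any null set needed for (\ref{wF}) can simply be added. We set $w_k(x,s):=u(x,t_k+s)$ for $s\in(0,1)$. By Lemma \ref{lem6} with (\ref{6.5}), the weak limits from Lemma \ref{lem244} and lower semicontinuity, the family $(w_k)$ is bounded in $L^\infty((0,1);W_0^{1,2}(\Om;\R^n))$. Proposition \ref{prop56} gives $\int_0^1\io|\pa_s w_k|\to0$, hence
\[
\sup_{s\in(0,1)}\|w_k(\cdot,s)-u(\cdot,t_k)\|_{L^1(\Om)}\le\int_0^1\io|\pa_s w_k|\to 0 .
\]
Interpolating with the $W^{1,2}$ bound via Rellich, we obtain $w_k\to u_\infty$ in $L^2(\Om\times(0,1))$. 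Along a subsequence also $\nas w_k\wto\nas u_\infty$ in $L^2$, so in particular $u_\infty\in W_0^{1,2}(\Om;\R^n)$.

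\textbf{Step 2: limit in (\ref{wu}).} We insert $\vp(x,t):=\zeta(t-t_k)\psi(x)$ into (\ref{wu}), with $\psi\in C_0^\infty(\Om;\R^n)$ and $\zeta\in C_0^\infty((0,1))$; here $\mu=0$ because we are in the setting of Theorem \ref{theo38}. We treat the terms one by one.
\begin{itemize}
\item The inertial term $\int u\cdot\vp_{tt}$ becomes $\int_0^1\io w_k\,\zeta''\psi$, which converges to $\io u_\infty\cdot\psi\int_0^1\zeta''=0$.
\item The viscous term $\int\lan\D:\nas u_t,\na\vp\ran$ can be integrated by parts in time. It becomes $-\int\lan\D:\nas w_k,\na\psi\ran\zeta'$, which converges to a multiple of $\int_0^1\zeta'=0$. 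This avoids any need for control of $\nas u_t$.
\item The elastic term converges to $\int_0^1\zeta\cdot\io\lan\C:\nas u_\infty,\na\psi\ran$.
\item The thermal term $\int\Theta\lan\B,\na\vp\ran$: Theorem \ref{theo55} ii) gives $\Theta(\cdot,t_k+\cdot)\to\Tinf$ in $L^1(\Om\times(0,1))$. The term therefore converges to $\Tinf\int\zeta\io\lan\B,\na\psi\ran$, which vanishes by the divergence theorem since $\psi$ has compact support.
\item The forcing term $\int f\cdot\vp$ tends to zero by (\ref{fg_decay}).
\end{itemize}
Altogether $\io\lan\C:\nas u_\infty,\nas\psi\ran=0$ for all $\psi$. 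By density this extends to $\psi=u_\infty$, and (\ref{DC_lower}) together with Korn's inequality then yields $u_\infty=0$.

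\textbf{Main obstacle.} The delicate point is the identification $w_k\to u_\infty$ on whole windows. It relies on the $L^1$ decay of $u_t$ from Proposition \ref{prop56} combined with uniform $W^{1,2}$ bounds on $u$, and these bounds must be established in the limit (via (\ref{6.5}) and lower semicontinuity) rather than only at the $\eps$-level. Once that is settled, the remaining limit passages are routine.
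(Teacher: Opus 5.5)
Your argument is correct, but it is organized differently from the paper's proof.

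\textbf{The paper's route.} The paper never tests the limit identity (\ref{wu}) directly. It tests the first equation of (\ref{0eps}) with $\zeta(t-t_k)\vp(x)$ and splits $\ueps(\cdot,t)=\ueps(\cdot,t_k)+\int_{t_k}^t\veps$ inside the elastic term. It then controls $\int_{t_k}^{t_k+1}\io|\veps|$, for large $k$ and all sufficiently small $\eps\in(\eps_j)_{j\in\N}$, through Lemma \ref{lem58} and Lemma \ref{lem45}. Only after letting $\eps=\eps_j\searrow0$ does it use Proposition \ref{prop56}, Theorem \ref{theo55} ii) and (\ref{fg_decay}) to show that the frozen elastic term $J_4(k)$ tends to zero. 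There, Proposition \ref{prop56} handles the inertial term and also the viscous term, after a spatial integration by parts.

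\textbf{Your route.} You stay entirely at the level of the limit. Since $u_t\in L^\infty_{loc}([0,\infty);L^2(\Om;\R^n))$ is the distributional derivative of the $L^2$-continuous function $u$, Proposition \ref{prop56} directly gives $u(\cdot,t_k+s)-u(\cdot,t_k)\to0$ in $L^1(\Om)$, uniformly in $s\in(0,1)$. Hence the whole window profile converges to the stationary $u_\infty$. The viscous term then drops out after a time integration by parts, because $\int_0^1\zeta'=0$.

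\textbf{Comparison.} Your route avoids a second pass through the approximate problems: no $\eps$-dependent thresholds $\eps^\star(\eta,k)$, and no renewed use of Lemma \ref{lem58} or Lemma \ref{lem45}. It relies only on facts about the limit that are already recorded: (\ref{wu}) with $\mu=0$, Proposition \ref{prop56}, Theorem \ref{theo55} ii), and the time-uniform $W^{1,2}$ bound from (\ref{6.5}). The price is that you must justify, for the limit, the identities $u(\cdot,t)-u(\cdot,t_0)=\int_{t_0}^t u_t$ and $\nas u_t=\pa_t\nas u$ in the distributional sense. The paper gets these for free by working with smooth approximate solutions, but both are routine here.

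\textbf{Two small points.}
\begin{itemize}
\item Choose $\zeta$ with $\int_0^1\zeta\ne0$ (e.g.\ $0\le\zeta\not\equiv0$); otherwise you cannot divide it out at the end.
\item Your remark that $N=\emptyset$ is admissible is correct. Your argument only uses $\nas w_k$ bounded in $L^2(\Om\times(0,1))$ together with $w_k\to u_\infty$ in $L^1(\Om\times(0,1))$, so no pointwise-in-time information is needed. For the same reason the Rellich interpolation step is not actually required.
\end{itemize}
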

\proof
  According to Lemma \ref{lem244}
  and the Fubini-Tonelli theorem when combined with Lemma \ref{lem6}, it follows that for each $t>0$ we have
  \be{59.3}
	\ueps(\cdot,t) \to u(\cdot,t)
	\quad \mbox{in } L^2(\Om;\R^n)
	\qquad \mbox{as } \eps=\eps_j\searrow 0,
  \ee
  and that there exist $c_1>0$ as well as a null set $N\subset (0,\infty)$
  \be{59.4}
	u(\cdot,t) \in W_0^{1,2}(\Om;\R^n)
	\quad \mbox{with} \quad
	\|u(\cdot,t)\|_{W^{1,2}(\Om)} \le c_1
	\qquad \mbox{for all } t\in (0,\infty)\sm N.
  \ee
  Assuming that $(t_k)_{k\in\N} \subset (0,\infty)\sm N$ and $u_\infty\in L^2(\Om;\R^n)$ satisfy $t_k\to\infty$ and (\ref{59.1})
  as $k\to\infty$, we first observe that due to (\ref{59.4}) we have
  \be{59.5}
	u_\infty \in W_0^{1,2}(\Om).
  \ee
  To see that actually $u_\infty$ must be trivial, we fix a nonnegative $\zeta\in C_0^\infty(\R)$ such that
  $\supp \zeta \subset (0,1)$ and $\int_{\R} \zeta=1$, and taking any $\vp\in C_0^\infty(\Om;\R^n)$ we use the first equation 
  in (\ref{0eps}) in verifying that
  \bea{59.6}
	& & \hs{-14mm}
	J_{1\eps}(k) + J_{2\eps}(k)
	:= - \int_0^\infty \io \zeta'(t-t_k) \veps(x,t)\cdot\vp(x) dxdt
	+ \eps \int_0^\infty \io \zeta(t-t_k) \veps(x,t) \cdot \Del^{2m} \vp(x,t) dxdt \nn\\
	&=& - \int_0^\infty \io \zeta(t-t_k) \lan \D:\nas\veps(x,t),\na\vp(x)\ran dxdt \nn\\
	& & - \int_0^\infty \io \zeta(t-t_k) \lan \C:\nas\ueps(x,t),\na\vp(x)\ran dxdt \nn\\
	& & + \int_0^\infty \io \zeta(t-t_k) \Teps(x,t) \lan \B,\na\vp(x)\ran dxdt \nn\\
	& & + \int_0^\infty \io \zeta(t-t_k) \feps(x,t) \cdot\vp(x) dxdt \nn\\
	&=:& J_{3\eps}(k)+...+J_{6\eps}(k)
	\qquad \mbox{for all $\eps\in (0,1)$ and } k\in\N.
  \eea
  Here, (\ref{244.4}) implies that for each $k\in\N$ we have
  \be{59.7}
	J_{1\eps}(k) \to J_1(k):= - \int_0^\infty \io \zeta'(t-t_k) u_t(x,t)\cdot \vp(x) dxdt
  \ee
  and
  \be{59.8}
	J_{2\eps}(k) \to 0
  \ee
  as $\eps=\eps_j\searrow 0$, whereas by (\ref{244.44}), (\ref{244.6}) and (\ref{fgec}),
  \be{59.9}
	J_{3\eps}(k) \to J_3(k):= - \int_0^\infty \io \zeta(t-t_k) \lan \D:\nas u_t(x,t),\na\vp(x)\ran dxdt
  \ee
  and
  \be{59.10}
	J_{5\eps}(k)\to J_5(k):=
	\int_0^\infty \io \zeta(t-t_k) \Theta(x,t) \lan \B,\na\vp(x)\ran dxdt
  \ee
  as well as
  \be{59.11}
	J_{6\eps}(k) \to J_6(k)
	:= \int_0^\infty \io \zeta(t-t_k) f(x,t)\cdot\vp(x) dxdt
  \ee
  for all $k\in\N$ as $\eps=\eps_j\searrow 0$.\abs
  In order to similarly treat the second summand on the right of (\ref{59.6}), we first note that due to Lemma \ref{lem58}
  and Lemma \ref{lem45}, given any $\eta>0$ we can find $k_0(\eta)\in\N$ such that for all $k>k_0(\eta)$ there exists
  $\eps^\star(\eta,k)\in (0,1)$
  fulfilling 
  \be{59.12}
	\int_{t_k}^{t_k+1} \io |\veps| 
	\le
	\frac{\eta}{2c_2}
	\qquad \mbox{for each } \eps\in (\eps_j)_{j\in\N} \cap (0,\eps^\star(\eta,k)),
  \ee
  where we have set
  \be{59.122}
	c_2:=\frac{1}{2} \cdot \bigg\{ \sum_{\mathbf{i},\mathbf{j},\mathbf{k},\mathbf{l}=1}^n |\C_{\mathbf{ijkl}}| \bigg\}
	\cdot c_3 \cdot 
	\max_{(\mathbf{i},\mathbf{j},\mathbf{k},\mathbf{l})\in\{1,...,n\}^4} 
	\Big\{ \|\pa_{\mathbf{lj}} \vp_{\mathbf{i}}\|_{L^\infty(\Om)} + \|\pa_{\mathbf{kj}} \vp_{\mathbf{i}}\|_{L^\infty(\Om)} \Big\}
	\qquad \mbox{with} \qquad
	c_3:=\|\zeta\|_{L^\infty(\R)}.
  \ee
  In the identity 
  \bea{59.13}
	J_{4\eps}(k)
	&=& - \frac{1}{2} \sum_{\mathbf{i},\mathbf{j},\mathbf{k},\mathbf{l}=1}^n \C_{\mathbf{ijlk}} \int_0^\infty \io \zeta(t-t_k) 
		\cdot \Big\{ \pa_{\mathbf{l}}(\ueps)_{\mathbf{k}}(x,t) + \pa_{\mathbf{k}} (\ueps)_{\mathbf{l}}(x,t) \Big\}
		\cdot \pa_{\mathbf{j}} \vp_{\mathbf{i}}(x) dxdt \nn\\
	&=& \frac{1}{2} \sum_{\mathbf{i},\mathbf{j},\mathbf{k},\mathbf{l}=1}^n \C_{\mathbf{ijlk}} \int_0^\infty \io \zeta(t-t_k)
		\cdot \Big\{ (\ueps)_{\mathbf{k}}(x,t) \pa_{\mathbf{lj}} \vp_{\mathbf{i}}(x)
		+ (\ueps)_{\mathbf{l}}(x,t) \pa_{\mathbf{kj}} \vp_{\mathbf{i}}(x) \Big\} dxdt \nn\\
	&=& \frac{1}{2} \sum_{\mathbf{i},\mathbf{j},\mathbf{k},\mathbf{l}=1}^n \C_{\mathbf{ijkl}} \int_0^\infty \io \zeta(t-t_k)
		\cdot \Big\{ (\ueps)_{\mathbf{k}}(x,t_k) \pa_{\mathbf{lj}} \vp_{\mathbf{i}}(x)
		+ (\ueps)_{\mathbf{l}}(x,t_k) \pa_{\mathbf{kj}} \vp_{\mathbf{i}}(x) \Big\} dxdt \nn\\
	& & + \frac{1}{2} \sum_{\mathbf{i},\mathbf{j},\mathbf{k},\mathbf{l}=1}^n \C_{\mathbf{ijkl}} \int_0^\infty \io \zeta(t-t_k)
		\cdot \bigg\{ \int_{t_k}^t (\veps)_{\mathbf{k}}(x,s) ds \bigg\} \cdot \pa_{\mathbf{lj}} \vp_{\mathbf{i}}(x) dxdt \nn\\
	& & + \frac{1}{2} \sum_{\mathbf{i},\mathbf{j},\mathbf{k},\mathbf{l}=1}^n \C_{\mathbf{ijkl}} \int_0^\infty \io \zeta(t-t_k)
		\cdot \bigg\{ \int_{t_k}^t (\veps)_{\mathbf{l}}(x,s) ds \bigg\} \cdot \pa_{\mathbf{kj}} \vp_{\mathbf{i}}(x) dxdt,
  \eea
  as obtained for each $k\in\N$ and $\eps\in (0,1)$ upon an integration by parts due to the fact that $u_{\eps t}=\veps$
  for all $\eps\in (0,1)$, we can use this inequality (\ref{59.12}) to estimate the second to last contribution according to
  \bas
	& & \hs{-20mm}
	\bigg| \frac{1}{2} \sum_{\mathbf{i},\mathbf{j},\mathbf{k},\mathbf{l}=1}^n \C_{\mathbf{ijkl}} \int_0^\infty \io \zeta(t-t_k)
		\cdot \bigg\{ \int_{t_k}^t (\veps)_{\mathbf{k}}(x,s) ds \bigg\} \cdot \pa_{\mathbf{lj}} \vp_{\mathbf{i}}(x) dxdt
	\bigg| \\
	&\le& c_2 \int_{t_k}^{t_k+1} \io \int_{t_k}^t |\veps(x,s)| dsdxdt \\
	&\le& c_2 \int_{t_k}^{t_k+1} \io |\veps| \\
	&\le& \frac{\eta}{2}
	\qquad \mbox{for all $k>k_0(\eta)$ and } \eps\in (\eps_j)_{j\in\N} \cap (0,\eps^\star(\eta,k)).
  \eas
  After a similar handling of the rightmost summand in (\ref{59.13}), using (\ref{244.5}) and (\ref{59.3}) together with
  our restriction that $t_k\not\in N$ we thus infer that for
  \be{59.133}
	\hs{-4mm}
	J_4(k):=
	\frac{1}{2} \sum_{\mathbf{i},\mathbf{j},\mathbf{k},\mathbf{l}=1}^n \C_{\mathbf{ijlk}} \int_0^\infty \io \zeta(t-t_k) \cdot
		\Big\{ u_{\mathbf{k}}(x,t_k) \pa_{\mathbf{lj}}\vp_{\mathbf{i}}(x) 
			+ u_{\mathbf{l}}(x,t_k) \pa_{\mathbf{kj}}\vp_{\mathbf{i}}(x)\Big\} dxdt,
	\quad k\in\N,
  \ee
  we have
  \bas
	\limsup_{\eps=\eps_j\searrow 0} \big| J_{4\eps}(k)-J_4(k)\big|
	\le \frac{\eta}{2} + \frac{\eta}{2} = \eta
	\qquad \mbox{for all } k>k_0(\eta).
  \eas
  When combined with (\ref{59.6})-(\ref{59.11}), this shows that
  \bea{59.14}
	\big|J_4(k)\big|
	&\le& \limsup_{\eps=\eps_j\searrow 0} \big| J_{4\eps}(k)-J_4(k)\big| 
	+ \limsup_{\eps=\eps_j\searrow 0} \big|J_{4\eps}(k)\big| \nn\\
	&\le& \eta +
	\limsup_{\eps=\eps_j\searrow w} \big| J_{1\eps}(k)+J_{2\eps}(k)-J_{3\eps}(k)-J_{5\eps}(k)-J_{6\eps}(k)\big| \nn\\
	&\le& \eta +
	\bigg| \int_0^\infty \io \zeta'(t-t_k) u_t(\cdot,t)\cdot \vp(x) dxdt \bigg| \nn\\
	& & + \bigg| \int_0^\infty \io \zeta(t-t_k) \lan \D:\nas u_t(x,t),\na\vp(x)\ran dxdt \bigg| \nn\\
	& & + \bigg| \int_0^\infty \io \zeta(t-t_k) \Theta(x,t) \lan \B,\na\vp(x)\ran dxdt \bigg| \nn\\
	& & + \bigg| \int_0^\infty \io \zeta(t-t_k) f(x,t) \cdot\vp(x) dxdt \bigg|
	\qquad \mbox{for all } k>k_0(\eta).
  \eea
  Here, Proposition \ref{prop56} ensures that
  \bea{59.15}
	\bigg| \int_0^\infty \io \zeta'(t-t_k) u_t(\cdot,t)\cdot \vp(x) dxdt \bigg|
	&\le& \|\zeta'\|_{L^\infty(\R)} \|\vp\|_{L^\infty(\Om)} \int_{t_k}^{t_k+1} \io |u_t| \nn\\
	&\to& 0
	\qquad \mbox{as } k\to\infty,
  \eea
  while after an integration by parts in the style of that in (\ref{59.13}), the same token applies so as to guarantee that,
  again by (\ref{59.122}),
  \bea{59.16}
	& & \hs{-12mm}
	\bigg| \int_0^\infty \io \zeta(t-t_k) \lan \D:\nas u_t(x,t),\na\vp(x)\ran dxdt \bigg| \nn\\
	&=& \bigg| \frac{1}{2} \sum_{\mathbf{i},\mathbf{j},\mathbf{k},\mathbf{l}=1}^n \D_{\mathbf{ijlk}} 
	\int_0^\infty \io \zeta(t-t_k) \cdot
	\Big\{ (u_t)_{\mathbf{k}}(x,t) \pa_{\mathbf{lj}} \vp_{\mathbf{i}}(x) 
	+ (u_t)_{\mathbf{l}}(x,t) \pa_{\mathbf{kj}} \vp_{\mathbf{i}}(x) \Big\} dxdt \bigg| \nn\\
	&\le& 2 \cdot 
	\frac{1}{2} \cdot \bigg\{ \sum_{\mathbf{i},\mathbf{j},\mathbf{k},\mathbf{l}=1}^n |\D_{\mathbf{ijkl}}| \bigg\}
	\cdot c_3 \cdot 
	\max_{(\mathbf{i},\mathbf{j},\mathbf{k},\mathbf{l})\in\{1,...,n\}^4} 
	\Big\{ \|\pa_{\mathbf{lj}} \vp_{\mathbf{i}}\|_{L^\infty(\Om)} + \|\pa_{\mathbf{kj}} \vp_{\mathbf{i}}\|_{L^\infty(\Om)} \Big\}
	\cdot \int_{t_k}^{t_k+1} \io |u_t| \nn\\
	&\to& 0
	\qquad \mbox{as } k\to\infty.
  \eea
  In the crucial second to last integral in (\ref{59.14}), we use that under the present assumptions we may rely on part ii) of
  Theorem \ref{theo55} to make sure that with $\Tinf$ as provided there we have 
  \bas
	\int_{t_k}^{t_k+1} |\Theta-\Tinf| \to 0
	\qquad \mbox{as } k\to\infty, 
  \eas 
  and that thus, since an integration by parts shows that
  \bas
	\io \lan \B,\na\vp\ran =0
  \eas
  due to the fact that $\vp=0$ on $\pO$, it follows that also
  \bea{59.17}
	& & \hs{-20mm}
	\bigg| \int_0^\infty \io \zeta(t-t_k) \Theta(x,t) \lan \B,\na\vp(x)\ran dxdt \bigg| \nn\\
	&=& \bigg| \Tinf \cdot \bigg\{ \int_0^\infty \zeta(t-t_k) dt \bigg\} \cdot \io \lan \B,\na\vp(x) \ran dx \nn\\
	& & \hs{6mm}
	+ \int_0^\infty \io \zeta(t-t_k) \cdot \big\{ \Theta(x,t)-\Tinf\big\} \lan \B,\na\vp(x)\ran dxdt \bigg| \nn\\
	&\le& c_3 |\B| \cdot \|\na\vp\|_{L^\infty(\Om)} \int_{t_k}^{t_k+1} \io |\Theta-\Tinf| \nn\\
	&\to& 0
	\qquad \mbox{as } k\to\infty.
  \eea
  Since, finally, (\ref{fg_decay}) particularly warrants that
  \bas
	\bigg| \int_0^\infty \io \zeta(t-t_k) f(x,t) \cdot\vp(x) dxdt \bigg|
	&\le& c_3 \|\vp\|_{L^2(\Om)} \int_{t_k}^{t_k+1} \|f(\cdot,t)\|_{L^2(\Om)} dt \nn\\
	&\to& 0
	\qquad \mbox{as } k\to\infty,
  \eas
  collecting (\ref{59.14})-(\ref{59.17}) we infer that
  \bas
	\limsup_{k\to\infty} \big|J_4(k)\big| \le \eta,
  \eas  
  and that therefore, as $\eta>0$ was arbitrary,
  \be{59.18}
	J_4(k) \to 0
	\qquad \mbox{as } k\to\infty.
  \ee
  On the other hand, in view of our hypothesis in (\ref{59.1}) we know from (\ref{59.133}) that due to the identity 
  $\int_{\R} \zeta=1$,
  \bas
	J_4(k)
	&=& \frac{1}{2} \sum_{\mathbf{i},\mathbf{j},\mathbf{k},\mathbf{l}=1}^n \C_{\mathbf{ijlk}} 
	\cdot \bigg\{ \int_0^\infty \zeta(t-t_k) dt \bigg\} \cdot \io 
	\Big\{ u_{\mathbf{k}}(x,t_k) \pa_{\mathbf{lj}}\vp_{\mathbf{i}}(x) + u_{\mathbf{l}}(x,t_k) \pa_{\mathbf{kj}}\vp_{\mathbf{i}}(x)
	\Big\} dx \nn\\
	&\to& \frac{1}{2} \sum_{\mathbf{i},\mathbf{j},\mathbf{k},\mathbf{l}=1}^n \C_{\mathbf{ijlk}} 
	\cdot \io
	\Big\{ (u_\infty)_{\mathbf{k}}(x) \pa_{\mathbf{lj}}\vp_{\mathbf{i}}(x) 
	+ (u_\infty)_{\mathbf{l}}(x) \pa_{\mathbf{kj}}\vp_{\mathbf{i}}(x)
	\Big\} dx
  \eas
  as $k\to\infty$.
  But since (\ref{59.5}) allows us to integrate by parts here without any appearance of boundary integrals, in combination
  with (\ref{59.18}) this means that
  \bas
	\io \lan\C:\nas u_\infty,\na\vp\ran =0
	\qquad \mbox{for all } \vp\in C_0^\infty(\Om;\R^n).
  \eas
  As $\lan\C:\nas\psi^{(1)},\na\psi^{(2)}\ran  = \lan\C:\nas\psi^{(1)},\nas\psi^{(2)}\ran$ a.e.~in $\Om$
  whenever $\psi^{(i)}\in W^{1,2}(\Om;\R^n)$ for $i\in\{1,2\}$, by density of $C_0^\infty(\Om;\R^n)$ in $W_0^{1,2}(\Om;\R^n)$
  this implies that $\io \lan\D;\nas u_\infty,\nas u_\infty\ran =0$. 
  According to (\ref{DC_lower}), we conclude that $\io |\nas u_\infty|^2=0$ and that thus (\ref{59.2}) holds.
\qed
Our asymptotic analysis can thus be completed:\abs
\proofc of Theorem \ref{theo60}.\quad
  Since $u$ is continuous on $[0,\infty)$ as an $L^2(\Om;\R^n)$-valued function by Theorem \ref{theo38},
  and since Lemma \ref{lem6} together with Lemma \ref{lem244} ensures the existenc of a null set $N\subset (0,\infty)$ such that
  $(u(\cdot,t))_{t\in (0,\infty)\sm N}$ is bounded in $W^{1,2}(\Om;\R^n)$ and hence relatively compact in $L^2(\Om;\R^n)$,
  the claim can readily be derived from the characterization of corresponding $\omega$-limits achieved in Lemma \ref{lem59},
  and from the fact that complemets of null sets are dense in $(0,\infty)$.
\qed

\newpage

\mysection{Appendix}
\subsection{Consistency of the solution concepts}
As announced near Definition \ref{dw}, let us here make sure that generalized solutions, in the sense
specified there, in fact must be classical if they are additionally required to enjoy suitable further regularity properties.
We note that the following statement in this direction imposes fairly mild hypotheses only, inter alia not requiring quantities
such as $\Theta_t$ or $\Del\Theta$ to be continuous in all of $\bom\times [0,\infty)$.
\begin{prop}\label{prop_dw}
  Let $D>0$, 
  $\D=(\D_{\mathbf{ijkl}})_{(\mathbf{i}, \mathbf{j}, \mathbf{k},\mathbf{l}) 
		\in \{1,...,n\}^4} \in \R^{n\times n \times n \times n}$ and
  $\C=(\C_{\mathbf{ijkl}})_{(\mathbf{i}, \mathbf{j}, \mathbf{k},\mathbf{l}) 
		\in \{1,...,n\}^4} \in \R^{n\times n \times n \times n}$ 
  satisfy (\ref{DCsym}), let
  $\B=(\B_{\mathbf{ij}})_{(\mathbf{i},\mathbf{j}) \in \{1,...,n\}^2} \in \R^{n\times n}_{sym}$,
  $\kappa\in C^0([0,\infty))$,
  $f\in C^0(\bom\times [0,\infty);\R^n)$ and $g\in C^0(\bom\times [0,\infty))$,
  $u_0\in C^1(\bom;\R^n)$, $u_{0t} \in C^0(\bom;\R^n)$ and $\Theta_0\in C^0(\bom)$ be such that
  $u_0=0$ on $\pO$ and $\Theta_0\ge 0$, and suppose that
  \be{p1}
	\lbal
	u \in C^0(\bom\times [0,\infty);\R^n)
	\cap C^2(\bom\times (0,\infty);\R^n)
	\cap C^0([0,\infty);W_0^{1,2}(\Om;\R^n)) 
	\qquad \mbox{and} \\[1mm]
	\Theta\in C^0(\bom\times [0,\infty)) \cap C^{2,1}(\bom\times (0,\infty))
	\ear
  \ee
  are such that
  \be{p2}
	u_t \in C^{1,0}(\bom\times [0,\infty);\R^n) \cap C^{2,1}(\bom\times (0,\infty);\R^n),
  \ee
  that $\Theta\ge 0$ in $\Om\times (0,\infty)$, and that $(u,\Theta)$ forms a generalized solution of (\ref{0}) in the sense
  of Definition \ref{dw}.
  Then $(u,\Theta)$ solves (\ref{0}) in the classical sense.
\end{prop}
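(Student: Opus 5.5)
Under the regularity (\ref{p1})--(\ref{p2}) we show separately that the mechanical equation holds classically, that $\mu\equiv 0$, and that the heat equation holds as an equality, with boundary and initial conditions along the way.

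\textbf{Mechanical part.} Since $u\in C^2$ and $u_t\in C^{2,1}$ in $\bom\times(0,\infty)$, take $\vp\in C_0^\infty(\Om\times(0,\infty);\R^n)$ in (\ref{wu}) and integrate by parts back. This yields
\[
\int_0^\infty\!\!\io \big(u_{tt}-\div(\D:\nas u_t)-\div(\C:\nas u)+\div(\Theta\B)-f\big)\cdot\vp=\int_0^\infty\!\!\int_{\bom}\lan\B,\na\vp\ran d\mu .
\]
The right-hand side defines a distribution of first order, namely the gradient-type term $-\div(\B\mu)$ restricted to the interior. We cannot yet conclude that it vanishes, so we first show $\mu\equiv 0$. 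For this we use the energy inequality (\ref{wF}). For smooth $(u,\Theta)$, testing the mechanical identity (still containing $\mu$) against $u_t$ is not directly available, so we argue through the heat inequality first.

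\textbf{Heat part.} Take $\phi$ concave with $\phi'\in C_0^\infty$, and choose $\phi(\xi)=\xi$ on $[0,R]$ with $R>\max\Theta$ on the support of a test function $\vp$ supported in $\bom\times(0,\infty)$. This is possible since $\Theta$ is continuous, hence locally bounded. Then $K^{(\phi)}(\Theta)=K(\Theta)$, $\phi'(\Theta)=1$ and $\phi''(\Theta)=0$ there. Integrating by parts in (\ref{wt}) gives
\[
\kappa(\Theta)\Theta_t\ge D\Del\Theta+\lan\D:\nas u_t,\nas u_t\ran-\Theta\lan\B,\nas u_t\ran+g
\]
pointwise in $\Om\times(0,\infty)$. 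Using $\vp$ not vanishing on $\pO$ also yields $\pa_\nu\Theta\le 0$... Here the one-sidedness is the obstacle, since only a supersolution inequality results.

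\textbf{Closing the gap (main obstacle).} Integrate the pointwise heat inequality over $\Om\times(0,t)$, which gives $\io K(\Theta(t))-\io K(\Theta_0)\ge\int_0^t\io(\ldots)$. Combine this with (\ref{wF}). Next, justify testing the mechanical relation by $u_t$ in the limit from test functions with time cutoffs, using $u_t\in C^{1,0}(\bom\times[0,\infty))$ and $u_t=0$ on $\pO$ (from $u\in C^0([0,\infty);W_0^{1,2})$). Adding the two yields
\[
\F(t)+a\int_{\bom}d\mu(t)\le \F(t)-\Big(\text{nonneg.\ defect of heat inequality}\Big)+\int_0^t\int_{\bom}\lan\B,\nas u_t\ran d\mu .
\]
The hardest step is controlling the measure term. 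I would first show the interior equation forces $\div(\B\mu)$ to be smooth, hence $\mu$ to be absolutely continuous in the interior. Alternatively, and preferably, test (\ref{wu}) with $\vp$ independent of the heat coupling to show the $\mu$-term is a distribution equal to a continuous function, so that $\mu$ acts like an $L^1$ density $m$. Then repeat the energy computation with $\Theta+m$ in place of $\Theta$: the term $m\lan\B,\nas u_t\ran$ is unmatched, and since $a>0$ in (\ref{wF}) this gives $a\io m(t)\le \int_0^t\io m\lan\B,\nas u_t\ran$ plus a nonpositive heat defect. A Gronwall argument, with $m(0)=0$ from continuity at $t=0$, forces $m\equiv0$ and the heat defect $\equiv0$.

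Hence $\mu\equiv0$, the heat inequality is an equality, and both equations hold classically. Boundary conditions follow from choosing test functions nonzero on $\pO$ in (\ref{wt}): equality in the interior plus the inequality with boundary terms gives $\pa_\nu\Theta=0$ once the defect is shown to vanish. Initial conditions follow from the $t=0$ terms in (\ref{wu}) and (\ref{wt}) together with the assumed continuity up to $t=0$.
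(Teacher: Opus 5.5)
Your skeleton is the paper's: an energy identity for the mechanical part via time cut-offs of $u_t$, a pointwise supersolution inequality from (\ref{wt}), integration, comparison with (\ref{wF}), and Gronwall. The gap is in the step you flag as the main obstacle. You resolve it by claiming that the interior equation forces $\mu$ to have an $L^1$ density $m$, and that claim fails in the generality of the proposition. The test functions in (\ref{wu}) are compactly supported in $\Om\times[0,\infty)$, so (\ref{wu}) cannot see any part of $\mu(t)$ concentrated on $\pO$. Moreover, $\B$ is only assumed symmetric. For singular $\B$ the relation $-\B\na\mu=$ (continuous) gives no information on $\mu$, or only directional information; for $\B=0$ the $\mu$-term in (\ref{wu}) vanishes identically. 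Hence a Gronwall argument for $\io m(t)$ would not rule out a singular or boundary part of $\mu$. Also, no continuity of $\mu$ at $t=0$ is available to give $m(0)=0$.

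No regularity of $\mu$ is needed, because the inequality you already wrote down closes by itself. Since $u_t\in C^{1,0}(\bom\times[0,\infty);\R^n)$, the constant $c_3(T):=|\B|\,\|\nas u_t\|_{L^\infty(\Om\times(0,T))}$ is finite. Combining your two identities with (\ref{wF}) then gives, for a.e.\ $t\in(0,T)$,
\[
	a\int_{\bom}d\mu(t)+\int_0^t\io h_1+D\int_0^t\int_{\pO}\frac{\pa\Theta}{\pa\nu}+\io\big\{K(\Theta(\cdot,0))-K(\Theta_0)\big\}
	\le c_3(T)\int_0^t\int_{\bom}d\mu(s)\,ds ,
\]
where $h_1\ge 0$ is the interior defect. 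The map $t\mapsto\int_{\bom}d\mu(t)$ is nonnegative and essentially bounded by (\ref{w6}). The integral form of Gronwall's lemma therefore yields $\mu(t)=0$ for a.e.\ $t$, and then all three defects vanish. This is exactly the paper's argument; since (\ref{wF}) controls the total mass on $\bom$, boundary or singular parts of $\mu$ cause no difficulty.

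Three further corrections. First, (\ref{wt}) yields $\frac{\pa\Theta}{\pa\nu}\ge 0$ on $\pO$, not $\le 0$. This is precisely the sign that lets you drop $D\int_{\pO}\frac{\pa\Theta}{\pa\nu}$ when integrating $\kappa(\Theta)\Theta_t\ge D\Del\Theta+\ldots$ over $\Om$. Second, the boundary and initial conditions for $\Theta$ come out of (\ref{wt}) only as the one-sided inequalities $\frac{\pa\Theta}{\pa\nu}\ge 0$ and $K(\Theta(\cdot,0))\ge K(\Theta_0)$. They must therefore be carried as nonnegative defects through the energy balance, as in the display, rather than read off from (\ref{wt}) and continuity. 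Third, $u(\cdot,0)=u_0$ and $u_t(\cdot,0)=u_{0t}$ must be derived from (\ref{wu}) before the energy comparison, since (\ref{wF}) involves $\F_0$ built from the data. This can be done without knowing $\mu=0$, e.g.\ with $\vp(x,t)=\frac{1}{2\delta}(\delta-t)_+^2\psi(x)$.
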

\proof
  Letting $a$ and $\mu$ be as in Definition \ref{dw}, according to the regularity properties in (\ref{p2}) and (\ref{p1}) as
  well as the evident fact that $u=u_t=0$ on $\pO\times (0,\infty)$ by (\ref{p1}),
  given $\vp\in C_0^\infty(\Om\times [0,\infty);\R^n)$
  we may integrate by parts in (\ref{wu}) and use 
  that $\lan \D:\nas u_t,\na\vp \ran=\lan \D:\nas u_t,\nas\vp \ran$ and
  $\lan \C:\nas u,\na\vp\ran = \lan \C:\nas u,\nas\vp\ran$ as well as
  $\lan \B,\na\vp\ran=\lan \B,\nas\vp\ran$ in $\Om\times (0,\infty)$
  to see that
  \bea{p22}
	& & \hs{-30mm}
	- \int_0^\infty \io u_t \cdot\vp_t 
	- \io \big\{ u(\cdot,0)-u_0\big\} \cdot \vp_t(\cdot,0)
	- \io u_{0t} \cdot\vp(\cdot,0) \nn\\
	&=& - \int_0^\infty \io \lan \D:\nas u_t,\nas\vp \ran
	- \int_0^\infty \io \lan \C:\nas u,\nas\vp\ran \nn\\
	& & + \int_0^\infty \io \Theta \lan \B,\nas\vp\ran
	+ \int_0^\infty \int_{\bom} \lan \B,\nas\vp\ran d\mu
	+ \int_0^\infty \io f\cdot\vp,
  \eea
  which by an approximation argument extends so as to hold actually for all compactly supported 
  $\vp\in C^1(\bom\times [0,\infty);\R^n)$ fulfilling $\vp=0$ on $\pO\times (0,\infty)$.
  Given $\psi\in C_0^\infty(\Om;\R^n)$, for each $\del>0$ we may hence choose
  $\vp(x,t):=\frac{1}{2\del} (\del-t)_+^2 \psi(x)$, $(x,t)\in\bom\times [0,\infty)$, to obtain the identity
  \be{p3}
	\frac{1}{\del} \int_0^\del \io (\del-t) u_t\cdot\psi
	+ \io \big\{ u(\cdot,0)-u_0\big\} \cdot\psi
	- \frac{\del}{2} \io u_{0t} \cdot\psi 
	= \frac{1}{2\del} \int_0^\del (\del-t)^2 h(t) dt,
  \ee
  where 
  $h(t):=\io \big\{ - \lan \D:\nas u_t,\nas\psi\ran -  \lan C:\nas u,\nas\psi\ran + \Theta \lan \B,\nas \psi\ran + f\cdot \psi \big\}
  + \int_{\bom} \lan \B,\nas\psi\ran d\mu(t)$, $t>0$,
  satisfies 
  $h\in L^\infty_{loc}([0,\infty))$ 
  thanks to (\ref{p2}), (\ref{p1}) and (\ref{w6}).
  Since moreover $u_t\cdot\psi$ is bounded in $\Om\times (0,1)$ by (\ref{p2}), we can therefore find $c_1>0$ and $c_2>0$ such that
  \be{p4}
	\bigg| \frac{1}{2\del} \int_0^\del (\del-t)^2 h(t) dt \bigg| \le c_1 \del^2
	\quad \mbox{and} \quad
	\bigg| \frac{1}{\del} \int_0^\del \io (\del-t) u_t\cdot\psi \bigg| \le c_2\del
	\qquad \mbox{for all } \del\in (0,1),
  \ee
  whence taking $\del\searrow 0$ in (\ref{p3}) shows that $\io \big\{ u(\cdot,0)-u_0 \big\} \cdot\psi=0$ and thus
  \be{p5}
	u(\cdot,0)=u_0
	\qquad \mbox{in } \Om,
  \ee
  as $\psi\in C_0^\infty(\Om;\R^n)$ was arbitrary.\abs
  We may thereupon divide (\ref{p3}) by $\del$ and again rely on the first inequality in (\ref{p4}) to see that since
  \bas
	& & \hs{-30mm}
	\bigg| \frac{1}{\del^2} \int_0^\del (\del-t) u_t\cdot\psi
	- \frac{1}{2} \io u_t(\cdot,0)\cdot\psi \bigg| \\
	&=& \bigg| \frac{1}{\del^2} \int_0^\del \io (\del-t) \big\{ u_t(x,t) - u_t(x,0)\big\}\cdot\psi(x) dxdt \bigg| \\
	&\le& \bigg\{ \sup_{(x,t)\in \Om\times (0,\del)} \big| u_t(x,t)-u_t(x,0)\big| \bigg\} 
		\cdot \|\psi\|_{L^\infty(\Om)} \cdot \frac{1}{\del^2} \int_0^\del (\del-t) dt \\[2mm]
	&\to& 0
	\qquad \mbox{as } \del\searrow 0
  \eas
  by continuity of $u_t$ in $\Om\times \{0\}$, we also have
  $\frac{1}{2} \io \big\{ u_t(\cdot,0)-u_{0t}\big\}\cdot \psi=0$ for any such $\psi$, and hence
  \be{p6}
	u_t(\cdot,0)=u_{0t}
	\qquad \mbox{in } \Om.
  \ee
  For $t_0>0, t_1>t_0$ and $\del\in (0,\frac{t_0}{2})$, we next let $\zeta\in W^{1,\infty}((0,\infty))$ be the piecewise 
  linear continuous function on $[0,\infty)$ satisfying $\zeta\equiv 0$ on $[0,t_0-\del]$, $\zeta'\equiv \frac{1}{\del}$
  on $(t_0-\del,t_0)$, $\zeta\equiv 1$ on $[t_0,t_1]$, $\zeta'\equiv -\frac{1}{\del}$ on $(t_1,t_1+\del)$ and
  $\zeta\equiv 0$ on $[t_1+\del,\infty)$.
  Taking $(\zeta_j)_{j\in\N} \subset C_0^\infty((0,\infty))$ such that $\supp \zeta_j \subset (t_0-2\del,t_1+2\del)$
  for all $j\in\N$ and $\zeta_j \wsto \zeta$ in $W^{1,\infty}((0,\infty))$ as $j\to\infty$, on applying (\ref{p22}) to
  $\vp(x,t):=\zeta_j(t) u_t(x,t)$, $(x,t)\in\bom\times [0,\infty)$, and letting $j\to\infty$, we obtain that
  \bea{p7}
	& & \hs{-20mm}
	- \int_{t_0-\del}^{t_1+\del} \io u_t \cdot (\zeta u_{tt} + \zeta' u_t) \nn\\
	&=& - \int_{t_0-\del}^{t_1+\del} \io \zeta \lan \D:\nas u_t,\nas u_t\ran
	- \int_{t_0-\del}^{t_1+\del} \io  \zeta \lan \C:\nas u, \nas u_t \ran \nn\\
	& & + \int_{t_0-\del}^{t_1+\del} \io  \zeta \Theta \lan \B,\nas u_t\ran
	+ \int_{t_0-\del}^{t_1+\del} \int_{\bom} \zeta \lan \B,\nas u_t\ran d\mu
	+ \int_{t_0-\del}^{t_1+\del} \io \zeta f\cdot u_t,
  \eea
  where by the chain rule and an integration by parts,
  \bas
	- \int_{t_0-\del}^{t_1+\del} \io u_t \cdot (\zeta u_{tt} + \zeta' u_t)
	&=& - \frac{1}{2} \int_{t_0-\del}^{t_1+\del} \io  \zeta \cdot (|u_t|^2)_t
	- \int_{t_0-\del}^{t_1+\del} \io \zeta' |u_t|^2
	= - \frac{1}{2} \int_{t_0-\del}^{t_1+\del} \io \zeta' |u_t|^2 \\
	&=& \frac{1}{2\del} \int_{t_1}^{t_1+\del} \io |u_t|^2
	- \frac{1}{2\del} \int_{t_0-\del}^{t_0} |u_t|^2 \\
	&\to& \frac{1}{2} \io |u_t|^2(\cdot,t_1) - \frac{1}{2} \io |u_t|^2(\cdot,t_0)
	\qquad \mbox{as } \del\searrow 0.
  \eas
  Likewise,
  \bas
	- \int_{t_0-\del}^{t_1+\del} \io \zeta \lan \C:\nas u,\nas u_t\ran
	\to - \frac{1}{2} \io \lan \C:\nas u(\cdot,t_1),\nas u(\cdot,t_1)\ran
	+ \frac{1}{2} \io \lan \C:\nas u(\cdot,t_0),\nas u(\cdot,t_0)\ran
  \eas
  as $\del\searrow 0$, so that letting $\del\searrow 0$ and then $t_0\searrow 0$ in (\ref{p7}), 
  thanks to (\ref{p6}), (\ref{p5}) and the
  $W^{1,2}\times L^2$-valued continuity of $0\le t\mapsto (u,u_t)(\cdot,t)$, as implied by (\ref{p1}) and (\ref{p2}), 
  we conclude that for all $t>0$,
  \bea{p8}
	& & \hs{-20mm}
	\frac{1}{2} \io |u_t(\cdot,t)|^2
	+ \frac{1}{2} \io \lan \C:\nas u(\cdot,t),\nas u(\cdot,t)\ran \nn\\
	&=& \frac{1}{2} \io |u_{0t}|^2
	+ \frac{1}{2} \io \lan \C:\nas u_0,\nas u_0\ran \nn\\
	& & - \int_0^t \io \lan \D:\nas u_t,\nas u_t \ran
	+ \int_0^t \io \Theta\lan \B:\nas u_t\ran
	+ \int_0^t \int_{\bom} \lan \B,\nas u_t\ran d\mu 
	+ \int_0^t \io f\cdot u_t.
  \eea
  We next exploit (\ref{wt}) in a standard manner (cf.~\cite[Lemma 2.1]{win_SIMA2015} for details in a closely related situation)
  to see that whenever $\phi\in C^\infty([0,\infty))$ is such that $\phi'\in C_0^\infty([0,\infty))$,
  $\phi'\ge 0$ and $\phi''\le 0$, we have
  \bas
	\pa_t K^{(\phi)}(\Theta)
	\ge D\phi'(\Theta)\Del\Theta + \phi'(\Theta) \lan \D:\nas u_t,\nas u_t\ran - \Theta \phi'(\Theta) \lan \B,\nas u_t\ran
		+ \phi'(\Theta) g
	\qquad \mbox{in } \Om\times (0,\infty)
  \eas
  and $D\phi'(\Theta)\frac{\partial\Theta}{\partial\nu}\ge 0$ on $\pO\times (0,\infty)$
  as well as $K^{(\phi)}(\Theta(\cdot,0)) \ge K^{(\phi)}(\Theta_0)$ in $\Om$, where $K^{(\phi)}$ is as in (\ref{Kphi}).
  Here, given $M>0$ we choose any nonnegative and nonincreasing $\chi\in C_0^\infty([0,\infty))$ such that $\chi\equiv 1$ on $[0,M]$,
  and set $\phi(\xi):=\int_0^\xi \chi(\sig)d\sig$, $\xi\ge 0$, to thus infer that since $(K^{(\phi)})'\equiv \kappa \phi'$,
  \bas
	\lball
	\kappa(\Theta) \Theta_t \ge D\Del\Theta
	+ \lan \D:\nas u_t,\nas u_t\ran
	- \Theta \lan \B,\nas u_t\ran
	+ g
	\qquad & \mbox{in } \big(\Om\times (0,\infty)\big) \cap \{\Theta<M\}, \\[1mm]
	\frac{\pa\Theta}{\pa\nu} \ge 0
	\qquad & \mbox{on } \big(\pO\times (0,\infty)\big) \cap \{\Theta<M\}, \\[1mm]
	\Theta(\cdot,0)\ge \Theta_0
	\qquad & \mbox{in } \Om\cap \{\Theta<M\},
	\ear
  \eas
  and that thus, since $M$ is arbitrary here, letting
  \be{p91}
	h_1:=\kappa(\Theta)\Theta_t - D\Del\Theta - \lan\D:\nas u_t,\nas u_t\ran + \Theta \lan \B,\nas u_t\ran - g
  \ee
  as well as
  \be{p92}
	h_2:=D\frac{\pa\Theta}{\pa\nu}
	\qquad \mbox{and} \qquad
	h_3:=K(\Theta(\cdot,0))-K(\Theta_0),
  \ee
  with the nondecreasing $K$ as in (\ref{K}),
  defines nonnegative functions $h_1\in C^0(\bom\times (0,\infty)), h_2\in C^0(\pO\times (0,\infty))$ and $h_3\in C^0(\bom)$.
  In particular, an integration shows that for all $t_0>0$ and any $t>t_0$ we have
  \bas
	\io K(\Theta(\cdot,t)) - \io K(\Theta(\cdot,t_0))
	&=& D \int_{t_0}^t \io \Del\Theta
	+ \int_{t_0}^t \io \lan \D:\nas u_t,\nas u_t\ran
	- \int_{t_0}^t \io \Theta \lan \B,\nas u_t\ran \\
	& & + \int_{t_0}^t \io g
	+ \int_{t_0}^t \io h_1 \\
	&=& \int_{t_0}^t \int_{\pO} h_2
	+ \int_{t_0}^t \io \lan \D:\nas u_t,\nas u_t\ran
	- \int_{t_0}^t \io \Theta \lan \B,\nas u_t\ran  \\
	& & + \int_{t_0}^t \io g
	+ \int_{t_0}^t \io h_1 
  \eas
  and hence, by continuity of $\Theta$ in $\bom\times \{0\}$, and by the nonnegativity of $h_1$ and $h_2$ as well as Beppo Levi's
  theorem,
  \bas
	\io K(\Theta(\cdot,t)) 
	&=& \io K(\Theta_0)
	+ \int_0^t \io \lan \D:\nas u_t,\nas u_t\ran
	- \int_0^t \io \Theta \lan \B,\nas u_t\ran  \\
	& & + \int_0^t \io g
	+ \int_0^t \io h_1 
	+ \int_0^t \int_{\pO} h_2
	+ \io h_3
	\qquad \mbox{for all } t>0.
  \eas
  In conjunction with (\ref{p8}), this shows that if we take $\F$ and $\F_0$ from (\ref{F}) and (\ref{F0}), then
  \bas
	\F(t) 
	&=& \F_0
	+ \int_0^t \io f\cdot u_t
	+ \int_0^t \io g \nn\\
	& & + \int_0^t \int_{\bom} \lan \B,\nas u_t\ran d\mu 
	+ \int_0^t \io h_1 + \int_0^t \int_{\pO} h_2
	+ \io h_3
	\qquad \mbox{for all } t>0,
  \eas
  while, on the other hand, (\ref{wF}) says that
  \bas
	\F(t) + a \int_{\bom} d\mu(t) \le \F_0 + \int_0^t \io f\cdot u_t + \int_0^t \io g
	\qquad \mbox{for a.e.~} t>0.
  \eas
  Noting that for each $T>0$ the number $c_3(T):=|\B| \cdot \|\nas u_t\|_{L^\infty(\Om\times (0,T))}$ is finite due to (\ref{p2}),
  by combining these relations and using the nonnegativity of the measure $\mu$ we obtain that
  \bas
	a \int_{\bom} d\mu(t)
	+ \int_0^t \io h_1 + \int_0^t \int_{\pO} h_2 + \io h_3
	\le c_3(T) \int_0^t \int_{\bom} d\mu(s) ds
	\qquad \mbox{for a.e.~} t\in (0,T).
  \eas
  Again since $\mu$ is nonnegative, and since 
  $(0,T)\ni t \mapsto \int_{\bom} d\mu(t)$ lies in $L^\infty((0,T))$ by (\ref{w6}),
  we may draw on a Gr\"onwall lemma to infer that $\mu(t)=0$ for a.e.~$t\in (0,T)$, and that consequently also
  $h_1\equiv 0$ in $\Om\times (0,T)$, $h_2\equiv 0$ on $\pO\times (0,T)$ and $h_3\equiv 0$ in $\Om$.
  As $T>0$ was arbitrary, from (\ref{p91}) and (\ref{p92}) we conclude that the second sub-problem of (\ref{0}) is solved
  classically, while using that $\mu=0$ we may thereupon integrate by parts in (\ref{wu}) and recall (\ref{p5}) and (\ref{p6})
  to see that also the first part of (\ref{0}) is satisfied in the pointwise sense.
\qed
\subsection{Local existence. Proof of Lemma \ref{lem1}}
Inter alia since the second equation in (\ref{0eps}) does not involve diffusion, 
our statement on local existence and extensibility in the approximate problems claimed in Lemma \ref{lem1} 
seems not covered by standard parabolic literature.
Let us therefore, finally, design a fixed point framework capable of providing the intended conclusion 
in an essentially self-contained manner:\abs
\proofc of Lemma \ref{lem1}.\quad
  We let $A=A(\eps)$ denote the self-adjoint realization of $\eps\Del^{2m}$ in $L^2(\Om;\R^n)$, with domain given by
  $D(A):=W^{4m,2}(\Om) \cap W_0^{2m,2}(\Om)$, and then obtain that the corresponding fractional power $A^\frac{1}{2}$
  has the property that $\|A^\frac{1}{2}(\cdot)\|_{L^2(\Om)}$ defines a norm equivalent to $\|\cdot\|_{W^{2m,2}(\Om)}$
  on $D(A^\frac{1}{2})=W_0^{2m,2}(\Om)$.
  Since $m>\frac{n+2}{4}$ and $m\ge 1$, and since thus $W^{2m,2}(\Om) \hra W^{1,\infty}(\Om)$ and $W^{2m,2}(\Om) \hra W^{2,2}(\Om)$,
  we can accordingly fix positive constants $c_i=c_i(\eps)>0$, $i\in\{1,...,4\}$, such that
  \be{1.2}
	\big\| \lan \B,\nas\vp\ran \big\|_{L^\infty(\Om)}
	\le c_1 \|A^\frac{1}{2} \vp\|_{L^2(\Om)}
	\qquad \mbox{for all } \vp\in D(A^\frac{1}{2})
  \ee
  and
  \be{1.3}
	\big\| \lan \D:\nas\vp,\nas\vp\ran\big\|_{L^\infty(\Om)}
	\le c_2 \|A^\frac{1}{2} \vp\|_{L^2(\Om)}^2
	\qquad \mbox{for all } \vp\in D(A^\frac{1}{2})
  \ee
  as well as
  \be{1.4}
	\big\| \div (\D:\nas \vp)\big\|_{L^2(\Om)}
	\le c_3 \|A^\frac{1}{2} \vp\|_{L^2(\Om)}
	\qquad \mbox{for all } \vp\in D(A^\frac{1}{2})
  \ee
  and
  \be{1.5}
	\big\| \div (\C:\nas \vp)\big\|_{L^2(\Om)}
	\le c_4 \|A^\frac{1}{2} \vp\|_{L^2(\Om)}
	\qquad \mbox{for all } \vp\in D(A^\frac{1}{2}),
  \ee
  and we moreover choose $c_5>0$ such that
  \be{1.6}
	\big\| \div (\vp\B)\big\|_{L^2(\Om)}
	\le c_5 \|\na\vp\|_{L^2(\Om)}
	\qquad \mbox{for all } \vp\in W^{1,2}(\Om).
  \ee
  Next, known smoothing properties of the associated analytic semigroup $(e^{-tA})_{t\ge 0}$ ensure that
  \be{1.7}
	\|A^\al e^{-tA} \vp\|_{L^2(\Om)} \le \|A^\al \vp\|_{L^2(\Om)}
	\qquad \mbox{for all $t>0, \al\ge 0$ and } \vp\in D(A^\al),
  \ee
  and that with some $c_6=c_6(\eps)>0$,
  \be{1.8}
	\|A^\frac{1}{2} e^{-tA} \vp\|_{L^2(\Om)} \le c_6 t^{-\frac{1}{2}} \|\vp\|_{L^2(\Om)}
	\qquad \mbox{for all $t\in (0,1)$ and } \vp\in L^2(\Om;\R^n).
  \ee
  We furthermore let
  \be{1.91}
	R\equiv R(\eps):=\|A^\frac{1}{2} v_{0\eps}\|_{L^2(\Om)} + 1
  \ee
  and
  \be{1.9}
	c_7\equiv c_7(\eps):=\|A^\frac{1}{2} u_{0\eps}\|_{L^2(\Om)},
	\
	c_8\equiv c_8(\eps):=\sup_{t>0} \|\feps(\cdot,t)\|_{L^2(\Om)}
	\ \mbox{and} \
	c_9 \equiv c_9(\eps):=\sup_{t>0} \|\geps(\cdot,t)\|_{L^\infty(\Om)}
  \ee
  as well as
  \be{1.10}
	c_{10}\equiv c_{10}(\eps):=\|\Theta_{0\eps}\|_{L^\infty(\Om)}
  \ee
  and
  \be{1.100}
	c_{11}\equiv c_{11}(\eps):= \io |\na \Theta_{0\eps}|^2
	+ \frac{2}{D} \cdot \Big\{ \frac{2c_1^2 c_{10}^2 R^2 |\Om|}{\eps} + 
		\frac{(c_2 R^2 + c_9)^2 |\Om|}{2\eps} \Big\}
  \ee
  and
  \be{1.101}
	c_{12} \equiv c_{12}(\eps) := c_3 R + c_4\cdot (c_7+R) + c_5 c_{11} + c_8,
  \ee
  choose $\lam=\lam(\eps)>0$ large enough such that
  \be{1.111}
	\lam\ge \frac{2c_1 R}{\eps}
	\qquad \mbox{and} \qquad
	\lam\ge \frac{2(c_2 R^2 + c_9)}{c_{10} \eps},
  \ee
  and taking any
  \be{1.122}
	\gamma\in \Big(\frac{1}{2},1\Big)
	\qquad \mbox{and} \qquad
	\vt \in (0,1-\gamma)
  \ee
  we fix $T=T(\eps)\in (0,1]$ such that
  \be{1.134}
	e^{\lam T} \le 2
	\qquad \mbox{and} \qquad
	2c_6 c_{12} T^\frac{1}{2} \le 1.
  \ee
  Then in the Banach space  
  \be{1.15}
	X:=C^\vt([0,T];W_0^{2m,2}(\Om;\R^n)),
  \ee
  we consider that closed bounded convex set
  \be{1.16}
	S:=\Big\{ \vp \in X \ \Big| \ \|A^\frac{1}{2} \vp(\cdot,t)\|_{L^2(\Om)} \le R
	\mbox{ for all } t\in [0,T]\Big\}
  \ee
  and define a mapping $\Phi$ on $S$ as follows:
  Given $\ovv\in S$, we use that our assumption $m>\frac{n+2}{4}$ actually implies that $W^{2m,2}(\Om)$ continuously embeds even into
  $C^{1+\iota_1}(\bom)$ for some $\iota_1\in (0,1)$, in view of (\ref{1.15}) meaning that 
  $\ovv\in C^\vt([0,T];C^{1+\iota_1}(\bom;\R^n))$.
  In particular, this warrants that 
  \be{1.166}
	h_1:=-\lan \B,\nas \ovv\ran
	\qquad \mbox{and} \qquad
	h_2:=\lan \D:\nas\ovv,\nas\ovv\ran + \geps
  \ee
  both lie in $C^{\iota_2,\frac{\iota_2}{2}}(\bom\times [0,T])$ for some $\iota_2\in (0,1)$, so that since
  the inclusion $\na\Theta_{0\eps}\in C_0^\infty(\Om;\R^n)$ guarantees validity of the corresponding first-order compatibility
  condition, in view of the lower estimate $\keps\ge\eps$ and the fact that $h_2\ge 0$, it follows from standard parabolic theory
  (\cite{amann},  \cite{LSU}) that there exists $T_\star=T_\star(\ovv)\in (0,T]$ such that the problem
  \be{1.17}
	\lball
	\keps(\Theta) \Theta_t = D\Del\Theta + h_1(x,t) \Theta + h_2(x,t),
	\qquad & x\in\Om, \ t\in (0,T_\star), \\[1mm]
	\frac{\pa\Theta}{\pa\nu}=0,
	\qquad & x\in\pO, \ t\in (0,T_\star), \\[1mm]
	\Theta(x,0)=\Theta_0(x),
	\qquad & x\in\Om,
	\ear
  \ee
  admits a unique nonnegative classical solution $\Phi_1(\ovv):=\Theta\in C^{2,1}(\bom\times [0,T_\star))$ which is such that
  \be{1.18}
	\mbox{if $T_\star<T$, \quad then \quad}
	\limsup_{t\nearrow T_\star} \|\Theta(\cdot,t)\|_{L^\infty(\Om)} = \infty.
  \ee
  To see that actually $T_\star=T$ and
  \be{1.19}
	\Theta \le 2c_{10}
	\qquad \mbox{in } \Om\times (0,T),
  \ee
  we let
  \be{1.20}
	\ov{\Theta}(x,t):= c_{10} e^{\lam t},
	\qquad x\in\bom, \ t\in [0,T_\star),
  \ee
  and note that thanks to the first condition in (\ref{1.134}),
  \be{1.21}
	\ov{\Theta} \le 2c_{10}
	\qquad \mbox{in } \Om\times (0,T_\star).
  \ee
  Moreover, in line with the inequality $\keps\ge\eps$ and our selections in (\ref{1.166}), (\ref{1.2}), (\ref{1.3}), (\ref{1.9})
  and (\ref{1.16}) we have
  \be{1.22}
	|h_1| \le c_1 \|A^\frac{1}{2}\ovv\|_{L^2(\Om)}
	\le c_1 R
	\quad \mbox{and} \quad
	|h_2| \le c_2\|A^\frac{1}{2}\ovv\|_{L^2(\Om)}^2 + c_9
	\le c_2 R^2 + c_9
	\qquad \mbox{in } \Om\times (0,T_\star),
  \ee
  so that
  \bas
	\keps(\Theta) \ov{\Theta}_t - D\Del\ov{\Theta} - h_1\ov{\Theta} - h_2
	&=& \keps(\Theta) \cdot \lam c_{10} e^{\lam t} - h_1\cdot c_{10} e^{\lam t} - h_2 \\
	&\ge& \eps \cdot \lam c_{10} e^{\lam t} - c_1 R \cdot c_{10} e^{\lam t} - c_2 R^2 - c_9 \\
	&=& (\eps\lam-c_1 R) \cdot c_{10} e^{\lam t} - c_2 R^2 - c_9 \\
	&\ge& \frac{\eps\lam}{2} \cdot c_{10} e^{\lam t}
	- c_2 R^2 - c_9 \\
	&\ge& \frac{\eps\lam}{2} \cdot c_{10} - c_2 R^2 - c_9 \\[2mm]
	&\ge& 0
	\qquad \mbox{in } \Om\times (0,T_\star)
  \eas
  according to the restrictions in (\ref{1.111}).
  As $\frac{\pa\ov{\Theta}}{\pa\nu}=0$ on $\pO\times (0,T_\star)$ and $\ov{\Theta}(\cdot,0)=c_{10} \ge \Theta_{0\eps}$ in $\Om$
  by (\ref{1.10}), a comparison principle thus asserts that $\Theta\le\ov{\Theta}$ in $\Om\times (0,T_\star)$, whence 
  (\ref{1.21}) and (\ref{1.18}) guarantee that indeed $T_\star=T$, and that (\ref{1.19}) holds.\abs
  We next test (\ref{1.17}) in a straightforward manner against $\Theta_t$ to see by using Young's inequality that due to
  (\ref{1.21}), and again thanks to (\ref{1.22}),
  \bas
	\io \keps(\Theta)\Theta_t^2
	+ \frac{D}{2} \frac{d}{dt} \io |\na\Theta|^2
	&=& \io h_1 \Theta \Theta_t 
	+ \io h_2 \Theta_t \\
	&\le& \eps \io \Theta_t^2
	+ \frac{1}{2\eps} \io h_1^2 \Theta^2
	+ \frac{1}{2\eps} \io h_2^2 \\
	&\le& \eps \io \Theta_t^2
	+ \frac{1}{2\eps} \cdot (c_1 R)^2 \cdot (2c_{10})^2 \cdot |\Om|
	+ \frac{1}{2\eps} \cdot (c_2 R^2 + c_9)^2 \cdot |\Om|
  \eas
  for all $t\in (0,T)$,
  whence estimating $\io \keps(\Theta) \Theta_t^2 \ge \eps\io \Theta_t^2$ for $t\in (0,T)$ we infer upon integrating in time that
  in view of (\ref{1.100}),
  \be{1.23}
	\io |\na\Theta|^2
	\le c_{11}
	\qquad \mbox{for all } t\in (0,T),
  \ee
  because $T\le 1$.
  Observing that for
  \be{1.24}
	[\Phi_2(\ovv)](\cdot,t):=u(\cdot,t):=u_{0\eps} + \int_0^t \ovv(\cdot,s) ds,
	\qquad t\in [0,T],
  \ee
  we have 
  \be{1.244}
	\|A^\frac{1}{2} u(\cdot,t)\|_{L^2(\Om)}
	\le \|A^\frac{1}{2} u_{0\eps}\|_{L^2(\Om)}
	+ \int_0^t \|A^\frac{1}{2} \ovv(\cdot,s)\|_{L^2(\Om)} ds 
	\le c_7 + R
	\qquad \mbox{for all } t\in (0,T)
  \ee
  due to (\ref{1.9}), (\ref{1.16}) and the fact that $T\le 1$, from (\ref{1.23}) when combined with (\ref{1.4}), (\ref{1.5}), 
  (\ref{1.6}), (\ref{1.9}) and (\ref{1.101}) we conclude that for
  \bas
	h_3:=\div (\D:\nas\ovv) + \div(\C:\nas u) - \div (\Theta\B) + \feps
  \eas
  we have
  \bea{1.25}
	\|h_3\|_{L^2(\Om)}
	&\le& c_3 \|A^\frac{1}{2}\ovv\|_{L^2(\Om)}
	+ c_4\|A^\frac{1}{2} u\|_{L^2(\Om)}
	+ c_5\|\na\Theta\|_{L^2(\Om)}
	+ \|\feps\|_{L^2(\Om)} \nn\\
	&\le& c_3 R + c_4\cdot (c_7+R) + c_5 c_{11}^\frac{1}{2} + c_8 
	= c_{12}
	\qquad \mbox{for all } t\in (0,T).
  \eea
  Therefore, letting
  \be{1.26}
	[\Phi\ovv](\cdot,t)
	:= e^{-tA} v_{0\eps} + \int_0^t e^{-(t-s)A} h_3(\cdot,s) ds,
	\qquad t\in [0,T], 
  \ee
  defines an element of $V^0([0,T];W_0^{2m,2}(\Om;\R^n))$ which according to (\ref{1.7}), (\ref{1.8}) and (\ref{1.91}) as well as
  the fact that $A^\frac{1}{2}$ and $e^{-tA}$ commute on $D(A^\frac{1}{2})$ for all $t>0$, satisfies
  \bea{1.27}
	\big\| [A^\frac{1}{2} \Phi \ovv](\cdot,t)\big\|_{L^2(\Om)}
	&=& \bigg\| e^{-tA} A^\frac{1}{2} v_{0\eps}
	+ \int_0^t A^\frac{1}{2} e^{-(t-s)A} h_3(\cdot,s) ds \bigg\|_{L^2(\Om)} \nn\\
	&\le& \|A^\frac{1}{2} v_{0\eps}\|_{L^2(\Om)}
	+ c_6 \int_0^t (t-s)^{-\frac{1}{2}} \|h_3(\cdot,s)\|_{L^2(\Om)} ds \nn\\
	&\le& R-1 + c_6 c_{12} \int_0^t (t-s)^{-\frac{1}{2}} ds \nn\\
	&=& R-1 + 2c_6 c_{12} t^\frac{1}{2} \nn\\
	&\le& R
	\qquad \mbox{for all } t\in (0,T) 
  \eea
  as a consequence of our second restriction on $T$ in (\ref{1.134}).
  To verify that in addition to this we have
  \be{1.28}
	\Phi(\ovv) \in C^{1-\gamma}([0,T];D(A^\gamma)),
  \ee
  possibly with less explicit knowledge on corresponding norms, we follow a line of arguments well-established in regularity
  theories of mild solutions to abstract evolution problems (\cite[Part 2]{friedman}) and firstly pick $c_{13}=c_{13}(\eps)>0$
  and $c_{14}=c_{14}(\eps)>0$ such that for all $t>0$ and each $\vp\in L^2(\Om;\R^n)$ we have
  \be{1.29}
	\|A^{1+\gamma} e^{-tA} \vp\|_{L^2(\Om)}
	\le c_{13} t^{-1-\gamma} \|\vp\|_{L^2(\Om)}
	\qquad \mbox{and} \qquad
	\|A^\gamma e^{-tA} \vp\|_{L^2(\Om)}
	\le c_{14} t^{-\gamma} \|\vp\|_{L^2(\Om)},
  \ee
  and for fixed $t_0\in [0,T)$ and $t\in (t_0,T]$ we represent
  \bea{1.30}
	[A^\gamma \Phi(\ovv)](\cdot,t) - [A^\gamma \Phi (\ovv)](\cdot,t_0)
	&=& [e^{-tA}-e^{-t_0 A}] A^\gamma v_{0\eps}
	+ \int_0^{t_0} A^\gamma [e^{-(t-s)A}-e^{-(t_0-s)A}] h_3(\cdot,s) ds \nn\\
	& & + \int_{t_0}^t A^\gamma e^{-(t-s)A} h_3(\cdot,s) ds.
  \eea
  Noting that
  \bas
	\big\| [e^{-tA}-e^{-t_0 A}] A^\gamma v_{0\eps} \big\|_{L^2(\Om)}
	= \bigg\| - \int_{t_0}^t e^{-\sig A} A^{1+\gamma} v_{0\eps} d\sig \bigg\|_{L^2(\Om)}
	\le \|A^{1+\gamma} v_{0\eps}\|_{L^2(\Om)} \cdot (t-t_0)
  \eas
  by (\ref{1.7}), that
  \bas
	\bigg\| \int_0^{t_0} A^\gamma [e^{-(t-s)A}-e^{-(t_0-s)A}] h_3(\cdot,s) ds \bigg\|_{L^2(\Om)}
	&=& \bigg\| - \int_0^{t_0} \int_{t_0}^t A^{1+\gamma} e^{-(\sig-s)A} h_3(\cdot,s) d\sig ds \bigg\|_{L^2(\Om)} \\
	&\le& c_{13} c_{12} \int_0^{t_0} \int_{t_0}^t (\sig-s)^{-1-\gamma} d\sig ds \\
	&=& \frac{c_{13} c_{12}}{\gamma(1-\gamma)} \cdot \big\{ (t-t_0)^{1-\gamma} + t_0^{1-\gamma} - t^{1-\gamma} \big\} \\
	&\le& \frac{c_{13} c_{12}}{\gamma(1-\gamma)} (t-t_0)^{1-\gamma}
  \eas
  due to (\ref{1.29}), (\ref{1.25}) and the condition $\gamma\in (0,1)$, and that
  \bas
	\bigg\| \int_{t_0}^t A^\gamma e^{-(t-s)A} h_3(\cdot,s) ds \bigg\|_{L^2(\Om)}
	\le c_{14} c_{12} \int_{t_0}^t (t-s)^{-\gamma} ds
	= \frac{c_{14} c_{12}}{1-\gamma} (t-t_0)^{1-\gamma}
  \eas
  thanks to (\ref{1.29}), (\ref{1.25}) and the inequality $\gamma<1$, from (\ref{1.30}) we infer that, indeed,
  \bas
	\big\| [A^\gamma \Phi(\ovv)](\cdot,t) - [A^\gamma \Phi(\ovv](\cdot,t_0) \big\|_{L^2(\Om)}
	\le \Big\{ \|A^{1+\gamma} v_{0\eps}\|_{L^2(\Om)} + \frac{c_{13} c_{12}}{\gamma(1-\gamma)} + \frac{c_{14} c_{12}}{1-\gamma}
		\Big\} \cdot (t-t_0)^{1-\gamma}
  \eas
  for any such $t_0$ and $t$.
  Since $[\Phi(\ovv)](\cdot,0)= v_{0\eps} \in D(A^\gamma)$, this clearly entails (\ref{1.28}) and thus, when combined with (\ref{1.27}),
  that $\Phi$ in fact maps $S$ into itself, and that $\ov{\Phi (S)}$ actually is a compact subst of $X$ thanks to the
  Arzel\`a-Ascoli theorem, because (\ref{1.122}) asserts that $1-\gamma>\vt$, and that $D(A^\gamma)$ is compactly embedded into
  $W_0^{2m,2}(\Om;\R^n)$ due to the inequality $\gamma>\frac{1}{2}$ (\cite{henry}).\abs
  To finally see that $\Phi$ is continuous, we assume that $(\ovk)_{k\in\N} \subset S$ and $\ovv\in S$ are such that 
  $\ovk\to \ovv$ in $X$ as $k\to\infty$.
  Then again thanks to the continuity of the embedding $W^{2m,2}(\Om)\hra C^{1+\iota_1}(\bom)$, the functions 
  $h_{1k}:=-\lan \B,\nas \ovk\ran$ and $h_{2k}:=\lan\D:\nas\ovk,\nas\ovk\ran + \geps$ in the identity
  $\keps(\Theta_k) \Theta_{kt} = D\Del\Theta_k + h_{1k}(x,t) \Theta_k + h_{2k}(x,t)$, satisfied along with homogeneous
  Neumann boundary conditions and the initial condition $\Theta_k(\cdot,0)=\Theta_{0\eps}$ by $\Theta_k:=\Phi_1(\ovk)$
  for all $k\in\N$ by definition of $\Phi_1$, have the property that $(h_{1k})_{k\in\N}$ and $(h_{2k})_{k\in\N}$ are
  bounded in $C^{\iota_3,\frac{\iota_3}{2}}(\bom\times [0,T])$ with some $\iota_3\in (0,1)$, so that parabolic Schauder theory
  (\cite{LSU}) yields boundedness of $(\Theta_k)_{k\in\N}$ in $C^{2+\iota_4,1+\frac{\iota_4}{2}}(\bom\times [0,T])$
  with some $\iota_4\in (0,1)$.
  As solutions to (\ref{1.17}) are unique within the class of nonnegative functions from $C^{2,1}(\bom\times [0,T])$
  due to the maximum principle, thanks to the Arzel\`a-Ascoli theorem this implies that $\Theta_k \to \Theta=\Phi_1(\ovv)$
  in $C^{2,1}(\bom\times [0,T])$ as $k\to\infty$.  
  Along with an argument parallel to that in (\ref{1.244}), this particularly ensures that for 
  $h_{3k}:=\div(\D:\nas\ovk) + \div (\C:\nas \Phi_2(\ovk)) - \div(\Phi_1(\ovk)\B) + \feps$, $k\in\N$,
  we have $h_{3k} \to \wh{h}_3:=\div (\D:\nas\ovv) + \div (\C:\nas \Phi_2(\ovv)) - \div (\Phi_1(\ovv)\B) + \feps$
  in $L^\infty((0,T);L^2(\Om))$ as $k\to\infty$, and that thus, once more by (\ref{1.7}),
  \bas
	\big\| [\Phi (\ovk)](\cdot,t) - [\Phi(\ovv)](\cdot,t) \big\|_{L^2(\Om)}
	&=& \bigg\| \int_0^t e^{-(t-s)A} [h_{3k}(\cdot,s)-\wh{h}_3(\cdot,s)] ds \bigg\|_{L^2(\Om)} \\
	&\le& \int_0^t \|h_{3k}(\cdot,s)-\ovv{h}_3(\cdot,s)\|_{L^2(\Om)} ds
	\to 0
	\qquad \mbox{as } k\to\infty.
  \eas
  By compactness of $\ov{\Phi(S)}$ in $X$, this ensures that, indeed, $\Phi (\ovk) \to \Phi(\ovv)$ in $X$ as $k\to\infty$.\abs
  In conclusion, we may employ the Schauder fixed point theorem to find $\veps\in S$ such that $\Phi(\veps)=\veps$,
  and writing $\ueps:=\Phi_2(\veps)$ and $\Teps:=\Phi_1(\veps)$ we readily see by means of a straightforward bootstrap argument
  that the triple $(\veps,\ueps,\Teps)$ actually enjoys regularity features of the form in (\ref{1.1}) and solves (\ref{0eps})
  in the classical sense in $\Om\times (0,T)$.
  As the above choice of $T$ depends on the initial data exclusively through the quantities
  $\|v_{0\eps}\|_{W^{2m,2}(\Om)}, \|u_{0\eps}\|_{W^{2m,2}(\Om)}, \|\na\Theta_{0\eps}\|_{L^2(\Om)}$ and 
  $\|\Theta_{0\eps}\|_{L^\infty(\Om)}$, a standard extension procedure finally yields $\tme\in (0,\infty]$ and a classical
  solution of (\ref{0eps}) in $\Om\times (0,\tme)$ with the properties in (\ref{1.1}) and (\ref{ext}).
  Strict positivity of $\Teps$ in $\bom\times [0,\tme)$, finally, is a consequence of the strong maximum principle
  and the fact that $\Theta_{0\eps}>0$ in $\bom$.
\qed

\bigskip

\section*{Declarations}
{\bf Funding.} \quad
The author acknowledges support of the Deutsche Forschungsgemeinschaft (Project No. 444955436).\abs
{\bf Conflict of interest statement.} \quad
The author declares that he has no conflict of interest.
{\bf Data availability statement.} \quad
Data sharing is not applicable to this article as no datasets were
generated or analyzed during the current study.


\small

\begin{thebibliography}{99}
%
\bibitem{villani}
  \sc Alexandre, R., Villani, C.:  
  \it On the Boltzmann equation for long-range interactions. 
  \rm Comm. Pure Appl. Math. {\bf 55}, 30–70 (2002)
\bibitem{amann}
  \sc Amann, H.:
  \it Nonhomogeneous linear and quasilinear elliptic and parabolic boundary value problems.
  \rm In: Function spaces, differential operators and nonlinear analysis (Friedrichroda, 1992), Volume 133 of 
  {\em Teubner-Texte Math.},  Teubner, Stuttgart, 9-126 (1993)
\bibitem{friedrich2022}
  \sc Badal, R., Friedrich, M., Kru\v{c}ik, M.:
  \it Nonlinear and linearized models in thermoviscoelasticity.
  \rm Arch. Ration. Mech. Anal. {\bf 247}, 5 (2023)
\bibitem{friedrich2024}
  \sc Badal, R., Friedrich, M., Kru\v{c}ik, M., Machill, L.:
  \rm Positive temperature in nonlinear thermoviscoelasticity and the derivation of linearized models.
  \rm J. Math. Pures Appl. {\bf 202}, 103751 (2025)
\bibitem{bartels_roubicek}
  \sc Bartels, S., Roub\'{\i}\v{c}ek, T.:
  \it Thermoviscoplasticity at small strains.
  \rm ZAMM Z. Angew. Math. Mech. {\bf 88}, 735-754 (2008)
\bibitem{cieslak_MAAN}
  \sc Bies, P.:, Cie\'slak, T.:
  \it Time-asymptotics of a heated string.
  \rm Math. Ann. {\bf 391}, 5941-5964 (2025)
\bibitem{cieslak_fuest_lankeit}
  \sc Bies, P.M., Cie\'slak, T., Fuest, M., Lankeit, J., Muha, B., Trifunovic, S.:
  \it Existence, uniqueness, and long-time asymptotic behavior of regular solutions in multidimensional thermoelasticity.
  \rm Preprint, {\tt arxiv2507.20794v2}
\bibitem{blanchard_guibe}
  \sc Blanchard, D., Guib\'e, O.:
  \it Existence of a solution for a nonlinear system in thermoviscoelasticity.
  \rm Adv. Differential Equations {\bf 5}, 1221-1252 (2000)
\bibitem{bonetti_bonfanti}
  \sc Bonetti, E., Bonfanti, G.:
  \it Existence and uniqueness of the solution to a 3D thermoviscoelastic system.
  \rm Electron. J. Differential Equations 2003, 50 (2003)
\bibitem{chen_hoffmann}
  \sc Chen, Z., Hoffmann, K.-H.:
  \it On a one-dimensional nonlinear thermoviscoelastic model for structural phase transitions in shape memory alloys. 
  \rm J. Differential Equations {\bf 112}, 325-350 (1994)
\bibitem{cieslak_CVPD}
  \sc Cie\'slak, T., Muha, B, Trifunovi\'c, S.A.:
  \it Global weak solutions in nonlinear 3D thermoelasticity. 
  \rm Calc. Var. Partial Differential Equations {\bf 63}, 26 (2024)
\bibitem{claes_lankeit_win}
  \sc Claes, L., Lankeit, J., Winkler, M.: 
  \it A model for heat generation by acoustic waves in piezoelectric materials: Global large-data solutions.
  \rm Math.~Models Methods Appl.~Sci., to appear
\bibitem{dafermos}
  \sc Dafermos, C.M.:
  \it Global smooth solutions to the initial boundary value problem for the equations of one-dimensional thermoviscoelasticity.
  \rm SIAM J. Math. Anal. {\bf 13}, 397-408 (1982)
\bibitem{dafermos_hsiao_smooth}
  \sc Dafermos, C.M., Hsiao, L.:
  \it Global smooth thermomechanical processes in one-dimensional nonlinear thermoviscoelasticity.
  \rm Nonlinear Anal. {\bf 6}, 435-454 (1982)
\bibitem{debye}
  \sc Debye, P.:
  \it Zur Theorie der spezifischen W\"armen.
  \rm Annalen der Physik {\bf 344}, 789-839 (1912)
\bibitem{solombrino}
  \sc Di Fratta, G., Solombrino, F.:
  \it Korn and Poincar\'e-Korn inequalities: a different perspective.
  \rm Proc. Am. Math. Soc. {\bf 153}, 143-159 (2025)
\bibitem{diperna_lions}
  \sc Di Perna, R.-J., Lions, P.-L.: \it
  On the Cauchy problem for Boltzmann equations: Global existence and weak stability.
  \rm Ann. Math. {\bf 130}, 321-366 (1989)
\bibitem{drotziger}
  \sc Drotziger, F., et al.:
  \it Glassy anomalies in the heat capacity of an ordered $2-$bromobenzophenone single crystal.
  \rm Physical Review Letters {\bf 120}, 215502 (2018)
\bibitem{feireisl}
  \sc Feireisl, E., Novotny, A.: 
  \it Singular Limits in Thermodynamics of Viscous Fluids. 
  \rm Advances in Mathematical Fluid Mechanics. Birkh\"auser, Basel (2009)
\bibitem{friedman}
   \sc Friedman, A.: \it Partial Differential Equations.
  \rm Holt, Rinehart \& Winston, New York, 1969
\bibitem{jost_pde}
  \sc Jost, J.: \it Partial differential equations. 
  \rm Graduate Texts in Mathematics, Vol. 214. Springer, New York, second edition, 2007
\bibitem{gawinecki}
  \sc Gawinecki, J.A.:
  \it Global existence of solutions for non-small data to non-linear spherically symmetric thermoviscoelasticity.
  \rm Math. Methods Appl. Sci. {\bf 26}, 907-936 (2003)
\bibitem{gaw_zaj_TMNA}
  \sc Gawinecki, J.A., Zajaczkowski, W.M.:
  \it Global existence of solutions to the nonlinear thermoviscoelasticity system with small data.
  \rm Topol. Methods Nonlinear Anal. {\bf 39}, 263-284 (2012)
\bibitem{gaw_zaj_CPAA}
  \sc Gawinecki, J.A., Zajaczkowski, W.M.:
  \it Global regular solutions to two-dimensional thermoviscoelasticity.
  \rm Commun. Pure Appl. Anal. {\bf 15}, 1009-1028 (2016)
\bibitem{griffel}
  \sc Griffel, M., Skochdopole, R.E., Spedding, F.H.:
  \it The Heat Capacity of Gadolinium from 15 to $355^\circ K$.
  \rm Phys. Rev. {\bf 93}, 657 (1954)
\bibitem{guo_zhu}
  \sc Guo, B., Zhu, P.:
  \it Global existence of smooth solution to nonlinear thermoviscoelastic system with clamped boundary conditions in solid-like
  materials.
  \rm Commun. Math. Phys. {\bf 203}, 365-383 (1999)
\bibitem{henry}
  \sc Henry, D.: \it Geometric Theory of Semilinear Parabolic Equations.
  \rm Lecture Notes in Mathematics. 840. Springer, Berlin-Heidelberg-New York, 1981
\bibitem{hsiao_luo}
  \sc Hsiao, L., Luo, T.:
  \it Large-time behavior of solutions to the equations of one-dimensional nonlinear thermoviscoelasticity.
  \rm Q. Appl. Math. {\bf 56}, 201-219 (1998)
\bibitem{jiang_QAM1993}
  \sc Jiang, S.:
  \it Global large solutions to initial boundary value problems in one- dimensional nonlinear thermoviscoelasticity. 
  \rm Q. Appl. Math. {\bf 51}, 731-744 (1993)
\bibitem{kim}
  \sc Kim, J.U.: \it Global existence of solutions of the equations of one-dimensional thermoviscoelasticity
  with initial data in $BV$ and $L^1$.
  \rm Ann. Scuola Norm. Sup. Pisa Cl. Sci. {\bf 10}, 357-429 (1983)
\bibitem{LSU}
  \sc Ladyzenskaja, O. A., Solonnikov, V. A., Ural'ceva, N. N.:
  \it Linear and Quasi-Linear Equations of Parabolic Type.
  \rm Amer. Math. Soc. Transl., Vol. {\bf 23}, Providence, RI, 1968
\bibitem{lankeit_win_NoDEA}
  \sc Lankeit, J., Winkler, M.:
  \it A generalized solution concept for the Keller-Segel system with logarithmic sensitivity:
  global solvability for large nonradial data. 
  \rm Nonlinear Differential Eq. Appl. NoDEA {\bf 24}, 49 (2017)
\bibitem{lions}
  \sc Lions, J.L.: 
  \it Quelques m\'ethodes de r\'esolution des probl\`emes aux limites non lin\'eaires. 
  \rm Etudes mathematiques. Dunod/Gauthier-Villars, Paris, XX, 1969
\bibitem{mielke_roubicek}
  \sc Mielke, A., Roubi\v{c}ek, T.:
  \it Thermoviscoelasticity in Kelvin-Voigt rheology at large strains.
  \rm Arch. Ration. Mech. Anal. {\bf 238}, 1-45 (2020)
\bibitem{owczarek_wielgos}
  \sc Owczarek, S., Wielgos, K.:
  \it On a thermo-visco-elastic model with nonlinear damping forces and $L^1$ temperature data. 
  \rm Math. Methods Appl. Sci. {\bf 46}, 9966-9999 (2023)
\bibitem{paoli_petrov}
  \sc Paoli, L., Petrov, A.:
  \it  Global existence result for thermoviscoelastic problems with hysteresis.
  \rm Nonlinear Anal. Real World Appl. {\bf 13}, 524-542 (2012)
\bibitem{paoli_petrov_gamm}
  \sc Paoli, L., Petrov, A.:
  \it Thermodynamics of multiphase problems in viscoelasticity.
  \rm GAMM-Mitt. {\bf 35}, 75-90 (2012)
\bibitem{pawlow2000}
  \sc Pawlow, I.:
  \it Three-dimensional model of thermomechanical evolution of shape memory material.
  \rm Control and Cybernetics {\bf 29}, 341-365 (2000)
\bibitem{pawlow_zajaczkowski_SIMA}
  \sc Pawlow, I., Zajaczkowski, W.M.:
  \it Global regular solutions to a Kelvin-Voigt type thermoviscoelastic system.
  \rm SIAM J. Math. Anal. {\bf 45}, 1997-2045 (2013)
\bibitem{paw_zaj2017}
  \sc Pawlow, I., Zajaczkowski, W.M.:
  \it Global regular solutions to three-dimensional thermo-visco-elasticity with nonlinear temperature-dependent specific heat.
  \rm Commun. Pure Appl. Anal. {\bf 16}, 1331-1371 (2017)
\bibitem{racke_zheng}
  \sc Racke, R., Zheng, S.:
  \it Global existence and asymptotic behavior in nonlinear thermoviscoelasticity.
  \rm J. Differential Equations {\bf 134}, 46-67 (1997)
\bibitem{rossi_roubicek}
  \sc Rossi, R., Roubi\v{c}ek, T.:
  \it Adhesive contact delaminating at mixed mode, its thermodynamics and analysis.
  \rm Interfaces Free Bound. {\bf 15}, 1-137 (2013)
\bibitem{roubicek}
  \sc Roubi\v{c}ek, T.:
  \it Thermo-visco-elasticity at small strains with $L^1$-data.
  \rm Quart. Appl. Math. {\bf 67}, 47-71 (2009)
\bibitem{roubicek_SIMA}
  \sc Roubi\v{c}ek, T.:
  \it Thermodynamics of rate-independent processes in viscous solids at small strains.
  \rm SIAM J. Math. Anal. {\bf 42}, 256-297 (2010)
\bibitem{roubicek_nodea2013}
  \sc Roubi\v{c}ek, T.:
  \it Nonlinearly coupled thermo-visco-elasticity.
  \rm NoDEA Nonlinear Differential Equations Appl. {\bf 20}, 1243-1275 (2013)
\bibitem{roubicek_dcdss13}
  \sc Roubi\v{c}ek, T.:
  \it Thermodynamics of perfect plasticity.
  \rm Discrete Contin. Dyn. Syst. Ser. S {\bf 6}, 193-2014 (2013)
\bibitem{shibata}
  \sc Shibata, Y.:
  \it Global in time existence of small solutions of nonlinear thermoviscoelastic equations.
  \rm Math. Methods Appl. Sci. {\bf 18}, 871-895 (1995)
\bibitem{troyanchuk}
  \sc Troyanchuk, I.O., et al.:
  \it Specific heat anomalies in $La_{1-x}Sr_{x}MnO_{3}$.
  \rm Physical Review B {\bf 71}, 224432 (2005)
\bibitem{wang_win_JLMS}
  \sc Wang, Y., Winkler, M.:
  \it An interpolation inequality involving $L\log L$ spaces and application to the characterization of blow-up behavior 
  in a two-dimensional Keller-Segel-Navier-Stokes system.
  \rm J.~London Math.~Soc. {\bf 109}, e12885 (2024)
\bibitem{watson}
  \sc Watson, S.J.:
  \it Unique global solvability for initial-boundary value problems in one-dimensional nonlinear thermoviscoelasticity.
  \rm Arch. Ration. Mech. Anal. {\bf 153}, 1-37 (2000)
\bibitem{win_SIMA2015}
  \sc Winkler, M.:
  \it Large-data global generalized solutions in a chemotaxis system with tensor-valued sensitivities. \quad
  \rm SIAM J.~Math.~Anal. {\bf 47}, 3092-3115 (2015)
\bibitem{win_SIMA2020}
  \sc Winkler, M.: 
  \it Small-mass solutions in the two-dimensional Keller-Segel system coupled to the Navier-Stokes equations.
  \rm SIAM J.~Math.~Anal. {\bf 52}, 2041-2080 (2020)
\bibitem{zhigun}
  \sc Zhigun, A.:
  \it Generalized global supersolutions with mass control for systems with taxis.
  \rm SIAM J. Math. Anal. {\bf 51}, 2425-2443 (2019)
%
\end{thebibliography}
\end{document}